\documentclass[twoside, 14pt]{article}
\usepackage{amsmath,amssymb,amsthm,mathrsfs}
\usepackage[margin=2.3cm]{geometry} 
\usepackage{lipsum}
\usepackage{titlesec,hyperref}
\usepackage{fancyhdr}
\usepackage[numbers,sort&compress]{natbib}
\usepackage{color}
\usepackage[titletoc]{appendix}

\allowdisplaybreaks

\fancypagestyle{plain}
{
	\fancyhf{}
	
}

\titleformat{\subsection}{\it}{\thesubsection.\enspace}{1.5pt}{}
\titleformat{\subsubsection}{\it}{\thesubsubsection.\enspace}{1.5pt}{}

\newtheorem{theo}{Theorem}[section]
\newtheorem{lemm}[theo]{Lemma}

\newtheorem{prop}[theo]{Proposition}
\newtheorem{rema}{Remark}[section]
\numberwithin{equation}{section}

\allowdisplaybreaks

\def\beq{\begin{equation}}
	\def\bal{\begin{aligned}}
		\def\dal{\end{aligned}}
	\def\deq{\end{equation}}
\def\beqq{\begin{equation*}}
	\def\deqq{\end{equation*}}

\def\p{\partial}

\def\al{\alpha}
\def \f{\frac}
\def \var{\varepsilon}
\def \vf{\varphi}

\def \i {\int_{\mathbb{R}^3_+}}
\def \be {\beta}

\def \ah{\alpha_h}
\def\me{\mathcal{E}}
\def\md{\mathcal{D}}
\def\bme{\bar{\mathcal{E}}}
\def\bmd{\bar{\mathcal{D}}}
\def \wu{\partial_3 u}

\def \wr{\partial_3 \vro}
\def \du{{\rm{div}} u}
\def \hs{\Lambda_h^{-s}}
\def \vro {\varrho}
\def \dl{\delta}

\begin{document}
	\begin{sloppypar}
		\title{Global uniform regularity and vanishing vertical viscosity limit for the compressible Navier--Stokes equations in the half-space  \hspace{-4mm}}
		\author{$\mbox{Jincheng Gao}^1$ \footnote{Email: gaojch5@mail.sysu.edu.cn}, \quad
			$\mbox{Lianyun Peng}^2$ \footnote{Corresponding author. Email: lianyun.peng@polyu.edu.hk}, \quad
			$\mbox{Jiahong Wu}^3$ \footnote{Email: jwu29@nd.edu},\quad 
            $\mbox{Zheng-an Yao}^{1,4}$ \footnote{Email: mcsyao@mail.sysu.edu.cn},\\
			\quad
			$^1\mbox{School}$ of Mathematics, Sun Yat-sen University,\\
			Guangzhou 510275, China\\
			$^2\mbox{Department}$ of Applied Mathematics, 
			The Hong Kong Polytechnic University,\\
			Hong Kong 999077, China\\
			$^3\mbox{Department}$ of Mathematics,
			University of Notre Dame, \\
			Notre Dame 46556, USA\\
            $^4 \mbox{Sun}$ Yat-sen University Institute of Advanced Studies Hong Kong,\\
            Hong Kong 999077, China\\
		}
		
		\date{}
		
		\maketitle

\begin{abstract}
In geophysical flows such as large-scale ocean dynamics, the
vertical viscosity is often much smaller than the horizontal viscosity. This anisotropy makes it natural to ask whether solutions of the full anisotropic
compressible Navier--Stokes equations converge, as the vertical
viscosity coefficient $\varepsilon \to 0$, to solutions of a
horizontally dissipative limit system, and whether this limit
can be justified globally in time.
Prior work has answered this question locally in time or in
the incompressible setting. We resolve this problem for the three-dimensional compressible Navier--Stokes equations in the upper half-space with the Navier
slip boundary condition.  This paper establishes two main results for
small perturbations of a constant equilibrium state. First, we prove the existence of a unique global-in-time solution whose
conormal Sobolev norm remains uniformly bounded for all $t \ge 0$
and all $\varepsilon \in (0,1)$. Second, we justify the global vanishing vertical viscosity limit. More precisely, we show that the solutions converge to a global solution of the horizontally dissipative
compressible Navier--Stokes system. This provides the first rigorous justification of the anisotropic viscosity limit for compressible flows on an infinite time interval. 
			
\medskip
\noindent\textit{Keywords}:
compressible Navier--Stokes equations;
vanishing vertical viscosity limit;
global uniform regularity;
conormal Sobolev space;
two-tier energy method;
slip boundary condition.
		\end{abstract}

		\tableofcontents

\section{Introduction}

In many geophysical flows such as large-scale ocean dynamics,
the dissipation is strongly anisotropic: the vertical
(cross-layer) viscosity is orders of magnitude smaller than the
horizontal (along-layer) viscosity.  According to classical
geophysical fluid dynamics (cf.\ \cite[Chapter~4]{1987Geophysical}),
the vertical viscosity coefficient ranges from $1$ to
$10^3\ \mathrm{cm}^2/\mathrm{sec}$, while the horizontal viscosity
ranges from $10^5$ to $10^8\ \mathrm{cm}^2/\mathrm{sec}$.
In such regimes it is natural to ask whether solutions of the full
anisotropic system converge, as the vertical viscosity $\varepsilon
\to 0$, to solutions of a horizontally dissipative limit equation,
and whether this limit can be justified globally in time.

\medskip 
The present paper addresses this question for the
three-dimensional compressible Navier--Stokes equations
in the upper half-space $\mathbb{R}^3_+ := \{x = (x_1, x_2, x_3)
: x_3 > 0\}$:
\begin{equation}\label{eq1}
	\left\{
	\begin{aligned}
		&\p_t \rho^\var + {\rm div}(\rho^\var u^\var) = 0,\\
		&\rho^\var \p_t u^\var + \rho^\var u^\var \cdot \nabla u^\var
		- \Delta_h u^{\var} - \var \p_3^2 u^{\var}
		- \nabla {\rm div}\, u^\var + \nabla P(\rho^\var) = 0,
	\end{aligned}\right.
\end{equation}
where $(t,x) \in \mathbb{R}_+ \times \mathbb{R}^3_+$,
$\Delta_h := \p_1^2 + \p_2^2$ is the horizontal Laplacian,
and the small parameter $\varepsilon \in (0,1)$ is the vertical
viscosity coefficient.  The system is supplemented with the
Navier slip boundary condition
\begin{equation}\label{bc1}
	\bigl(u_3^\var,\, \p_3 u_h^\var\bigr)\big|_{x_3=0} = 0,
\end{equation}
where $\rho^\var$ is the density and $u^\var = (u^\var_h,
u^\var_3)$ is the velocity field with horizontal component
$u^\var_h = (u^\var_1, u^\var_2)$.
The Navier slip condition \eqref{bc1} is physically appropriate
for geophysical flows at a rigid lower boundary. It imposes no
normal flow through the boundary ($u_3^\var|_{x_3=0}=0$) while
allowing tangential sliding, and it avoids the Prandtl boundary
layer that arises under no-slip conditions, making the vanishing
viscosity analysis tractable.
For simplicity we take the pressure law
\begin{equation*}
	P(\rho^\var) = \frac{(\rho^\var)^3}{3}.
\end{equation*}
The qualitative conclusions of this paper extend to the general
$\gamma$-law $P(\rho) = \rho^\gamma/\gamma$ ($\gamma > 1$) with
minor modifications.

\medskip

The system \eqref{eq1} admits the physically important steady state
$\rho^* \equiv 1$, $u^* \equiv 0$.
Introducing the density perturbation
$\vro^{\var} := \rho^{\var} - 1$
and substituting $\rho^{\var} = \vro^{\var} + 1$ into \eqref{eq1},
we obtain the system governing $(\vro^{\var}, u^{\var})$:
\begin{equation}\label{eqr}
	\left\{\begin{aligned}
		&\p_t \vro^{\var} + {\rm div}\, u^{\var}
		= -(u^{\var} \cdot \nabla \vro^{\var}
		+ \vro^{\var}\, {\rm div}\, u^{\var}),\\
		&\p_t u^{\var}
		- \Delta_h u^{\var} - \var \p_3^2 u^{\var}
		- \nabla {\rm div}\, u^{\var} + \nabla \vro^{\var}
		= - u^{\var} \cdot \nabla u^{\var}
		- \frac{\vro^{\var}}{1+\vro^{\var}}
		\bigl(\Delta_h u^{\var} + \var \p_3^2 u^{\var}
		+ \nabla {\rm div}\, u^{\var}\bigr)
		- \vro^{\var} \nabla \vro^{\var},\\
		&\bigl(u^{\var}_3,\, \p_3 u^{\var}_h\bigr)\big|_{x_3=0} = 0,
	\end{aligned}\right.
\end{equation}
with initial data
\begin{equation}\label{eqr-i}
	(\vro^{\var}, u^{\var})\big|_{t=0} = (\vro_0, u_0).
\end{equation}
As $\varepsilon \to 0$, system \eqref{eqr} formally converges to the
horizontally dissipative system
\begin{equation}\label{eqr0}
	\left\{\begin{aligned}
		&\p_t \varrho^0 + {\rm div}\, u^0
		= -(u^0 \cdot \nabla \varrho^0 + \varrho^0\, {\rm div}\, u^0),\\
		&\p_t u^0 - \Delta_h u^0 - \nabla {\rm div}\, u^0 + \nabla \varrho^0
		= -u^0 \cdot \nabla u^0
		- \frac{\varrho^0}{1+\varrho^0}
		\bigl(\Delta_h u^0 + \nabla {\rm div}\, u^0\bigr)
		- \varrho^0 \nabla \varrho^0,\\
		&u^0_3\big|_{x_3=0} = 0,
	\end{aligned}\right.
\end{equation}
with initial data
\begin{equation}\label{eqr-i2}
	(\varrho^0, u^0)\big|_{t=0} = (\varrho_0, u_0).
\end{equation}
The goal of this paper is to rigorously justify the limit from
\eqref{eqr}--\eqref{eqr-i} to \eqref{eqr0}--\eqref{eqr-i2}
globally in time.

\medskip
The mismatch between the boundary conditions of \eqref{eqr} and
\eqref{eqr0} precludes the use of standard
Sobolev spaces.  Following the approach introduced for the
incompressible setting in \cite{Masmoudi2012ARMA, Gao2024},
we work in the conormal Sobolev framework.
Let $(Z_k)_{1 \le k \le 3}$ be the generators of vector fields
tangent to $\p\mathbb{R}^3_+ = \mathbb{R}^2 \times \{x_3=0\}$:
$Z_k = \p_k$ for $k = 1, 2$ and $Z_3 = \varphi(x_3)\p_3$,
where $\varphi \in C^\infty(\mathbb{R}_+)$ satisfies $\varphi(0)=0$,
$\varphi'(0) \neq 0$, and $\varphi(x_3) > 0$ for $x_3 > 0$
(for example, $\varphi(x_3) = x_3(1+x_3)^{-1}$).
Set $Z^\alpha := Z_1^{\alpha_1} Z_2^{\alpha_2} Z_3^{\alpha_3}$
and define the conormal Sobolev space
\begin{equation*}
	H_{co}^m(\mathbb{R}_+^3) :=
	\bigl\{ f \in L^2(\mathbb{R}_+^3)
	\mid Z^\alpha f \in L^2(\mathbb{R}_+^3),\;
	\forall\, 0 \le |\alpha| \le m \bigr\}.
\end{equation*}
We also define $\nabla_h := (\p_1, \p_2, 0)$,
$\alpha_h := (\alpha_1, \alpha_2, 0)$, and the norms
\begin{equation*}
	\begin{aligned}
		&\| f \|_{H^m_{tan}}^2
		:= \sum_{0 \le |\alpha_h| \le m} \| Z^{\alpha_h} f \|_{L^2}^2,
		\qquad
		\| f \|_{\dot{H}_{tan}^m}^2
		:= \sum_{|\alpha_h|=m} \| Z^{\alpha_h} f \|_{L^2}^2,\\
		&\| f \|_{H^m_{co}}^2
		:= \sum_{0 \le |\alpha| \le m} \| Z^\alpha f \|_{L^2}^2,
		\qquad
		\| f \|_{\dot{H}^m_{co}}^2
		:= \sum_{|\alpha|=m} \| Z^\alpha f \|_{L^2}^2.
	\end{aligned}
\end{equation*}
Finally, the horizontal Fourier multiplier $\Lambda_h^s$, $s \in
\mathbb{R}$, is defined by
\begin{equation*}
	\Lambda_h^s f(x_h) :=
	\int_{\mathbb{R}^2} |\xi_h|^s \hat{f}(\xi_h)\,
	e^{2\pi i x_h \cdot \xi_h}\, d\xi_h,
\end{equation*}
where $x_h := (x_1, x_2, 0)$ and $\hat{f}$ denotes the horizontal
Fourier transform.

\medskip

Let $w_h^\var := (\nabla \times u^\var)_h$ denote the horizontal
vorticity, and fix a constant $\zeta \in (0, \frac{1}{34}]$.
We define the energy functional
\begin{equation*}
	\begin{aligned}
		\mathcal{E}(t) :=\;
		&\| (\vro^\var, u^\var)(t) \|_{H^m_{co}}^2
		+ \| (\p_3 \vro^\var, \p_3 u^\var)(t) \|_{H^{m-1}_{co}}^2\\
		&+ \| w_h^\var(t) \|_{L^\infty}^3
		+ \| \p_3 \vro^\var(t) \|_{L^\infty}^3\\
		&+ \| \Lambda_h^{-(1-\zeta)}(\vro^\var, u^\var)(t) \|_{L^2}^2
		+ \| \Lambda_h^{-(1-\zeta)} \p_3(\vro^\var, u^\var)(t) \|_{L^2}^2\\
		&+ \| \p_t(u^\var, w_h^\var, \p_3 u_3^\var)(t) \|_{H_{tan}^2}^2
		+ \| Z_3 \p_t(w_h^\var, \p_3 u_3^\var)(t) \|_{L^2}^2
		+ \varepsilon \| \p_3^2 \vro^\var(t) \|_{H_{co}^2}^2,
	\end{aligned}
\end{equation*}
and the dissipation functional
\begin{equation*}
	\begin{aligned}
		\mathcal{D}(t) :=\;
		&\| \nabla_h u^\var(t) \|_{H^m_{co}}^2
		+ \| {\rm div}\, u^\var(t) \|_{H^m_{co}}^2
		+ \varepsilon \| \p_3 u^\var(t) \|_{H^m_{co}}^2\\
		&+ \| \p_3 \nabla_h u^\var(t) \|_{H^{m-1}_{co}}^2
		+ \| \p_3\, {\rm div}\, u^\var(t) \|_{H^{m-1}_{co}}^2
		+ \varepsilon \| \p_3^2 u^\var(t) \|_{H^{m-1}_{co}}^2\\
		&+ \| \p_t(\nabla_h u^\var, {\rm div}\, u^\var,
		\nabla_h w_h^\var,
		\nabla \p_3 u_3^\var)(t) \|_{H^2_{tan}}^2
		+ \| Z_3 \p_t(\nabla_h w_h^\var,
		\nabla \p_3 u_3^\var)(t) \|_{L^2}^2\\
		&+ \varepsilon\bigl(
		\| \p_3 \p_t u^\var(t) \|_{H^2_{tan}}^2
		+\| \p_3 \p_t w_h^\var(t) \|_{H^2_{tan}}^2
		+\| \p_3 Z_3 \p_t w_h^\var(t) \|_{L^2}^2
		\bigr)\\
		&+ \| \nabla \vro^\var(t) \|_{H^{m-1}_{co}}^2
		+ \varepsilon \| \p_3^2 \vro^\var(t) \|_{H_{co}^2}^2.
	\end{aligned}
\end{equation*}

Several features of this energy norm deserve some comments.
First,  because of the mismatch of boundary conditions, only one
order of vertical derivative can be controlled in conormal space,
hence the $H^{m-1}_{co}$ regularity for $\p_3(\vro^\var, u^\var)$.
Second, although the density equation has no dissipation,
the coupling with the momentum equation transfers part of the
velocity dissipation to the density via the pressure term,
yielding the $H^{m-1}_{co}$ control of $\nabla \vro^\var$.
Third, to close the energy estimate it is essential to control
the $L^\infty$-norms of $(w_h^\var, \p_3\vro^\var)$.
Obtaining these requires higher-order vertical derivatives,
which are not available directly. We trade them for
time derivatives via the momentum equation, explaining the
presence of $\p_t$-terms in $\mathcal{E}(t)$.
Fourth, the cubic powers $\|w_h^\var\|_{L^\infty}^3$ and
$\|\p_3\vro^\var\|_{L^\infty}^3$ are required in order that
the power of $\|(w_h^\var, \p_3\vro^\var)\|_{L^\infty}$ in
$\mathcal{E}(t)$ dominates the linear contributions from
equation \eqref{25} when applying the maximum principle (see Remark~\ref{rema-Linfty} below).

\medskip

For the time decay analysis we define the tangential energy and
dissipation functionals
\begin{equation*}
	\begin{aligned}
		\mathcal{E}_{tan}(t)
		&:= \| (\vro^\var, u^\var)(t) \|_{H^m_{tan}}^2
		+ \| \p_3(\vro^\var, u^\var)(t) \|_{H^{m-1}_{tan}}^2,\\
		\bar{\mathcal{E}}_{tan}(t)
		&:= \| \nabla_h(\vro^\var, u^\var)(t) \|_{H^{m-2}_{tan}}^2
		+ \| \nabla_h \p_3(\vro^\var, u^\var)(t) \|_{H^{m-3}_{tan}}^2,
	\end{aligned}
\end{equation*}
\begin{equation*}
	\begin{aligned}
		\mathcal{D}_{tan}(t)
		&:= \| (\nabla_h u^\var, {\rm div}\, u^\var)(t) \|_{H^m_{tan}}^2
		+ \varepsilon \| \p_3 u^\var(t) \|_{H^m_{tan}}^2\\
		&\quad + \| \p_3(\nabla_h u^\var, {\rm div}\, u^\var)(t)
		\|_{H^{m-1}_{tan}}^2
		+ \varepsilon \| \p_3^2 u^\var(t) \|_{H^{m-1}_{tan}}^2
		+ \| \nabla \vro^\var(t) \|_{H^{m-1}_{tan}}^2,\\
		\bar{\mathcal{D}}_{tan}(t)
		&:= \| \nabla_h(\nabla_h u^\var, {\rm div}\, u^\var)(t)
		\|_{H^{m-2}_{tan}}^2
		+ \varepsilon \| \nabla_h \p_3 u^\var(t) \|_{H^{m-2}_{tan}}^2\\
		&\quad
		+ \| \nabla_h \p_3(\nabla_h u^\var, {\rm div}\, u^\var)(t)
		\|_{H^{m-3}_{tan}}^2
		+ \varepsilon \| \nabla_h \p_3^2 u^\var(t) \|_{H^{m-3}_{tan}}^2
		+ \| \nabla_h \nabla \vro^\var(t) \|_{H^{m-3}_{tan}}^2.
	\end{aligned}
\end{equation*}
The functional $\bar{\mathcal{E}}_{tan}(t)$ enjoys a faster
time decay rate than $\mathcal{E}_{tan}(t)$; this two-tier decay
structure is a key mechanism for controlling $L^1$-in-time
integrals in the energy argument.

\medskip
We can now state our two main theorems.

\begin{theo}\label{main_result_one}
	Let $m \ge 5$ be an integer and $\zeta \in (0, \frac{1}{34}]$.
	Assume the initial data $(\vro_0, u_0)$ satisfies the smallness
	condition
	\begin{equation}\label{condition}
		\mathcal{E}(0) \le \delta_0,
	\end{equation}
	for some sufficiently small constant $\delta_0 > 0$.
	Then system \eqref{eqr}--\eqref{eqr-i} has a unique global-in-time
	solution $(\vro^\var, u^\var)$ satisfying the uniform estimate
	\begin{equation}\label{uniform_estimate}
		\mathcal{E}(t) + \int_0^t \mathcal{D}(\tau)\, d\tau \le C\delta_0,
	\end{equation}
	and the time decay estimate
	\begin{equation}\label{decay_estimate}
		\begin{aligned}
			&(1+t)^{1-\zeta} \mathcal{E}_{tan}(t)
			+ (1+t)^{2-2\zeta} \bar{\mathcal{E}}_{tan}(t)\\
			&\quad
			+ \int_0^t (1+\tau)^{1-2\zeta} \mathcal{D}_{tan}(\tau)\, d\tau
			+ \int_0^t (1+\tau)^{2-2\zeta} \bar{\mathcal{D}}_{tan}(\tau)\, d\tau
			\le C\delta_0,
		\end{aligned}
	\end{equation}
	where the constant $C > 0$ is independent of $t \ge 0$ and
	$\varepsilon \in (0,1)$.
\end{theo}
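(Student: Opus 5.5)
The plan is a continuity (bootstrap) argument on top of a local well-posedness result. Local existence in the conormal framework for fixed $\varepsilon>0$ follows from standard theory, since \eqref{eqr} is a hyperbolic--parabolic system with Navier slip condition. I assume a priori that
\[
\sup_{0\le \tau\le T}\mathcal{E}(\tau)+\int_0^T\mathcal{D}(\tau)\,d\tau\le 2C_*\delta_0
\quad\text{and}\quad
(1+t)^{1-\zeta}\mathcal{E}_{tan}+(1+t)^{2-2\zeta}\bar{\mathcal{E}}_{tan}\le 2C_*\delta_0,
\]
with constants independent of $\varepsilon$. I then improve both bounds to $C_*\delta_0$. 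Uniqueness follows from an $L^2$ stability estimate for the difference of two solutions, using the $L^\infty$ bounds on $\nabla u$ and $\nabla\vro$ contained in $\mathcal{E}$.

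The energy estimates proceed in the following order.

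\textbf{Step 1 (tangential energy).} Apply $Z^\alpha$ with $|\alpha|\le m$ to \eqref{eqr} and test with $Z^\alpha(\vro,u)$. The commutators $[Z_3,\p_3]$ produce terms controlled by $\p_3(\vro,u)$ in $H^{m-1}_{co}$. The boundary terms vanish or are harmless thanks to $u_3|_{x_3=0}=0$ and $\p_3u_h|_{x_3=0}=0$.

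\textbf{Step 2 (normal derivatives).} Rather than estimate $\p_3 u$ directly, work with the horizontal vorticity $w_h$ and with $\p_3u_3$ expressed through ${\rm div}\,u$. This gives homogeneous Dirichlet-type boundary data ($w_h|_{x_3=0}=0$ up to lower order), so the $H^{m-1}_{co}$ estimates close.

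\textbf{Step 3 (density dissipation).} The $\|\nabla\vro\|_{H^{m-1}_{co}}^2$ dissipation comes from testing the momentum equation with $\nabla\vro$. Pair this with the effective-viscous-flux structure: $\p_t\p_3\vro+\p_3\vro$ is governed by $\p_3 {\rm div}\,u$ terms, which yields damping of $\p_3\vro$.

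\textbf{Step 4 ($L^\infty$ norms).} The bounds on $\|w_h\|_{L^\infty}^3$ and $\|\p_3\vro\|_{L^\infty}^3$ come from a maximum principle applied to the transport-damped equations for $w_h$ and $\p_3\vro$, treating $\varepsilon\p_3^2$ as a good sign term. The cubic power absorbs the linear forcing. The time-derivative terms in $\mathcal{E}$ are estimated by differentiating in $t$ and running the tangential estimate again; via the equation they stand in for the missing $\p_3^2$ regularity in anisotropic embeddings $\|f\|_{L^\infty}\lesssim \|f\|_{H^2_{tan}}^{1/2}\|\p_3 f\|_{H^2_{tan}}^{1/2}$.

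\textbf{Step 5 (negative Sobolev norm).} The $\Lambda_h^{-(1-\zeta)}$ norm is propagated by an $L^2$ estimate after applying $\Lambda_h^{-(1-\zeta)}$. The nonlinear terms are handled with the Hardy--Littlewood--Sobolev inequality in the horizontal variables. This needs $\|\cdot\|_{L^{p}_h}$ with $1/p=1/2+(1-\zeta)/2$, and closes using the decay rates.

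For the decay bound \eqref{decay_estimate}, the tangential estimates give
\[
\frac{d}{dt}\mathcal{E}_{tan}+c\,\mathcal{D}_{tan}\le \text{(nonlinear)}.
\]
Since there is dissipation only in horizontal directions, interpolate with the negative norm:
\[
\|f\|_{L^2}\le \|\Lambda_h^{-(1-\zeta)}f\|_{L^2}^{\theta}\,\|\nabla_h f\|_{L^2}^{1-\theta}, \qquad \theta=\tfrac{1}{2-\zeta}.
\]
This gives $\mathcal{D}_{tan}\gtrsim \mathcal{E}_{tan}^{1+1/(1-\zeta)}$, hence the rate $(1+t)^{-(1-\zeta)}$. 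The same argument one horizontal derivative up gives the rate $(1+t)^{-(2-2\zeta)}$ for $\bar{\mathcal{E}}_{tan}$. The time-weighted dissipation integrals follow by multiplying by $(1+t)^{\beta}$ and integrating, absorbing $\beta(1+t)^{\beta-1}\mathcal{E}_{tan}$ through the lower-tier decay. This loses the extra $\zeta$ in the first tier.

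\textbf{Main obstacle.} The hardest step is controlling the nonlinear terms in the top-order energy uniformly in $\varepsilon$ and in time. Integrals such as $\int_0^t\|\nabla u\|_{L^\infty}\mathcal{E}\,d\tau$ need $\|\nabla u\|_{L^\infty}\in L^1_t$. Vertical derivatives carry no dissipation uniformly in $\varepsilon$, so $\|\p_3 u_h\|_{L^\infty}$ and $\|\p_3\vro\|_{L^\infty}$ do not obviously decay integrably. My first attempt is to split $\p_3 u_h$ into $w_h$ plus $\nabla_h u_3$. I would bound the $w_h$ contribution by pairing it with a decaying horizontal factor, e.g.\ $\|w_h\|_{L^\infty}\|\nabla_h f\|_{L^2}^2\le \|w_h\|_{L^\infty}\mathcal{D}$, so that only $\sup_t\|w_h\|_{L^\infty}$ is needed. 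Every other factor would be placed in $\bar{\mathcal{E}}_{tan}$ or $\mathcal{D}_{tan}$, whose combined rate $(1+t)^{-(3/2-\zeta)}$-type products are integrable when $\zeta\le 1/34$. This smallness of $\zeta$ is where the exponent bookkeeping should close.
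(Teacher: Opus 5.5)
Your overall architecture (bootstrap, tangential estimates, density dissipation from testing the momentum equation with $\nabla\varrho$, maximum principle with cubic powers, time-derivative energies, HLS for the negative norm, two-tier decay) matches the paper's. However, Step~2 has a genuine gap.

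The curl kills the pressure: for $P=\rho^3/3$ one has $\nabla P/\rho=\nabla(\rho^2/2)$. So the $w_h$ equation carries no information about the acoustic pair $(\partial_3\varrho,\partial_3u_3)$, and nothing in your plan bounds $\|\partial_3(\varrho,u_3)\|_{H^{m-1}_{co}}$ in $L^\infty_t$.

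Writing $\partial_3u_3={\rm div}\,u-\nabla_h\cdot u_h$ is circular. Here ${\rm div}\,u\neq0$, and it is controlled only as dissipation ($L^2_tH^m_{co}$), never in $L^\infty_tH^{m-1}_{co}$.

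Moreover, applying $Z^\alpha\partial_3$ to the density equation produces the linear, non-small cross term $\int Z^\alpha\partial_3{\rm div}\,u\;Z^\alpha\partial_3\varrho\,dx$. This term has to be cancelled by a pressure term in the velocity estimate. Your Step~3 also consumes $\|\partial_3{\rm div}\,u\|^2_{H^{m-1}_{co}}$ (through $\nabla{\rm div}\,u$). The vorticity dissipation $\|\nabla_hw_h\|^2+\varepsilon\|\partial_3w_h\|^2$ does not produce this, since it misses $\partial_3^2u_3$.

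The effective-viscous-flux damping does not fill the hole at the $H^{m-1}_{co}$ level. By \eqref{23} it trades $\partial_3{\rm div}\,u$ for $\partial_tu_3$, which $\mathcal{E}$ controls only at low tangential order ($H^2_{tan}$). Re-expressing $\partial_tu_3$ through the momentum equation simply undoes the damping. The paper uses \eqref{25} only for $\|\partial_3\varrho\|_{L^\infty}$.

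The missing idea is the direct normal-derivative estimate of Lemmas~\ref{lemma33}--\ref{lemma34}: apply $Z^\alpha$ to the full momentum equation and test with $-Z^\alpha\partial_3^2u$. This works because:
\begin{itemize}
\item on the flat boundary, $u_3=\partial_3u_h=0$ at $x_3=0$ kills all boundary terms;
\item the viscous terms yield $\|Z^\alpha\nabla_h\partial_3u\|^2+\|Z^\alpha\partial_3{\rm div}\,u\|^2+\varepsilon\|Z^\alpha\partial_3^2u\|^2$, up to commutators when $\alpha_3\ge1$;
\item the pressure term $-\int Z^\alpha\nabla\varrho\cdot Z^\alpha\partial_3^2u\,dx$ cancels the density cross term exactly after one integration by parts, again using $\partial_3u_h|_{x_3=0}=0$.
\end{itemize}

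Two further points. First, $\mathcal{E}$ contains $\varepsilon\|\partial_3^2\varrho\|^2_{H^2_{co}}$, so \eqref{uniform_estimate} requires propagating it. It also enters the time-integrability of the forcing $\varepsilon[\nabla(\varrho/(1+\varrho))\times\partial_3^2u]_h$ in the $w_h$ equation (see \eqref{21}). Your plan never addresses it. The paper handles it in Lemma~\ref{lemma37}, via an equation for $\partial_3^2\ln(1+\varrho)$ obtained from the divergence of the momentum equation.

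Second, the ODE argument for decay needs every nonlinear term in the tangential inequalities to be bounded by a small multiple of $\mathcal{D}_{tan}$ (respectively $\bar{\mathcal{D}}_{tan}$), not of the full $\mathcal{D}$. This is where the near/away-from-the-boundary splitting of the $u_3\partial_3^2(\cdot)$ terms and the interpolation \eqref{inter-2} against the negative norm are needed. It is there, in \eqref{est-s-16/17}, that the restriction on $\zeta$ arises. Integrability of $(1+t)^{-(3/2-\zeta)}$ holds for every $\zeta<1/2$, so it does not account for $\zeta\le1/34$.
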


\begin{rema}\label{rema-zeta}
	The restriction $\zeta \in (0, \frac{1}{34}]$ arises from three
	independent requirements.
	First, Hardy--Littlewood--Sobolev applied to the negative-order
	norm $\|\Lambda_h^{-(1-\zeta)}(\vro^\var, u^\var)\|_{L^2}$ requires
	$\zeta \in (0,1)$.
	Second, for the time integral of $\mathcal{D}_{tan}$ to converge,
	the decay rate of $\mathcal{E}_{tan}$ must be strictly faster than
	that of $\mathcal{D}_{tan}$, i.e.\ $1-\zeta > 1-2\zeta$, which
	gives $\zeta > 0$.
	Third, to obtain the improved decay rate of
	$\bar{\mathcal{E}}_{tan}$, an interpolation inequality
	(cf.\ \eqref{inter-2}) forces $\zeta \le \frac{1}{34}$;
	see \eqref{est-s-16/17} for the precise calculation.
\end{rema}

\begin{rema}\label{rema-vert}
	The absence of vertical dissipation in \eqref{eqr} forces us to
	include one order of vertical derivative in the decay estimate.
	The convective term in \eqref{eqr} prevents obtaining the same
	decay rate for the top-order ($m$-th order) horizontal derivative
	as for lower orders; see Lemma~\ref{lemma33} for details.
\end{rema}

\begin{rema}\label{rema-Linfty}
	The uniform control of $\|\p_3 \vro^\var\|_{L^\infty}$ is obtained
	by applying the maximum principle to the equation for
	$\p_3\vro^\var$.  The resulting equation contains linear terms
	proportional to $(1+\vro^\var)\p_t u_3^\var$, $\Delta_h u_3^\var$,
	and $\varepsilon \p_3 \nabla_h \cdot u_h^\var$
	(see equation \eqref{25}), which can only be absorbed provided
	the exponent of $\|(w_h^\var, \p_3\vro^\var)\|_{L^\infty}$
	in $\mathcal{E}(t)$ is strictly larger than one.  This forces
	the cubic exponent in the definition of $\mathcal{E}(t)$.
\end{rema}

\begin{rema}\label{rema-div}
	In the pioneering work of Wang, Xin, and Yong \cite{Wang-Xin-Yan2015}, 
	uniform regularity and the vanishing viscosity limit were established 
	locally in time for the full compressible system
	\begin{equation}\label{cns}
		\left\{\begin{aligned}
			&\p_t \rho^\var + {\rm div}(\rho^\var u^\var) = 0,\\
			&\rho^\var \p_t u^\var + \rho^\var u^\var \cdot \nabla u^\var
			- \var(\Delta_h u^\var + \p_3^2 u^\var
			+ \nabla {\rm div}\, u^\var)
			+ \nabla P(\rho^\var) = 0,
		\end{aligned}\right.
	\end{equation}
	with general Navier-slip boundary conditions. A key structural 
	difference between \eqref{cns} and the system \eqref{eq1} studied 
	here is the role of the divergence term $-\nabla {\rm div}\, u^\var$, 
	which has a dual effect.  On one hand, it weakens the
	vertical dissipation for the density: for system \eqref{cns} one
	obtains $\|\p_3^2 \vro^\var\|_{L^2}^2$, while for \eqref{eq1}
	only the weaker $\varepsilon\|\p_3^2 \vro^\var\|_{L^2}^2$ is
	available, which forces $\|\p_3\vro^\var\|_{L^\infty}$ into the
	energy functional.  On the other hand, the same divergence term
	provides the strong dissipation $\|{\rm div}\, u^\var\|_{L^2}^2$,
	which is essential for controlling the nonlinear convection term
	$u^\var \cdot \nabla u^\var$.
\end{rema}

\medskip

Our second main result justifies the vanishing vertical viscosity
limit globally in time.

\begin{theo}\label{main_result_two}
	Under the conditions of Theorem~\ref{main_result_one}, the global
	solution $(\vro^\var, u^\var)$ of \eqref{eqr}--\eqref{eqr-i}
	contains a subsequence $(\vro^{\var_n}, u^{\var_n})$ with
	$\var_n \to 0$ such that, for every $T > 0$,
	\begin{equation*}
		(\vro^{\var_n}, u^{\var_n})
		\longrightarrow (\vro^0, u^0)
		\quad \text{in } C\bigl([0,T];\, H^{m-1}_{co,\mathrm{loc}}\bigr),
	\end{equation*}
	 where $(\vro^0, u^0)$ is a global solution of
	the limit system \eqref{eqr0}--\eqref{eqr-i2}.
\end{theo}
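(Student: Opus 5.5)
The plan is a compactness argument driven by the $\varepsilon$-independent bounds of Theorem~\ref{main_result_one}. From \eqref{uniform_estimate}, for all $t\ge0$ and $\varepsilon\in(0,1)$,
\[
\|(\vro^\var,u^\var)(t)\|_{H^m_{co}}^2+\|\p_3(\vro^\var,u^\var)(t)\|_{H^{m-1}_{co}}^2\le C\delta_0 .
\]
We also have uniform $L^\infty$ control of $w_h^\var$ and $\p_3\vro^\var$, and time-integrated control of $\nabla_h u^\var$, ${\rm div}\,u^\var$ in $H^m_{co}$ and of $\nabla\vro^\var$ in $H^{m-1}_{co}$. Since $m\ge5$, the anisotropic Sobolev embedding in conormal spaces, $\|f\|_{L^\infty}^2\lesssim \|f\|_{H^2_{co}}\,\|\p_3 f\|_{H^1_{co}}$ in the form used earlier, gives uniform $L^\infty$ (indeed $W^{1,\infty}$-type) bounds on $(\vro^\var,u^\var)$. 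In particular $|\vro^\var|\le 1/2$, so the factor $\frac{\vro^\var}{1+\vro^\var}$ is harmless.

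First I bound time derivatives uniformly in $H^{m-1}_{co}$ on $[0,T]$. I use the equations \eqref{eqr}: $\p_t\vro^\var=-{\rm div}\,u^\var-u^\var\cdot\nabla\vro^\var-\vro^\var{\rm div}\,u^\var$ is bounded in $L^\infty(0,T;H^{m-1}_{co})$ by the energy. For $u^\var$, the term $\varepsilon\p_3^2u^\var$ is only bounded in $L^2(0,T;H^{m-1}_{co})$ through $\varepsilon\|\p_3^2u^\var\|^2_{H^{m-1}_{co}}\in L^1_t$, and $\Delta_h u^\var$, $\nabla{\rm div}\,u^\var$ are handled similarly using $\mathcal D$. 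So I expect $\p_t u^\var$ bounded in $L^2(0,T;H^{m-2}_{co})$, or at least in $L^2(0,T;H^{m-1}_{co,loc})$ after losing one conormal derivative. The $\p_t$-terms already present in $\mathcal E$ and $\mathcal D$, e.g.\ $\|\p_t u^\var\|_{H^2_{tan}}$, give additional uniform control.

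Next comes compactness. On a bounded set $K\subset\overline{\mathbb R^3_+}$, the bound of $(\vro^\var,u^\var)$ in $L^\infty(0,T;H^m_{co})$ together with $\p_3(\cdot)\in L^\infty(0,T;H^{m-1}_{co})$ gives that $Z^\alpha(\vro^\var,u^\var)$ with $|\alpha|\le m-1$ lies in a bounded set of $H^1(K)$, uniformly in $t$. Indeed, for $|\alpha|\le m-1$ one tangential derivative more costs at most $H^m_{co}$, and $\p_3 Z^\alpha f$ is controlled by $Z^{\alpha'}\p_3 f$ plus commutators, which are bounded in $H^{m-1}_{co}$. Rellich then gives compactness in $H^{m-1}_{co}(K)$. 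Combining this with the equicontinuity in time from the $\p_t$ bounds, Aubin--Lions--Simon (the $C([0,T];B)$ version) yields a subsequence converging in $C([0,T];H^{m-1}_{co}(K))$. A diagonal argument over $T=N$ and $K=B_N\cap\overline{\mathbb R^3_+}$ produces one subsequence $\var_n\to0$ valid for every $T$. Weak-$*$ lower semicontinuity transfers the uniform bounds to the limit $(\vro^0,u^0)$.

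Finally, I pass to the limit in the weak formulation of \eqref{eqr}. The term $\var\p_3^2u^\var$ tested against $\phi\in C_c^\infty$ becomes $-\var\int\p_3u^\var\p_3\phi$, using $\p_3u_h^\var|_{x_3=0}=0$ for the horizontal components. This is bounded by $\var^{1/2}\bigl(\var\|\p_3u^\var\|^2_{L^2_tL^2}\bigr)^{1/2}\to0$; for $u_3$ the boundary term involves $\p_3u_3^\var=({\rm div}\,u^\var-\nabla_h\cdot u^\var_h)$, which is bounded in traces, times $\var$, and so also vanishes. The nonlinear terms converge by the strong local convergence plus uniform bounds, and $u_3^0|_{x_3=0}=0$ passes to the limit via traces. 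Hence $(\vro^0,u^0)$ is a global (strong, by the regularity inherited) solution of \eqref{eqr0}--\eqref{eqr-i2}. The main obstacle is the time-derivative bound: the momentum equation contains $\var\p_3^2u^\var$ and $\nabla{\rm div}\,u^\var$, which lose a normal derivative, so the conormal compactness must be run one level down and only locally. I would first try to use the $\p_t$-components of $\mathcal E,\mathcal D$ together with the dissipation integrals to get $\p_t$ bounds in $L^2_tH^{m-2}_{co}$, and then interpolate with the $L^\infty_tH^m_{co}$ bound to get equicontinuity in $H^{m-1}_{co,loc}$.
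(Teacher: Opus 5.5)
Your proposal is correct and follows essentially the same route as the paper: $\varepsilon$-uniform conormal bounds from Theorem~\ref{main_result_one}, time-derivative bounds read off from the equations, Simon's compactness lemma in $C([0,T];H^{m-1}_{co,loc})$, and passage to the limit, with $u_3^0|_{x_3=0}=0$ recovered from traces and weak convergence in $L^2_tH^1$. The obstacle you flag is not real, since $\mathcal{D}$ already contains $\|\partial_3{\rm div}\,u^\varepsilon\|_{H^{m-1}_{co}}^2$, $\|\nabla\varrho^\varepsilon\|_{H^{m-1}_{co}}^2$ and $\varepsilon\|\partial_3^2u^\varepsilon\|_{H^{m-1}_{co}}^2$, so the paper gets $\partial_t(\varrho^\varepsilon,u^\varepsilon)$ bounded in $L^2(0,T;H^{m-1}_{co})$ directly; your weaker $H^{m-2}_{co}$ bound would also suffice for Simon's lemma, and your explicit diagonal argument over $T$ and $K$ spells out a step the paper leaves implicit.
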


\begin{rema}
	The convergence in Theorem~\ref{main_result_two} holds along a
	subsequence; full convergence as $\varepsilon \to 0$ would follow
	from uniqueness of global solutions to the limit system
	\eqref{eqr0}, which to our knowledge remains an open problem in
	general.
\end{rema}

We now place these results in the context of the existing literature
and explain in what respects they go beyond it. For the isotropic compressible Navier--Stokes equations (formally
\eqref{eq1} with $\varepsilon=1$), Matsumura and Nishida
\cite{Matsumura-Nishida1980, Matsumura-Nishida1983} established
global existence of classical solutions near a constant equilibrium
in the $H^3$ framework.  Huang, Li, and Xin \cite{Huang-Li-Xin2012}
later obtained global strong solutions for small but
possibly highly oscillatory initial data; see also
\cite{Fan-Li-Li2022, Huang-Li2018, Hong-Hou-Peng-Zhu2024,
	Wen-2017} for further developments.  Wen \cite{Wen-2025} recently
proved global well-posedness under a scaling-invariant smallness
condition, even allowing far-field vacuum.  The large-time behavior
of solutions has been extensively studied; we refer to
\cite{Guo-Wang2012, Hoff-Zumbrun1995, Liu-Wang1998,
	Kagei-Kobayashi2002} for pointwise decay and energy decay results.
Some blow-up criterions for the strong solution as the time variable approaches the maximum time of existence, see also
\cite{{Huang-Li-Xin-Siam},{Huang-Li-Xin-CMP},{Wen-Zhu-Adv},{Sun-Wang-Zhang-JMPA}}.
The global well-posedness and optimal decay rates for the limit
system \eqref{eqr0} in the three-dimensional whole space
were established in \cite{FHWW2025} for small initial perturbations.

\medskip
The most direct predecessor of the present work is the incompressible
analogue of our problem. The incompressible counterpart of our problem concerns the equation
\begin{equation}\label{eq3}
	\p_t u + u \cdot \nabla u
	- \p_1^2 u - \p_2^2 u - \var \p_3^2 u + \nabla P = 0,
	\quad {\rm div}\, u = 0,
\end{equation}
which converges formally as $\varepsilon \to 0$ to
\begin{equation}\label{eq4}
	\p_t u + u \cdot \nabla u
	- \p_1^2 u - \p_2^2 u + \nabla P = 0,
	\quad {\rm div}\, u = 0.
\end{equation}
The well-posedness of \eqref{eq4} in Sobolev and Besov spaces under
small data has been widely studied
\cite{ani-CDGG2000, Chemin2007, Iftimie2002, Liu-Paica-Zhang-2020,
	Paicu2005, Zhang-2009, new-Cao-Wu2025}.
Large-time behavior on $\mathbb{T}^2 \times \mathbb{R}$ and
$\mathbb{R}^3$ is addressed in \cite{jWy2021, JTW2023}.
For the vanishing vertical viscosity limit of \eqref{eq3}
in the whole space, see \cite{ani-CDGG2000}; for the half-space
with Navier slip conditions, Iftimie and Planas
\cite{ani-Iftimie-Planas2016} proved convergence by a compactness
argument under weak initial data assumptions.
Global-in-time uniform estimates in the conormal framework for
\eqref{eq3} in the half-space with slip boundary conditions,
together with a time-independent convergence rate to \eqref{eq4},
were established in \cite{Gao2024}.
The present paper extends this incompressible analysis to the
compressible setting, which introduces substantial new
difficulties owing to the coupling between the density and velocity
equations and the weaker dissipation structure.

\medskip
Regarding the vanishing viscosity limit for compressible flows,
Wang, Xin, and Yong \cite{Wang-Xin-Yan2015} established uniform
regularity and convergence for the full (isotropic) viscosity limit
in a bounded domain with Navier-slip conditions, but only locally
in time; the key structural difference from \eqref{eq1} is
discussed in Remark~\ref{rema-div}.
For the simultaneous incompressible and vanishing vertical viscosity
limits of strong solutions to \eqref{eq1} with ill-prepared data
in a Dirichlet boundary condition setting, see the recent work of
Masmoudi, Sun, Wang, and Zhang \cite{Masmoudi-Sun-Wang-Zhang}.
More broadly, the vanishing viscosity limit for the incompressible
Navier--Stokes equations is one of the central open problems in
fluid mechanics; we refer to \cite{Constantin1986, Constantin1988,
	Kato1972, Masmoudi2007CMP, ConstantinWu1995, ConstantinWu1996,
	BeiraodaVeiga2010, BeiraodaVeiga2011, Xiao2007, Iftimie2011ARMA,
	Masmoudi2012ARMA, Xiao2013, Wang-2016, Wang-Xin-Yan2015} and the
references therein for the extensive literature on this topic.

\medskip
In this context, the present paper makes three main contributions. First, while all prior results for the vanishing vertical viscosity
limit of compressible Navier--Stokes \cite{Wang-Xin-Yan2015,
	Masmoudi-Sun-Wang-Zhang} are either local in time or restricted to
the incompressible regime, this paper is the first to establish
global-in-time uniform estimates and justify the anisotropic limit
on an infinite time interval for the compressible system \eqref{eq1}
with slip boundary conditions.
Second, compared with the incompressible analysis of \cite{Gao2024},
the compressible setting introduces the density perturbation
$\vro^\var$ as a new unknown with no vertical dissipation;
its coupling to the velocity via the pressure generates additional
nonlinear terms, and the resulting weaker dissipation structure
(only $\varepsilon\|\p_3^2\vro^\var\|_{L^2}^2$ is available,
compared with $\|\p_3^2\vro^\var\|_{L^2}^2$ for \eqref{cns};
see Remark~\ref{rema-div}) forces the $L^\infty$ control of
$\p_3\vro^\var$ to be carried as an independent component of the
energy throughout the entire argument.
Third, the two-tier energy method developed here constitutes a
genuinely new contribution even relative to the incompressible
case of \cite{Gao2024}. The
faster algebraic decay of $\bar{\mathcal{E}}_{tan}$ at rate
$(1+t)^{-(2-2\zeta)}$ feeds back into the $L^1$-in-time control
of the critical nonlinear terms needed to close the $L^\infty$
estimates uniformly in $t$ and $\varepsilon$.

\medskip
The proof of Theorem~\ref{main_result_one} is built on a
bootstrap argument in which energy and dissipation estimates
are closed simultaneously.  We briefly outline the two main
steps.  A detailed technical discussion is given in
Section~\ref{section-approach}.

\textbf{Step A: Horizontal derivatives and time decay.}
We first establish uniform estimates for tangential derivatives
and derive enhanced dissipation for the density via the pressure
coupling.  Incorporating the negative-order horizontal Sobolev
norms $\|\Lambda_h^{-(1-\zeta)}(\vro^\var, u^\var)\|_{L^2}^2$
into the energy, we derive the nonlinear differential inequality
\begin{equation*}
	\frac{d}{dt}\widehat{\mathcal{E}}_{tan}(t)
	+ \kappa\, C_0^{-\frac{1}{1-\zeta}}
	\widehat{\mathcal{E}}_{tan}(t)^{1+\frac{1}{1-\zeta}} \le 0,
\end{equation*}
which yields the algebraic decay
$\widehat{\mathcal{E}}_{tan}(t) \lesssim C_0(1+t)^{-(1-\zeta)}$.
A second level of estimates then gives the faster decay
$(1+t)^{-(2-2\zeta)}$ for $\bar{\mathcal{E}}_{tan}(t)$,
establishing \eqref{decay_estimate}.

\textbf{Step B: Vertical $L^\infty$ estimates and closure.}
The maximum principle applied to the equations for $w_h^\var$
and $\p_3\vro^\var$ yields integral-in-time bounds on their
$L^\infty$ norms, provided the $L^1$-in-time integrability of
critical quantities is available.  The latter is exactly what the
two-tier decay from Step A supplies.  Combining Steps A and B
within the bootstrap scheme closes the energy inequality
\eqref{uniform_estimate} and completes the proof.

Theorem~\ref{main_result_two} then follows from a classical
strong compactness argument: the uniform bounds in
Theorem~\ref{main_result_one} imply equicontinuity in time and
compactness in space, so Aubin--Lions type results apply to
extract a convergent subsequence.

\medskip
Throughout the paper we write $A \lesssim B$ to mean $A \le CB$
for a constant $C > 0$ that may change from line to line and is
independent of $t \ge 0$ and $\varepsilon \in (0,1)$.
We write $A \sim B$ when $A \lesssim B$ and $B \lesssim A$.

\medskip
The remainder of the paper is organized as follows. The rest of the paper is organized as follows.
In Section \ref{section-approach}, we explain the difficulties and our approach to establish
the global uniform estimate and time decay rate estimate
for the system  \eqref{eqr} and \eqref{eqr0} respectively.
In Section  \ref{global-estimate}, we apply the energy method
to establish the global uniform and decay rate estimate
for system  \eqref{eqr} under the condition of small initial data \eqref{condition}.
In Section \ref{asymptotic-behavior},
with the help of the uniform estimate in Theorem \ref{main_result_one}, we obtain the vanishing vertical viscosity limit to the equation \eqref{eqr}, which proves Theorem \ref{main_result_two}.
Finally, we introduce some useful inequalities in Appendix \ref{usefull-inequality}.

\vskip .2in 
\section{Difficulties and outline of our approach}
\label{section-approach}

The proof of Theorem~\ref{main_result_one} is divided into four major steps.
Steps~1--3 establish the $L^\infty$ control of the horizontal
vorticity $w_h^\varepsilon$ and of $\partial_3\varrho^\varepsilon$,
which is needed to close the conormal energy estimate for
one-order vertical derivatives.
Step~4 derives the tangential energy estimates and the two-tier
time decay, which in turn supplies the $L^1$-in-time
integrability required in Steps~2 and~3.
The four steps are therefore mutually dependent and are closed
simultaneously within the bootstrap scheme of
Proposition~\ref{main_pro} in Section~\ref{global-estimate}.

 {\textbf{Step 1: Vertical derivative estimate.}}  Because of the boundary layer effect induced by the mismatch
 of boundary conditions between \eqref{eqr} and \eqref{eqr0},
 the global uniform estimate must be established in the
 conormal Sobolev framework.
 In particular, it is necessary to control
 $\|(\partial_3\varrho^\varepsilon,
 \partial_3 u^\varepsilon)\|_{H^{m-1}_{co}}^2$.
 Estimating the term $III_{1,1}$ when $|\alpha| = m-1$,
 one encounters the integral
 \begin{equation*}
 	\int_{\mathbb{R}^3_+}
 	\partial_3 u_h^{\varepsilon}
 	\cdot Z^\alpha \nabla_h \varrho^{\varepsilon}
 	\cdot \partial_3 Z^\alpha \varrho^{\varepsilon}\, dx
 	\;\le\;
 	\|\partial_3 u_h^{\varepsilon}\|_{L^\infty}
 	\|Z^\alpha \nabla_h \varrho^{\varepsilon}\|_{L^2}
 	\|\partial_3 Z^\alpha \varrho^{\varepsilon}\|_{L^2},
 \end{equation*}
 which forces control of $\|\partial_3 u^\varepsilon\|_{L^\infty}$.
 By the classical Sobolev embedding, all components of
 $\partial_3 u^\varepsilon$ except the horizontal vorticity
 $w_h^\varepsilon$ are already bounded by the conormal energy
 $\mathcal{E}(t)$.
 It therefore suffices to estimate
 $\|w_h^\varepsilon\|_{L^\infty}$ separately, which is the
 subject of Step~2.

   {\textbf{Step 2: $L^{\infty}-$Estimate of $w^{\var}_h$.}} Applying the maximum principle to the equation satisfied by
   $w_h^\varepsilon$, one obtains
   \begin{equation}\label{2222}
   	\begin{aligned}
   		\|w^{\varepsilon}_h(t)\|_{L^\infty}
   		\;\leq\;&
   		\|w^{\varepsilon}_h(0)\|_{L^\infty}
   		+ \int_{0}^t
   		\|w^{\varepsilon}_h\,{\rm div}\,u^{\varepsilon}
   		+ w^{\varepsilon}\cdot\nabla u^{\varepsilon}_h
   		\|_{L^\infty}\,d\tau \\
   		&+ \int_{0}^t
   		\Bigl\|\Bigl[\nabla\Bigl(\frac{\varrho^{\varepsilon}}
   		{1+\varrho^{\varepsilon}}\Bigr)
   		\times \Delta_h u^{\varepsilon}\Bigr]_h
   		\Bigr\|_{L^\infty}\,d\tau \\
   		&+ \varepsilon\int_{0}^t
   		\Bigl\|\Bigl[\nabla\Bigl(\frac{\varrho^{\varepsilon}}
   		{1+\varrho^{\varepsilon}}\Bigr)
   		\times \partial_3^2 u^{\varepsilon}\Bigr]_h
   		\Bigr\|_{L^\infty}\,d\tau \\
   		&+ \int_{0}^t
   		\Bigl\|\Bigl[\nabla\Bigl(\frac{\varrho^{\varepsilon}}
   		{1+\varrho^{\varepsilon}}\Bigr)
   		\times \nabla\,{\rm div}\,u^{\varepsilon}\Bigr]_h
   		\Bigr\|_{L^\infty}\,d\tau.
   	\end{aligned}
   \end{equation}
The first three integrals on the right are controlled using
the time decay estimates of Step~4 together with
Cauchy--Schwarz in time.
The fourth and fifth integrals are more delicate; we address
each in turn. The critical contribution from the fourth term is
$\varepsilon\int_0^t
\|\partial_3\varrho^\varepsilon\|_{L^\infty}
\|\partial_3^2 u_h^\varepsilon\|_{L^\infty}\,d\tau$.
To handle $\|\partial_3^2 u_h^\varepsilon\|_{L^\infty}$
without losing a factor of $\varepsilon^{-1}$, we extract
one vertical derivative from the horizontal momentum equation:
\begin{equation*}
	\begin{aligned}
		\varepsilon\|\partial_3^3 u^{\varepsilon}_h\|_{H^2_{tan}}
		\;\lesssim\;&
		\bigl\|\partial_3\bigl(
		(1+\varrho^{\varepsilon})\partial_t u^{\varepsilon}_h
		+ (1+\varrho^{\varepsilon})u^{\varepsilon}\cdot
		\nabla u^{\varepsilon}_h
		- \Delta_h u^{\varepsilon}_h
		- \nabla_h\,{\rm div}\,u^{\varepsilon}
		+ (1+\varrho^{\varepsilon})^2\nabla_h\varrho^{\varepsilon}
		\bigr)\bigr\|_{H^2_{tan}} \\
		\;\lesssim\;&
		\mathcal{E}(t)^{1/2} + \mathcal{D}(t)^{1/2}.
	\end{aligned}
\end{equation*}
Applying the anisotropic Sobolev inequality and
H\"{o}lder's inequality in time then yields
\begin{equation}\label{21}
	\begin{aligned}
		&\varepsilon\int_{0}^t
		\|\partial_3\varrho^{\varepsilon}\|_{L^\infty}
		\|\partial_3^2 u^{\varepsilon}_h\|_{L^\infty}\,d\tau \\
		\lesssim\;&
		\varepsilon\int_{0}^t
		\|\partial_3\varrho^{\varepsilon}\|_{L^2}^{1/8}
		\|\partial_3\nabla_h\varrho^{\varepsilon}\|_{H^1_{tan}}^{3/8}
		\|\partial_3^2\varrho^{\varepsilon}\|_{H^2_{tan}}^{1/2}
		\|\partial_3^2 u^{\varepsilon}_h\|_{H^2_{tan}}^{1/2}
		\|\partial_3^3 u^{\varepsilon}_h\|_{H^2_{tan}}^{1/2}
		\,d\tau \\
		\lesssim\;&
		\sup_{0\le\tau\le t}\mathcal{E}(\tau)^{1/2}
		\left\{\int_{0}^t
		\|\partial_3\varrho^{\varepsilon}\|_{L^2}^2
		(1+\tau)^{1-2\zeta}\,d\tau
		\right\}^{1/16}
		\left\{\int_{0}^t
		\|\partial_3\nabla_h\varrho^{\varepsilon}\|_{H^1_{tan}}^2
		(1+\tau)^{2-2\zeta}\,d\tau
		\right\}^{3/16} \\
		&\times
		\left\{\int_{0}^t
		\varepsilon\|\partial_3^2\varrho^{\varepsilon}\|_{H^2_{tan}}^2
		\,d\tau
		\right\}^{1/4}
		\left\{\int_{0}^t
		\varepsilon\|\partial_3^2 u^{\varepsilon}_h\|_{H^2_{tan}}^2
		\,d\tau
		\right\}^{1/4}
		\left\{\int_{0}^t
		(1+\tau)^{-\frac{7-8\zeta}{4}}\,d\tau
		\right\}^{1/4}
		+ \int_{0}^t\mathcal{D}(\tau)\,d\tau.
	\end{aligned}
\end{equation}
The time integrals weighted by $(1+\tau)^{1-2\zeta}$ and
$(1+\tau)^{2-2\zeta}$ are precisely those provided by
the two-tier decay estimate \eqref{decay_estimate}.
The integral $\int_0^t(1+\tau)^{-(7-8\zeta)/4}\,d\tau$ converges
since $\zeta\le\frac{1}{34}$ implies $\frac{7-8\zeta}{4}>1$.   
   
The critical contribution from the fifth term is
$\int_0^t
\|\nabla_h\varrho^\varepsilon\|_{L^\infty}
\|\partial_3\,{\rm div}\,u^\varepsilon\|_{L^\infty}\,d\tau$.
A direct estimate of $\|\partial_3\,{\rm div}\,u^\varepsilon
\|_{L^\infty}$ would require $\partial_3^2 u^\varepsilon$
at the $L^\infty$ level, which is not available.
Instead, we extract $\partial_3\,{\rm div}\,u^\varepsilon$
directly from the vertical component of the momentum equation:
\begin{equation}\label{23}
	(1+\varepsilon)\,\partial_3\,{\rm div}\,u^{\varepsilon}
	= (1+\varrho^{\varepsilon})\partial_t u^{\varepsilon}_3
	+ (1+\varrho^{\varepsilon})u^{\varepsilon}\cdot
	\nabla u^{\varepsilon}_3
	- \Delta_h u^{\varepsilon}_3
	+ \varepsilon\,\partial_3\nabla_h\cdot u^{\varepsilon}_h
	+ (1+\varrho^{\varepsilon})^2\partial_3\varrho^{\varepsilon}.
\end{equation}
Using \eqref{23} together with the anisotropic Sobolev
inequality and the two-tier decay of Step~4, one obtains
\begin{equation}\label{22}
	\begin{aligned}
		&\int_{0}^t
		\|\nabla_h\varrho^{\varepsilon}\|_{L^\infty}
		\|\partial_3\,{\rm div}\,u^{\varepsilon}\|_{L^\infty}
		\,d\tau \\
		\lesssim\;&
		\sup_{0\le\tau\le t}
		\bigl(
		\|(\partial_t u^{\varepsilon}_3,
		\partial_3\partial_t u^{\varepsilon}_3)(\tau)
		\|_{H^2_{tan}}
		+ \|\partial_3\varrho^{\varepsilon}(\tau)\|_{L^\infty}
		\bigr)
		\left\{\int_{0}^t
		\|\nabla_h\varrho^{\varepsilon}\|_{L^2}^2
		(1+\tau)^{1-2\zeta}\,d\tau
		\right\}^{1/16} \\
		&\times
		\left\{\int_{0}^t
		\|\nabla\nabla_h\varrho^{\varepsilon}\|_{H^2_{tan}}^2
		(1+\tau)^{2-2\zeta}\,d\tau
		\right\}^{7/16}
		\left\{\int_{0}^t
		(1+\tau)^{-\frac{15-16\zeta}{8}}\,d\tau
		\right\}^{1/2} \\
		&+
		\bigl(1 + \sup_{0\le\tau\le t}
		\|u^{\varepsilon}(\tau)\|_{L^\infty}\bigr)
		\int_{0}^t\mathcal{D}_{tan}(\tau)\,d\tau.
	\end{aligned}
\end{equation}
The time exponent satisfies
$\frac{15-16\zeta}{8} > 1$ for $\zeta \le \frac{1}{34}$,
so the last time integral in \eqref{22} converges.   
   
The estimates \eqref{21} and \eqref{22} show that
controlling $\|w_h^\varepsilon\|_{L^\infty}$ requires
the additional quantities
\begin{equation*}
	\|\partial_t(u^\varepsilon, w^{\varepsilon}_h,
	\partial_3 u^{\varepsilon}_3)(t)\|_{H^2_{tan}}^2
	+ \|Z_3\partial_t(w^{\varepsilon}_h,
	\partial_3 u^{\varepsilon}_3)(t)\|_{L^2}^2
	+ \varepsilon\|\partial_3^2\varrho^{\varepsilon}(t)
	\|_{H^2_{co}}^2
\end{equation*}
to be part of the energy functional $\mathcal{E}(t)$.
This explains the presence of the mixed time--vertical
derivative terms in the definition of $\mathcal{E}(t)$
in the introduction.
Furthermore, the decay weights $(1+\tau)^{1-2\zeta}$ and
$(1+\tau)^{2-2\zeta}$ in \eqref{21}--\eqref{22} show
that the optimal time decay of the tangential energy is
indispensable for closing the $L^\infty$ estimate.

\textbf{Step 3: $L^\infty$ estimate of
	$\partial_3\varrho^\varepsilon$.}
Applying $\partial_3$ to the density equation
$\eqref{eqr}_1$ yields the transport equation
\begin{equation*}
	\partial_t\partial_3\varrho^{\varepsilon}
	+ u^{\varepsilon}\cdot\nabla\partial_3\varrho^{\varepsilon}
	+ (1+\varrho^{\varepsilon})\partial_3\,{\rm div}\,u^{\varepsilon}
	= -\partial_3\varrho^{\varepsilon}\,{\rm div}\,u^{\varepsilon}
	- \partial_3 u^{\varepsilon}\cdot\nabla\varrho^{\varepsilon}.
\end{equation*}
The term $(1+\varrho^\varepsilon)\partial_3\,{\rm div}\,
u^\varepsilon$ on the left is linear in
$\partial_3 u^\varepsilon$ and cannot be directly absorbed
by the energy.
Substituting \eqref{23} to eliminate
$\partial_3\,{\rm div}\,u^\varepsilon$, we arrive at the
equation
\begin{equation}\label{25}
	\begin{aligned}
		&\partial_t\partial_3\varrho^{\varepsilon}
		+ u^{\varepsilon}\cdot\nabla\partial_3\varrho^{\varepsilon}
		+ \frac{1}{1+\varepsilon}\partial_3\varrho^{\varepsilon} \\
		=\;&
		-\frac{1+\varrho^{\varepsilon}}{1+\varepsilon}
		\Bigl(
		(1+\varrho^{\varepsilon})\partial_t u^{\varepsilon}_3
		- \Delta_h u^{\varepsilon}_3
		+ \varepsilon\,\partial_3\nabla_h\cdot u^{\varepsilon}_h
		+ (1+\varrho^{\varepsilon})u^{\varepsilon}\cdot
		\nabla u^{\varepsilon}_3
		\Bigr) \\
		&- \partial_3\varrho^{\varepsilon}\,{\rm div}\,u^{\varepsilon}
		- \partial_3 u^{\varepsilon}\cdot\nabla\varrho^{\varepsilon}
		- \frac{1}{1+\varepsilon}
		\bigl((1+\varrho^{\varepsilon})^3 - 1\bigr)
		\partial_3\varrho^{\varepsilon}.
	\end{aligned}
\end{equation}
The first line on the right of \eqref{25} contains the
linear terms $(1+\varrho^\varepsilon)\partial_t u_3^\varepsilon$,
$\Delta_h u_3^\varepsilon$, and
$\varepsilon\,\partial_3\nabla_h\cdot u_h^\varepsilon$.
When applying the maximum principle to \eqref{25}, these terms
contribute quantities proportional to
$\|(w_h^\varepsilon, \partial_3\varrho^\varepsilon)\|_{L^\infty}$
to the right-hand side.
To absorb them, it is necessary that the power of
$\|(w_h^\varepsilon, \partial_3\varrho^\varepsilon)\|_{L^\infty}$
in the energy functional $\mathcal{E}(t)$ is strictly greater
than one; this is the reason for the cubic exponents
$\|w_h^\varepsilon\|_{L^\infty}^3$ and
$\|\partial_3\varrho^\varepsilon\|_{L^\infty}^3$ in the
definition of $\mathcal{E}(t)$.
With this choice, the maximum principle applied to \eqref{25}
yields the uniform estimate for
$\|\partial_3\varrho^\varepsilon\|_{L^\infty}$.

\textbf{Step 4: Tangential energy estimates and two-tier
	time decay.}
With the $L^\infty$ bounds from Steps~2 and~3 available
as bootstrap assumptions, we turn to the energy
estimates that drive the decay.
The argument proceeds in three substeps.

\emph{(4a) Tangential and vertical derivative estimates.}
We apply conormal vector fields $Z^{\alpha_h}$ with
$|\alpha_h| \le m$ to the system \eqref{eqr} and derive
energy identities for the tangential norms
$\|(\varrho^\varepsilon, u^\varepsilon)\|_{H^m_{tan}}^2$
and $\|\partial_3(\varrho^\varepsilon, u^\varepsilon)
\|_{H^{m-1}_{tan}}^2$.
The nonlinear terms are controlled by the anisotropic
Sobolev inequality, using the vertical derivative estimate
from Step~1 and the $L^\infty$ bounds from Steps~2--3.

\emph{(4b) Enhanced dissipation for the density.}
The density equation alone provides no direct dissipation
for $\nabla\varrho^\varepsilon$.
However, taking the inner product of the momentum equation
$\eqref{eqr}_2$ with $\nabla\varrho^\varepsilon$ and using
the pressure term, one recovers the dissipation estimate
$\|\nabla\varrho^\varepsilon\|_{H^{m-1}_{tan}}^2$, which
is indispensable for closing the energy.

\emph{(4c) Two-tier decay.}
Combining substeps (4a)--(4b), and incorporating the
negative-order norm
$\|\Lambda_h^{-(1-\zeta)}(\varrho^\varepsilon,
u^\varepsilon)\|_{L^2}^2$ into a modified energy
$\widehat{\mathcal{E}}_{tan}(t)
\sim \mathcal{E}_{tan}(t)$,
one derives the differential inequality
\begin{equation}\label{202}
	\frac{d}{dt}\widehat{\mathcal{E}}_{tan}(t)
	+ \kappa\,\mathcal{D}_{tan}(t) \le 0.
\end{equation}
The key step is to pass from \eqref{202} to a
nonlinear inequality.
Using the interpolation between the negative-order norm
and $\mathcal{E}_{tan}(t)$, one shows that
$\mathcal{D}_{tan}(t)
\ge \kappa C_0^{-1/(1-\zeta)}
\widehat{\mathcal{E}}_{tan}(t)^{1+1/(1-\zeta)}$,
where $C_0 := \|\Lambda_h^{-(1-\zeta)}(\varrho_0,
u_0)\|_{L^2}^2$.
Substituting into \eqref{202} gives
\begin{equation}\label{202b}
	\frac{d}{dt}\widehat{\mathcal{E}}_{tan}(t)
	+ \kappa\,C_0^{-\frac{1}{1-\zeta}}
	\widehat{\mathcal{E}}_{tan}(t)^{1+\frac{1}{1-\zeta}}
	\le 0,
\end{equation}
from which the ODE comparison principle yields the
algebraic decay
\begin{equation*}
	\widehat{\mathcal{E}}_{tan}(t)
	\lesssim C_0(1+t)^{-(1-\zeta)}.
\end{equation*}
Inserting this back into \eqref{202} and integrating with
the time weight $(1+t)^{1-\zeta}$ establishes the first
tier of the decay estimate:
\begin{equation}\label{203}
	(1+t)^{1-\zeta}\mathcal{E}_{tan}(t)
	+ \int_0^t(1+\tau)^{1-2\zeta}
	\mathcal{D}_{tan}(\tau)\,d\tau
	\lesssim C_0.
\end{equation}
To obtain the faster decay of
$\bar{\mathcal{E}}_{tan}(t)
= \|\nabla_h(\varrho^\varepsilon, u^\varepsilon)
\|_{H^{m-2}_{tan}}^2
+ \|\nabla_h\partial_3(\varrho^\varepsilon, u^\varepsilon)
\|_{H^{m-3}_{tan}}^2$,
we repeat the energy argument at this higher level and
use \eqref{203} to absorb the lower-order remainders.
This yields the modified differential inequality
\begin{equation*}
	\frac{d}{dt}\widehat{\bar{\mathcal{E}}}_{tan}(t)
	+ \kappa\,\bar{\mathcal{D}}_{tan}(t) \le 0,
	\quad
	\widehat{\bar{\mathcal{E}}}_{tan}(t)
	\sim \bar{\mathcal{E}}_{tan}(t),
\end{equation*}
and, integrating with the weight $(1+t)^{2-2\zeta}$ and
using \eqref{203}, the second tier of the decay estimate:
\begin{equation}\label{24}
	(1+t)^{2-2\zeta}\bar{\mathcal{E}}_{tan}(t)
	+ \int_0^t(1+\tau)^{2-2\zeta}
	\bar{\mathcal{D}}_{tan}(\tau)\,d\tau
	\lesssim C_0.
\end{equation}
The two-tier decay estimates \eqref{203}--\eqref{24}
supply exactly the $L^1$-in-time integrability
exploited in Steps~2--3 above.
Inserting \eqref{203}--\eqref{24} back into the estimates
of Steps~1--3 within the bootstrap scheme of
Proposition~\ref{main_pro} closes the argument and
establishes the global-in-time uniform estimates
\eqref{uniform_estimate}--\eqref{decay_estimate}
with constants independent of $t$ and $\varepsilon$.
The rigorous implementation is carried out in
Section~\ref{global-estimate}.

\vskip .2in        
\section{Global in time uniform regularity}\label{global-estimate}
			
	In this section, we establish the global-in-time well-posedness of system \eqref{eqr} in conormal Sobolev spaces under the smallness assumption \eqref{condition} on the initial data. Owing to the horizontally dissipative structure of the equations, one can first derive a local-in-time existence and uniqueness theory with estimates that are uniform with respect to $\var$. The main task is then to extend these local solutions globally in time by establishing suitable uniform a priori estimates. Therefore, the primary objective of this section is to prove global-in-time uniform regularity for solutions arising from small initial data satisfying \eqref{condition}. For notational simplicity, we suppress the superscript $\var$ throughout this section. We now state the corresponding global uniform estimate.
	
			\begin{prop}\label{main_pro}
			Let $m \ge 5$ be an integer  and constants
				$(\sigma, s)$ satisfy $\frac{16}{17}\le \sigma<s<1$, assume the initial data $(\vro_0, u_0)$ satisfying the condition stated in Theorem \ref{main_result_one}.
				For the solution $(\vro,u)$ of equation \eqref{eqr}
				defined on $ [0,T] \times \mathbb{R}_+^3$, assume there exists
				a small positive constant $\delta$ such that for any $t \in (0, T]$
				\begin{equation}\label{assumption} \bal
					&\; \underset{0\le \tau \le t}{\sup}\mathcal{E}(\tau)
					+\underset{0\le \tau \le t}{\sup}[(1+\tau)^s \mathcal{E}_{tan}(\tau)] +\underset{0\le \tau \le t}{\sup}[(1+\tau)^{1+\sigma} \bme_{tan}(\tau) ]
					 \\
                    &\;+\int_0^t \md(\tau)d\tau  +\int_0^t (1+\tau)^{\sigma} \md_{tan}(\tau)d\tau
					+ \int_0^t (1+\tau)^{1+\sigma} \bmd_{tan}(\tau)d\tau \le \delta,
				\dal \end{equation}
				then the  solution of equation \eqref{eqr} has the estimate
				\begin{equation}\label{close_assumption}\bal
                &\; \underset{0\le \tau \le t}{\sup}\mathcal{E}(\tau)
					+\underset{0\le \tau \le t}{\sup}[(1+\tau)^s \mathcal{E}_{tan}(\tau)] +\underset{0\le \tau \le t}{\sup}[(1+\tau)^{1+\sigma} \bme_{tan}(\tau) ]
					 \\
                    &\;+\int_0^t \md(\tau)d\tau  +\int_0^t (1+\tau)^{\sigma} \md_{tan}(\tau)d\tau
					+ \int_0^t (1+\tau)^{1+\sigma} \bmd_{tan}(\tau)d\tau  \le \frac{\delta}{2},
				\dal \end{equation}
				where the small positive constant $\delta:=12 C \mathcal{E}(0)$ and $s:=1-\zeta$, $\sigma:=1-2\zeta$.
			Here $C$ is a positive constant independent of time $t$ and
				parameter $\var$.
			\end{prop}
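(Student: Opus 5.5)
The plan is a standard continuity (bootstrap) argument. Under \eqref{assumption} with $\delta$ small, we derive a chain of a priori inequalities. Each has the form ``quantity $\le C\mathcal{E}(0) + C\delta^{\theta}\times(\text{bootstrap quantities})$'' with some $\theta>0$. Summing them with suitable weights and using $\delta=12C\mathcal{E}(0)$ with $\delta$ small then gives \eqref{close_assumption}. We follow the four steps of Section~\ref{section-approach}, but in the reverse logical order. All nonlinear terms are estimated using \eqref{assumption} as input.

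\textbf{Tangential estimates and the two-tier decay.} First we apply $Z^{\alpha_h}$, $|\alpha_h|\le m$, and $\p_3 Z^{\alpha_h}$, $|\alpha_h|\le m-1$, to \eqref{eqr} and perform $L^2$ energy estimates. Next we add a small multiple of the inner product of the momentum equation with $\nabla Z^{\alpha_h}\vro$, which recovers $\|\nabla\vro\|_{H^{m-1}_{tan}}^2$. This yields $\frac{d}{dt}\widehat{\mathcal{E}}_{tan}+\kappa\mathcal{D}_{tan}\le C\delta^{1/2}\mathcal{D}_{tan}+N(t)$. Here the nonlinear remainder $N$ is bounded via anisotropic Sobolev inequalities by $(\mathcal{E}^{1/2}+\|(w_h,\p_3\vro)\|_{L^\infty})\mathcal{D}_{tan}$ plus terms absorbable by $\int\mathcal{D}$.

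We then estimate $\frac{d}{dt}\|\Lambda_h^{-s}(\vro,u)\|_{L^2}^2$. The nonlinear terms are handled by Hardy--Littlewood--Sobolev in $x_h$ (for instance $\|\Lambda_h^{-s}(fg)\|_{L^2}\lesssim\|f\|_{L^{2}_{x_3}L^{2/s}_h}\|g\|_{L^2}$), which keeps $\|\Lambda_h^{-s}(\vro,u)(t)\|_{L^2}^2\lesssim\mathcal{E}(0)$. The interpolation $\|f\|_{L^2}\le\|\Lambda_h^{-s}f\|^{1/(1+s)}\|\nabla_hf\|^{s/(1+s)}$ then gives $\mathcal{D}_{tan}\gtrsim C_0^{-1/s}\widehat{\mathcal{E}}_{tan}^{1+1/s}$. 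The $\vro$-parts are controlled through $\nabla_h\vro$ and $\p_3$-parts through $\p_3\nabla_h$ and $\|\nabla\vro\|$. From this we obtain $(1+t)^s\mathcal{E}_{tan}\lesssim\mathcal{E}(0)$, and multiplying by $(1+t)^\sigma$ gives the weighted dissipation integral, since $\sigma<s$.

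The second tier repeats the argument with one extra $\nabla_h$. It uses $\|\nabla_h f\|^2\lesssim\|f\|^{2\theta}\|\nabla_h^2 f\|^{2(1-\theta)}$-type interpolations with the first-tier decay. This is where $\sigma\ge 16/17$ enters via \eqref{inter-2}, and it yields the $(1+t)^{1+\sigma}$ bounds for $\bar{\mathcal{E}}_{tan}$ and $\bar{\mathcal{D}}_{tan}$.

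\textbf{Full conormal and time-derivative estimates.} We apply $Z^\alpha$, $|\alpha|\le m$, and $\p_3Z^\alpha$, $|\alpha|\le m-1$, to \eqref{eqr}. Commutators $[Z_3,\p_3]$ and boundary terms are treated using \eqref{bc1}; $\p_3u_h$ is handled via $w_h$ together with $\p_3u_3=\du-\nabla_h\cdot u_h$. The bad term $\p_3u_h\cdot Z^\alpha\nabla_h\vro\,\p_3Z^\alpha\vro$ is bounded by $\|w_h\|_{L^\infty}\mathcal{D}$. The $\p_t$-quantities and $\var\|\p_3^2\vro\|_{H^2_{co}}^2$ are estimated in the same way after differentiating \eqref{eqr} in $t$ and in $\p_3$; the equation \eqref{23} converts $\p_3\du$ into controlled terms. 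Altogether this gives $\mathcal{E}(t)+\int_0^t\mathcal{D}\le C\mathcal{E}(0)+C\delta^{1/2}\int_0^t\mathcal{D}+C\delta^{3/2}$.

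\textbf{$L^\infty$ bounds and closing.} The maximum principle for $w_h$ gives \eqref{2222}. The delicate terms are handled exactly as in \eqref{21} and \eqref{22}: the weights $(1+\tau)^\sigma$ and $(1+\tau)^{1+\sigma}$ from \eqref{assumption} are inserted, and the leftover time integrals converge because $\sigma$ is close to $1$. This gives $\|w_h(t)\|_{L^\infty}\le\|w_h(0)\|_{L^\infty}+C\delta$. Cubing, we obtain $\|w_h\|_{L^\infty}^3\le C\mathcal{E}(0)+C\delta^3$, which is small relative to $\delta$.

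For $\p_3\vro$ we use \eqref{25}. The damping term $\frac{1}{1+\var}\p_3\vro$ together with Duhamel gives $\|\p_3\vro(t)\|_{L^\infty}\le\|\p_3\vro(0)\|_{L^\infty}+\sup_\tau\|(\p_tu_3,\Delta_hu_3,\var\p_3\nabla_h u_h)\|_{L^\infty}+\dots$. The linear forcing terms are $O(\delta^{1/2})$ by Sobolev embedding from the $\p_t$-terms in $\mathcal{E}$. Cubing gives $O(\delta^{3/2})\ll\delta$, which is precisely why the cubic exponent is used.

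Summing all the estimates yields a left-hand side of at most $C'\mathcal{E}(0)+C\delta^{3/2}$. This is at most $\delta/2$ once $\delta=12C\mathcal{E}(0)$, with the constant chosen larger than the sum of the $C'$ constants, and $\delta_0$ is small.

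The main obstacle is the second-tier decay together with the terms \eqref{21} and \eqref{22}. There one has to balance the interpolation exponents so that every time integral converges uniformly in $\var$, using only $\var$-weighted vertical dissipation. I would first fix the exponents in the interpolations $\|\p_3\vro\|_{L^\infty}$ and $\|\nabla_h\vro\|_{L^\infty}$ as in \eqref{21} and \eqref{22}, and then verify the threshold $\sigma\ge16/17$.
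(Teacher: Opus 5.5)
Your outline follows the paper's bootstrap: the tangential and one-vertical-derivative estimates, the density dissipation from the cross term, the negative norm plus interpolation for the first-tier decay, the weighted second tier, the time-derivative and $\varepsilon\|\partial_3^2\varrho\|_{H^2_{co}}^2$ estimates, the maximum principle for $w_h$, and closure with $\delta=12C\mathcal{E}(0)$. One step takes a genuinely different route. For $\|\partial_3\varrho\|_{L^\infty}$ you keep the factor $e^{-(t-\tau)/(1+\varepsilon)}$ in Duhamel's formula for \eqref{25} and bound the forcing $G$ uniformly in time. The paper discards this factor and proves $\int_0^t\|G\|_{L^\infty}d\tau\lesssim\delta^{1/2}$ from the weighted decay. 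Your version is simpler and decouples Lemma~\ref{lemma39} from the decay estimates. One term needs care, however: $\sup_\tau\|\varepsilon\partial_3\nabla_h\cdot u_h\|_{L^\infty}$ does not follow from $\eqref{ie:Sobolev}_1$ and $\mathcal{E}$. That route would need $\varepsilon\partial_3^2u_h\in H^3_{tan}$, whereas \eqref{est-p32-u3-uh} gives only $H^2_{tan}$. You can either use a fractional anisotropic embedding that interpolates $\varepsilon\|\partial_3u_h\|_{H^4_{tan}}$ and $\varepsilon\|\partial_3^2u_h\|_{H^2_{tan}}$, or treat that single term in $L^1_t$ as the paper does ($\varepsilon\int_0^t\|\partial_3\nabla_hu\|_{L^\infty}d\tau\lesssim\varepsilon^{3/4}\delta^{1/2}$). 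The second option is compatible with your Duhamel bound because the kernel is at most $1$.

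Several points need correcting, none of them fatal.

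\emph{Exponents.} The $w_h$ estimate gains only $\delta^{5/6}$, not $\delta$: under \eqref{assumption}, $\sup_\tau\|w_h\|_{L^\infty}\int_0^t\|\mathrm{div}\,u\|_{L^\infty}d\tau\lesssim\delta^{1/3}\cdot\delta^{1/2}$. Likewise, the factor in front of $\int_0^t\mathcal{D}$ is $\mathcal{E}^{1/3}\le\delta^{1/3}$, because $\mathcal{E}$ controls only $\|w_h\|_{L^\infty}^3$. So the closing remainder is $\delta^{4/3}$ rather than $\delta^{3/2}$, which still suffices.

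\emph{Hardy--Littlewood--Sobolev step.} This needs $\|\|f\|_{L^\infty_{x_3}}\|_{L^{2/s}_h}$, as in $\eqref{ie:Sobolev}_6$. The version with $L^2_{x_3}$ that you wrote fails by scaling in $x_3$.

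\emph{Missing negative-norm estimate.} You never propagate $\|\Lambda_h^{-s}\partial_3(\varrho,u)\|_{L^2}$. It is part of $\mathcal{E}$, and your interpolation needs it for $\|\partial_3u_h\|_{L^2}$, which $\mathcal{D}_{tan}$ does not contain. In that estimate the term $u_3\partial_3^2\varrho$ is not controlled by the energy. Lemma~\ref{lemma312} handles it using $u_3|_{x_3=0}=0$, which gives $|u_3\partial_3^2\varrho|\lesssim(\|\partial_3u_3\|_{L^\infty_{x_3}}+|u_3|)|Z_3\partial_3\varrho|$.

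\emph{The $\partial_3^2$ terms in the tangential estimates.} The same device is what actually handles $Z^{\beta_h}u_3\,\partial_3^2\varrho$ and $\nabla_hu_3\,\partial_3^2u_h$ in the one-vertical-derivative tangential estimates of Lemma~\ref{lemma33}. Near the boundary one uses a cutoff $\chi$ and writes $u_3=\int_0^{x_3}\partial_3u_3\,dz$; away from it one uses $\partial_3^2=\varphi^{-1}Z_3\partial_3$. Anisotropic Sobolev inequalities alone do not suffice here, because $\partial_3^2\varrho$ and $\partial_3^2u_h$ are controlled only with a factor $\varepsilon$. This is also where \eqref{est-s-16/17} forces $s\ge\frac{16}{17}$.

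\emph{Remainders in the decay inequalities.} Every remainder in the tangential inequalities must be bounded pointwise in time by a small multiple of $\mathcal{D}_{tan}$ (respectively $\bar{\mathcal{D}}_{tan}$), as the paper achieves. Anything controlled only by $\int\mathcal{D}$, as your phrase ``absorbable by $\int\mathcal{D}$'' allows, would break both the nonlinear ODE and the $(1+t)^{\sigma}$ weighting.
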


Since the proof of Proposition~\ref{main_pro} is rather lengthy and technically involved, we first provide a brief overview of its structure. The proof is divided into seven groups of lemmas, each devoted to estimating a specific component of the energy functional $\mathcal{E}(t)$. We now summarize the role of each group and explain the logical order in which the estimates are derived.

\medskip
\noindent\textbf{Subsection~\ref{sec:pp}:  Preparatory tools (Lemma~\ref{lemma-help}).}
Before entering the main energy argument, we record two
preparatory estimates that will be used repeatedly throughout
the section.
By expressing $\partial_3^2 u_3$ and $\varepsilon\partial_3^2 u_h$
directly from the momentum equation, we show that
$\|\partial_3^2 u_3\|_{H^2_{tan}}^2
+ \varepsilon\|\partial_3^2 u_h\|_{H^2_{tan}}^2
\lesssim \mathcal{E}(t)$,
which upgrades the available vertical regularity
without paying extra dissipation.
We also derive $H^k_{co}$ and $H^k_{tan}$ bounds for
$\partial_t\varrho$ and $\partial_3\partial_t\varrho$
from the density equation, eliminating time derivatives of
the density from subsequent nonlinear estimates.

\medskip
\noindent\textbf{Subsection~\ref{sec-tan-vertical}:
	Tangential and vertical derivative estimates
	(Lemmas~\ref{lemma32}--\ref{lemma34}).}
This subsection establishes the core energy identities
for the conormal norm
$\|(\varrho, u)\|_{H^m_{co}}^2
+ \|\partial_3(\varrho, u)\|_{H^{m-1}_{co}}^2$,
which controls the first two components of $\mathcal{E}(t)$.

\emph{Lemma~\ref{lemma32}} derives the differential inequalities
\eqref{3101-1}--\eqref{3101-2} for the tangential norms
$\|(\varrho, u)\|_{H^m_{tan}}^2$ and
$\|\nabla_h(\varrho, u)\|_{H^{m-2}_{tan}}^2$.
The key step is to control the nonlinear commutator terms using
the anisotropic Sobolev inequality, the interpolation
inequalities \eqref{inter-1}--\eqref{inter-2}, and the
condition $s \ge \frac{16}{17}$; the resulting bounds involve
$\sqrt{\mathcal{E}(t)}\bar{\mathcal{D}}_{tan}(t)$,
which is the correct structure for the two-tier decay argument.

\emph{Lemma~\ref{lemma33}} handles the one-order vertical
derivative estimates for the tangential norms,
i.e.\ the differential inequalities for
$\|\partial_3(\varrho, u)\|_{H^{m-1}_{tan}}^2$ and
$\|\nabla_h\partial_3(\varrho, u)\|_{H^{m-3}_{tan}}^2$.
The principal difficulty is the term $II_{1,4}$ (respectively
$II_{3,5}$), involving $\partial_3^2\varrho$ (respectively
$\partial_3^2 u_h$), which lies outside the energy framework.
This term is handled by decomposing the domain into a region
near the boundary (where $u_3 = \int_0^{x_3}\partial_3 u_3\,dz$
provides extra decay) and a region away from the boundary
(where the conormal norm is equivalent to the standard Sobolev
norm), yielding the critical estimate \eqref{est-s-16/17} and
the constraint $s \ge \frac{16}{17}$.

\emph{Lemma~\ref{lemma34}} establishes the full conormal estimate
$\|\partial_3(\varrho, u)\|_{H^k_{co}}^2$ for $1\le k \le m-1$,
where the multi-index $\alpha$ includes vertical components.
The commutator $[Z^\alpha, \partial_3]$ introduces terms
proportional to $Z_3^{\beta_3}(1/\varphi)$, bounded by $1/x_3$.
These are absorbed using the Hardy-type inequality
$u_3 = \int_0^{x_3}\partial_3 u_3\,dz$ near the boundary
and the equivalence of conormal and standard norms away from it.

\medskip
\noindent\textbf{Subsection~\ref{sec-density}:
	Dissipation estimate for the density
	(Lemmas~\ref{lemma35}--\ref{lemma37}).}
The density equation provides no direct dissipation for
$\nabla\varrho$.
The three lemmas in this subsection recover it via the
momentum equation.

\emph{Lemma~\ref{lemma35}} establishes the tangential dissipation
estimate $\|Z^{\alpha_h}\nabla\varrho\|_{L^2}^2$
for $|\alpha_h| \le m-1$ by taking the inner product of the
momentum equation with $Z^{\alpha_h}\nabla\varrho$,
using the time derivative of the density equation to rewrite
the resulting $\partial_t$ term as a full time derivative of
$\int Z^{\alpha_h} u \cdot Z^{\alpha_h}\nabla\varrho\,dx$
plus controlled remainders.
The estimate splits into four cases ($|\alpha_h| = 0, 1,
m-2, m-1$) according to whether the nonlinear terms require
the tangential or the higher-order tangential dissipation.

\emph{Lemma~\ref{lemma36}} extends the density dissipation
to the full conormal norm $\|Z^\alpha\nabla\varrho\|_{L^2}^2$
for $|\alpha| \le m-1$, including vertical components.
When $\alpha_3 \ge 1$, the commutator $[Z^\alpha, \partial_3]$
must be handled via the property \eqref{properity-p3},
and the resulting boundary terms are controlled using
the Hardy estimate and Lemma~\ref{lemma-help}.

\emph{Lemma~\ref{lemma37}} controls
$\varepsilon\|\partial_3^2\varrho\|_{H^2_{co}}^2$,
the last component of $\mathcal{E}(t)$.
Taking the divergence of the momentum equation and substituting
the density equation gives an ODE-type equation for
$\partial_3^2\ln(1+\varrho)$; multiplying by
$\varepsilon Z^\alpha\partial_3^2\varrho$
and integrating yields the estimate \eqref{3901},
whose right-hand side is controlled by the time derivative
estimates established in the next subsection.

\medskip
\noindent\textbf{Subsection~\ref{lines}}:
\noindent\textbf{$L^\infty$ estimates
	(Lemmas~\ref{lemma-wh}--\ref{lemma39}).}
These two lemmas provide the $L^\infty$ bounds for the horizontal
vorticity and the vertical density derivative,
which appear with cubic powers in $\mathcal{E}(t)$.
The maximum principle applied to the $w_h$-equation
gives an integral inequality for $\|w_h(t)\|_{L^\infty}$;
the four integral terms on the right-hand side of \eqref{2222}
are bounded by $\delta^{5/6}$ using the two-tier decay from
the tangential estimates of the previous step.
The key inputs are the $L^1$-in-time bounds on
$\|\partial_t u_3\|_{L^\infty}$ and
$\|\nabla_h\varrho\|_{L^\infty}$, controlled by the time
decay rates \eqref{203}--\eqref{24}.
Similarly, the maximum principle applied to equation \eqref{25}
for $\partial_3\varrho$ gives the bound \eqref{vro01};
the damping term $\frac{1}{1+\varepsilon}\partial_3\varrho$
on the left-hand side of \eqref{25} ensures global integrability
of the forcing term $G(t)$ in time.

\medskip
\noindent\textbf{Subsection~\ref{timer}:
	Time derivative estimates (Lemma~\ref{lemma310}).}
This subsection establishes the three differential inequalities
\eqref{3601}--\eqref{3801} for the time derivative components
$\|\partial_t u\|_{H^2_{tan}}^2$,
$\|\partial_t w_h\|_{H^2_{tan}}^2 + \|Z_3\partial_t w_h\|_{L^2}^2$,
and
$\|\partial_t\partial_3 u_3\|_{H^2_{tan}}^2
+ \|Z_3\partial_t\partial_3 u_3\|_{L^2}^2$,
all of which appear in $\mathcal{E}(t)$.
These terms arise because $\varepsilon\partial_3^2 u_h$
and $\partial_3\,\mathrm{div}\,u$ can only be controlled at
the $L^\infty$ level by trading vertical derivatives for
time derivatives via the momentum equation (see Section~2).
The proofs differentiate the system in time, apply energy
estimates to the resulting linear equations,
and absorb the resulting commutator terms using
Lemma~\ref{lemma-help} and the bootstrap assumption
\eqref{assumption}.

\medskip
\noindent\textbf{Subsection~\ref{negg}: Negative Sobolev estimates (Lemmas~\ref{lemma311}--\ref{lemma312}).}
Lemma~\ref{lemma311} controls
$\|\Lambda_h^{-(1-\zeta)}(\varrho, u)(t)\|_{L^2}^2$
and Lemma~\ref{lemma312} controls
$\|\Lambda_h^{-(1-\zeta)}\partial_3(\varrho, u)(t)\|_{L^2}^2$.
These negative-order norms capture the low-frequency content of
the solution and are the essential input for deriving the
nonlinear decay inequality \eqref{202b}.
The Hardy--Littlewood--Sobolev inequality (Lemma~\ref{H-L})
is used to bound the nonlinear terms in $L^{2/(1+s)}$
and transfer them to the negative norm via duality.
The condition $\zeta \in (0,1)$ enters here to ensure
the Hardy--Littlewood--Sobolev inequality applies.

\medskip
\noindent\textbf{Subsection~\ref{dect}: Decay in time estimates
	(Lemmas~\ref{lemma313}--\ref{lemma315}).}
This subsection contains three lemmas that together drive the
time decay and assemble the full bootstrap bound.

\emph{Lemma~\ref{lemma313}} establishes the
global uniform bound
$\mathcal{E}(t) + \int_0^t \mathcal{D}(\tau)\,d\tau \lesssim C_0$,
where $C_0 := \mathcal{E}(0) + \delta^{4/3}$ is defined in
\eqref{co}.
The proof combines the conormal energy estimates from the tangential
and vertical derivative lemmas (via induction and the equivalence
\eqref{est-equivalence}), adds the density dissipation via the
density dissipation lemmas with a small coupling constant $\kappa_1$,
and then incorporates the time derivative estimates
\eqref{3601}--\eqref{3801}, the $L^\infty$ bounds
\eqref{wh01} and \eqref{vro01}, and the negative-order bounds
\eqref{J01}--\eqref{K01} to produce the combined bound
\eqref{31002}--\eqref{31003}.

\emph{Lemma~\ref{lemma314}} establishes the
first tier of time decay:
$(1+t)^s\mathcal{E}_{tan}(t)
+ \int_0^t(1+\tau)^\sigma\mathcal{D}_{tan}(\tau)\,d\tau
\lesssim C_0$.
The three differential inequalities \eqref{31102}--\eqref{31104}
are combined into a single inequality \eqref{31105} for the
modified energy $\widetilde{\mathcal{E}}_{tan}(t)
\sim \mathcal{E}_{tan}(t)$.
Under the bootstrap assumption \eqref{assumption} this simplifies
to the linear dissipation inequality \eqref{31106}:
$\frac{d}{dt}\widetilde{\mathcal{E}}_{tan}(t)
+ \kappa_2\mathcal{D}_{tan}(t) \le 0$.
The interpolation inequality \eqref{31107} then gives
$\widetilde{\mathcal{E}}_{tan}(t)
\lesssim C_0^{1/(1+s)}\mathcal{D}_{tan}(t)^{s/(1+s)}$,
converting \eqref{31106} into the nonlinear ODE
$\frac{d}{dt}\widetilde{\mathcal{E}}_{tan}
+ \kappa_2 C_0^{-1/s}
\widetilde{\mathcal{E}}_{tan}^{1+1/s} \le 0$.
The ODE comparison principle then yields the algebraic decay
\eqref{31108}: $\mathcal{E}_{tan}(t) \lesssim C_0(1+t)^{-s}$.
Multiplying \eqref{31106} by $(1+t)^\sigma$ and integrating
gives the weighted dissipation integral via \eqref{est-sigma-s},
completing the proof of \eqref{31101}.

\emph{Lemma~\ref{lemma315}} establishes the
faster second tier of time decay:
$(1+t)^{1+\sigma}\bar{\mathcal{E}}_{tan}(t)
+ \int_0^t(1+\tau)^{1+\sigma}\bar{\mathcal{D}}_{tan}(\tau)\,d\tau
\lesssim C_0$.
The higher-order differential inequalities
\eqref{31202}--\eqref{31204} are combined into a single inequality
for the modified energy $\widetilde{\bar{\mathcal{E}}}_{tan}(t)
\sim \bar{\mathcal{E}}_{tan}(t)$, satisfying
$\frac{d}{dt}\widetilde{\bar{\mathcal{E}}}_{tan}(t)
+ \kappa_3\bar{\mathcal{D}}_{tan}(t) \le 0$
under the bootstrap assumption.
Multiplying by $(1+t)^{1+\sigma}$ and integrating, and using
the bound $\widetilde{\bar{\mathcal{E}}}_{tan}(t)
\lesssim \mathcal{D}_{tan}(t)$ together with the first-tier
decay \eqref{31101} to control the right-hand side, yields
\eqref{31201}.

\noindent\textbf{Subsection~\ref{lastt}: Global in time uniform
	regularity (closing of Proposition~\ref{main_pro} and proof of
	Theorem~\ref{main_result_one}).}
With Lemmas~\ref{lemma313}--\ref{lemma315} in hand, adding their
three estimates gives
\begin{equation*}
	\begin{aligned}
		&\sup_{0\le\tau\le t}\mathcal{E}(\tau)
		+ \sup_{0\le\tau\le t}\big[(1+\tau)^s\mathcal{E}_{tan}(\tau)\big]
		+ \sup_{0\le\tau\le t}\big[(1+\tau)^{1+\sigma}\bar{\mathcal{E}}_{tan}(\tau)\big] \\
		&\quad
		+ \int_0^t\mathcal{D}(\tau)\,d\tau
		+ \int_0^t(1+\tau)^\sigma\mathcal{D}_{tan}(\tau)\,d\tau
		+ \int_0^t(1+\tau)^{1+\sigma}\bar{\mathcal{D}}_{tan}(\tau)\,d\tau  \\
		&\le 3C\big(\mathcal{E}(0)+\delta^{4/3}\big).
	\end{aligned}
\end{equation*}
where $(\sigma,s)$ satisfy $\frac{16}{17}\le\sigma<s<1$.
Choosing $\delta = 12C\mathcal{E}(0) \le \min\{1,(12C)^{-3}\}$
ensures $3C\delta^{4/3} \le \delta/4$
and $3C\mathcal{E}(0) \le \delta/4$,
so the left-hand side is bounded by $\delta/2$.
This is the improved estimate \eqref{close_assumption}, which
is strictly sharper than the bootstrap assumption
\eqref{assumption} by a factor of $2$, closing the bootstrap.

\medskip
The proof of Theorem~\ref{main_result_one} then follows by
a standard continuity argument.
Define $T^*$ as the supremum over all $T_0$ such that
\eqref{assumption} holds on $[0,T_0)$ (see \eqref{criterion}).
Local well-posedness gives \eqref{assumption} for small $t$,
so $T^* > 0$.
If $T^* < +\infty$, Proposition~\ref{main_pro} gives
\eqref{close_assumption} on $[0,T^*)$, and local existence
extends the solution to $[0,T^{**})$ for some $T^{**}>T^*$
with \eqref{assumption} still holding, contradicting the
maximality of $T^*$.
Hence $T^* = +\infty$ and the global solution satisfies
\eqref{uniform_estimate}--\eqref{decay_estimate} for all $t\ge 0$,
uniformly in $\varepsilon\in(0,1)$.

\subsection{Preparatory tools} \label{sec:pp}

\begin{lemm}\label{lemma-help}
    For any smooth solution $(\vro,u)$ of equation \eqref{eqr}, under the assumption \eqref{assumption},
				it holds,
                \beq\label{est-p32-u3-uh} \bal
\|\p_3^2 u_3\|_{H_{tan}^2}^2
+ \var \|\p_3^2 u_h\|_{H_{tan}^2}^2 \lesssim \me(t).
                \dal \deq
               Moreover, for positive integer $ 0 \le k \le 2$, 
               \beq \label{pro-ptvro} \bal
\| \p_t \vro\|_{H_{co}^k}^2 
\lesssim  \| \du\|_{H_{co}^k}^2 + \me(t) (\|\du \|_{H_{co}^k}^2 + \|\nabla \vro  \|_{H_{co}^k}^2),  ~~~\| \p_t \vro\|_{H_{tan}^k}^2 
\lesssim \me(t),
              \dal \deq
              and
                \beq \label{pro-p3tvro} \bal
               \| \p_3 \p_t \vro\|_{H_{co}^k}^2 
\lesssim &\; \| \p_3  \du\|_{H_{co}^k}^2 + \me(t) (\|(\du, \p_3 \du)\|_{H_{co}^k}^2 + \|\nabla \vro  \|_{H_{co}^{k+1}}^2), 
 ~~~ \| \p_3 \p_t \vro\|_{H_{tan}^k}^2 
\lesssim  \me(t).
              \dal \deq
\end{lemm}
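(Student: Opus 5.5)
The plan is to read every quantity off the equations \eqref{eqr} algebraically, with no energy estimates. Dividing the momentum equation by $1+\vro$, its vertical component is exactly \eqref{23}:
\begin{equation*}
(1+\var)\p_3^2 u_3 = (1+\vro)\p_t u_3 + (1+\vro)u\cdot\nabla u_3 - \Delta_h u_3 + \var\p_3\nabla_h\cdot u_h - \p_3\nabla_h\cdot u_h + (1+\vro)^2\p_3\vro .
\end{equation*}
Here we write $\p_3{\rm div}\,u = \p_3^2u_3 + \p_3\nabla_h\cdot u_h$. Each term on the right is controlled by $\me(t)$ in $H^2_{tan}$:
\begin{itemize}
\item $\p_t u_3$ is in $\me$ directly.
\item $\Delta_h u_3$ and $\p_3\nabla_h u_h$ lie in $H^2_{tan}$ since $m\ge5$ and $\|u\|_{H^m_{co}}+\|\p_3 u\|_{H^{m-1}_{co}}\le \me^{1/2}$.
\item $\p_3\vro$ lies in $H^{m-1}_{co}$.
\end{itemize}
The coefficients $(1+\vro)^j$ and the convective term are handled by the product estimate $\|fg\|_{H^2_{tan}}\lesssim \|f\|_{L^\infty}\|g\|_{H^2_{tan}}+\|f\|_{H^2_{tan}}\|g\|_{L^\infty}$. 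The needed $L^\infty$ norms of $\vro$, $u$, $\nabla_h u$ and $\p_3 u_3$ come from the anisotropic Sobolev embedding in the appendix. The only $L^\infty$ quantity not given by that embedding, $w_h$, is in $\me$ through $\|w_h\|^3_{L^\infty}$. Since $\me\le\delta\le1$, we have $\me^{3/2}\lesssim\me$, which gives $\|\p_3^2u_3\|_{H^2_{tan}}^2\lesssim\me$.

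For the horizontal components the same identity gives
\begin{equation*}
\var\p_3^2 u_h = (1+\vro)\p_t u_h + (1+\vro)u\cdot\nabla u_h - \Delta_h u_h - \nabla_h{\rm div}\,u + (1+\vro)^2\nabla_h\vro .
\end{equation*}
Every right-hand term is bounded in $H^2_{tan}$ by $\me^{1/2}$ in the same way, so $\var^2\|\p_3^2u_h\|_{H^2_{tan}}^2\lesssim\me$. That is weaker than claimed by a factor of $\var$. I would upgrade it with a $w_h$ route: $\p_3 u_h$ is expressed through $w_h$ and $\nabla_h u_3$, so $\p_3^2u_h$ is expressed through $\p_3 w_h$ and $\nabla_h\p_3 u_3$. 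The second piece is in $\me$. For the first, the vorticity equation gives $\var\p_3^2 w_h = \p_t w_h - \Delta_h w_h + \dots$, and $\p_t w_h$ is in $\me$.

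Interpolating $\var\|\p_3 w_h\|^2\le \var\|w_h\|\,\|\p_3^2w_h\|$ with the Navier condition ($w_h|_{x_3=0}=0$ after using $\p_3u_h=0$ and $u_3=0$) would give $\var\|\p_3 w_h\|_{H^2_{tan}}^2\lesssim \|w_h\|_{H^2_{tan}}\,\|\var\p_3^2 w_h\|_{H^2_{tan}}$. This is where the extra $\var$ comes from. This step is the main obstacle, because it uses the boundary condition to kill the boundary term in the integration by parts. It also needs $\Delta_h w_h\in H^2_{tan}$, which requires $\nabla_h^2\p_3u$ in $H^2_{tan}$, i.e.\ $\p_3u\in H^{4}_{tan}$; that is available since $m-1\ge4$.

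The density bounds follow directly from $\p_t\vro = -(1+\vro){\rm div}\,u - u\cdot\nabla\vro$. For $k\le2$, the Moser-type product estimates in $H^k_{co}$ bound the first term by $\|\du\|_{H^k_{co}}+\|\vro\|_{L^\infty\cap W^{k,\infty}_{co}}\|\du\|_{H^k_{co}}$. They bound the second by $\|u\|_{W^{k,\infty}_{co}}\|\nabla\vro\|_{H^k_{co}}$, and the $L^\infty$ factors are $\lesssim\me^{1/2}$. For the purely tangential bound every factor is in $\me$ itself, since $\du$ and $\nabla\vro$ lie in $H^{m-1}_{co}$. For $\p_3\p_t\vro$, I would apply $\p_3$ first and expand. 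The result consists of $(1+\vro)\p_3\du$, $\p_3\vro\,\du$, $\p_3u\cdot\nabla\vro$ and $u\cdot\nabla\p_3\vro$, where $\p_3\vro$ is taken in $L^\infty$, which the energy provides.

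For the last term, $u_h\cdot\nabla_h\p_3\vro$ is bounded by $\|\nabla\vro\|_{H^{k+1}_{co}}$. In $u_3\p_3^2\vro$ we write $u_3\p_3 = (u_3/\varphi)Z_3$, and the Hardy inequality (via $u_3|_{x_3=0}=0$) bounds $u_3/\varphi$ in $L^\infty$ by $\|\p_3u_3\|$. Hence this term is bounded by $\me^{1/2}\|\nabla\vro\|_{H^{k+1}_{co}}$. This is exactly why $H^{k+1}_{co}$ appears in \eqref{pro-p3tvro}.
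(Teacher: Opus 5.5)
Your treatment of $\partial_3^2u_3$ and of $\partial_t\varrho$, $\partial_3\partial_t\varrho$ matches the paper's: read everything off \eqref{equ-p32u3} and $\eqref{eqr}_1$ and estimate products with the anisotropic inequalities. Your $u_3$ identity has a stray $+\varepsilon\partial_3\nabla_h\cdot u_h$; the $\varepsilon$-terms cancel and only $-\partial_3\nabla_h\cdot u_h$ remains, which is harmless.

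The real difference is the horizontal bound. The paper gets $\varepsilon\|\partial_3^2u_h\|_{H^2_{tan}}^2\lesssim\mathcal{E}(t)$ ``similarly'' from the second line of \eqref{equ-p32u3}. As you correctly observe, that identity alone only gives $\varepsilon^2\|\partial_3^2u_h\|_{H^2_{tan}}^2\lesssim\mathcal{E}(t)$, and your vorticity argument supplies the missing factor. You write $\partial_3^2u_h$ via $\partial_3w_h$ and $\nabla_h\partial_3u_3$, integrate by parts once in $x_3$ against $w_h|_{x_3=0}=0$, and bound $\varepsilon\partial_3^2w_h$ through \eqref{equ-ptwh}. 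The more obvious route would be $\varepsilon\|\partial_3^2u_h\|^2\le\|\partial_3u_h\|\,\varepsilon\|\partial_3^3u_h\|$. Taking the curl is what lets yours close: it removes $\nabla\,\mathrm{div}\,u$ and $\nabla_h\partial_tu_3$, which would need three tangential derivatives on $\partial_3^2u_3$ or $\partial_tu_3$ (not controlled by $\mathcal{E}$). What remains is $\partial_tw_h$, which $\mathcal{E}$ controls in $H^2_{tan}$.

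The cost sits in your ``$\dots$'', in three places.
\begin{itemize}
\item $u_3\partial_3w_h$ contains unweighted $\partial_3^2u_h$, so it must be handled as $(u_3/\varphi)Z_3w_h$.
\item $[\nabla(\frac{\varrho}{1+\varrho})\times\nabla\,\mathrm{div}\,u]_h$ contains $\nabla_h\varrho\,\partial_3\mathrm{div}\,u$, so the $u_3$ bound has to come first.
\item In $\varepsilon\,\partial_3(\frac{\varrho}{1+\varrho})\,\partial_3^2u_h$, once tangential derivatives hit $\partial_3\varrho$ you can only use $\varepsilon^{1/2}\|\partial_3^2\varrho\|_{H^2_{co}}$. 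The resulting bound contains the unknown $\varepsilon^{1/2}\|\partial_3^2u_h\|_{H^2_{tan}}$ and must be absorbed by Young's inequality. This works because the prefactor is a positive power of $\mathcal{E}\le1$.
\end{itemize}
With these points your argument closes.

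One correction is needed in the density part. $\mathcal{E}$ contains only the cubes $\|w_h\|_{L^\infty}^3$ and $\|\partial_3\varrho\|_{L^\infty}^3$, so these norms are $\lesssim\mathcal{E}^{1/3}$, not $\mathcal{E}^{1/2}$. Putting $\partial_3\varrho$ in $L^\infty$ in $\partial_3\varrho\,\mathrm{div}\,u$, or $\partial_3u_h$ in $L^\infty$ in $\partial_3u\cdot\nabla\varrho$, therefore gives $\mathcal{E}^{2/3}$ instead of the factor $\mathcal{E}$ in \eqref{pro-p3tvro}. When conormal derivatives fall on $\partial_3\varrho$, no $L^\infty$ bound is available at all.

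Instead, put $Z^\beta\partial_3\varrho$ and $Z^\beta\partial_3u_h$ in $L^2_{x_3}L^\infty_{x_h}$; this costs two tangential derivatives, covered by $\|\partial_3(\varrho,u)\|_{H^{m-1}_{co}}$ for $m\ge5$. Put the partner factor in $L^\infty_{x_3}L^2_{x_h}$, which costs one $\partial_3$. This is exactly how $\partial_3\mathrm{div}\,u$ and $\|\nabla\varrho\|_{H^{k+1}_{co}}$ enter the bracket of \eqref{pro-p3tvro}.

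Finally, $\|\partial_3\partial_t\varrho\|_{H^k_{tan}}^2\lesssim\mathcal{E}$ needs $\partial_3\mathrm{div}\,u\in H^2_{tan}$. That comes from the first part of the lemma, not merely from $\mathrm{div}\,u,\nabla\varrho\in H^{m-1}_{co}$.
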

\begin{proof}
Due to the relation $\p_3 \du = \p_3^2 u_3 + \p_3 \nabla_h \cdot u_h$, we can check that
\beq\label{equ-p32u3} \bal
(1+\var) \p_3^2 u_3 &= ( 1+ \vro) \p_t u_3 + ( 1+ \vro) u \cdot \nabla u_3 - \Delta_h u_3 -\p_3 \nabla_h \cdot u_h + ( 1+ \vro)^2 \p_3 \vro,\\
\var \p_3^2 u_h & =  ( 1+ \vro) \p_t u_h + ( 1+ \vro) u \cdot \nabla u_h - \Delta_h u_h  - \nabla_h \du +( 1+ \vro)^2 \nabla_h \vro.
\dal \deq
Then, applying  the anisotropic
				type inequality \eqref{ie:Sobolev}, we have
		\beqq \bal
		 \|\p_3^2 u_3\|_{H_{tan}^2}^2 \lesssim &\; \| ( 1+ \vro) \p_t u_3\|_{H_{tan}^2}^2 +  \| ( 1+ \vro) u \cdot \nabla u_3 \|_{H_{tan}^2}^2 +\| \Delta_h u_3\|_{H_{tan}^2}^2 \\&\; + \|\p_3 \nabla_h \cdot u_h\|_{H_{tan}^2}^2 
		 + \| ( 1+ \vro)^2 \p_3 \vro\|_{H_{tan}^2}^2 \lesssim  \me(t),
		\dal \deqq 
		and similarly, we have
		\beqq 
		 \var \|\p_3^2 u_h\|_{H_{tan}^2}^2 \lesssim \me(t).
		\deqq
        Thus we finish the proof of \eqref{est-p32-u3-uh}.  Next, with the help of \eqref{est-p32-u3-uh}, using the equation $\eqref{eqr}_1$ and the anisotropic inequalities \eqref{ie:Sobolev}, we can obtain the estimates
        \eqref{pro-ptvro} and \eqref{pro-p3tvro}, here we omit the proof for simplicity.
 		Thus, we finish the proof of this lemma. 
\end{proof}

			\subsection{Tangential and vertical derivative estimates}\label{sec-tan-vertical}
            It is easy to check that
                \beq\label{est-equivalence} \bal
              \|(\vro,u)\|_{H_{co}^m}^2 + \| \p_3 (\vro, u)(t)\|_{H_{co}^{m-1}}^2 \thicksim  &\; \|(\vro,u)\|_{H_{tan}^m}^2 + \| \p_3 (\vro, u)(t)\|_{H_{co}^{m-1}}^2 \\
               \thicksim  &\; \|(\vro,u)\|_{L^2}^2 + \|(\vro,u)\|_{\dot{H}_{tan}^m}^2
                  + \| \p_3 (\vro, u)(t)\|_{H_{co}^{m-1}}^2.
               \dal \deq
            Therefore, in this subsection, we will establish the estimate for the tangential derivative and one-order vertical derivative estimate of density and velocity field.
			First of all, we establish the estimate for the tangential derivative
			of density and velocity field, and in order to obtain the faster decay of the solution, we will estimate the higher tangential derivative at the same time, see the following lemma in detail.
			\begin{lemm}\label{lemma32}
				For any smooth solution $(\vro,u)$ of equation \eqref{eqr},
				it holds that
				\beq\label{3101-1} \bal
				&\; \f12 \frac{d}{dt} (\|(\vro,u)(t)\|_{L^2}^2 +\|(\vro, u)(t)\|_{\dot{H}_{tan}^m}^2 )
				+\|(\nabla_h u, \du)(t)\|_{H^m_{tan}}^2 +\var \|\p_3 u(t)\|_{H^m_{tan}}^2
				\lesssim \sqrt{\me(t)}\md_{tan}(t),
                \dal \deq
                and 
                \beq\label{3101-2} \bal
				&\; \f12 \frac{d}{dt} (\|\nabla_h(\vro,u)(t)\|_{L^2}^2 + \|\nabla_h (\vro, u)(t)\|_{\dot{H}_{tan}^{m-2}}^2 ) \\
				&+\|\nabla_h(\nabla_h u, \du)(t)\|_{H^{m-2}_{tan}}^2
                +\var \|\nabla_h \p_3  u(t)\|_{H^{m-2}_{tan}}^2
				\lesssim 
                \sqrt{\me(t)}\bar{\md}_{tan}(t).
				\dal \deq
			\end{lemm}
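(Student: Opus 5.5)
We will run the standard $L^2$ energy method on the system after applying horizontal derivatives. Since $Z^{\alpha_h}=\p_1^{\alpha_1}\p_2^{\alpha_2}$ commutes with $\p_t$, $\p_3$, $\nabla$ and with the boundary conditions, no commutator with the linear operator appears. Moreover, $Z^{\alpha_h}u$ still satisfies $(Z^{\alpha_h}u_3,\p_3Z^{\alpha_h}u_h)|_{x_3=0}=0$.

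\emph{Linear part.} We apply $Z^{\alpha_h}$ to \eqref{eqr} and take the $L^2(\mathbb{R}^3_+)$ inner product of the density equation with $Z^{\alpha_h}\vro$ and of the momentum equation with $Z^{\alpha_h}u$. For $|\alpha_h|=0$ and $|\alpha_h|=m$ this gives \eqref{3101-1}; for $\nabla_h Z^{\alpha_h}$ with $|\alpha_h|=0$ and $|\alpha_h|=m-2$ it gives \eqref{3101-2}. Integrating by parts:
\begin{itemize}
\item $-\int\Delta_h Z^{\alpha_h}u\cdot Z^{\alpha_h}u$ produces $\|\nabla_h Z^{\alpha_h}u\|_{L^2}^2$ with no boundary term.
\item $-\var\int\p_3^2Z^{\alpha_h}u\cdot Z^{\alpha_h}u=\var\|\p_3Z^{\alpha_h}u\|_{L^2}^2$, because the boundary term $\p_3Z^{\alpha_h}u_h\cdot Z^{\alpha_h}u_h+\p_3 Z^{\alpha_h}u_3\,Z^{\alpha_h}u_3$ vanishes at $x_3=0$.
\item $-\int\nabla\du\cdot Z^{\alpha_h}u=\|Z^{\alpha_h}\du\|^2$, the boundary term vanishing since $u_3|_{x_3=0}=0$.
\item The pressure coupling $\int\nabla Z^{\alpha_h}\vro\cdot Z^{\alpha_h}u+\int Z^{\alpha_h}\du\,Z^{\alpha_h}\vro$ cancels, again because $u_3=0$ on the boundary.
\end{itemize}
This yields the left-hand sides of \eqref{3101-1}--\eqref{3101-2}. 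For the lower-order dissipation we note that $\|(\nabla_h u,\du)\|_{H^m_{tan}}^2$ is equivalent to the sum of its $L^2$ and $\dot H^m_{tan}$ parts by horizontal Fourier interpolation, so only the endpoint orders need to be estimated.

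\emph{Nonlinear terms.} The remaining work is to bound
\[
\int Z^{\alpha_h}(u\cdot\nabla\vro+\vro\du)Z^{\alpha_h}\vro,\qquad
\int Z^{\alpha_h}\Big(u\cdot\nabla u+\tfrac{\vro}{1+\vro}(\Delta_hu+\var\p_3^2u+\nabla\du)+\vro\nabla\vro\Big)\cdot Z^{\alpha_h}u
\]
by $\sqrt{\me}\,\md_{tan}$ (respectively $\sqrt{\me}\,\bmd_{tan}$). We distribute derivatives by Leibniz and use the anisotropic Sobolev inequality \eqref{ie:Sobolev}: the factor carrying few derivatives goes in $L^\infty$ and is bounded by $\sqrt{\me}$, while the remaining two factors are placed in $L^2$ and bounded by dissipation.

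The viscous terms $\frac{\vro}{1+\vro}\Delta_h u$ and $\frac{\vro}{1+\vro}\nabla\du$ at top order are integrated by parts horizontally, which moves one $\nabla_h$ onto $Z^{\alpha_h}u$. The $\var\p_3^2u$ term is integrated by parts in $x_3$; the boundary term vanishes by the slip condition, and what remains is $\var\|\p_3 Z^{\alpha_h} u\|^2$ times $\sqrt{\me}$, using $\p_3\vro\in L^\infty$ from $\me$. In the transport terms $u\cdot\nabla Z^{\alpha_h}\vro\,Z^{\alpha_h}\vro$ we integrate by parts to obtain $-\frac12\int\du|Z^{\alpha_h}\vro|^2$, and $\du$ is then bounded via $\md_{tan}$. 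The zero-order-in-derivative factors, such as $Z^{\alpha_h}\vro$ paired with $\du$, are handled by using $\|\nabla\vro\|_{H^{m-1}_{tan}}$ from $\md_{tan}$ wherever a derivative falls on $\vro$.

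\emph{Main obstacle.} The difficulty is the vertical convection $u_3\p_3 Z^{\alpha_h}(\cdot)$ and any factor involving $\p_3u_h$, whose $L^\infty$ norm needs $w_h\in L^\infty$; this is supplied by $\me$. A related problem is ensuring that every quadratic term has both non-$L^\infty$ factors in dissipative norms, without an undifferentiated $\vro$ or $u$ in $L^2$.

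For \eqref{3101-2} this is sharper: each term must contain a $\nabla_h$ so that it lands in $\bmd_{tan}$. Whenever all horizontal derivatives fall on the $L^\infty$ factor we would instead interpolate, bounding for instance $\|\nabla_h u\|_{L^\infty}$ by $\bme_{tan}^{1/4}\bmd_{tan}^{1/4}$-type products through \eqref{inter-1}--\eqref{inter-2}. For $u_3\p_3$ we would write $u_3=\int_0^{x_3}\p_3u_3$ and use $\p_3u_3=\du-\nabla_h\cdot u_h$, which gains a horizontal derivative.

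If a term cannot be placed entirely in $\bmd_{tan}$, I would expect to need an $\me^{1/2}\md_{tan}^{1/2}\bmd_{tan}^{1/2}$ bound. That would require revising the claimed right-hand side, so I would first try to show it is absorbable.
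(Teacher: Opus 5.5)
\paragraph{Where the proof fails.} Your overall scheme matches the paper's: horizontal derivatives, the linear cancellations from the slip condition, and anisotropic trilinear bounds at the endpoint orders. Estimate \eqref{3101-1} largely goes through along your lines. The genuine gap is \eqref{3101-2}, and it is exactly the point your last paragraph leaves open.

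Apart from the $\varepsilon$-weighted terms, every quantity in $\bar{\mathcal{D}}_{tan}$ carries at least two horizontal derivatives on $u$ or $\partial_3 u$, or at least one on ${\rm div}\,u$, $\partial_3{\rm div}\,u$ or $\nabla\varrho$. At $|\alpha_h|=1$ the convection produces
\[
\tfrac12\int_{\mathbb{R}^3_+}{\rm div}\,u\,|\nabla_h u|^2\,dx
\quad\text{and}\quad
\int_{\mathbb{R}^3_+}\nabla_h u_3\,\partial_3 u\cdot\nabla_h u\,dx .
\]
Their factors $\nabla_h u$, $\partial_3 u$ and $\partial_3 u_3$ (inside ${\rm div}\,u$) are not dissipated at this level. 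By \eqref{ie:Sobolev}$_3$ the first term is bounded by
$\|\nabla_h u\|_{L^2}\|\partial_3\nabla_h u\|_{L^2}^{1/2}\|\partial_1\nabla_h u\|_{L^2}^{1/2}\|{\rm div}\,u\|_{L^2}^{1/2}\|\partial_2{\rm div}\,u\|_{L^2}^{1/2}$.
The last factors $\|\partial_1\nabla_h u\|_{L^2}^{1/2}\|\partial_2{\rm div}\,u\|_{L^2}^{1/2}$ give $\bar{\mathcal{D}}_{tan}^{1/2}$. With \eqref{inter-1} alone, the other three give only $\bar{\mathcal{D}}_{tan}^{3/8}$ in this splitting, with nothing from $\partial_3 u_3$, so you reach only $\bar{\mathcal{D}}_{tan}^{7/8}$.

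\paragraph{The missing idea.} $\mathcal{E}(t)$ contains the negative horizontal norms $\|\Lambda_h^{-s}(u,\partial_3 u)\|_{L^2}$ with $s=1-\zeta$. By \eqref{inter-2}, each low factor can be interpolated between these norms and $\|\nabla_h^2(u,\partial_3u)\|_{L^2}\lesssim\bar{\mathcal{D}}_{tan}^{1/2}$. The three low factors then carry $\bar{\mathcal{D}}_{tan}$ to the power $\frac{4s+3}{4(s+2)}\ge\frac12$, which holds for $s=1-\zeta$. Any surplus power of $\|\nabla_h^2(u,\partial_3 u)\|_{L^2}$ is absorbed into $\sqrt{\mathcal{E}}$, giving $\sqrt{\mathcal{E}}\,\bar{\mathcal{D}}_{tan}$.

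You do cite \eqref{inter-2}, but only to bound $\|\nabla_h u\|_{L^\infty}$ by $\bar{\mathcal{E}}_{tan}$-type products, which is not the mechanism. Your fallback $\sqrt{\mathcal{E}}\,\mathcal{D}_{tan}^{1/2}\bar{\mathcal{D}}_{tan}^{1/2}$ is also not usable. After Young's inequality it leaves $\mathcal{E}\,\mathcal{D}_{tan}$, which cannot be integrated against $(1+t)^{1+\sigma}$ in Lemma~\ref{lemma315}, where only $\int_0^t(1+\tau)^{\sigma}\mathcal{D}_{tan}\,d\tau$ is controlled.

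\paragraph{Wrong power of $\mathcal{E}$.} $\|w_h\|_{L^\infty}$ and $\|\partial_3\varrho\|_{L^\infty}$ enter $\mathcal{E}$ cubed. So wherever you put $\partial_3u_h$ or $\partial_3\varrho$ in $L^\infty$ (your ``main obstacle'' and the $\varepsilon\partial_3^2u$ term), you get $\mathcal{E}^{1/3}$, which is weaker than the claimed $\sqrt{\mathcal{E}}$ for small $\mathcal{E}$. In this lemma such factors should instead go into \eqref{ie:Sobolev}$_2$ carrying only horizontal derivatives, which are controlled by $\|\partial_3(\varrho,u)\|_{H^{m-1}_{co}}\le\sqrt{\mathcal{E}}$. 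The vertical derivative then falls on a partner factor whose $\partial_3$-derivative is dissipated. The $\mathcal{E}^{1/3}$ loss only appears in Lemma~\ref{lemma33}.

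\paragraph{Smaller points.}
\begin{itemize}
\item The Hardy device $u_3=\int_0^{x_3}\partial_3u_3\,dz$ is not needed here.
\item $\partial_3u_3={\rm div}\,u-\nabla_h\cdot u_h$ gains no horizontal derivative on the ${\rm div}\,u$ piece.
\item At $\alpha_h=0$, the terms $\int\varrho^2\,{\rm div}\,u$ and $\int\varrho\nabla\varrho\cdot u$ cancel exactly, since $-\int u\cdot\nabla(\varrho^2)=\int\varrho^2{\rm div}\,u$. This settles your concern about undissipated $\|\varrho\|_{L^2}$ and $\|u\|_{L^2}$.
\end{itemize}
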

			\begin{proof}
				For any $|\al_h|=k\le m$,  the equations $\eqref{eqr}$ yields directly
				\begin{equation} \label{31-equ}
					\begin{aligned}
						&\frac{d}{dt}\frac{1}{2}\i(|Z^{\ah} \vro|^2+|Z^{\ah}u|^2)dx
						+\|\nabla_h Z^{\ah}u\|_{L^2}^2 +\| Z^{\ah} \du\|_{L^2}^2 + \var \|\p_3 Z^{\ah}u\|_{L^2}^2 \\
						=&\i Z^{\ah}(-u\cdot \nabla \vro -\vro \, \du )\cdot Z^{\ah} \vro \ dx
						-\i Z^{\ah} (\vro\, \nabla \vro)\cdot Z^{\ah} u\ dx\\
						& -\i Z^{\ah} (u\cdot \nabla u)\cdot Z^{\ah} u\ dx-\i Z^{\ah} (\f{\vro}{1+\vro} \Delta_h u)\cdot Z^{\ah} u\ dx  \\
                        &-\i Z^{\ah} (\f{\vro}{1+\vro} \nabla \du)\cdot Z^{\ah} u\ dx -\var \i Z^{\ah} (\f{\vro}{1+\vro}  \p_3^2 u)\cdot Z^{\ah} u\ dx\\
				:= & \sum_{i=1}^{6} I_i,
					\end{aligned}
				\end{equation}
				where we have used the basic fact
				\beqq \bal
				& \i Z^{\ah} \nabla \vro \cdot Z^{\ah} u \ dx
				+\i Z^{\ah} \du \cdot Z^{\ah} \vro \ dx=0.
				\dal \deqq
				If $|\alpha_h|=0$, integrating by part,  we can obtain
				\beqq \bal
				I_1 + I_2 & = - \i u \cdot \nabla( \vro^2) \ dx- \i \vro^2 \, \du \ dx =0,
				\dal \deqq
				and  using the anisotropic type inequality \eqref{ie:Sobolev}, we have
				\beqq \bal
				I_3 &= \f12 \i |u|^2 \du \ dx \lesssim \| u\|_{L^2} \| \nabla_h u\|_{L^2} \| \du\|_{L^2}^{\f12}   \|\p_3 \du\|_{L^2}^{\f12}  \lesssim  \sqrt{\me(t)}\md_{tan}(t),\\
				I_4+I_5+I_6 & \lesssim ( \| \Delta_h u\|_{L^2} + \var \| \p_3^2 u\|_{L^2} +  \| \nabla \du \|_{L^2}) \| \vro\|_{L^2}^{\f12}   \|\p_3 \vro\|_{L^2}^{\f12} \| \du\|_{L^2}^{\f12}   \|u\|_{L^2}^{\f14}  \| \nabla_h u\|_{L^2}^{\f12}   \|\nabla_h^2 u\|_{L^2}^{\f14} \\
				& \lesssim  \sqrt{\me(t)}\md_{tan}(t).
				\dal \deqq
				Thus we have
				\begin{equation}\label{3102} \bal
					&\f12 \frac{d}{dt} \|(\vro, u)(t)\|_{L^2}^2
				+\|(\nabla_h u, \du)(t)\|_{L^2}^2 +\var \|\p_3 u(t)\|_{L^2}^2 \lesssim \sqrt{\me(t)}\md_{tan}(t).
				\dal \end{equation}
				\textbf{Now let us deal with the case $|\alpha_h| = 1$}.
				Integrating by part, we have
			   \beqq \bal
			   & I_1 + I_2 \\
               =& - \i \nabla_h u\cdot \nabla \vro \cdot\nabla_h \vro \ dx + \f12 \i \du \, |\nabla_h \vro|^2 \ dx - \i (\nabla_h \vro \, \du+ \vro \nabla_h \du) \cdot \nabla_h \vro \ dx \\
               &\; + \f12 \i \nabla_h ( \vro^2) \nabla_h \du \ dx\\
			    =& - \i \nabla_h u_h \, |\nabla_h \vro|^2 \ dx - \i \nabla_h u_3 \, \p_3 \vro \cdot \nabla_h \vro \ dx - \f12 \i \du \, |\nabla_h \vro|^2 \ dx.
			   \dal \deqq
			   For any function $f(x)$, we can easily obtain the following interpolation inequality
			   \beq \label{inter-1}
			   \| \nabla_h f\|_{L^2} \lesssim   \| f\|_{L^2}^{\f12}   \| \nabla_h^2 f\|_{L^2}^{\f12},
			   \deq
			   which yields that
			   \beqq \bal
			    \i \nabla_h u_h\, |\nabla_h \vro|^2 \ dx  \lesssim  &\;  \|\nabla_h \vro\|_{L^2} \|\p_3 \nabla_h \vro\|_{L^2}^{\frac12} \|\p_1 \nabla_h \vro\|_{L^2}^{\frac12}
					\|\nabla_h u\|_{L^2}^{\frac12}
					\|\p_2 \nabla_h u\|_{L^2}^{\frac12}\\
					\lesssim  &\; \| \vro\|_{L^2}^{\f12}   \|\nabla_h^2 \vro\|_{L^2}^{\f12} (\|\p_3  \vro\|_{L^2}^{\frac12} \|\p_3 \nabla_h^2 \vro\|_{L^2}^{\frac12})^{\f12} \|\p_1 \nabla_h \vro\|_{L^2}^{\frac12} (\|u\|_{L^2}^{\frac12} \|\nabla_h^2 u\|_{L^2}^{\frac12})^{\frac12}
					\|\p_2 \nabla_h u\|_{L^2}^{\frac12}\\
					\lesssim &\; \sqrt{\me(t)}\bmd_{tan}(t).
			   \dal  \deqq
			   Similarly, we have
			   	   \beqq \bal
			   	   \i \nabla_h u_3 \, \p_3 \vro \cdot\nabla_h \vro \ dx
			   	   \lesssim  &\; \|\nabla_h u_3\|_{L^2}^{\frac12}
					\|\p_3 \nabla_h u_3\|_{L^2}^{\frac12} \|\p_3 \vro\|_{L^2}^{\f12} \|\p_{13} \vro\|_{L^2}^{\frac12} \|\nabla_h \vro\|_{L^2}^{\frac12}  \|\p_2 \nabla_h \vro\|_{L^2}^{\frac12}
					\\
					 \lesssim  &\; ( \| u_3\|_{L^2}^{\frac12} \|\nabla_h^2 u_3\|_{L^2}^{\frac12})^{\frac12}
					\|\p_3 \nabla_h u_3\|_{L^2}^{\frac12} \|\p_3 \vro\|_{L^2}^{\f12} \|\p_{13} \vro\|_{L^2}^{\frac12} (\| \vro\|_{L^2}^{\frac12} \| \nabla_h^2 \vro\|_{L^2}^{\frac12})^{\frac12} \|\p_2 \nabla_h \vro\|_{L^2}^{\frac12}
					\\
					\lesssim &\; \sqrt{\me(t)}\bmd_{tan}(t),\\
					 \i \du \, |\nabla_h \vro|^2 \ dx  \lesssim &\; \|\nabla_h \vro\|_{L^2} \|\p_3 \nabla_h \vro\|_{L^2}^{\frac12} \|\p_1 \nabla_h \vro\|_{L^2}^{\frac12}
					\|\du \|_{L^2}^{\frac12}
					\| \p_2 \du \|_{L^2}^{\frac12}\\
					\lesssim &\; \|\vro\|_{L^2}^{\f12} \|\nabla_h^2 \vro\|_{L^2}^{\f12} \|\p_3 \nabla_h \vro\|_{L^2}^{\frac12} \|\p_1 \nabla_h \vro\|_{L^2}^{\frac12}
					\|\du \|_{L^2}^{\frac12}
					\| \p_2 \du \|_{L^2}^{\frac12}\\
					\lesssim &\; \sqrt{\me(t)}\bmd_{tan}(t).
			   	    \dal  \deqq
			   	    Thus, combining the above estimates, we have
			   	    \beqq
			   	    I_1 +I_2 \lesssim \sqrt{\me(t)}\bmd_{tan}(t).
			   	    \deqq
			   	    Next, we estimate the term $I_3$. Integrating by part, we have
			   	    \beqq \bal
			   	    I_3  = \;  -  \i \nabla_h u_h \,|\nabla_h u|^2 \ dx - \i \nabla_h u_3 \, \p_3 u \cdot \nabla_h u \ dx +  \f12 \i \du \,|\nabla_h u|^2\ dx
			   	     = \; \sum_{i=1}^{3} I_{3,i}.
			   	      \dal \deqq
			   	     On one hand, using the interpolation inequality \eqref{inter-1} again, it is easy to check that
			   	        \beqq \bal
			   	        I_{3,1} \lesssim &\; \|  \nabla_h u\|_{L^2}^{\f32} \|\p_1 \nabla_h u_h\|_{L^2}^{\f12}  \|  \p_2 \nabla_h u\|_{L^2}^{\f12}  \|  \p_3 \nabla_h u\|_{L^2}^{\f12}\\
			   	      \lesssim &\; (\| u\|_{L^2}^{\f12}  \|  \nabla_h^2 u\|_{L^2}^{\f12})^{\f23} \| \nabla_h^2 u\|_{L^2} (\|  \p_3  u\|_{L^2}^{\f12} \|  \p_3 \nabla_h^2 u\|_{L^2}^{\f12})^{\frac12}\\
					\lesssim &\; \sqrt{\me(t)}\bmd_{tan}(t).
			   	    \dal \deqq
			   	    On the other hand, for any smooth function $f(x)$,
                    due to the interpolation inequalities
			   	    \beq \label{inter-2} \bal
			   	    \|  \nabla_h f\|_{L^2} \lesssim &\; \| \hs f\|_{L^2}^{\f{1}{s+2}} \| \nabla_h^2  f\|_{L^2}^{\f{s+1}{s+2}},\\
			   	     \|  f\|_{L^2} \lesssim &\; \| \hs f\|_{L^2}^{\f{2}{s+2}} \| \nabla_h^2  f\|_{L^2}^{\f{s}{s+2}},
			   	   \dal \deq
			   	     we can estimate the terms $I_{3,2}$ and $I_{3,3}$ as follows:
			   	    \beqq \bal
			   	        I_{3,2} \lesssim &\;  \|  \nabla_h  u\|_{L^2} \|  \p_2 \nabla_h u\|_{L^2}^{\f12} \|\p_3 \nabla_h u_3\|_{L^2}^{\f12}  \|  \p_3 u\|_{L^2}^{\f12}    \|  \p_{13} u\|_{L^2}^{\f12}     \\
			   	      \lesssim &\; \| \hs u\|_{L^2}^{\f{1}{s+2}} \| \nabla_h^2  u\|_{L^2}^{\f{s+1}{s+2}}    \Big(\| \hs \p_3 u\|_{L^2}^{\f{2}{s+2}} \| \nabla_h^2 \p_3 u\|_{L^2}^{\f{s}{s+2}}  \Big)^{\f12}   \Big(\| \hs \p_3 u\|_{L^2}^{\f{1}{s+2}} \| \nabla_h^2 \p_3 u\|_{L^2}^{\f{s+1}{s+2}}  \Big)^{\f12}    \|(\p_2 \nabla_h u ,\p_3 \nabla_h u_3)\|_{L^2}\\
			   	      \lesssim &\; \| (\hs u, \hs \p_3 u)\|_{L^2}^{\f{5}{2(s+2)}} \| \nabla_h^2 (u, \p_3 u)\|_{L^2}^{\f{4s+3}{2(s+2)}} \|(\p_2 \nabla_h u ,\p_3 \nabla_h u_3)\|_{L^2}\\
                    \lesssim &\; \sqrt{\me(t)}\bmd_{tan}(t),
			   	    \dal \deqq
			   	    and
			   	     \beqq \bal
			   	        I_{3,3} \lesssim &\;  \|  \nabla_h  u\|_{L^2} \|\p_3 \nabla_h u\|_{L^2}^{\f12}   \|  \p_{1} \nabla_h u\|_{L^2}^{\f12}   \|  \du\|_{L^2}^{\f12}    \|  \p_2 \du\|_{L^2}^{\f12}\\
			   	      \lesssim &\;\Big( \| \hs (u,\p_3 u)\|_{L^2}^{\f{1}{s+2}} \| \nabla_h^2  (u,\p_3 u)\|_{L^2}^{\f{s+1}{s+2}} \Big)^{\f32}    \Big(\| \hs \p_3 u_3\|_{L^2}^{\f{2}{s+2}} \| \nabla_h^2 \p_3 u_3\|_{L^2}^{\f{s}{s+2}}  +  \| \hs u_h\|_{L^2}^{\f{1}{s+2}} \| \nabla_h^2 u_h\|_{L^2}^{\f{s+1}{s+2}} \Big)^{\f12}  \\
                      &\; \times \|(\p_1 \nabla_h u ,\p_2 \du)\|_{L^2}\\
                      \lesssim &\; \Big(\| (\hs u, \hs \p_3 u)\|_{L^2}^{\f{5}{2(s+2)}} \| \nabla_h^2 (u, \p_3 u)\|_{L^2}^{\f{4s+3}{2(s+2)}} + \| (\hs u, \hs \p_3 u)\|_{L^2}^{\f{2(s+1)}{s+2}} \| \nabla_h^2 (u, \p_3 u)\|_{L^2}^{\f{2}{s+2}} \Big) \\
                      &\; \times \|(\p_1 \nabla_h u ,\p_2 \du)\|_{L^2}\\
                    \lesssim &\; \sqrt{\me(t)}\bmd_{tan}(t),
			   	    \dal \deqq
                    where we have used  $s \ge \f{16}{17}$.
                    Combining the estimates from $I_{3,1}$ to $I_{3,3}$, we can obtain
                    \beqq \bal
			   	        I_{3}
					\lesssim \sqrt{\me(t)}\bmd_{tan}(t).
			   	    \dal \deqq
                    As for the term $I_5$, integrating by parts and using the anisotropic type inequalities $\eqref{ie:Sobolev}$, one can arrive that
                    \beqq \bal
                    I_5 = &\; - \i (\nabla_h (\f{\vro}{1+\vro}) \nabla \du + \f{\vro}{1+\vro} \nabla \nabla_h \du ) \cdot \nabla_h u \ dx\\
                    =  &\; \i \Big(\nabla_h u \cdot \nabla \nabla_h (\f{\vro}{1+\vro})  + \nabla_h(\f{\vro}{1+\vro})  \cdot \nabla_h \du   \Big) \, \du \ dx \\
                    &\;  +  \i \Big(\nabla_h u   \cdot \nabla (\f{\vro}{1+\vro}) + \f{\vro}{1+\vro} \nabla_h \du \Big) \cdot \nabla_h \du \ dx\\
                    \lesssim &\;  \| \du \|_{L^2}^{\f12} \| \p_1 \du \|_{L^2}^{\f12} \|\nabla \nabla_h (\f{\vro}{1+\vro})\|_{L^2} \| \nabla_h u \|_{L^2}^{\f14}
                    \| \p_3 \nabla_h u \|_{L^2}^{\f14}\| \p_2 \nabla_h u \|_{L^2}^{\f14} \| \p_{23} \nabla_h u \|_{L^2}^{\f14}\\
                    &\; + \| \du \|_{L^2}^{\f12} \| \nabla_h \du \|_{L^2}^{\f32} \| \nabla_h \vro \|_{L^2}^{\f14} \| \p_3 \nabla_h \vro \|_{L^2}^{\f14}
                    \| \p_2 \nabla_h \vro \|_{L^2}^{\f14} \| \p_{23} \nabla_h \vro \|_{L^2}^{\f14}\\
                    &\; + \| \nabla_h \du \|_{L^2} ( \| \nabla_h u \|_{L^2}^{\f14} \| \nabla_h^2 u \|_{L^2}^{\f12} \| \nabla_h^3 u \|_{L^2}^{\f14}\| \nabla_h \vro \|_{L^2}^{\f12} \| \p_3 \nabla_h \vro \|_{L^2}^{\f12}
                    \\
                    &\; + \| \nabla_h u_3 \|_{L^2}^{\f14}  \| \p_3 \nabla_h u_3 \|_{L^2}^{\f14} \| \p_1 \nabla_h u_3 \|_{L^2}^{\f14} \| \p_{13} \nabla_h u_3 \|_{L^2}^{\f14} \| \p_3 \vro \|_{L^2}^{\f12} \| \p_{13} \vro \|_{L^2}^{\f12} )
                     + \| \vro\|_{L^{\infty}} \| \nabla_h \du \|_{L^2}^{2}\\
                    \lesssim &\; \sqrt{\me(t)}\bmd_{tan}(t).
                    \dal \deqq
                    Similarly, integrating by parts, the terms $I_4$ and $I_6$ can be bounded by
                    \beqq \bal
                    I_4 = &\;  \i \f{\vro}{1+\vro} \Delta_h u\cdot \nabla_h^2 u\ dx  \lesssim \| \vro\|_{L^{\infty}} \|  \nabla_h^2 u \|_{L^2}^2 \lesssim \sqrt{\me(t)}\bmd_{tan}(t),\\
                    I_6 =&\; \var \i \p_3 u \cdot \p_3 (\nabla_h (\f{\vro}{1+\vro} ) \cdot \nabla_h u) \ dx
                    + \var \i \p_3 \nabla_h u \cdot \p_3 (\f{\vro}{1+\vro}  \,\nabla_h u) \ dx \\
                    \lesssim  &\; \sqrt{\me(t)}\bmd_{tan}(t).
                    \dal \deqq
                    Substituting the estimates from $I_1$ to $I_6$ into \eqref{31-equ}, we have
                   \begin{equation} \label{3103} \bal
					&\f12 \frac{d}{dt} \|\nabla_h(\vro, u)(t)\|_{L^2}^2
				+\|\nabla_h(\nabla_h u, \du)(t)\|_{L^2}^2 +\var \|\p_3 \nabla_h u(t)\|_{L^2}^2 \lesssim \sqrt{\me(t)}\bmd_{tan}(t).
				\dal \end{equation}
				\textbf{Next let us deal with the case $|\alpha_h| =m-1$}.
				Obviously, it holds
				\beqq \bal
				I_1
				= &\; -\sum_{0\le \beta_h \le \alpha_h}C^{\beta_h}_{\alpha_h}
				\i (Z^{\beta_h} u \cdot Z^{\al_h-\beta_h}\nabla \vro + Z^{\beta_h} \vro \cdot Z^{\al_h-\beta_h} \du)\cdot Z^{\al_h} \vro\ dx.
				\dal \deqq
				If $|\beta_h|=0$, integrating by part and using the divergence-free condition,
				we conclude
				\beqq \bal
				&-\i(u \cdot Z^{\al_h}\nabla \vro + \vro \, Z^{\al_h} \du)\cdot Z^{\al_h} \vro\ dx\\
				= &\; \frac{1}{2}\i |Z^{\al_h} \vro|^2 {\rm div} u\ dx - \i \vro \, Z^{\al_h} \du \cdot Z^{\al_h} \vro\ dx\\
                \lesssim &\;  \|\du\|_{L^{\infty}} \| Z^{\al_h} \vro\|_{L^2}^2 +  \|\vro\|_{L^{\infty}} \| Z^{\al_h} \vro\|_{L^2} \| Z^{\al_h} \du\|_{L^2} \\
                \lesssim &\; \sqrt{\me(t)}\bmd_{tan}(t).
				\dal \deqq
				If $\beta_h=\alpha_h$, the anisotropic type inequality \eqref{ie:Sobolev} yields directly
				\beqq
				\begin{aligned}
					&\i (Z^{\al_h} u \cdot \nabla \vro + Z^{\al_h} \vro \, \du )\cdot Z^{\al_h} \vro\ dx\\
					\lesssim
					&(\|Z^{\al_h} u\|_{L^2}^{\frac12}\|\p_3 Z^{\al_h} u\|_{L^2}^{\frac12}
					\|\nabla \vro\|_{H_{tan}^2}+  \|\du\|_{L^{\infty}} \|Z^{\al_h} \vro\|_{L^2})
					\|Z^{\al_h} \vro\|_{L^2}\\
					\lesssim
					&\sqrt{\me(t)}\bmd_{tan}(t).
				\end{aligned}
				\deqq
				If $1<|\beta_h|<|\alpha_h|$, we apply  the anisotropic
				type inequality \eqref{ie:Sobolev} to obtain
				\beqq
				\begin{aligned}
					&\i (Z^{\beta_h} u \cdot Z^{\al_h-\beta_h}\nabla \vro+ Z^{\beta_h} \vro \,Z^{\al_h-\beta_h}\du )\cdot Z^{\al_h} \vro \ dx\\
					\lesssim &\Big(\|Z^{\beta_h} u\|_{L^{2}}^{\f14} \|\p_3 Z^{\beta_h} u\|_{L^{2}}^{\f14} \|\p_1 Z^{\beta_h} u\|_{L^{2}}^{\f14} \|\p_{13} Z^{\beta_h} u\|_{L^{2}}^{\f14}
					\|Z^{\al_h-\beta_h}\nabla \vro\|_{L^2}^{\f12} \|\p_2 Z^{\al_h-\beta_h}\nabla \vro\|_{L^2}^{\f12}  \\
                    &\; + \| Z^{\beta_h}  \vro\|_{L^2}^{\f12}  \| \p_3 Z^{\beta_h}  \vro\|_{L^2}^{\f12}
					 \| Z^{\al_h -\beta_h}  \du\|_{H_{tan}^2}\Big) \|Z^{\al_h} \vro\|_{L^2}\\
					\lesssim
					&\sqrt{\me(t)}\bmd_{tan}(t),
				\end{aligned}
				\deqq
				and for $|\beta_h|=1$, we have
				\beqq
				\begin{aligned}
					&\i (Z^{\beta_h} u \cdot Z^{\al_h-\beta_h}\nabla \vro + Z^{\beta_h} \vro \,Z^{\al_h-\beta_h}\du )\cdot Z^{\al_h} \vro\ dx\\
					\lesssim
					&\|Z^{\beta_h} u\|_{L^{\infty}}
					\|Z^{\al_h-\beta_h}\nabla \vro\|_{L^2} \|Z^{\al_h} \vro\|_{L^2} + \| Z^{\beta_h}  \vro\|_{L^2}^{\f14}  \| \p_3 Z^{\beta_h}  \vro\|_{L^2}^{\f14}  \| \p_1 Z^{\beta_h}  \vro\|_{L^2}^{\f14}  \|\p_{13} Z^{\beta_h}  \vro\|_{L^2}^{\f14} \\
                    &\;\times
					 \| Z^{\al_h -\beta_h}  \du\|_{L^2}^{\f12} \| \p_2 Z^{\al_h -\beta_h}  \du\|_{L^2}^{\f12} \|Z^{\al_h} \vro\|_{L^2}\\
					\lesssim
					&\sqrt{\me(t)}\bmd_{tan}(t).
				\end{aligned}
				\deqq
				Thus, the combination of above estimates yields directly
				\beqq
				I_1
				\lesssim \sqrt{\me(t)}\bmd_{tan}(t).
				\deqq
                Similarly, integrating by parts, for $i=1,2$, we can conclude that
                \begin{align*}
               I_2 =  &\; \i Z^{\al_h-e_i} ( \vro \nabla \vro ) \cdot Z^{\al_h+e_i} u \ dx \lesssim \sqrt{\me(t)}\bmd_{tan}(t),\\
               I_3 = &\; - \sum_{0 < \beta_h \le \alpha_h} C^{\beta_h}_{\alpha_h}
				\i Z^{\beta_h} u \cdot Z^{\al_h-\beta_h}\nabla u\cdot Z^{\al_h} u \ dx
                - \i  u \cdot Z^{\al_h} \nabla u \cdot Z^{\al_h} u\ dx\\
                = &\; - \sum_{0 < \beta_h \le \alpha_h} C^{\beta_h}_{\alpha_h}
				\i (Z^{\beta_h} u_h \cdot Z^{\al_h-\beta_h}\nabla_h u + Z^{\beta_h} u_3 \, Z^{\al_h-\beta_h}\p_3  u)\cdot Z^{\al_h} u \ dx
                +\f12 \i  \du \,| Z^{\al_h} u|^2\ dx\\
                \lesssim &\; \sqrt{\me(t)}\bmd_{tan}(t),\\
                I_4 = &\; \i Z^{\al_h-e_i} ( \f{\vro}{1+\vro }\Delta_h u ) \cdot Z^{\al_h+e_i} u \ dx \lesssim \sqrt{\me(t)}\bmd_{tan}(t).
                \end{align*}
              Next we estimate the term $I_5$.
              \beqq \bal
  I_5 =&\;  - \i Z^{\al_h} \Big(\nabla (\f{\vro}{1+\vro }\du) - \nabla (\f{\vro}{1+\vro }) \du \Big) \cdot Z^{\al_h} u \ dx\\
  =&\;   \i Z^{\al_h} (\f{\vro}{1+\vro }\du) \cdot Z^{\al_h} \du \ dx + \i Z^{\al_h} (\nabla (\f{\vro}{1+\vro }) \du ) \cdot Z^{\al_h} u \ dx\\
  =&\;  \sum_{0 \le \beta_h \le \alpha_h} C^{\beta_h}_{\alpha_h} \i Z^{\be_h} (\f{\vro}{1+\vro })  Z^{\al_h-\be_h}  \du \cdot Z^{\al_h} \du \ dx \\
  &\; + \sum_{0 \le \beta_h \le \alpha_h} C^{\beta_h}_{\alpha_h} \i Z^{\be_h} \nabla (\f{\vro}{1+\vro })  Z^{\al_h-\be_h} \du  \cdot Z^{\al_h} u \ dx.
              \dal \deqq
              Integrating by parts and applying  the anisotropic
				type inequality \eqref{ie:Sobolev}, for $i=1,2$, we can  obtain
             \beqq \bal
&\; \sum_{0 \le \beta_h \le \alpha_h} C^{\beta_h}_{\alpha_h} \i Z^{\be_h} \nabla (\f{\vro}{1+\vro })  Z^{\al_h-\be_h} \du  \cdot Z^{\al_h} u \ dx \\
= &\; \sum_{0 \le \beta_h < \alpha_h} C^{\beta_h}_{\alpha_h} \i Z^{\be_h} \nabla (\f{\vro}{1+\vro })  Z^{\al_h-\be_h} \du  \cdot Z^{\al_h} u \ dx \\
&\; - \i Z^{\al_h} (\f{\vro}{1+\vro }) \cdot (Z^{\al_h} u \cdot \nabla \du + \du \,Z^{\al_h} \du) \ dx\\
\lesssim &\; \Big(\| \nabla (\f{\vro}{1+\vro }) \|_{H_{tan}^2} \|Z^{\al_h} \du\|_{L^2} +\sum_{0 < \beta_h  < \alpha_h-e_i}  \| Z^{\be_h}\nabla (\f{\vro}{1+\vro }) \|_{H_{tan}^1} \|Z^{\al_h-\be_h} \du\|_{H_{tan}^1}
\\
&\; +  \|Z^{\alpha_h-e_i} \nabla (\f{\vro}{1+\vro }) \|_{L^2} \|Z^{e_i} \du\|_{H_{tan}^2} \Big) \|Z^{\al_h} u\|_{L^2}^{\f12} \|\p_3 Z^{\al_h} u\|_{L^2}^{\f12} \\
&\; + \|Z^{\alpha_h}  (\f{\vro}{1+\vro })\|_{L^2}(\| Z^{\al_h} u\|_{L^2}^{\f12} \|\p_3 Z^{\al_h} u\|_{L^2}^{\f12} \|\nabla \du\|_{H_{tan}^{2}}+ \| \du\|_{L^{\infty}} \| Z^{\al_h} \du\|_{L^2})\\
\lesssim &\; \sqrt{\me(t)}\bmd_{tan}(t).
\dal \deqq
            Similarly, we have
            \beqq
 \sum_{0 \le \beta_h \le \alpha_h} C^{\beta_h}_{\alpha_h} \i Z^{\be_h} (\f{\vro}{1+\vro })  Z^{\al_h-\be_h}  \du \cdot Z^{\al_h} \du \ dx  \lesssim \sqrt{\me(t)}\bmd_{tan}(t),
            \deqq
            which, together with the above estimate,  yields that
            \beqq
           I_5 \lesssim \sqrt{\me(t)}\bmd_{tan}(t).
\deqq
It remains to estimate the term $I_6$.  Similar to the estimate of $I_5$, for $i=1,2$, we have
\begin{align*}
I_6 = &\;  - \var \i Z^{\al_h} \Big( \p_3( \f{\vro}{1+\vro } \p_3 u ) - \p_3( \f{\vro}{1+\vro }) \p_3 u  \Big)
\cdot Z^{\al_h} u \ dx \\
= &\;  \var \i Z^{\al_h} ( \f{\vro}{1+\vro } \p_3 u ) \cdot Z^{\al_h} \p_3 u\ dx - \var \sum_{0 \le \beta_h \le \alpha_h} C^{\beta_h}_{\alpha_h}  \i Z^{\be_h} \p_3( \f{\vro}{1+\vro }) Z^{\al_h-\be_h}  \p_3 u
\cdot Z^{\al_h} u\ dx \\
= &\;  \var \i Z^{\al_h} ( \f{\vro}{1+\vro } \p_3 u ) \cdot Z^{\al_h} \p_3 u\ dx - \var \sum_{0 \le \beta_h < \alpha_h} C^{\beta_h}_{\alpha_h}  \i Z^{\be_h} \p_3( \f{\vro}{1+\vro }) Z^{\al_h-\be_h}  \p_3 u
\cdot Z^{\al_h} u\ dx \\
&\; +\var \i Z^{\al_h-e_i} \p_3( \f{\vro}{1+\vro })\, Z^{e_i} (\p_3 u
\cdot Z^{\al_h} u)\ dx\\
\lesssim &\; \sqrt{\me(t)}\bmd_{tan}(t).
\end{align*}
Substituting the estimates from $I_1$ to $I_6$ into \eqref{31-equ}, we have for $|\al_h|=m-1$, we have
                   \begin{equation}\label{3104} \bal
					&\f12 \frac{d}{dt} \|Z^{\ah}(\vro, u)(t)\|_{L^2}^2
				+\|Z^{\ah}(\nabla_h u, \du)(t)\|_{L^2}^2 +\var \|\p_3 Z^{\ah} u(t)\|_{L^2}^2 \lesssim \sqrt{\me(t)}\bmd_{tan}(t).
				\dal \end{equation}
                \textbf{Finally, we deal with the case $|\ah|=m$.}
                Similar to the case $|\ah|=m-1$,  we can obtain
                \begin{equation} \label{3105}\bal
					&\f12 \frac{d}{dt}  \sum\limits_{ |\al|=m} \|Z^{\ah}(\vro, u)(t)\|_{L^2}^2
				+ \!\!\sum\limits_{ |\al|=m} \|Z^{\ah}(\nabla_h u, \du)(t)\|_{L^2}^2 +\var \!\!\sum\limits_{ |\al|=m} \|\p_3 Z^{\ah} u(t)\|_{L^2}^2 \lesssim \sqrt{\me(t)}\md_{tan}(t).
				\dal \end{equation}
                Combining the estimates \eqref{3102}, \eqref{3103}-\eqref{3105}, we complete the proof of this lemma.
			\end{proof}
            Next, in order to obtain the decay estimate, we will establish the estimate for the vertical derivative as well as the higher derivative 
			of density and velocity field with the help of Lemma \ref{lemma-help}. 	
			\begin{lemm}\label{lemma33}
				For any smooth solution $(\vro,u)$ of equation \eqref{eqr},
				it holds 
				\beq\label{3201-1} \bal
				&\; \f12 \frac{d}{dt}( \|\p_3(\vro, u)(t)\|_{L^2}^2 + \|\p_3(\vro, u)(t)\|_{\dot{H}_{tan}^{m-1}}^2 )\\
				&+\|\p_3 (\nabla_h u, \du)(t)\|_{H^{m-1}_{tan}}^2 +\var \|\p_3^2 u(t)\|_{H^{m-1}_{tan}}^2
				\lesssim   \;(\me(t)^{\f13}+\me(t)^{\f12})\md_{tan}(t),
                \dal \deq 
                and
                \beq\label{3201-2} \bal
				&\; \f12 \frac{d}{dt} (\|\p_3\nabla_h(\vro, u)(t)\|_{L^2}^2 + \|\p_3 \nabla_h (\vro, u)(t)\|_{\dot{H}_{tan}^{m-3}}^2 )\\
				&+\|\p_3 \nabla_h (\nabla_h u, \du)(t)\|_{H^{m-3}_{tan}}^2 +\var \|\nabla_h \p_3^2 u(t)\|_{H^{m-3}_{tan}}^2
				\lesssim \; (\me(t)^{\f13}+\me(t)^{\f12})\bar{\md}_{tan}(t).
				\dal \deq
			\end{lemm}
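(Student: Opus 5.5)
I would follow the structure of the proof of Lemma~\ref{lemma32}: apply $\p_3$ to \eqref{eqr} and then $Z^{\al_h}$ with $|\al_h|\le m-1$ (respectively $\nabla_h Z^{\al_h}$ with $|\al_h|\le m-3$ for \eqref{3201-2}), and take the $L^2$ inner product with $\p_3 Z^{\al_h}(\vro,u)$. The boundary condition $u_3=\p_3u_h=0$ on $x_3=0$ is what makes the linear terms work. For the viscous terms, integrating by parts in $x_3$ with $\p_3 u$ gives boundary contributions $\p_3^2u\cdot\p_3 u|_{x_3=0}$. Using $\p_3u_h=0$ there, only $\p_3^2 u_3\,\p_3 u_3$ survives, and this combines with the $\nabla\du$ term. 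I would therefore compute $\int \p_3\nabla \du\cdot\p_3 u$ as $-\|\p_3\du\|^2$ plus boundary terms. These should cancel once one uses $\p_3 u_3=\du$ on the boundary, after rewriting $\p_3\p_3^2 u_3$ via $\p_3\nabla_h\cdot u_h$. The pressure/continuity cross terms cancel as before, which gives the left-hand side of \eqref{3201-1}. The $\var\p_3^2$ term is handled identically and yields $\var\|\p_3^2u\|^2_{H^{m-1}_{tan}}$.

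For the nonlinear terms I would distribute derivatives by Leibniz. I would estimate each piece with the anisotropic inequality \eqref{ie:Sobolev}, the interpolations \eqref{inter-1}--\eqref{inter-2}, and Lemma~\ref{lemma-help}, which controls $\p_3^2u_3$ and $\var\p_3^2 u_h$ in $H^2_{tan}$ by $\me$. The transport terms with $u_h\cdot\nabla_h$ are treated as in Lemma~\ref{lemma32}, and commutator pieces with one factor $\p_3 u$ use $\|w_h\|_{L^\infty}\le \me^{1/3}$. This is the source of the $\me^{1/3}$ on the right-hand side, coming from the cubic power in $\me$. Each remaining term is bounded by $\sqrt{\me}\,\md_{tan}$ (respectively $\bmd_{tan}$), with the lowest-order factors absorbed through the negative norm $\|\hs(\vro,u,\p_3\vro,\p_3u)\|_{L^2}\le\sqrt{\me}$ and \eqref{inter-2}.

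The main obstacle is the convective term in which $\p_3^2$ falls on the unknown, namely
\beqq
\i u_3\,\p_3^2Z^{\al_h}\vro\cdot\p_3Z^{\al_h}\vro\,dx,
\deqq
together with its cousins $Z^{\beta_h}u_3\,\p_3^2Z^{\al_h-\beta_h}\vro$ (the term $II_{1,4}$), and the analogue with $\p_3^2 u_h$ (the term $II_{3,5}$). The top-order piece is handled by integrating by parts, giving $\f12\int\p_3u_3|\p_3Z^{\al_h}\vro|^2$. Lower-order pieces with $\beta_h\neq0$ contain $\p_3^2\vro$, which is not in the energy. For these I would split $\mathbb R^3_+$ with a cutoff $\chi(x_3)$. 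Near the boundary, write $Z^{\beta_h}u_3=\int_0^{x_3}\p_3Z^{\beta_h}u_3\,dz$, so that $Z^{\beta_h}u_3\,\p_3^2 f=(Z^{\beta_h}u_3/\vf)\,Z_3\p_3 f$; Hardy's inequality then bounds $Z^{\beta_h}u_3/\vf$ by $\p_3 Z^{\beta_h}u_3$, and $Z_3\p_3 f$ lies in $H^{m-1}_{co}$. Away from the boundary, $\p_3^2$ is comparable to $Z_3\p_3$, so conormal norms control it. Either way I get a product of a $\me$-bounded factor and tangential dissipation factors. The delicate point is that the dissipation must be purely tangential with enough horizontal derivatives to close with $\md_{tan}$; this requires \eqref{inter-2} with $s\ge\f{16}{17}$ to distribute the exponents so that their sum reaches $2$. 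This exponent bookkeeping (the estimate \eqref{est-s-16/17}) is the step I expect to be hardest. I would set it up first, and only then carry out the routine cases $|\al_h|=0,1,m-3,m-1$ as in Lemma~\ref{lemma32}.
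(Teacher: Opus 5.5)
There is a genuine gap in the very first step, the linear energy identity. You apply $\partial_3 Z^{\alpha_h}$ to the momentum equation and test with $\partial_3 Z^{\alpha_h}u$. Integrating $-\varepsilon\partial_3^3 Z^{\alpha_h}u$, $-\partial_3\nabla\mathrm{div}\,Z^{\alpha_h}u$ and $\partial_3\nabla Z^{\alpha_h}\varrho$ by parts in $x_3$, and using $\partial_3u_h|_{x_3=0}=0$ and $\partial_3\mathrm{div}\,u|_{x_3=0}=\partial_3^2u_3|_{x_3=0}$, leaves the boundary integral
\[
\int_{\mathbb{R}^2}\partial_3Z^{\alpha_h}u_3\,\bigl[(1+\varepsilon)\,\partial_3^2Z^{\alpha_h}u_3-\partial_3Z^{\alpha_h}\varrho\bigr]\Big|_{x_3=0}\,dx_h .
\]
So neither the viscous terms nor the pressure--continuity cross terms close ``as before''.

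This integral does not vanish, since $\partial_3u_3\neq0$ on the wall. The identity you invoke, $\partial_3u_3=\mathrm{div}\,u$ at $x_3=0$, is false because $u_h$ is free there and $\nabla_h\cdot u_h\neq0$. The integral is quadratic with an $O(1)$ coefficient, so it cannot be treated perturbatively. It cannot be bounded by trace estimates either: the traces of $\partial_3\varrho$ and $\partial_3^2u_3$ need $\partial_3^2\varrho$ and $\partial_3^3u_3$ in $L^2$, and neither is controlled uniformly in $\varepsilon$.

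Evaluating the $u_3$-equation at $x_3=0$ turns the integral into $-\int_{x_3=0}\partial_3Z^{\alpha_h}u_3\,Z^{\alpha_h}N_3\,dx_h$, where $N$ is the right-hand side of $\eqref{eqr}_2$. It cancels only against the boundary term produced when $\int\partial_3Z^{\alpha_h}N_3\,\partial_3Z^{\alpha_h}u_3$ is integrated back by parts. You are forced to do that anyway. Once $\partial_3$ falls on $N$, Leibniz produces $\frac{\varrho}{1+\varrho}\partial_3^2\mathrm{div}\,u$, $\varepsilon\frac{\varrho}{1+\varrho}\partial_3^3u_3$ and $\varrho\,\partial_3^2\varrho$. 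None of these is controlled by $\mathcal{E}$ or $\mathcal{D}_{tan}$, so your plan to ``distribute derivatives by Leibniz'' and estimate directly breaks on exactly these terms.

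The fix is the paper's choice of multiplier. Do not differentiate the momentum equation in $x_3$. Apply $Z^{\alpha_h}$ to $\eqref{eqr}_2$ and test with $-Z^{\alpha_h}\partial_3^2u$, while $Z^{\alpha_h}\partial_3\eqref{eqr}_1$ is tested with $Z^{\alpha_h}\partial_3\varrho$. This has three consequences.
\begin{itemize}
\item Every vertical integration by parts has a boundary integrand containing $\partial_3Z^{\alpha_h}u_h$ or $Z^{\alpha_h}u_3$ (hence also $\partial_tZ^{\alpha_h}u_3$ and $\Delta_hZ^{\alpha_h}u_3$), and all of these vanish.
\item The cross terms cancel exactly: $\int Z^{\alpha_h}\partial_3\mathrm{div}\,u\,Z^{\alpha_h}\partial_3\varrho-\int Z^{\alpha_h}\nabla\varrho\cdot Z^{\alpha_h}\partial_3^2u=0$.
\item The nonlinear terms appear as $\int Z^{\alpha_h}N\cdot Z^{\alpha_h}\partial_3^2u$. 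The third component keeps $\partial_3^2u_3=\partial_3\mathrm{div}\,u-\partial_3\nabla_h\cdot u_h$, which lies in $\mathcal{D}_{tan}$. The horizontal components are integrated back to $\partial_3Z^{\alpha_h}u_h$ at no boundary cost.
\end{itemize}
This gives \eqref{32-equ}.

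Once that identity is in place, the rest of your plan coincides with the paper. For $II_{1,4}$ and $II_{3,5}$ (and $II_{1,3}$, $II_{3,4}$ at order $m-2$) you use the cutoff $\chi$, write $u_3=\int_0^{x_3}\partial_3u_3\,dz$ near the wall, and use the conormal/Sobolev equivalence away from it. The factor $\mathcal{E}^{1/3}$ comes from $\|\partial_3u_h\|_{L^\infty}\lesssim\|w_h\|_{L^\infty}+\|\nabla_hu_3\|_{L^\infty}$. The exponent count uses \eqref{inter-2} with $s\ge\frac{16}{17}$, as in \eqref{est-s-16/17}.
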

			\begin{proof}
				For any $|\ah|=k\le m-1$, applying $Z^{\ah} \p_3$-operator to $\eqref{eqr}_1$ and multiplying by $Z^{\ah} \p_3\vro$ and integrating over $\mathbb{R}_+^3$, we have
                \begin{equation*}
					\begin{aligned}
						\frac{d}{dt}\frac{1}{2}\i| Z^{\ah} \p_3\vro|^2 \ dx + \i  Z^{\ah} \p_3\du\cdot Z^{\ah} \p_3\vro \ dx
						=\i Z^{\ah} \p_3 (-u\cdot \nabla \vro -\vro \, \du )\cdot Z^{\ah} \p_3 \vro \ dx.
					\end{aligned}
				\end{equation*}
                Next, applying $Z^{\ah}$-operator to $\eqref{eqr}_2$ and multiplying by $-Z^{\ah} \p_3^2 u$ and integrating over $\mathbb{R}_+^3$, together with the above equality, we can obtain that
				\begin{equation} \label{32-equ}
					\begin{aligned}
						&\frac{d}{dt}\frac{1}{2}\i(| Z^{\ah} \p_3\vro|^2+|Z^{\ah} \p_3 u|^2)dx
						+\|Z^{\ah} \nabla_h \p_3 u\|_{L^2}^2 +\| Z^{\ah} \p_3 \du\|_{L^2}^2 + \var \|\p_3^2 Z^{\ah}u\|_{L^2}^2 \\
						=&\i Z^{\ah} \p_3 (-u\cdot \nabla \vro -\vro \, \du )\cdot Z^{\ah} \p_3 \vro \ dx
						+\i Z^{\ah} (\vro\, \nabla \vro)\cdot Z^{\ah} \p_3^2 u\ dx\\
						& +\i Z^{\ah} (u\cdot \nabla u)\cdot Z^{\ah}  \p_3^2 u\ dx+\i Z^{\ah} (\f{\vro}{1+\vro} \Delta_h u)\cdot Z^{\ah} \p_3^2  u\ dx  \\
                        &+\i Z^{\ah} (\f{\vro}{1+\vro} \nabla \du)\cdot Z^{\ah}  \p_3^2 u\ dx +\var \i Z^{\ah} (\f{\vro}{1+\vro}  \p_3^2 u)\cdot Z^{\ah}  \p_3^2 u\ dx\\
				:= & \sum_{i=1}^{6} II_i,
					\end{aligned}
				\end{equation}
				where we have used the basic fact
				\beqq \bal
				& \i Z^{\ah} \p_3 \du \cdot Z^{\ah} \p_3 \vro \ dx -\i Z^{\ah} \nabla \vro \cdot Z^{\ah} \p_3^2 u \ dx
				\\
                = &\; \i Z^{\ah} \p_3 (\nabla_h \cdot u_h) \cdot Z^{\ah} \p_3 \vro \ dx -\i Z^{\ah} \nabla_h \vro \cdot Z^{\ah} \p_3^2 u_h \ dx
                = 0.
				\dal \deqq
				Since the case $|\ah|=0$ is elementary, we can easily obtain the following estimate
   \beq \label{3202}
    \f12 \frac{d}{dt} \|\p_3(\vro, u)(t)\|_{L^2}^2
				+\|\p_3(\nabla_h u, \du)(t)\|_{L^2}^2 +\var \|\p_3^2 u(t)\|_{L^2}^2
				\lesssim  \sqrt{\me(t)}\md_{tan}(t).
   \deq 
   We restrict our attention to the case $|\ah|>0$. 
   {\textbf{Now we deal with the case $|\ah|=1$.}}
    \begin{align*}
  II_1 = &\; - \i (\nabla_h \p_3 u \cdot \nabla \vro + \nabla_h \p_3 \vro \, \du) \cdot \nabla_h \p_3 \vro \ dx - \i ( \p_3 u \cdot  \nabla \nabla_h  \vro + \p_3 \vro \, \nabla_h  \du) \cdot \nabla_h \p_3 \vro \ dx \\
  &\; - \i ( \nabla_h u \cdot \nabla \p_3  \vro +  \nabla_h \vro \,  \p_3 \du) \cdot \nabla_h \p_3 \vro \ dx - \i (  u \cdot \nabla \nabla_h\p_3  \vro  +   \vro \,  \nabla_h \p_3 \du) \cdot \nabla_h \p_3 \vro \ dx \\
   = &\; - \i (\nabla_h \p_3 u \cdot \nabla \vro + \f12 \nabla_h \p_3 \vro \, \du) \cdot \nabla_h \p_3 \vro \ dx - \i ( \p_3 u \cdot  \nabla \nabla_h  \vro + \p_3 \vro \, \nabla_h  \du) \cdot \nabla_h \p_3 \vro \ dx \\
   &\; - \i ( \nabla_h u_h \cdot \nabla_h \p_3  \vro +  \nabla_h \vro \,  \p_3 \du) \cdot \nabla_h \p_3 \vro \ dx - \i \nabla_h u_3 \, \p_3^2 \vro \cdot \nabla_h \p_3 \vro \ dx  \\
   &\; - \i  \vro \,  \nabla_h \p_3 \du \cdot \nabla_h \p_3 \vro \ dx \\
  := &\; \sum_{i=1}^{5} II_{1,i}.
   \end{align*}
We first handle the term $II_{1,4}$ involving second-order vertical derivatives, as the energy framework only involves first-order vertical derivatives.
 Define $\chi(x_3)$ is a smooth compactly supported function which takes the value one in the vicinity of $0$ and is supported in $[0,1]$. Then we rewrite $II_{1,4}$ as follows:
\beq \label{ah1-II14}
		\bal
			II_{1,4}  &\; = \i \nabla_h u_3 \, \p_3^2 \vro  \cdot \nabla_h \p_3 \vro (1-\chi+\chi)^2  \, dx\\
		&\; \lesssim  \i |\nabla_h u_3 \, \p_3^2 \vro  \cdot \nabla_h \p_3 \vro| (1-\chi)^2 \, dx + \i |\nabla_h u_3 \, \p_3^2 \vro  \cdot \nabla_h \p_3 \vro| \chi^2 \, dx.
		\dal
		\deq
Away from the boundary, the conormal $H_{co}^m$ norm is equivalent to the usual $H^m$ norm, then we can obtain
\begin{align*}
 &\; \i |\nabla_h u_3 \, \p_3^2 \vro  \cdot \nabla_h \p_3 \vro| (1-\chi)^2 \, dx  \\
 \lesssim  &\; \|  \nabla_h u_3\|_{L^2}^{\f14} \| \p_{1} \nabla_h u_3\|_{L^2}^{\f14} \| \p_{3} \nabla_h u_3\|_{L^2}^{\f14} \| \p_{13}  \nabla_h u_3\|_{L^2}^{\f14} \| \nabla_h \p_3 \vro\|_{L^2}  \| Z_3 \p_3 \vro\|_{L^2}^{\f12}  \|\p_2 Z_3 \p_3 \vro\|_{L^2}^{\f12} \\
 \lesssim  &\; \|  \nabla_h u_3\|_{L^2}^{\f14} \| \p_{1} \nabla_h u_3\|_{L^2}^{\f14} \| \p_{3} \nabla_h u_3\|_{L^2}^{\f14} \| \p_{13}  \nabla_h u_3\|_{L^2}^{\f14} \| \nabla_h \p_3 \vro\|_{L^2}  \| Z_3 \p_3 \vro\|_{L^2}^{\f12}\\
 &\quad \times ( \|\p_2 \p_3 \vro\|_{L^2}^{\f12} + \|\p_2 \p_3 \vro\|_{L^2}^{\f14} \|\p_2 Z_3 \p_3 \vro\|_{L^2}^{\f14}) \\
 \lesssim &\; \sqrt{\me(t)}\bmd_{tan}(t),
\end{align*}
where we have used the interpolation inequality \eqref{a10}.
Since $u_3 = 0$ on the boundary, we can write $u_3 = \int_{0}^{x_3} \p_3 u_3 \ dz$, thus we have
\beqq \bal
&\; \i |\nabla_h u_3 \, \p_3^2 \vro  \cdot \nabla_h \p_3 \vro| \chi^2 \, dx
=  \i |\nabla_h (\int_{0}^{x_3} \p_3 u_3 \ dz) \, \p_3^2 \vro  \cdot \nabla_h \p_3 \vro| \chi^2 \, dx\\
\lesssim &\; \| \nabla_h \p_3 u_3 \|_{L^2}^{\f14} \| \nabla_h \p_3^2 u_3 \|_{L^2}^{\f14} \| \nabla_h \p_{13} u_3 \|_{L^2}^{\f14} \| \nabla_h \p_1 \p_3^2 u_3 \|_{L^2}^{\f14}   \| \nabla_h \p_3 \vro\|_{L^2}  \| Z_3 \p_3 \vro\|_{L^2}^{\f12}  \|\p_2 Z_3 \p_3 \vro\|_{L^2}^{\f12} \\
 \lesssim &\; \sqrt{\me(t)}\bmd_{tan}(t).
\dal \deqq
Therefore, we can obtain that
\beqq
 II_{1,4} \lesssim \sqrt{\me(t)}\bmd_{tan}(t).
\deqq
Applying  the anisotropic
				type inequality \eqref{ie:Sobolev} and using the estimate \eqref{est-p32-u3-uh}, it is easy to check that
                \begin{align*}
                II_{1,1}
\lesssim &\; \|\nabla_h \p_3 \vro \|_{L^2} \| \nabla_h \p_3 u\|_{L^2}^{\f14} \| \p_3 \nabla_h \p_3 u\|_{L^2}^{\f14} \| \p_1 \nabla_h \p_3 u\|_{L^2}^{\f14} \| \p_{13} \nabla_h \p_3 u\|_{L^2}^{\f14} \| \nabla \vro\|_{L^2}^{\f12}  \| \p_2 \nabla \vro\|_{L^2}^{\f12} \\
 &\;+ (\|\p_3 u_3\|_{L^{\infty}} +\| \nabla_h u \|_{L^{\infty}}) \|  \nabla_h \p_3  \vro\|_{L^2}^2 \\
\lesssim &\; \sqrt{\me(t)}\bmd_{tan}(t) + \|(\p_3 u_3 ,\p_3^2 u_3)\|_{L^2}^{\f14} \|\nabla_h (\p_3 u_3 ,\p_3^2 u_3)\|_{H_{tan}^2}^{\f34} \|  \nabla_h \p_3  \vro\|_{L^2}^2 \\
\lesssim &\; \sqrt{\me(t)}\bmd_{tan}(t).
 \end{align*}
                Similarly, we have
                \beqq \bal
II_{1,2} + II_{1,3} + II_{1,5} \lesssim \sqrt{\me(t)}\bmd_{tan}(t),
                \dal \deqq
which, together with the estimates of $II_{1,1}$ and  $II_{1,4}$, yields that
\beqq
 II_{1} \lesssim \sqrt{\me(t)}\bmd_{tan}(t).
\deqq
Similarly, applying  the anisotropic
				type inequality \eqref{ie:Sobolev}, integrating by parts, we can obtain
\begin{align*}
II_2 = &\;\i \nabla_h ( \vro\, \p_3 \vro) \cdot \nabla_h \p_3^2 u_3 \ dx -\i \p_3 \nabla_h ( \vro\, \nabla_h \vro) \cdot \nabla_h \p_3 u_h \ dx  \\
= &\; \i \nabla_h ( \vro\, \p_3 \vro) \cdot \nabla_h \p_3^2 u_3 \ dx -\i (\p_3 \nabla_h \vro\, \nabla_h \vro + \nabla_h \vro\, \p_3 \nabla_h \vro + \p_3  \vro\, \nabla_h^2 \vro )\cdot  \nabla_h \p_3 u_h \ dx \\
&\;  + \i \p_3 \nabla_h \vro \cdot \nabla_h(  \vro \, \nabla_h \p_3 u_h) \ dx\\
\lesssim &\; \sqrt{\me(t)}\bmd_{tan}(t),\\
II_4 = &\;\i \nabla_h ( \f{\vro}{1+\vro} \, \Delta_h u_3) \cdot \nabla_h \p_3^2 u_3 \ dx -\i \p_3 \nabla_h ( \f{\vro}{1+\vro} \, \Delta_h u_h) \cdot \nabla_h \p_3 u_h \ dx  \\
= &\;\i \nabla_h ( \f{\vro}{1+\vro} \, \Delta_h u_3) \cdot \nabla_h \p_3^2 u_3 \ dx -\i ( \p_3 \nabla_h (\f{\vro}{1+\vro}) \, \Delta_h u_h +  \nabla_h( \f{\vro}{1+\vro}) \, \p_3 \Delta_h u_h) \cdot \nabla_h \p_3 u_h \ dx \\
&\; - \i \p_3(\f{\vro}{1+\vro})  \, \nabla_h \Delta_h u_h \cdot \nabla_h \p_3 u_h \ dx  + \i \p_3 \Delta_h u_h \cdot \nabla_h (\f{\vro}{1+\vro} \,  \nabla_h \p_3 u_h) \ dx\\
\lesssim &\; \sqrt{\me(t)}\bmd_{tan}(t),
\end{align*}
and using the estimate \eqref{est-p32-u3-uh}, we have
\begin{align*}
II_{5} = &\;\i \nabla_h ( \f{\vro}{1+\vro} \, \p_3 \du) \cdot \nabla_h \p_3^2 u_3 \ dx -\i \p_3 \nabla_h ( \f{\vro}{1+\vro} \, \nabla_h \du) \cdot \nabla_h \p_3 u_h \ dx  \\
\lesssim &\; \sqrt{\me(t)}\bmd_{tan}(t),\\
II_{6} = &\; \var \i \Big(\nabla_h ( \f{\vro}{1+\vro}) \, \p_3^2 u +  \f{\vro}{1+\vro} \, \nabla_h \p_3^
2 u\Big) \cdot \nabla_h \p_3^2 u \ dx\\
\lesssim &\; \sqrt{\me(t)}\bmd_{tan}(t).
\end{align*}
It remains to estimate the term $II_{3}$. Integrating by parts, we split $II_3$ as follows:
\begin{align*}
II_3 = &\; -\i \p_3 (u \cdot \nabla  \nabla_h  u + \nabla_h u_h \cdot \nabla_h u + \nabla_h u_3 \, \p_3 u) \cdot  \p_3 \nabla_h u \ dx\\
= &\;  \f12 \i \du |  \p_3 \nabla_h u |^2 \ dx - \i   \p_3 u \cdot \nabla  \nabla_h  u  \cdot  \p_3 \nabla_h u \ dx - \i \p_3 (\nabla_h u_h \cdot \nabla_h u ) \cdot  \p_3 \nabla_h u \ dx\\
&\; -\i \p_3 \nabla_h u_3 \, \p_3 u \cdot  \p_3 \nabla_h u \ dx
-\i  \nabla_h u_3 \, \p_3^2 u \cdot  \p_3 \nabla_h u \ dx\\
:= &\; \sum_{i=1}^{5} II_{3,i}.
\end{align*}
Using the interpolation inequalities \eqref{inter-1}-\eqref{inter-2} and the anisotropic type inequality \eqref{ie:Sobolev}, we can check that
\begin{align*}
&\; II_{3,1} + II_{3,2}\\
\lesssim &\; (\| \p_3 u_3 \|_{L^{\infty}} + \| \nabla_h u\|_{L^{\infty}}) \| \p_3 \nabla_h u \|_{L^2}^2 + \| \p_3 u_h \|_{L^2}^{\f12} \| \p_{13} u_h\|_{L^2}^{\f12} \|\nabla_h^2 u\|_{L^2}^{\f12} \|\p_3 \nabla_h^2 u\|_{L^2}^{\f12} \| \p_3 \nabla_h u\|_{L^2}^{\f12} \| \p_{23} \nabla_h u\|_{L^2}^{\f12}\\
\lesssim &\;  \Big(\| (\p_3 u_3, \p_3^2 u_3 )\|_{L^{2}}^{\f14} \| \nabla_h( \p_3 u_3, \p_3^2 u_3 )\|_{H_{tan}^1}^{\f34}   + \| (\nabla_h u, \p_3 \nabla_h u )\|_{L^{2}}^{\f14} \| \nabla_h( \nabla_h u, \p_3 \nabla_h u)\|_{H_{tan}^1}^{\f34} \Big) \\
&\; \times \|(  \hs \p_3 u\|_{L^2}^{\f{1}{s+2}}  \| \nabla_h^2 \p_3 u\|_{L^2}^{\f{s+1}{s+2}} )^{2}  + \| \p_3 u_h \|_{L^2}^{\f12} \|\nabla_h^2 u\|_{L^2}^{\f12} \|\p_3 \nabla_h^2 u\|_{L^2}^{\f12} \| \p_{3} u\|_{L^2}^{\f12} \| \nabla_h^2 \p_{3} u\|_{L^2}^{\f12} \| \p_{23} \nabla_h u\|_{L^2}^{\f12}\\
\lesssim &\;  \sqrt{\me(t)}\bmd_{tan}(t), 
\end{align*}
here we have  used that $s \ge \f{16}{17}$ and the estimate \eqref{est-p32-u3-uh}. In a similar way, we can arrive that
\beqq  \bal
II_{3,3} +II_{3,4} \lesssim \sqrt{\me(t)}\bmd_{tan}(t).
\dal \deqq
Next, similar to the estimate \eqref{ah1-II14} of $II_{1,4}$, we rewrite the term $II_{3,5}$ as follows:
\beqq \bal
II_{3,5} \lesssim &\;\i |\nabla_h u_3 \, \p_3^2 u_3 \cdot  \p_3 \nabla_h u_3| \ dx +  \i |\nabla_h u_3 \, \p_3^2 u_h \cdot  \p_3 \nabla_h u_h| (1-\chi)^2  \ dx  + \i |\nabla_h u_3 \, \p_3^2 u_h \cdot  \p_3 \nabla_h u_h| \chi^2  \ dx\\
\lesssim &\; \|\nabla_h u_3\|_{L^2}^{\f12} \|\p_3 \nabla_h u_3\|_{L^2}^{\f12} \|\p_3^2 u_3\|_{L^2}^{\f14}  \|\nabla_h \p_3^2 u_3\|_{L^2}^{\f12}  \|\nabla_h^2 \p_3^2 u_3\|_{L^2}^{\f14} \|\p_3 \nabla_h u_3\|_{L^2}\\
&\; +  \i |\nabla_h u_3 \, \p_3^2 u_h \cdot  \p_3 \nabla_h u_h| (1-\chi)^2  \ dx  + \i |\nabla_h u_3 \, \p_3^2 u_h \cdot  \p_3 \nabla_h u_h| \chi^2  \ dx.\\
\dal \deqq
Away from the boundary, the conormal $H_{co}^m$ norm is equivalent to the usual $H^m$ norm,  using the interpolation inequalities $\eqref{ie:Sobolev}_4$  for the term $Z_3 \p_{13} u$, $\eqref{ie:Sobolev}_5$ for the term $Z_3 \p_3 u$ as well as the interpolation inequality \eqref{inter-2}, then we can obtain that
\beq\label{est-s-16/17}\bal
 &\; \i |\nabla_h u_3 \, \p_3^2 u_h \cdot  \p_3 \nabla_h u_h| (1-\chi)^2  \ dx  \\
\lesssim &\; \|\nabla_h u_3 \|_{L^2}^{\f12} \|\p_3 \nabla_h u_3 \|_{L^2}^{\f12}
\|\p_3 \nabla_h u_h \|_{L^2}^{\f12}
\|\p_{23} \nabla_h u_h \|_{L^2}^{\f12} \|Z_3 \p_3 u_h\|_{L^2}^{\f12} \|Z_3 \p_{13} u_h \|_{L^2}^{\f12} \\
\lesssim &\; \|(\nabla_h u_3, \p_3 \nabla_h u_h) \|_{L^2}^{\f43}
 \|(\p_3 \nabla_h u_3,\p_{23} \nabla_h u_h ) \|_{L^2} \|\p_3  u_h \|_{L^2}^{\f38} \|(\p_3 u_h, Z_3^4 \p_3 u_h)\|_{L^2}^{\f18} \|(\p_{13} u_h, Z_3^3 \p_{13} u_h)\|_{L^2}^{\f16} \\
\lesssim &\; (\|\hs (\nabla_h u_3, \p_3 \nabla_h u_h) \|_{L^2}^{\f{1}{s+2}} \|\nabla_h^2 ( u_3, \p_3 u_h) \|_{L^2}^{\f{s+1}{s+2}})^{\f43}
 \|(\p_3 \nabla_h u_3,\p_{23} \nabla_h u_h ) \|_{L^2}  \\
&\; \times  (\|\hs \p_3  u_h \|_{L^2}^{\f{2}{s+2}} \|\nabla_h^2 \p_3  u_h \|_{L^2}^{\f{s}{s+2}})^{\f38} \|(\p_3 u_h, Z_3^4 \p_3 u_h)\|_{L^2}^{\f18} \|(\p_{13} u_h, Z_3^3 \p_{13} u_h)\|_{L^2}^{\f16} \\
\lesssim  &\;  \sqrt{\me(t)}\bmd_{tan}(t),
\dal \deq
 where we require the constants satisfy $m\ge 5$ and  $\f{4(s+1)}{3(s+2)} + \f{3s}{8(s+2)} \ge 1$, which yields $\f{16}{17} \le s < 1$.
Similarly, by rewriting $u_3 = \int_{0}^{x_3} \p_3 u_3 \ dz$, we can obtain
\begin{align*}
&\; \i |\nabla_h u_3 \, \p_3^2 u_h \cdot  \p_3 \nabla_h u_h| \chi^2  \ dx\\
\lesssim &\;  \|\p_3 \nabla_h  u_3 \|_{L^2}^{\f12} \|\p_3^2 \nabla_h u_3 \|_{L^2}^{\f12}
\|\p_3 \nabla_h u_h \|_{L^2}^{\f12}
\|\p_{13} \nabla_h u_h \|_{L^2}^{\f12}  \|Z_3 \p_3 u_h \|_{L^2}^{\f12} \|Z_3 \p_{23} u_h \|_{L^2}^{\f12}\\
\lesssim &\;  \|\p_3 u_h \|_{L^2}^{\f14} \| \p_3  \nabla_h u_h \|_{L^2}^{\f34}  \|(\p_3 \nabla_h  u_3,\p_3^2 \nabla_h u_3 , \p_{13} \nabla_h u_h )\|_{L^2}^{\f32}
 \|(\p_3 u_h, Z_3^2 \p_3 u_h )\|_{L^2}^{\f14}  \|(\p_{23} u_h, Z_3^2 \p_{23} u_h )\|_{L^2}^{\f14}\\
\lesssim &\;  (\|\hs \p_3  u_h \|_{L^2}^{\f{2}{s+2}} \|\nabla_h^2 \p_3  u_h \|_{L^2}^{\f{1}{s+2}})^{\f14}  (\|\hs \p_3 \nabla_h u_h \|_{L^2}^{\f{1}{s+2}} \|\nabla_h^2  \p_3 u_h \|_{L^2}^{\f{s+1}{s+2}})^{\f34}\\
&\; \times  \|(\p_3 \nabla_h  u_3,\p_3^2 \nabla_h u_3 , \p_{13} \nabla_h u_h )\|_{L^2}^{\f32}
 \|(\p_3 u_h, Z_3^2 \p_3 u_h )\|_{L^2}^{\f14}  \|(\p_{23} u_h, Z_3^2 \p_{23} u_h )\|_{L^2}^{\f14}\\
 \lesssim  &\;  \sqrt{\me(t)}\bmd_{tan}(t).
\end{align*}
Thus, the combination of the above estimates yields that
\beqq
II_{3,5} \lesssim  \sqrt{\me(t)}\bmd_{tan}(t),
\deqq
which, together with the estimates from $II_{3,1}$ to  $II_{3,4}$, yields that
\beqq
II_{3} \lesssim \sqrt{\me(t)}\bmd_{tan}(t).
\deqq
Substituting the estimates from $II_1$ to $II_6$ to \eqref{32-equ}, we can obtain that
\beq \label{3203}
    \f12 \frac{d}{dt} \|\nabla_h \p_3(\vro, u)(t)\|_{L^2}^2
				+\|\nabla_h \p_3(\nabla_h u, \du)(t)\|_{L^2}^2 +\var \|\nabla_h \p_3^2 u(t)\|_{L^2}^2
				\lesssim  \sqrt{\me(t)}\bmd_{tan}(t).
   \deq
    {\textbf{Now we deal with the case $|\ah|=m-2$.}}
   Integrating by parts, we have
   \begin{align*}
   II_1 = &\; - \i Z^{\al_h} ( u \cdot \nabla \p_3 \vro  + \p_3 u \cdot \nabla \vro + \vro \, \p_3 \du + \p_3  \vro \, \du) \cdot Z^{\al_h} \p_3 \vro \ dx\\
   = &\; \f12 \i \du \, | Z^{\al_h} \p_3 \vro|^2 \ dx -\sum_{0< \beta_h \le \alpha_h}C^{\beta_h}_{\alpha_h}
				\i Z^{\beta_h} u_h \cdot \nabla_h  Z^{\al_h-\beta_h}\p_3 \vro \cdot Z^{\al_h} \p_3 \vro\ dx\\
                &\; -\sum_{0< \beta_h \le \alpha_h}C^{\beta_h}_{\alpha_h}
				\i Z^{\beta_h} u_3 \cdot  Z^{\al_h-\beta_h} \p_3^2 \vro \cdot Z^{\al_h} \p_3 \vro\ dx\\
                &\; -\sum_{0 \le \beta_h \le \alpha_h}C^{\beta_h}_{\alpha_h}
				\i (Z^{\beta_h} \p_3 u_3 \cdot Z^{\al_h-\beta_h}\p_3  \vro + Z^{\beta_h} \p_3 u_h \cdot Z^{\al_h-\beta_h}\nabla_h  \vro) \cdot Z^{\al_h} \p_3 \vro\ dx\\
                &\; - \i Z^{\al_h} (\vro \, \p_3 \du + \p_3  \vro \, \du) \cdot Z^{\al_h} \p_3 \vro \ dx\\
                := &\; \sum_{i=1}^{5} II_{1,i}.
   \end{align*}
   Applying the anisotropic type inequality \eqref{ie:Sobolev}, it is easy to check that for $i=1,2$,
   \begin{align*}
   II_{1,2} \lesssim  &\; \sum_{e_i < \beta_h \le \alpha_h} \|Z^{\be_h} u_h\|_{H_{tan}^1}^{\f12}  \|\p_3 Z^{\be_h} u_h\|_{H_{tan}^1}^{\f12}
   \|Z^{\ah-\be_h} \nabla_h \p_3 \vro\|_{L^2}^{\f12}   \|\p_2 Z^{\ah-\be_h} \nabla_h \p_3 \vro\|_{L^2}^{\f12}
    \|Z^{\ah} \p_3 \vro\|_{L^2}\\
    + &\; \|Z^{e_i } u_h\|_{L^{\infty}} \|Z^{\ah-e_i} \nabla_h \p_3 \vro\|_{L^2}  \|Z^{\ah} \p_3 \vro\|_{L^2} \\
    \lesssim &\; \sqrt{\me(t)}\bmd_{tan}(t),\\
    II_{1,4} \lesssim &\;  (\|\p_3 u_3\|_{L^{\infty}} \|Z^{\ah} \p_3 \vro\|_{L^2} + \|\p_3 u_h\|_{L^{\infty}} \|Z^{\ah} \nabla_h \vro\|_{L^2} )\|Z^{\ah} \p_3 \vro\|_{L^2}\\
    &\; + \sum_{0 < \beta_h \le [\f{\alpha_h}{2}]} \Big(\|Z^{\be_h} \p_3 u_3\|_{H_{tan}^1}^{\f12}  \| Z^{\be_h} \p_3^2 u_3\|_{H_{tan}^1}^{\f12}
   \|Z^{\ah-\be_h} \p_3 \vro\|_{L^2}^{\f12}   \|\p_2 Z^{\ah-\be_h}  \p_3 \vro\|_{L^2}^{\f12} \\
   &\; +  \|Z^{\be_h} \p_3 u_h\|_{H_{tan}^2}
   \|Z^{\ah-\be_h} \nabla_h \vro\|_{L^2}^{\f12}   \|\p_3 Z^{\ah-\be_h} \nabla_h  \vro\|_{L^2}^{\f12} \Big) \|Z^{\ah} \p_3 \vro\|_{L^2}\\
   &\; + \sum_{[\f{\alpha_h}{2}] < \beta_h \le \ah} \Big(\|Z^{\be_h} \p_3 u_3\|_{L^2}^{\f12}  \| Z^{\be_h} \p_3^2 u_3\|_{L^2}^{\f12}
   \| Z^{\ah-\be_h} \p_3 \vro\|_{H_{tan}^2} \\
   &\; +  \|Z^{\be_h} \p_3 u_h\|_{L^2}^{\f12}  \|\p_1 Z^{\be_h} \p_3 u_h\|_{L^2}^{\f12}
   \|Z^{\ah-\be_h} \nabla_h \vro\|_{H_{tan}^1}^{\f12}   \|\p_3 Z^{\ah-\be_h} \nabla_h  \vro\|_{H_{tan}^1}^{\f12} \Big) \|Z^{\ah} \p_3 \vro\|_{L^2}\\
   \lesssim &\; (\me(t)^{\f13}+\me(t)^{\f12})\bmd_{tan}(t),
   \end{align*}
   where we have used the estimate \eqref{est-p32-u3-uh} and the fact $ \| \p_3 u_h \|_{L^{\infty}} \lesssim \| w_h \|_{L^{\infty}} + \| \nabla_h u_3 \|_{L^{\infty}} \lesssim \me(t)^{\f13}+\me(t)^{\f12}$. Similarly, we can obtain
   \beqq
   II_{1,1} + II_{1,5}  \lesssim \sqrt{\me(t)}\bmd_{tan}(t).
   \deqq
   It should be pointed out that the estimate of term $II_{1,3}$ is somewhat complicated.  We first decompose this term into two components similar to the estimate \eqref{ah1-II14}:	
   \beqq
   \bal
   II_{1,3} \lesssim &\; \sum_{0< \beta_h \le \alpha_h}C^{\beta_h}_{\alpha_h}
				\i |Z^{\beta_h} u_3 \cdot  Z^{\al_h-\beta_h} \p_3^2 \vro \cdot Z^{\al_h} \p_3 \vro| (1-\chi)^2 \ dx \\
               &\;  + \sum_{0< \beta_h \le \alpha_h}C^{\beta_h}_{\alpha_h} \i |Z^{\beta_h} u_3 \cdot  Z^{\al_h-\beta_h} \p_3^2 \vro \cdot Z^{\al_h} \p_3 \vro| \chi^2 \ dx.
   \dal \deqq
   If $\be_h=e_i$ for $i=1,2$, using the $L^{\infty}$ anisotropic inequality $\eqref{ie:Sobolev}_1$ and interpolation inequality \eqref{a6}, we can estimate
   \begin{align*}
   &\; \i |Z^{e_i} u_3 \cdot  Z^{\al_h-e_i} \p_3^2 \vro \cdot Z^{\al_h} \p_3 \vro| (1-\chi)^2 \ dx \\
   \lesssim &\; \| Z^{e_i} u_3\|_{L^{\infty}}  \|Z^{\al_h-e_i} Z_3 \p_3 \vro\|_{L^2}
   \|Z^{\al_h} \p_3 \vro\|_{L^2}\\
   \lesssim &\; \|\nabla_h u_3\|_{H_{tan}^2}^{\f12} \|\nabla_h \p_3 u_3\|_{H_{tan}^2}^{\f12} (\|Z^{\al_h-e_i} \p_3 \vro  \|_{L^2} +  \|Z^{\al_h-e_i} \p_3 \vro  \|_{L^2}^{\f12}  \|Z^{\al_h-e_i} Z_3^2 \p_3 \vro  \|_{L^2}^{\f12})  \|Z^{\al_h} \p_3 \vro\|_{L^2}\\
   \lesssim &\; \sqrt{\me(t)}\bmd_{tan}(t),\\
   \end{align*}
   and
 \begin{align*}
   &\;  \i |Z^{e_i} u_3 \cdot  Z^{\al_h-e_i} \p_3^2 \vro \cdot Z^{\al_h} \p_3 \vro| \chi^2 \ dx\\
   \lesssim &\; \| Z^{e_i} \p_3 u_3\|_{L^{\infty}}  \|Z^{\al_h-e_i} Z_3 \p_3 \vro\|_{L^2}
   \|Z^{\al_h} \p_3 \vro\|_{L^2}\\
    \lesssim &\; \|\nabla_h \p_3 u_3\|_{H_{tan}^2}^{\f12} \|\nabla_h \p_3^2 u_3\|_{H_{tan}^2}^{\f12}\|Z^{\al_h-e_i} Z_3 \p_3 \vro\|_{L^2}
   \|Z^{\al_h} \p_3 \vro\|_{L^2} \\
   \lesssim &\; \sqrt{\me(t)}\bmd_{tan}(t).
   \end{align*}
   If $2 \le |\be_h | \le [\f{|\ah|}{2}]$,  using the anisotropic type inequality $\eqref{ie:Sobolev}_2$, we can obtain
   \beqq \bal
    &\; \i |Z^{\be_h} u_3 \cdot  Z^{\al_h-\be_h} \p_3^2 \vro \cdot Z^{\al_h} \p_3 \vro| (1-\chi)^2 \ dx + \i |Z^{\beta_h} u_3 \cdot  Z^{\al_h-\beta_h} \p_3^2 \vro \cdot Z^{\al_h} \p_3 \vro| \chi^2 \ dx \\
    \lesssim &\; \| ( Z^{\be_h} u_3, Z^{\be_h} \p_3 u_3)\|_{L^2}^{\f14}  \| \p_3 ( Z^{\be_h} u_3, Z^{\be_h} \p_3 u_3)\|_{L^2}^{\f14} \| \p_1 ( Z^{\be_h} u_3, Z^{\be_h} \p_3 u_3)\|_{L^2}^{\f14} \| \p_{13} ( Z^{\be_h} u_3, Z^{\be_h} \p_3 u_3)\|_{L^2}^{\f14} \\
    &\; \times \|Z^{\al_h-\be_h} Z_3 \p_3 \vro\|_{L^2}^{\f12} \|\p_2 Z^{\al_h-\be_h} Z_3 \p_3 \vro\|_{L^2}^{\f12}
   \|Z^{\al_h} \p_3 \vro\|_{L^2} \\
     \lesssim &\; \sqrt{\me(t)}\bmd_{tan}(t).\\
   \dal \deqq
    If $[\f{|\ah|}{2}] < |\be_h | \le |\ah|$, we can obtain
   \beqq \bal
    &\; \i |Z^{\be_h} u_3 \cdot  Z^{\al_h-\be_h} \p_3^2 \vro \cdot Z^{\al_h} \p_3 \vro| (1-\chi)^2 \ dx + \i |Z^{\beta_h} u_3 \cdot  Z^{\al_h-\beta_h} \p_3^2 \vro \cdot Z^{\al_h} \p_3 \vro| \chi^2 \ dx \\
    \lesssim &\; \Big(\| Z^{\be_h} u_3\|_{L^2}^{\f14}  \| \p_3 Z^{\be_h} u_3\|_{L^2}^{\f14} \| \p_1 Z^{\be_h} u_3\|_{L^2}^{\f14} \| \p_{13}  Z^{\be_h} u_3\|_{L^2}^{\f14}  \|Z^{\al_h-\be_h} Z_3 \p_3 \vro\|_{L^2}^{\f12} \|\p_2 Z^{\al_h-\be_h} Z_3 \p_3 \vro\|_{L^2}^{\f12}\\
    &\; +   \|  Z^{\be_h}\p_3 u_3\|_{L^2}^{\f12} \|   Z^{\be_h} \p_{3}^2 u_3\|_{L^2}^{\f12}  \|Z^{\al_h-\be_h} Z_3 \p_3 \vro\|_{H_{tan}^2} \Big)
   \|Z^{\al_h} \p_3 \vro\|_{L^2} \\
     \lesssim &\; \sqrt{\me(t)}\bmd_{tan}(t).
   \dal \deqq
   Thus, the combination of the above estimates yields directly
   \beqq
II_{1,3} \lesssim \sqrt{\me(t)}\bmd_{tan}(t),
   \deqq
   which, together with the estimates of $II_{1,1}$, $II_{1,2}$, $II_{1,4}$ and $II_{1,5}$, we have
    \beqq
II_{1} \lesssim (\me(t)^{\f13}+\me(t)^{\f12})\bmd_{tan}(t).
   \deqq
   For $i=1,2$, integrating by parts, we can check that
\begin{align*}
II_{2} = &\; \i Z^{\ah} ( \vro\, \p_3 \vro ) \cdot  Z^{\ah} \p_3^2 u_3 \ dx +  \i Z^{\ah-e_i} \p_3 ( \vro\, \nabla_h \vro ) \cdot  Z^{\ah + e_i} \p_3 u_h \ dx\\
 =&\; \sum_{0 \le \beta_h \le \alpha_h }C^{\beta_h}_{\alpha_h} \i Z^{\be_h} \vro \, Z^{\ah-\be_h} \p_3 \vro  \cdot Z^{\al_h} \p_3^2 u_3 \ dx \\
 &\; +\sum_{0 < \beta_h \le \alpha_h-e_i}C^{\beta_h}_{\alpha_h-e_i} \i Z^{\be_h} \vro \, Z^{\ah- e_i -\be_h} \nabla_h \p_3 \vro \cdot Z^{\al_h+e_i} \p_3 u_h \ dx\\
 &\; +\sum_{0 < \beta_h \le \alpha_h-e_i}C^{\beta_h}_{\alpha_h-e_i} \i Z^{\be_h} \p_3  \vro \, Z^{\ah- e_i -\be_h} \nabla_h \vro \cdot Z^{\al_h+e_i} \p_3 u_h \ dx.
 \end{align*}
 By Lemma \ref{lemm:sobolev-ie}, one can arrive that
 \begin{align*}
 &\; \sum_{0 \le \beta_h \le \alpha_h }C^{\beta_h}_{\alpha_h} \i Z^{\be_h} \vro \, Z^{\ah-\be_h} \p_3 \vro  \cdot Z^{\al_h} \p_3^2 u_3 \ dx  \\
 \lesssim &\; \Big(\| \vro\|_{L^{\infty}}  \|Z^{\ah } \p_3 \vro \|_{L^2}  + \sum_{0 < \be_h \le [\f{\ah}{2}]}  \| Z^{\be_h} \vro \|_{H_{tan}^1}^{\f12} \| \p_3 Z^{\be_h} \vro \|_{H_{tan}^1}^{\f12}   \|Z^{\ah -\be_h} \p_3 \vro \|_{L^2}^{\f12}  \|\p_2 Z^{\ah -\be_h} \p_3 \vro \|_{L^2}^{\f12}\\
 &\; + \sum_{[\f{\ah}{2}] < \be_h \le \ah }  \| Z^{\be_h} \vro \|_{L^2}^{\f12} \| \p_3 Z^{\be_h} \vro \|_{L^2}^{\f12}   \|Z^{\ah -\be_h} \p_3 \vro \|_{H_{tan}^2} \Big) \| Z^{\al_h} \p_3^2 u_3 \|_{L^2}\\
\lesssim &\; \sqrt{\me(t)}\bmd_{tan}(t).
\end{align*}
Similarly, we can check that
\beqq \bal
&\; \sum_{0 < \beta_h \le \alpha_h-e_i}C^{\beta_h}_{\alpha_h-e_i} \i Z^{\be_h} \vro \, Z^{\ah- e_i -\be_h} \nabla_h \p_3 \vro \cdot Z^{\al_h+e_i} \p_3 u_h \ dx\\
 &\; +\sum_{0 < \beta_h \le \alpha_h-e_i}C^{\beta_h}_{\alpha_h-e_i} \i Z^{\be_h} \p_3  \vro \, Z^{\ah- e_i -\be_h} \nabla_h \vro \cdot Z^{\al_h+e_i} \p_3 u_h \ dx\\
 \lesssim &\; \sqrt{\me(t)}\bmd_{tan}(t),
\dal \deqq
which, together with above estimate, we have
\beqq
 II_2 \lesssim \sqrt{\me(t)}\bmd_{tan}(t).
\deqq
Now we estimate the term $II_{3}$. For $i=1,2$, integrating by parts, we decompose the term $II_3$ as follows:
\beqq \bal
II_3 = &\;  \i Z^{\ah}(u_h \cdot \nabla_h u_3 + u_3 \, \p_3 u_3) \cdot Z^{\al_h} \p_3^2 u_3  \ dx + \i Z^{\ah- e_i} \p_3 (u_h \cdot \nabla_h u_h)  \cdot Z^{\al_h+e_i} \p_3 u_h  \ dx \\
&\; + \i Z^{\ah- e_i}( \p_3  u_3 \, \p_3 u_h) \cdot Z^{\al_h+e_i} \p_3 u_h   \ dx
+ \i Z^{\ah- e_i}( u_3 \,  \p_3^2 u_h) \cdot Z^{\al_h+e_i} \p_3 u_h   \ dx \\
:= &\; \sum_{i=1}^{4} II_{3,i}.
\dal \deqq
 By Lemma \ref{lemm:sobolev-ie}, we can easily check that
 \beqq 
II_{3,1} + II_{3,2} + II_{3,3} \lesssim  \sqrt{\me(t)}\bmd_{tan}(t).
 \deqq
Similar to the estimate of $II_{1,3}$, we will decompose the term $II_{3,4}$ as follows:
\begin{align*}
II_{3,4} = &\; \i  u_3 \, Z^{\ah- e_i} \p_3^2 u_h \cdot Z^{\al_h+e_i} \p_3 u_h   \ dx  + \sum_{0 < \beta_h \le \alpha_h-e_i}C^{\beta_h}_{\alpha_h-e_i} \i Z^{\be_h} u_3 \, Z^{\ah- e_i -\be_h} \p_3^2 u_h  \cdot Z^{\al_h+e_i} \p_3 u_h   \ dx\\
= &\;  - \i Z^{\ah} \p_3 u_h \cdot \, ( u_3 \, Z^{\al_h}  \p_3^2 u_h  + Z^{e_i} u_3 \,  \cdot Z^{\al_h-e_i} \p_3^2 u_h  ) \ dx \\
&\; + \sum_{0 < \beta_h \le \alpha_h-e_i}C^{\beta_h}_{\alpha_h-e_i} \i Z^{\be_h} u_3 \, Z^{\ah- e_i -\be_h} \p_3^2 u_h  \cdot Z^{\al_h+e_i} \p_3 u_h  (\chi +1-\chi)^2 \ dx\\
\lesssim &\;  \f12 \i \p_3 u_3 |Z^{\ah} \p_3 u_h |^2 \ dx + \i |Z^{\ah} \p_3 u_h \cdot \,  Z^{e_i} u_3 \,  \cdot Z^{\al_h-e_i} \p_3^2 u_h |   \chi^2  \ dx\\
&\; + \sum_{0 < \beta_h \le \alpha_h-e_i}C^{\beta_h}_{\alpha_h-e_i} \i |Z^{\be_h} u_3 \, Z^{\ah- e_i -\be_h} \p_3^2 u_h  \cdot Z^{\al_h+e_i} \p_3 u_h|  \chi^2   \ dx\\
&\; + \i |Z^{\ah} \p_3 u_h \cdot \,  Z^{e_i} u_3 \,  \cdot Z^{\al_h-e_i} \p_3^2 u_h  | (1-\chi)^2 \ dx \\
&\; + \sum_{0 < \beta_h \le \alpha_h-e_i}C^{\beta_h}_{\alpha_h-e_i} \i |Z^{\be_h} u_3 \, Z^{\ah- e_i -\be_h} \p_3^2 u_h  \cdot Z^{\al_h+e_i} \p_3 u_h | (1-\chi)^2 \ dx.
\end{align*}
It is easy to check that
\beqq
  \i \p_3 u_3 |Z^{\ah} \p_3 u_h |^2 \ dx \lesssim \| \p_3 u_3 \|_{L^{\infty}} \|Z^{\ah} \p_3 u_h \|_{L^2}^2 \lesssim  \sqrt{\me(t)}\bmd_{tan}(t).
\deqq
On one hand, near the boundary, by writing $u_3 = \int_{0}^{x_3} \p_3 u_3 \ dz$,  we have
\beqq \bal
&\; \i |Z^{\ah} \p_3 u_h \cdot \,  Z^{e_i} u_3 \,  \cdot Z^{\al_h-e_i} \p_3^2 u_h |   \chi^2  \ dx \\
\lesssim &\; \|Z^{\ah} \p_3 u_h \|_{L^2}^{\f12}  \|\p_1 Z^{\ah} \p_3 u_h \|_{L^2}^{\f12}  \|Z^{e_i} \p_3 u_3 \|_{L^2}^{\f12} \|Z^{e_i} \p_3^2 u_3 \|_{L^2}^{\f12} \|Z^{\al_h-e_i} Z_3 \p_3 u_h\|_{L^2}^{\f12}  \|\p_2 Z^{\al_h-e_i} Z_3 \p_3 u_h\|_{L^2}^{\f12}, \\
\lesssim &\; \sqrt{\me(t)}\bmd_{tan}(t),
\dal \deqq
and
\beqq \bal
&\; \sum_{0 < \beta_h \le \alpha_h-e_i}C^{\beta_h}_{\alpha_h-e_i} \i |Z^{\be_h} u_3 \, Z^{\ah- e_i -\be_h} \p_3^2 u_h  \cdot Z^{\al_h+e_i} \p_3 u_h|  \chi^2   \ dx\\
\lesssim &\; \Big( \sum_{0 < \beta_h \le [\f{\alpha_h-e_i}{2}]} \|Z^{\be_h} \p_3 u_3 \|_{H_{tan}^1}^{\f12} \|Z^{\be_h} \p_3^2 u_3 \|_{H_{tan}^1}^{\f12} \|Z^{\al_h-e_i-\be_h} Z_3 \p_3 u_h\|_{L^2}^{\f12}  \|\p_2 Z^{\al_h-e_i-\be_h} Z_3 \p_3 u_h\|_{L^2}^{\f12} \\
&\; +  \sum_{[\f{\alpha_h-e_i}{2}] < \beta_h \le \alpha_h-e_i }  \|Z^{\be_h} \p_3 u_3 \|_{L^2}^{\f12} \|Z^{\be_h} \p_3^2 u_3 \|_{L^2}^{\f12} \|Z^{\al_h-e_i-\be_h} Z_3 \p_3 u_h\|_{H_{tan}^2}
\Big)\|Z^{\ah+e_i} \p_3 u_h \|_{L^2},\\
\lesssim &\; \sqrt{\me(t)}\bmd_{tan}(t).
\dal \deqq
On the other hand, away from the boundary, we will use the interpolation inequality \eqref{a6} to obtain that
\beqq \bal
&\; \i |Z^{\ah} \p_3 u_h \cdot \,  Z^{e_i} u_3 \,  \cdot Z^{\al_h-e_i} \p_3^2 u_h |   (1-\chi)^2  \ dx \\
\lesssim &\; \|Z^{\ah} \p_3 u_h \|_{L^2}^{\f12}  \|\p_1 Z^{\ah} \p_3 u_h \|_{L^2}^{\f12}  \|Z^{e_i} u_3 \|_{L^2}^{\f14} \|\p_2 Z^{e_i} u_3 \|_{L^2}^{\f14}  \|\p_3 Z^{e_i} u_3 \|_{L^2}^{\f14}  \|\p_{23} Z^{e_i} u_3 \|_{L^2}^{\f14}  \|Z^{\al_h-e_i} Z_3 \p_3 u_h\|_{L^2} \\
\lesssim &\; \|Z^{\ah} \p_3 u_h \|_{L^2}^{\f12}  \|\p_1 Z^{\ah} \p_3 u_h \|_{L^2}^{\f12}  \|Z^{e_i} u_3 \|_{L^2}^{\f14} \|\p_2 Z^{e_i} u_3 \|_{L^2}^{\f14}  \|\p_3 Z^{e_i} u_3 \|_{L^2}^{\f14}  \|\p_{23} Z^{e_i} u_3 \|_{L^2}^{\f14}  \\
&\; \times (\|Z^{\al_h-e_i} \p_3 u_h\|_{L^2} + \|Z^{\al_h-e_i}\p_3 u_h\|_{L^2}^{\f12} \|Z^{\al_h-e_i} Z_3^2 \p_3 u_h\|_{L^2}^{\f12} )  \\
\lesssim &\; \sqrt{\me(t)}\bmd_{tan}(t),
\dal \deqq
and for $i=1,2$,
\begin{align*}
&\; \sum_{0 < \beta_h \le \alpha_h-e_i}C^{\beta_h}_{\alpha_h-e_i} \i |Z^{\be_h} u_3 \, Z^{\ah- e_i -\be_h} \p_3^2 u_h  \cdot Z^{\al_h+e_i} \p_3 u_h|  (1-\chi)^2   \ dx\\
\lesssim &\; \Big(\|Z^{e_j}  u_3 \|_{L^2}^{\f14} \| \p_3 Z^{e_j} u_3 \|_{L^2}^{\f14}  \|\p_1 Z^{e_j}  u_3 \|_{L^2}^{\f14} \| \p_{13} Z^{e_j} u_3 \|_{L^2}^{\f14} \|Z^{\al_h-e_i-e_j} Z_3 \p_3 u_h\|_{L^2}^{\f12}  \|\p_2 Z^{\al_h-e_i-e_j} Z_3 \p_3 u_h\|_{L^2}^{\f12}
\\
&\; + \sum_{e_j < \beta_h \le [\f{\alpha_h-e_i}{2}]} \|Z^{\be_h}  u_3 \|_{H_{tan}^1}^{\f12} \|Z^{\be_h} \p_3 u_3 \|_{H_{tan}^1}^{\f12} \|Z^{\al_h-e_i-\be_h} Z_3 \p_3 u_h\|_{L^2}^{\f12}  \|\p_2 Z^{\al_h-e_i-\be_h} Z_3 \p_3 u_h\|_{L^2}^{\f12} \\
&\; +  \sum_{[\f{\alpha_h-e_i}{2}] < \beta_h \le \alpha_h-e_i }  \|Z^{\be_h}  u_3 \|_{L^2}^{\f12} \|Z^{\be_h} \p_3 u_3 \|_{L^2}^{\f12} \|Z^{\al_h-e_i-\be_h} Z_3 \p_3 u_h\|_{H_{tan}^2}
\Big)\|Z^{\ah+e_i} \p_3 u_h \|_{L^2} \\
\lesssim &\; \sqrt{\me(t)}\bmd_{tan}(t),
\end{align*}
where we have used the following estimate
\begin{align*}
\|\p_2 Z^{\al_h-e_i-e_j} Z_3 \p_3 u_h\|_{L^2} \lesssim  \|\p_2 Z^{\al_h-e_i-e_j} \p_3 u_h\|_{L^2} +  \|\p_2 Z^{\al_h-e_i-e_j} \p_3 u_h\|_{L^2}^{\f12} \|\p_2 Z^{\al_h-e_i-e_j} Z_3^2 \p_3 u_h\|_{L^2}^{\f12}.
\end{align*}
Thus, the combination of the above estimates yields that
\beqq
II_{3,4} \lesssim  \sqrt{\me(t)}\bmd_{tan}(t).
\deqq
Combining the estimates from $II_{3,1}$ to $II_{3,3}$, we can obtain that
\beqq
II_{3} \lesssim \sqrt{\me(t)}\bmd_{tan}(t).
\deqq
Similarly, for $i=1,2$, integrating by parts, we can check that
\begin{align*}
II_4 = &\; \i  Z^{\ah - e_i } \p_3 ( \f{\vro}{1+\vro}\, \Delta_h u ) \cdot  Z^{\ah+e_i} \p_3 u \ dx \\
\lesssim &\; \sqrt{\me(t)}\bmd_{tan}(t),\\
II_5 = &\; \i Z^{\ah} ( \f{\vro}{1+\vro} \, \p_3 \du ) \cdot  Z^{\ah} \p_3^2 u_3 \ dx +  \i Z^{\ah-e_i} \p_3 ( \f{\vro}{1+\vro}\, \nabla_h \du ) \cdot  Z^{\ah + e_i} \p_3 u_h \ dx\\
\lesssim &\; \sqrt{\me(t)}\bmd_{tan}(t),
\end{align*}
and
\beqq \bal
II_6 = &\; \var \sum_{0 < \beta_h \le \alpha_h}C^{\beta_h}_{\alpha_h}  Z^{\be_h} (\f{\vro}{1+\vro}) Z^{\al_h - \be_h} \p_3^2 u \cdot  Z^{\al_h } \p_3^2 u \ dx
\lesssim  \sqrt{\me(t)}\bmd_{tan}(t).
\dal \deqq
Substituting the estimates from $II_{1}$ to $II_{6}$ into \eqref{32-equ}, we have, for $|\ah| = m-2$,
\beq\label{3204}  \bal
    \f12 \frac{d}{dt} \|Z^{\ah} \p_3(\vro, u)(t)\|_{L^2}^2
				+\|Z^{\ah} \p_3(\nabla_h u, \du)(t)\|_{L^2}^2 +\var \|Z^{\ah} \p_3^2 u(t)\|_{L^2}^2
				\lesssim  (\me(t)^{\f13}+\me(t)^{\f12})\bmd_{tan}(t).
\dal \deq
{\bf{Finally, we deal with the case  $|\ah| = m-1$.}} Similar to the estimates of the case $|\ah|=m-2$, we can check that if $|\ah| = m-1$,
\beq\label{3205}  \bal
    \f12 \frac{d}{dt} \|Z^{\ah} \p_3(\vro, u)(t)\|_{L^2}^2
				+\|Z^{\ah} \p_3(\nabla_h u, \du)(t)\|_{L^2}^2 +\var \|Z^{\ah} \p_3^2 u(t)\|_{L^2}^2
				\lesssim  (\me(t)^{\f13}+\me(t)^{\f12})\md_{tan}(t),
\dal \deq
without using any interpolation inequalities for the terms $II_{1,3}$ and $II_{3,4}$.
Combining the estimates \eqref{3202}, \eqref{3203}-\eqref{3205}, we finish the proof of this lemma.
   \end{proof}

   Finally, in order to close the energy estimate, we will estimate the conormal energy norm for the one-order vertical derivative for the density and velocity field.
   \begin{lemm}\label{lemma34}
				For any smooth solution $(\vro,u)$ of equation \eqref{eqr}, under the assumption \eqref{assumption},
				it holds for any positive integer $\ 1 \le k \le m-1$,
				\beq \label{3301} \bal
				&\;  \f{d}{dt}\|\p_3(\vro, u)\|_{H_{co}^{k}}^2 + \sum_{0 \le |\al| \le k-1} \f{d}{dt}\i Z^{\al} u \cdot Z^{\al} \nabla  \vro \ dx +\f32\sum_{|\al|=k} \|\p_3 Z^{\al} \nabla_h  u\|_{L^2}^2   \\
                &\; + \f32 \sum_{|\al|=k}  (\| \p_3  Z^{\al} \du\|_{L^2}^2 +\var   \|\p_3^2 Z^{\al}u\|_{L^2}^2 ) \\
				\lesssim &\;  \|\p_3(\nabla_h u, \du)\|_{H_{co}^{k-1}}^2  +\var \|\p_3^2 u\|_{H_{co}^{k-1}}^2  +\|\du\|_{H^{k-1}_{co}}^2 + \| (\nabla_h u, \Delta_h u)\|_{H_{co}^{k-1}}^2  +  (\me(t)^{\f13}+\me(t)^{\f12})\md(t).
            \dal \deq
            \begin{proof}
            For any $1\le |\al|=k \le m-1$, we only deal with the case $\al_3 \ge 1 $, since we have already dealt with the case $\al_3=0$, see Lemma \ref{lemma32}. Applying $\p_3 Z^{\al} $-operator to $\eqref{eqr}_1$ and multiplying by $\p_3 Z^{\al} \vro$ and integrating over $\mathbb{R}_+^3$, we have
                \begin{equation} \label{Zal-vro}
					\begin{aligned}
						\frac{d}{dt}\frac{1}{2}\i| \p_3 Z^{\al} \vro|^2 \ dx+ \i  \p_3 Z^{\al} \du\cdot \p_3 Z^{\al} \vro \ dx
						=\i \p_3 Z^{\al}  (-u\cdot \nabla \vro -\vro \, \du )\cdot \p_3 Z^{\al} \vro \ dx.
					\end{aligned}
				\end{equation}
                Next, applying $Z^{\al}$-operator to $\eqref{eqr}_2$ and multiplying by $-\p_3^2 Z^{\al} u$ and integrating over $\mathbb{R}_+^3$, we have
\begin{equation} \label{Zal-u}
					\begin{aligned}
						&\; \frac{d}{dt}\frac{1}{2}\i |\p_3 Z^{\al} u|^2 dx
						+\|\p_3 Z^{\al} \nabla_h  u\|_{L^2}^2 +\| \p_3  Z^{\al} \du\|_{L^2}^2 + \var \|\p_3^2 Z^{\al}u\|_{L^2}^2 - \i Z^{\al} \nabla \vro \cdot \p_3^2 Z^{\al} u \ dx \\
						=&\;\i Z^{\al} (\vro\, \nabla \vro+u\cdot \nabla u+\f{\vro}{1+\vro} (\Delta_h u+ \nabla \du +\var  \p_3^2 u))\cdot \p_3^2 Z^{\al}  u\ dx\\
                        &\; - \i \p_3 Z^{\al} \du \cdot (\p_3^2 Z^{\al} u_3 - \p_3 Z^{\al} \p_3 u_3)\ dx
                        - \i ( Z^{\al} \p_3 \du - \p_3 Z^{\al} \du ) \cdot \p_3^2 Z^{\al} u_3\ dx  \\
                       &\; - \var \i (Z^{\al} \p_3^2  u - \p_3^2 Z^{\al} u) \cdot \p_3^2 Z^{\al} u\ dx,
					\end{aligned}
				\end{equation}
                where we have used that
                \begin{align*}
                &\; \i Z^{\al} \nabla \du \cdot \p_3^2 Z^{\al} u \ dx
               =  \i Z^{\al} \p_3 \du \cdot \p_3^2 Z^{\al} u_3 \ dx + \i Z^{\al} \nabla_h \du \cdot \p_3^2 Z^{\al} u_h \ dx  \\
               = &\; \i (Z^{\al}  \p_3  \du -\p_3 Z^{\al}  \du ) \cdot  \p_3^2 Z^{\al} u_3 \ dx  + \i \p_3 Z^{\al}  \du \cdot (\p_3^2 Z^{\al} u_3 -\p_3 Z^{\al} \p_3 u_3 + \p_3 Z^{\al} \p_3 u_3  ) \ dx \\
               &\; + \i \p_3 Z^{\al} \du \cdot \p_3 Z^{\al} \nabla_h  \cdot u_h \ dx  \\
                = &\; \i ( Z^{\al} \p_3 \du - \p_3 Z^{\al} \du ) \cdot \p_3^2 Z^{\al} u_3\ dx + \i \p_3 Z^{\al} \du \cdot (\p_3^2 Z^{\al} u_3 - \p_3 Z^{\al} \p_3 u_3)\ dx +\| \p_3  Z^{\al} \du\|_{L^2}^2,
                \end{align*}
                and
                 \beqq \bal
                  \var \i Z^{\al} \p_3^2 u \cdot \p_3^2 Z^{\al} u \ dx
                  =  \var \|\p_3^2 Z^{\al} u\|_{L^2}^2 + \var \i (Z^{\al} \p_3^2  u - \p_3^2 Z^{\al} u) \cdot \p_3^2 Z^{\al} u\ dx.
                 \dal \deqq
                Notice that
                \begin{align*}
              &\; - \i Z^{\al} \nabla \vro \cdot \p_3^2 Z^{\al} u \ dx = - \i Z^{\al} \p_3 \vro \cdot \p_3^2 Z^{\al} u_3 \ dx - \i Z^{\al} \nabla_h \vro \cdot \p_3^2 Z^{\al} u_h \ dx  \\
              = &\;- \i  Z^{\al} \p_3 \vro \cdot (\p_3^2 Z^{\al} u_3 -\p_3 Z^{\al} \p_3 u_3) \ dx - \i  Z^{\al} \p_3 \vro \cdot \p_3 Z^{\al} \p_3 u_3  \ dx  \\
              &\; - \i \p_3 Z^{\al}  \vro \cdot \p_3 Z^{\al} \nabla_h \cdot u_h \ dx \\
              = &\;- \i  Z^{\al} \p_3 \vro \cdot (\p_3^2 Z^{\al} u_3 -\p_3 Z^{\al} \p_3  u_3) \ dx - \i (Z^{\al} \p_3 \vro- \p_3 Z^{\al}  \vro) \cdot \p_3 Z^{\al} \p_3 u_3 \ dx \\
              &\;- \i \p_3 Z^{\al}  \vro \cdot \p_3 Z^{\al} \du \ dx.
               \end{align*}
            Thus, combining the above equality  and the equalities \eqref{Zal-vro}-\eqref{Zal-u}, we can obtain that
					\begin{align*}
						&\; \frac{d}{dt}\frac{1}{2}\i(| \p_3 Z^{\al} \vro|^2+|\p_3 Z^{\al}  u|^2)dx
						+\|\p_3 Z^{\al} \nabla_h u\|_{L^2}^2 +\| \p_3Z^{\al}  \du\|_{L^2}^2 + \var \|\p_3^2 Z^{\al}u\|_{L^2}^2 \\
						=&\;\i  \p_3 Z^{\al} (-u\cdot \nabla \vro -\vro \, \du )\cdot  \p_3 Z^{\al} \vro \ dx
						+\i Z^{\al} (\vro\, \nabla \vro)\cdot \p_3^2 Z^{\al} u\ dx \notag\\
						& \;+\i Z^{\al} (u\cdot \nabla u)\cdot \p_3^2 Z^{\al} u\ dx+\i Z^{\al} (\f{\vro}{1+\vro} \Delta_h u)\cdot \p_3^2 Z^{\al} u\ dx  \\
                        &\;+\i Z^{\al} (\f{\vro}{1+\vro} \nabla \du)\cdot \p_3^2 Z^{\al} u\ dx +\var \i Z^{\al} (\f{\vro}{1+\vro}  \p_3^2 u)\cdot \p_3^2 Z^{\al} u\ dx\\
                        &\; - \i \p_3 Z^{\al} \du \cdot (\p_3^2 Z^{\al} u_3 - \p_3 Z^{\al} \p_3 u_3)\ dx
                        - \i ( Z^{\al} \p_3 \du - \p_3 Z^{\al} \du ) \cdot \p_3^2 Z^{\al} u_3\ dx  \notag\\
                       &\; - \var \i (Z^{\al} \p_3^2  u - \p_3^2 Z^{\al} u) \cdot \p_3^2 Z^{\al} u\ dx + \i  Z^{\al} \p_3 \vro \cdot (\p_3^2 Z^{\al} u_3 -\p_3 Z^{\al} \p_3 u_3) \ dx \\
                       &\; + \i (Z^{\al} \p_3 \vro- \p_3 Z^{\al}  \vro) \cdot \p_3 Z^{\al} \p_3 u_3 \ dx \\
				:= &\; \sum_{i=1}^{11} III_i.
					\end{align*}
                First we estimate the term $III_1$.
                \begin{align*}
          III_1 = &\; - \sum_{0 \le \beta \le \alpha}C^{\beta}_{\alpha} \i \p_3 Z^{\be} u \cdot Z^{\al-\be} \nabla \vro  \cdot \p_3 Z^{\al} \vro \ dx  - \sum_{0 \le \beta \le \alpha}C^{\beta}_{\alpha} \i  Z^{\be} u_h \cdot \p_3 Z^{\al-\be} \nabla_h \vro  \cdot \p_3 Z^{\al} \vro \ dx \\
          &\; - \sum_{0 \le \beta \le \alpha}C^{\beta}_{\alpha} \i  Z^{\be} u_3 \, \p_3 Z^{\al-\be} \p_3 \vro  \cdot \p_3 Z^{\al} \vro \ dx - \sum_{0 \le \beta \le \alpha}C^{\beta}_{\alpha} \i  \p_3 Z^{\be}  \vro \cdot  Z^{\al-\be} \du  \cdot \p_3 Z^{\al} \vro \ dx  \\
          &\; - \sum_{0 \le \beta \le \alpha}C^{\beta}_{\alpha} \i  Z^{\be} \vro \cdot \p_3 Z^{\al-\be} \du  \cdot \p_3 Z^{\al} \vro \ dx \\
          := &\; \sum_{i=1}^{5} III_{1,i}.
                 \end{align*}
                 By Lemma \ref{lemm:sobolev-ie}, we have
                \begin{align*}
                III_{1,1} \lesssim &\; (\| \p_3 u_h\|_{L^{\infty}} \| Z^{\al} \nabla_h \vro \|_{L^2} +  \| \p_3 u_3 \|_{H_{tan}^1}^{\f12} \| \p_3^2 u_3\|_{H_{tan}^1}^{\f12} \| Z^{\al} \p_3 \vro \|_{L^2} )\| \p_3 Z^{\al}  \vro \|_{L^2} \\
                &\; + \sum_{0 < \beta \le [\f{\alpha}{2}]} (\| \p_3 Z^{\be} u_h\|_{H_{tan}^2} \|  Z^{\al-\be} \nabla_h\vro\|_{L^2}^{\f12}  \| \p_3  Z^{\al-\be} \nabla_h \vro\|_{L^2}^{\f12} \\
                &\; + \| \p_3 Z^{\be} u_3\|_{H_{tan}^1}^{\f12} \| \p_3 Z^{\be} u_3\|_{H_{tan}^1}^{\f14} \| Z^{\al-\be} \p_3 \vro\|_{L^2}^{\f12}  \| \p_2  Z^{\al-\be} \p_3 \vro\|_{L^2}^{\f12}
                ) \| \p_3 Z^{\al}  \vro \|_{L^2} \\
                &\; + \sum_{[\f{\alpha}{2}] < \beta \le \al} (\| \p_3 Z^{\be} u_h\|_{L^2}^{\f12} \| \p_1 \p_3 Z^{\be} u_h\|_{L^2}^{\f12} \|  Z^{\al-\be} \nabla_h\vro\|_{H_{tan}^1}^{\f12}  \| \p_3  Z^{\al-\be} \nabla_h \vro\|_{H_{tan}^1}^{\f12} \\
                &\; + \| \p_3 Z^{\be} u_3\|_{L^2}^{\f12} \| \p_3^2 Z^{\be} u_3\|_{L^2}^{\f12} \| Z^{\al-\be} \p_3 \vro\|_{H_{tan}^2}
                ) \| \p_3 Z^{\al}  \vro \|_{L^2}\\
                \lesssim &\;(\me(t)^{\f13}+\me(t)^{\f12}) \md(t),
                \end{align*}
                and integrating by parts, we have
                \begin{align*}
                III_{1,2} = &\; - \f12 \i u_h \cdot \nabla_h  (|\p_3 Z^{\al} \vro|^2) \ dx - \sum_{0 < \beta \le \alpha}C^{\beta}_{\alpha} \i  Z^{\be} u_h \cdot \p_3 Z^{\al-\be} \nabla_h \vro  \cdot \p_3 Z^{\al} \vro \ dx \\
               = &\;   \f12 \i \nabla_h \cdot  u_h  \, |\p_3 Z^{\al} \vro|^2 \ dx - \sum_{0 < \beta \le \alpha}C^{\beta}_{\alpha} \i  Z^{\be} u_h \cdot \p_3 Z^{\al-\be} \nabla_h \vro  \cdot \p_3 Z^{\al} \vro \ dx \\
               \lesssim &\; \| \nabla_h u\|_{L^{\infty}} \|\p_3 Z^{\al} \vro \|_{L^2}^2 + \|Z^{e_i} u_h \|_{L^{\infty}} \| \p_3 Z^{\al-e_i} \nabla_h \vro\|_{L^2}  \| \p_3 Z^{\al} \vro\|_{L^2} \\
               &\; + \sum_{e_i < \beta \le \al} \| Z^{\be} u_h\|_{H_{tan}^1}^{\f12}\|\p_3 Z^{\be} u_h\|_{H_{tan}^1}^{\f12} \| \p_3 Z^{\al-\be} \nabla_h\vro\|_{L^2}^{\f12}  \| \p_{23}  Z^{\al-\be} \nabla_h \vro\|_{L^2}^{\f12} \| \p_3 Z^{\al} \vro\|_{L^2} \\
               \lesssim &\; \sqrt{\me(t)} \md(t),
                \end{align*}
                where $i=1,2,3$.
                Now we deal with the difficult term $III_{1,3}$. If $|\beta|=0$, integrating by parts, we can obtain
                \begin{align}\label{est-communtator}
                &\; - \i u_3 \,  \p_3 Z^{\al} \p_3 \vro \cdot \p_3 Z^{\al} \vro \, dx \notag\\
                = &\; - \f12 \i u_3 \,  \p_3(|\p_3 Z^{\al} \vro |^2) \, dx -  \i u_3 \,  \p_3 ( Z^{\al} \p_3 \vro-\p_3 Z^{\al} \vro) \cdot \p_3 Z^{\al} \vro \, dx \notag\\
                = &\; \f12 \i \p_3 u_3 \,|\p_3 Z^{\al} \vro |^2\, dx - \sum_{0< \be_3 \le \al_3} C_{\al_3}^{\be_3}   \i u_3 \,  \p_3 Z_3^{\be_3}(\f{1}{\vf})   \,  Z^{\al_{h}} Z_3^{\al_3-\be_3 +1 } \vro \cdot \p_3 Z^{\al} \vro \, dx \notag\\
                &\; - \sum_{0< \be_3 \le \al_3} C_{\al_3}^{\be_3}   \i u_3 \, Z_3^{\be_3}(\f{1}{\vf})  \, \p_3 Z^{\al_{h}}  Z_3^{\al_3-\be_3+1 } \vro \cdot \p_3 Z^{\al} \vro \, dx, 
               \end{align}
                where we have used that
                \beq \label{properity-p3}
		Z^{\al} \p_3 - \p_3 Z^{\al}= Z^{\al_{h}} Z_3^{\al_3} (\f{1}{\vf} Z_3) - \p_3 Z^{\al_{h}} Z_3^{\al_3}
		= \sum_{0< \be_3 \le \al_3} C_{\al_3}^{\be_3}     Z_3^{\be_3}(\f{1}{\vf})  \,  Z^{\al_{h}} Z_3^{\al_3-\be_3+1 }.
		\deq
        Furthermore, by induction, we can check that
\beqq
Z_3^{\be_3}(\f{1}{\vf}) \lesssim \f{1}{x_3}.
\deqq
         Using the above estimate and writing $u_3 =\int_{0}^{x_3} \p_3 u_3 \ dz$,  we can obtain that
        \beqq  \bal
         &\; - \sum_{0< \be_3 \le \al_3} C_{\al_3}^{\be_3}   \i u_3 \, Z_3^{\be_3}(\f{1}{\vf})  \,  \p_3 Z^{\al_{h}}  Z_3^{\al_3-\be_3+1 } \vro \cdot \p_3 Z^{\al} \vro \, dx\\
                \lesssim &\; \sum_{0< \be_3 \le \al_3} \i \f{1}{x_3} \cdot  x_3 \|\p_3 u_3 \|_{L_{x_3}^{\infty}} | \p_3 Z^{\al_{h}}  Z_3^{\al_3-\be_3+1 } \vro| \,|  \p_3 Z^{\al} \vro|\  dx \\
                \lesssim &\; \sum_{0< \be_3 \le \al_3} \|\p_3 u_3 \|_{L^{\infty}} \| \p_3 Z^{\al_{h}}  Z_3^{\al_3-\be_3+1 } \vro\|_{L^2}  \|  \p_3 Z^{\al} \vro\|_{L^2} \\
                \lesssim &\;\sqrt{\me(t)} \md(t),
        \dal \deqq
        and similarly, we can check that
         \beqq \bal
       \f12 \i \p_3 u_3 \,|\p_3 Z^{\al} \vro |^2\, dx - \sum_{0< \be_3 \le \al_3} C_{\al_3}^{\be_3}   \i u_3 \,  \p_3 Z_3^{\be_3}(\f{1}{\vf})   \,  Z^{\al_{h}} Z_3^{\al_3-\be_3 +1 } \vro \cdot \p_3 Z^{\al} \vro \, dx
        \lesssim \sqrt{\me(t)} \md(t).
         \dal \deqq
         Thus, we have
         \beqq \bal
       - \i u_3 \,  \p_3 Z^{\al} \p_3 \vro \cdot \p_3 Z^{\al} \vro \, dx
        \lesssim \sqrt{\me(t)} \md(t).
         \dal \deqq
         If $\be =e_i$, $i=1,2$, similar to the estimate \eqref{ah1-II14} of $II_{1,4}$, we have
         \beqq \bal
        &\; - \i Z^{e_i} u_3 \,  \p_3 Z^{\al-e_i} \p_3 \vro \cdot \p_3 Z^{\al} \vro \, dx \\
        \lesssim &\; \i | Z^{e_i} u_3 \,  \p_3 Z^{\al-e_i} \p_3 \vro \,\p_3 Z^{\al} \vro | \chi^2 \, dx + \i | Z^{e_i} u_3 \,  \p_3 Z^{\al-e_i} \p_3 \vro \,\p_3 Z^{\al} \vro | (1-\chi)^2 \, dx \\
        \lesssim &\; (\| Z^{e_i} \p_3 u_3\|_{L^{\infty}} +\| Z^{e_i} u_3\|_{L^{\infty}} )  \| Z_3 Z^{\al-e_i} \p_3 \vro\|_{L^2} \| \p_3 Z^{\al} \vro\|_{L^2} \\
         \lesssim &\;   \sqrt{\me(t)} \md(t),
         \dal \deqq
         and if $\be =e_3$, it is easy to check that
         \beqq \bal
         &\; - \i Z^{e_3} u_3 \,  \p_3 Z^{\al-e_3} \p_3 \vro \cdot \p_3 Z^{\al} \vro \, dx \\
         \lesssim &\;\|\p_3 u_3\|_{L^{\infty}}\| Z_3 Z^{\al-e_i} \p_3 \vro\|_{L^2}  \| \p_3 Z^{\al} \vro\|_{L^2} \\
         \lesssim &\;   \sqrt{\me(t)} \md(t).
         \dal \deqq
         If $2 \le |\be| \le  [\f{|\al|}{2}]$, we first consider the case $\be_3=0$.
         \begin{align*}
         &\; - \i Z^{\be_h} u_3 \,  \p_3 Z^{\al-\be_h} \p_3 \vro \cdot \p_3 Z^{\al} \vro \, dx \\
         \lesssim &\; \i | Z^{\be_h} u_3 \,  \p_3 Z^{\al-\be_h} \p_3 \vro \,\p_3 Z^{\al} \vro | \chi^2 \, dx + \i | Z^{\be_h} u_3 \,  \p_3 Z^{\al-\be_h} \p_3 \vro \,\p_3 Z^{\al} \vro | (1-\chi)^2 \, dx \\
         \lesssim &\; (\|Z^{\be_h} \p_3 u_3\|_{H_{tan}^1}^{\f12} \| Z^{\be_h} \p_{13} u_3\|_{H_{tan}^1}^{\f12} + \|Z^{\be_h}  u_3\|_{H_{tan}^1}^{\f12} \| Z^{\be_h} \p_{1} u_3\|_{H_{tan}^1}^{\f12} ) \\
         &\; \times \| Z_3 Z^{\al-\be_h} \p_3 \vro\|_{L^2}^{\f12} \|\p_2 Z_3 Z^{\al-\be_h} \p_3 \vro\|_{L^2}^{\f12}\| \p_3 Z^{\al} \vro\|_{L^2} \\
         \lesssim &\;   \sqrt{\me(t)} \md(t),
         \end{align*}
         and if $\be_3 \ge 1$,
         \beqq \bal
         &\; - \i Z^{\be} u_3 \,  \p_3 Z^{\al-\be} \p_3 \vro \cdot \p_3 Z^{\al} \vro \, dx \\
         \lesssim &\; \|\p_3 Z^{\be-e_3} u_3\|_{H_{tan}^1}^{\f12} \| \p_3^2 Z^{\be-e_3}  u_3\|_{H_{tan}^1}^{\f12} \| Z_3 Z^{\al-\be} \p_3 \vro\|_{L^2}^{\f12} \|\p_2 Z_3 Z^{\al-\be} \p_3 \vro\|_{L^2}^{\f12}\| \p_3 Z^{\al} \vro\|_{L^2} \\
         \lesssim &\;   \sqrt{\me(t)} \md(t).
         \dal \deqq
         Similarly, we can check that if $[\f{|\al|}{2}] < |\be| \le |\al|$,
         \beqq
         - \i Z^{\be_h} u_3 \,  \p_3 Z^{\al-\be_h} \p_3 \vro \,\p_3 Z^{\al} \vro \, dx
         \lesssim \sqrt{\me(t)} \md(t).
         \deqq
         Combining above estimates of the term $III_{1,3}$, we have
         \beqq
         III_{1,3}   \lesssim \sqrt{\me(t)} \md(t).
         \deqq
                Similar to the estimate of $III_{1,1}$, we can check that
                \beqq
                III_{1,4} +III_{1,5} \lesssim  \sqrt{\me(t)} \md(t),
                \deqq
                which, together with the estimates from $III_{1,1}$ to $III_{1,3}$, yields that
                \beqq
                III_{1} \lesssim  (\me(t)^{\f13}+\me(t)^{\f12})\md(t).
                \deqq
                Next we estimate the term $III_3$. Integrating by parts, we have
                \begin{align*}
III_{3} = &\; -\i \p_3Z^{\al}  (u_h \cdot \nabla_h u+ u_3 \, \p_3 u) \cdot \p_3 Z^{\al} u \ dx \\
= &\; -\sum_{0 \le \be \le \al} C_{\al}^{\be}  \i  (\p_3 Z^{\be} u_h \cdot Z^{\al-\be} \nabla_h u +  Z^{\be} u_h \cdot \p_3 Z^{\al-\be} \nabla_h u)\cdot \p_3 Z^{\al} u \ dx \\
&\;  -\sum_{0 \le \be \le \al} C_{\al}^{\be}  \i  \p_3 Z^{\be} u_3 \, Z^{\al-\be} \p_3 u\cdot \p_3 Z^{\al} u \ dx +  \i  u_3 \, \p_3 Z^{\al} \p_3 u \cdot \p_3 Z^{\al} u \ dx \\
&\; -\sum_{0 < \be \le \al} C_{\al}^{\be}  \i   Z^{\be} u_3 \, \p_3 Z^{\al-\be} \p_3 u \cdot \p_3 Z^{\al} u \ dx \\
:= &\; \sum_{i=1}^{4} III_{3,i}.
\end{align*}
By Lemma \ref{lemm:sobolev-ie}, we can check
\begin{align*}
III_{3,1} \lesssim &\;  \| \p_3  Z^{\al}  u\|_{L^2}^{\f12} \| \p_{13}  Z^{\al} u\|_{L^2}^{\f12} \sum_{0 \le \be \le \al}  \Big( \|\p_3 Z^{\be} u_h\|_{L^2}^{\f12} \|\p_{23} Z^{\be} u_h\|_{L^2}^{\f12}
                \|Z^{\al-\be} \nabla_h u\|_{L^2}^{\f12} \|\p_3 Z^{\al-\be} \nabla_h u\|_{L^2}^{\f12} \\
                &\; +  \|Z^{\be} u_h\|_{L^2}^{\f14} \|\p_2 Z^{\be} u_h\|_{L^2}^{\f14} \|\p_3 Z^{\be} u_h\|_{L^2}^{\f14}  \|\p_{23} Z^{\be} u_h\|_{L^2}^{\f14}
                \|\p_3 Z^{\al-\be} \nabla_h u\|_{L^2}\Big) \\
                \lesssim &\; \sqrt{\me(t)} \md(t),\\
                III_{3,2} \lesssim &\;\sum_{0 \le \be \le \al}  \| \p_3  Z^{\al}  u\|_{L^2}^{\f12} \| \p_{13}  Z^{\al} u \|_{L^2}^{\f12} \|\p_3 Z^{\be} u_3\|_{L^2}^{\f12} \|\p_3^2 Z^{\be} u_3\|_{L^2}^{\f12} \
                \|Z^{\al-\be} \p_3 u\|_{L^2}^{\f12} \|\p_2 Z^{\al-\be} \p_3 u\|_{L^2}^{\f12}\\
                \lesssim &\; \sqrt{\me(t)} \md(t).
\end{align*}
               Integrating by parts and using anisotropic inequalities \eqref{ie:Sobolev}, similar to estimate \eqref{est-communtator} of $III_{1,3}$, we have
                \beqq \bal
III_{3,3} = &\; - \i u_3 \p_3 (\p_3 Z^{\al} u +  Z^{\al}  \p_3 u -\p_3 Z^{\al} u) \cdot \p_3 Z^{\al} u \ dx \\
= &\; \f12 \i \p_3 u_3  |\p_3 Z^{\al} u|^2 \ dx - \i u_3  \p_3(Z^{\al}  \p_3 u -\p_3 Z^{\al} u) \cdot \p_3 Z^{\al} u \,\ dx\\
\lesssim &\; \sqrt{\me(t)} \md(t).
\dal \deqq
               It remains to deal with the term $III_{3,4}$. If $\be_3=0$, similar to the estimate \eqref{ah1-II14} of $II_{1,4}$, we can check
                \beqq \bal
                &\; -  \i   Z^{\be} u_3 \, \p_3 Z^{\al-\be} \p_3 u \cdot \p_3 Z^{\al} u \ dx \\
                \lesssim &\;  \i   |Z^{\be} u_3 \, \p_3 Z^{\al-\be} \p_3 u \cdot \p_3 Z^{\al} u| \chi^2 \ dx  +  \i   |Z^{\be} u_3 \, \p_3 Z^{\al-\be} \p_3 u \cdot \p_3 Z^{\al} u| (1- \chi)^2 \ dx \\
                \lesssim &\; \Big( \| Z^{\be} \p_3 u_3\|_{L^2}^{\f12} \|\p_3 Z^{\be} \p_3 u_3\|_{L^2}^{\f12} +\| Z^{\be}  u_3\|_{L^2}^{\f12} \|\p_3 Z^{\be} u_3\|_{L^2}^{\f12}
                ) \\ &\; \times \| Z_3  Z^{\al-\be} \p_3 u\|_{L^2}^{\f12} \| \p_1 Z_3  Z^{\al-\be} \p_3 u\|_{L^2}^{\f12}
                 \| \p_3  Z^{\al}  u\|_{L^2}^{\f12} \| \p_{13}  Z^{\al} u\|_{L^2}^{\f12} \\
              \lesssim &\; \sqrt{\me(t)} \md(t).
\dal \deqq
If $\be_3 \ge 1$, we have
                \beqq \bal
                &\; -  \i   Z^{\be} u_3 \, \p_3 Z^{\al-\be} \p_3 u \cdot \p_3 Z^{\al} u \ dx \\
                \lesssim &\; \| \p_3 Z^{\be -e_3}  u_3\|_{L^2}^{\f12} \|\p_3 Z^{\be -e_3} u_3\|_{L^2}^{\f12}  \| Z_3  Z^{\al-\be} \p_3 u\|_{L^2}^{\f12} \| \p_1 Z_3  Z^{\al-\be} \p_3 u\|_{L^2}^{\f12}
                 \| \p_3  Z^{\al}  u\|_{L^2}^{\f12} \| \p_{13}  Z^{\al} u\|_{L^2}^{\f12} \\
              \lesssim &\; \sqrt{\me(t)} \md(t).
\dal \deqq
Thus, we can obtain that
\beqq
III_{3,4} \lesssim  \sqrt{\me(t)} \md(t),
\deqq
which, together with the estimates from $III_{3,1}$ to $III_{3,3}$, yields that
\beqq
III_{3} \lesssim  \sqrt{\me(t)} \md(t).
\deqq
                Similarly, integrating by part, we can check that
                \begin{align*}
                 III_2= &\;   -\i \p_3 Z^{\al} ( \vro \,   \nabla_h \vro)\cdot \p_3 Z^{\al} u_h \ dx   + \i Z^{\al} ( \vro \,   \p_3 \vro)\cdot \p_3^2 Z^{\al} u_3 \ dx \\
                 \lesssim &\; \sqrt{\me(t)} \md(t),\\
III_{4} = &\; \sum_{0 \le \be \le \al} C_{\al}^{\be}  \i (\p_3 Z^{\be} (\f{\vro}{1+\vro}) \, Z^{\al-\be} \Delta_h u + Z^{\be} (\f{\vro}{1+\vro})\, \p_3 Z^{\al-\be} \Delta_h u )\cdot \p_3 Z^{\al} u \ dx \\
= &\; - \i Z^{\al} \nabla_h u \cdot \nabla_h \,\Big( \p_3 (\f{\vro}{1+\vro}) \, \p_3 Z^{\al} u \Big)\ dx
+\sum_{0 < \be \le \al} C_{\al}^{\be}  \i \p_3 Z^{\be} (\f{\vro}{1+\vro}) \, Z^{\al-\be} \Delta_h u  \cdot \p_3 Z^{\al} u \ dx\\
&\; -\sum_{0 \le \be < \al} C_{\al}^{\be}  \i \p_3 Z^{\al-\be} \nabla_h u \cdot \nabla_h \, \Big( Z^{\be} (\f{\vro}{1+\vro}) \, \p_3 Z^{\al} u \Big)\ dx  + \i  Z^{\al} (\f{\vro}{1+\vro}) \, \p_3 \Delta_h u  \cdot \p_3 Z^{\al} u \ dx \\
\lesssim &\; \sqrt{\me(t)} \md(t),\\
III_5 = &\;  \i Z^{\al} (\f{\vro}{1+\vro} \p_3 \du) \cdot \p_3^2 Z^{\al} u_3 \ dx -  \i \p_3 Z^{\al} (\f{\vro}{1+\vro} \nabla_h \du) \cdot \p_3 Z^{\al} u_h \ dx \\
= &\; \sum_{0 \le \be \le \al} C_{\al}^{\be}  \i  Z^{\be} (\f{\vro}{1+\vro}) \, Z^{\al-\be} \p_3 \du \cdot \p_3^2 Z^{\al} u_3 \ dx \\
&\; -  \sum_{0 \le \be \le \al} C_{\al}^{\be}  \i  \p_3 Z^{\be} (\f{\vro}{1+\vro}) \, Z^{\al-\be} \nabla_h \du \cdot \p_3 Z^{\al} u_h \ dx  \\
&\; -  \sum_{0 \le \be \le \al} C_{\al}^{\be}  \i  Z^{\be} (\f{\vro}{1+\vro}) \, \p_3  Z^{\al-\be} \nabla_h \du \cdot \p_3 Z^{\al} u_h \ dx  \\
= &\; \sum_{0 \le \be \le \al} C_{\al}^{\be}  \i  Z^{\be} (\f{\vro}{1+\vro}) \, Z^{\al-\be} \p_3 \du \cdot \p_3^2 Z^{\al} u_3 \ dx   + \i Z^{\al}  \du \, \nabla_h \cdot \Big(\p_3 (\f{\vro}{1+\vro}) \, \p_3 Z^{\al} u_h \Big) \ dx \\
&\; -  \sum_{0 < \be \le \al} C_{\al}^{\be}  \i  \p_3 Z^{\be} (\f{\vro}{1+\vro}) \, Z^{\al-\be} \nabla_h \du \cdot \p_3 Z^{\al} u_h \ dx  +  \i \p_3 Z^{\al}  \du \, \nabla_h \cdot \Big(\f{\vro}{1+\vro} \, \p_3 Z^{\al} u_h \Big) \ dx\\
&\; -  \sum_{0 < \be \le \al} C_{\al}^{\be}  \i  Z^{\be} (\f{\vro}{1+\vro}) \, \p_3  Z^{\al-\be} \nabla_h \du \cdot \p_3 Z^{\al} u_h \ dx  \\
\lesssim &\; \sqrt{\me(t)} \md(t),
                \end{align*}
                and
                \beqq \bal
                III_6 = \var \sum_{0 \le \be \le \al} C_{\al}^{\be}  \i  Z^{\be} (\f{\vro}{1+\vro}) \,   Z^{\al-\be} \p_3^2 u \cdot \p_3^2 Z^{\al} u \ dx \lesssim  \sqrt{\me(t)} \md(t).
                \dal \deqq
                Using the property of the commutator \eqref{properity-p3}, we can obtain
        \begin{align*}
        III_7 = &\; - \sum_{0< \be_3 \le \al_3} C_{\al_3}^{\be_3}  \i \p_3 Z^{\al} \du   \cdot \p_3( Z_3^{\be_3}(\f{1}{\vf}) \,  Z^{\al_{h}} Z_3^{\al_3-\be_3+1 }  u_3 ) \ dx \\
        \lesssim &\;
        \| \p_3 Z^{\al} \du\|_{L^2} \| \p_3^2 u_3\|_{H_{co}^{k-1}} \lesssim \nu  \| \p_3 Z^{\al} \du\|_{L^2}^2 + C_{\nu}\| \p_3^2 u_3\|_{H_{co}^{k-1}}^2,\\
         III_{8} = &\; - \i ( Z^{\al} \p_3 \du - \p_3 Z^{\al} \du ) \cdot \p_3 Z^{\al} \p_3 u_3\ dx  \\
         &\; - \i ( Z^{\al} \p_3 \du - \p_3 Z^{\al} \du ) \cdot (\p_3^2  Z^{\al} u_3-\p_3 Z^{\al} \p_3 u_3)\ dx \\
         \lesssim  &\; \| \p_3 \du\|_{H_{co}^{k-1}} (\|\p_3 Z^{\al} \p_3 u_3\|_{L^2} + \| \p_3^2 u_3\|_{H_{co}^{k-1}} )\\
         \lesssim &\; \nu  \| \p_3 Z^{\al} \p_3 u_3 \|_{L^2}^2 + C_{\nu} \|( \p_3^2 u_3 ,\p_3 \du)\|_{H_{co}^{k-1}}^2,\\
         III_{9} \lesssim &\; \nu \var \| \p_3^2 Z^{\al} u\|_{L^2}^2 + C_{\nu} \var \| \p_3^2 u\|_{H_{co}^{k-1}}^2,
        \end{align*}
        and similarly, for $i=1,2,3$, we have
        \beqq \bal
        III_{10}  = &\; \i Z^{\al-e_i} \p_3 \vro \cdot Z^{e_i} (\p_3^2 Z^{\al} u_3 -\p_3 Z^{\al} \p_3 u_3) \ dx
        \lesssim \nu \| \p_3^2 u_3\|_{H_{co}^{k}}^2+ C_{\nu} \| \p_3 \vro\|_{H_{co}^{k-1}}^2, \\
        III_{11}  \lesssim &\;  \nu  \| \p_3 Z^{\al} \p_3 u_3\|_{L^2}^2 + C_{\nu} \| \p_3 \vro\|_{H_{co}^{k-1}}^2. \\
        \dal \deqq
        Combining the estimates from $III_{1}$ to  $III_{11}$, summing over $|\al| = k$ and using the smallness of $\nu$, we have
        \beq \label{Z3-all} \bal
&\; \frac{d}{dt}\i\sum_{|\al|=k}(| \p_3 Z^{\al} \vro|^2+|\p_3 Z^{\al}  u|^2)dx
						+\f32\sum_{|\al|=k}(\|\p_3 Z^{\al} \nabla_h  u\|_{L^2}^2 +\| \p_3  Z^{\al} \du\|_{L^2}^2 + \var \|\p_3^2 Z^{\al}u\|_{L^2}^2)  \\
 \lesssim &\;  \|\p_3(\nabla_h u, \du)(t)\|_{H_{co}^{k-1}}^2  +\var \|\p_3^2 u(t)\|_{H_{co}^{k-1}}^2 + \|\p_3 \vro(t)\|_{H_{co}^{k-1}}^2+ (\me(t)^{\f13}+\me(t)^{\f12})\md(t) .
       \dal \deq
       Thanks to the Lemma \ref{lemma36}, which will be proved later, we have
       \begin{align*}
			 \| \nabla \vro\|_{H_{co}^{k-1}}^2
		\lesssim &\; -\sum_{0 \le |\al| \le k-1}\frac{d}{dt} \i Z^{\al} u \cdot Z^{\al} \nabla  \vro \ dx +  \|(\du, \nabla \du)\|_{H^{k-1}_{co}}^2+ \| (\nabla_h u,\Delta_h u)\|_{H_{co}^{k-1}}^2 \\
                    &\; +
					\var \|\p_3^2 u \|_{H^{k-1}_{co}}^2
					+\sqrt{\me(t)}\md(t)\\
                    \lesssim &\; -\sum_{0 \le |\al| \le k-1}\frac{d}{dt} \i Z^{\al} u \cdot Z^{\al} \nabla  \vro \ dx +\|\p_3(\nabla_h u, \du)\|_{H_{co}^{k-1}}^2 +  \|\du\|_{H^{k-1}_{co}}^2\\
                    &\;+ \| (\nabla_h u, \Delta_h u)\|_{H_{co}^{k-1}}^2  +
					\var \|\p_3^2 u \|_{H^{k-1}_{co}}^2
					+\sqrt{\me(t)}\md(t).
       \end{align*}
         Thus, substituting above estimate into \eqref{Z3-all},  we can obtain
       \begin{align*}
				&\; \f{d}{dt}\|\p_3(\vro, u)\|_{H_{co}^{k}}^2 + \sum_{0 \le |\al| \le k-1} \f{d}{dt}\i Z^{\al} u \cdot Z^{\al} \nabla  \vro \ dx +\f32\sum_{|\al|=k} \|\p_3 Z^{\al} \nabla_h  u\|_{L^2}^2   \\
                &\; + \f32 \sum_{|\al|=k}  (\| \p_3  Z^{\al} \du\|_{L^2}^2 +\var   \|\p_3^2 Z^{\al}u\|_{L^2}^2 ) \\
				\lesssim &\;  \|\p_3(\nabla_h u, \du)\|_{H_{co}^{k-1}}^2  +\var \|\p_3^2 u\|_{H_{co}^{k-1}}^2  +\|\du\|_{H^{k-1}_{co}}^2 + \| (\nabla_h u, \Delta_h u)\|_{H_{co}^{k-1}}^2  +  (\me(t)^{\f13}+\me(t)^{\f12})\md(t),
            \end{align*}
             thus we complete the proof of this lemma.
            \end{proof}
			\end{lemm}

			\subsection{Dissipation estimate of density}\label{sec-density}
            In this subsection, we will establish the dissipative
			estimate of density derived from the velocity equation $\eqref{eqr}_2$. First of all, we will establish the tangential dissipative
			estimate.
			\begin{lemm}\label{lemma35}
			For any smooth solution $(\vro,u)$ of equation \eqref{eqr}, it holds for all $0 \le |\ah| = k \le m-1$\
			\beq \label{3401}
				\begin{aligned}
					&\frac{d}{dt}
					\i Z^{\ah} u \cdot Z^{\ah} \nabla \vro \ dx
					+ \f12\|   Z^{\ah}  \nabla \vro\|_{L^2}^2 \\
					\lesssim &
					\left\{\begin{array}{*{2}{ll}}
				\|(\du, \nabla \du, \Delta_h u)\|_{H^{k}_{tan}}^2+
					\var \|\p_3^2 u \|_{H^{k}_{tan}}^2+ \sqrt{\me(t)}\md_{tan}(t) & ~~ k=0 ~{\rm or}~ m-1,\\
				 \|\nabla_h(\du, \nabla \du, \Delta_h u)\|_{H^{k-1}_{tan}}^2+
					\var \|\nabla_h \p_3^2 u \|_{H^{k-1}_{tan}}^2+\sqrt{\me(t)}\bmd_{tan}(t) & ~~ k=1 ~{\rm or}~ m-2.
		\end{array}\right.
				\end{aligned}
				\deq
			\end{lemm}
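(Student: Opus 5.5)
The plan is the standard pressure–velocity coupling argument. Apply $Z^{\ah}$ to the momentum equation $\eqref{eqr}_2$, written with the pressure term isolated:
\[
\nabla\vro = -\p_t u + \Delta_h u + \var\p_3^2 u + \nabla\du + N,
\]
where
\[
N := -u\cdot\nabla u - \tfrac{\vro}{1+\vro}\bigl(\Delta_h u+\var\p_3^2u+\nabla\du\bigr) - \vro\nabla\vro .
\]
Then take the $L^2$ inner product with $Z^{\ah}\nabla\vro$. Because $Z^{\ah}$ involves only horizontal derivatives, it commutes with $\nabla$, and no commutators arise.

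The time derivative term is handled by
\[
-\i Z^{\ah}\p_t u\cdot Z^{\ah}\nabla\vro\,dx = -\frac{d}{dt}\i Z^{\ah}u\cdot Z^{\ah}\nabla\vro\,dx + \i Z^{\ah}u\cdot Z^{\ah}\nabla\p_t\vro\,dx .
\]
In the last integral, integrate by parts. The boundary term vanishes since $u_3|_{x_3=0}=0$, leaving $-\i Z^{\ah}\du\, Z^{\ah}\p_t\vro\,dx$. Now substitute $\eqref{eqr}_1$, i.e. $\p_t\vro=-\du-u\cdot\nabla\vro-\vro\du$. This produces $\|Z^{\ah}\du\|_{L^2}^2$ plus a cubic term. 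The linear viscous terms $\Delta_h u$, $\var\p_3^2u$ and $\nabla\du$ are bounded by Cauchy--Schwarz with a weight $\frac14$ on $\|Z^{\ah}\nabla\vro\|_{L^2}^2$, giving $\|Z^{\ah}(\Delta_h u,\nabla\du)\|_{L^2}^2+\var^2\|Z^{\ah}\p_3^2u\|_{L^2}^2$. Since $\var<1$, the last piece is bounded by $\var\|\p_3^2u\|_{H^k_{tan}}^2$. Absorbing the remaining $\frac14$ as well leaves $\frac12\|Z^{\ah}\nabla\vro\|_{L^2}^2$ on the left. This yields the linear part of \eqref{3401} in every case. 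For $k=1$ or $m-2$, every term carries at least one horizontal derivative, so these terms are written as $\nabla_h(\cdots)$ in $H^{k-1}_{tan}$, as claimed.

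The nonlinear terms are $\i Z^{\ah}N\cdot Z^{\ah}\nabla\vro\,dx$ and $\i Z^{\ah}\du\,Z^{\ah}(u\cdot\nabla\vro+\vro\du)\,dx$. I would expand them by Leibniz exactly as in Lemma \ref{lemma32}, splitting according to whether $|\be_h|$ is at most or greater than $[|\ah|/2]$. Then apply the anisotropic inequality \eqref{ie:Sobolev}, putting the lower-order factor in $L^\infty$ or in the $H^1_{tan}$ mixed norms and the higher-order factor in $L^2$.

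For $k=0$ and $k=m-1$ the target is $\sqrt{\me}\,\md_{tan}$. Here $\|\nabla\vro\|_{H^{m-1}_{tan}}$, $\|(\nabla_h u,\du)\|_{H^m_{tan}}$ and $\|\p_3(\nabla_hu,\du)\|_{H^{m-1}_{tan}}$ already lie in $\md_{tan}$. The term $\vro\nabla\vro$ is paired as $\|\vro\|_{L^\infty}\|Z^{\ah}\nabla\vro\|_{L^2}^2$ and lower-order products. In $u\cdot\nabla u$, the piece $u_3\p_3u$ is controlled via $u_3$ in $L^\infty$ and $\p_3 u$ in the energy.

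For $k=1$ or $m-2$ the target is $\sqrt{\me}\,\bmd_{tan}$. There I would repeat the device of Lemma \ref{lemma32}, case $|\ah|=1$: use the interpolations \eqref{inter-1}--\eqref{inter-2} together with the negative norm $\|\hs(\cdot)\|_{L^2}$, which lies in $\me$, to trade undifferentiated factors such as $\|u\|_{L^2}$ and $\|\p_3 u\|_{L^2}$ for $\nabla_h^2$ quantities in $\bmd_{tan}$. This uses $s\ge\frac{16}{17}$, as in $I_{3,2}$ and $I_{3,3}$.

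The main obstacle I expect is the quasilinear viscous nonlinearity $\frac{\vro}{1+\vro}(\nabla\du+\var\p_3^2u)$ at top order $k=m-1$. It contains $Z^{\ah}\p_3\du$ and $\var Z^{\ah}\p_3^2u$ with $|\ah|=m-1$. The factor $\var\|\p_3^2u\|_{H^{m-1}_{tan}}$ appears only in $\md_{tan}$ squared with a factor $\var$, so I would bound it as $\|\vro\|_{L^\infty}\,\var\|Z^{\ah}\p_3^2u\|_{L^2}\|Z^{\ah}\nabla\vro\|_{L^2}$ and use $\var\le\sqrt\var$. Lower-order pieces go through Lemma \ref{lemma-help} (specifically \eqref{est-p32-u3-uh}) to keep $\p_3^2u_3$ and $\var\p_3^2u_h$ in $H^2_{tan}$ controlled by $\me$. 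The term $u_3\p_3u$ near the boundary, when it meets $\p_3^2$-type factors, would be handled as in \eqref{ah1-II14}: split with the cutoff $\chi$ and use $u_3=\int_0^{x_3}\p_3u_3\,dz$.
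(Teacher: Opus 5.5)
Your proposal matches the paper's proof. You apply $Z^{\alpha_h}$ to the momentum equation and pair with $Z^{\alpha_h}\nabla\varrho$. The $\partial_t u$ term becomes $-\frac{d}{dt}\int_{\mathbb{R}^3_+} Z^{\alpha_h}u\cdot Z^{\alpha_h}\nabla\varrho\,dx+\|Z^{\alpha_h}{\rm div}\,u\|_{L^2}^2$ plus cubic terms, via integration by parts (using $u_3|_{x_3=0}=0$) and $\eqref{eqr}_1$. The linear viscous terms are absorbed by Cauchy--Schwarz, and the nonlinear terms are treated case by case ($k=0,1,m-2,m-1$) with \eqref{ie:Sobolev} and \eqref{inter-1}, exactly as the paper does.

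One bookkeeping caution concerns the piece $\int_{\mathbb{R}^3_+} u_3\,\partial_3 u\cdot\nabla\varrho\,dx$ at $k=0$. If you bound both $\|u_3\|_{L^\infty}$ and $\|\partial_3 u\|_{L^2}$ by the energy, you only get $\mathcal{E}(t)\sqrt{\mathcal{D}_{tan}(t)}$, because neither $\|u_3\|_{L^2}$ nor $\|\partial_3 u_h\|_{L^2}$ is controlled by $\mathcal{D}_{tan}$ uniformly in $\varepsilon$. To obtain $\sqrt{\mathcal{E}(t)}\,\mathcal{D}_{tan}(t)$, split as the paper does, $\|u_3\|_{H^1_{tan}}^{1/2}\|\partial_3 u_3\|_{H^1_{tan}}^{1/2}\|\partial_3 u\|_{L^2}^{1/2}\|\partial_{13}u\|_{L^2}^{1/2}\|\nabla\varrho\|_{L^2}$. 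Alternatively, use the dissipative bound $\|u_3\|_{L^\infty}\lesssim\|\nabla u_3\|_{L^2}^{1/2}\|\nabla^2 u_3\|_{L^2}^{1/2}$.
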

            \begin{proof}
            For any $|\ah|=k \le m-1$, the equation $\eqref{eqr}_2$ yields that
            \beqq \bal
            \|Z^{\ah} \nabla \vro\|_{L^2}^2  =  &\;  -\i \p_t Z^{\ah} u \cdot Z^{\ah} \nabla \vro \ dx + \i Z^{\ah} (\Delta_h u + \var \p_3^2 u + \nabla \du) \cdot Z^{\ah} \nabla \vro \ dx\\
            &\; - \i Z^{\ah}\Big( u \cdot \nabla u+ \f{\vro}{1+\vro} (\Delta_h u +\var \p_3^2 u+\nabla \du) +\vro\, \nabla \vro \Big) \cdot Z^{\ah} \nabla \vro \ dx.
            \dal \deqq
            On the other hand, using the density equation $\eqref{eqr}_1$ and integrating by part, we conclude
            \beqq \bal
        &\; -\i \p_t Z^{\ah} u \cdot Z^{\ah} \nabla \vro \ dx \\ = &\;  -\p_t \i Z^{\ah} u \cdot Z^{\ah} \nabla \vro \ dx  +\i  Z^{\ah} u \cdot  Z^{\ah} \nabla \p_t \vro \ dx \\
        = &\;  -  \p_t \i Z^{\ah} u \cdot Z^{\ah} \nabla \vro \ dx   -\i  Z^{\ah} u \cdot \nabla Z^{\ah} (\du + u \cdot \nabla \vro + \vro \, \du)\ dx \\
        = &\;  -  \p_t \i Z^{\ah} u \cdot Z^{\ah} \nabla \vro \ dx   +\i  Z^{\ah} \du \cdot Z^{\ah} (\du + u \cdot \nabla \vro + \vro \, \du)\ dx.
            \dal \deqq
            Thus, we have
            \begin{align*}
           &\; \f{d}{dt} \i Z^{\ah} u \cdot Z^{\ah} \nabla \vro \ dx  + \|Z^{\ah} \nabla \vro\|_{L^2}^2 \\
           = &\; \| Z^{\ah} \du\|_{L^2}^2 +  \i Z^{\ah} \du \cdot Z^{\ah} ( u \cdot \nabla \vro + \vro \, \du)\ dx  + \i Z^{\ah} (\Delta_h u + \var \p_3^2 u + \nabla \du) \cdot Z^{\ah} \nabla \vro \ dx\\
           &\; - \i Z^{\ah} (\vro\, \nabla \vro)\cdot Z^{\ah} \nabla \vro \ dx- \i Z^{\ah}( u \cdot \nabla u) \cdot Z^{\ah} \nabla \vro \ dx \\
           &\; - \i Z^{\ah} \Big(\f{\vro}{1+\vro} (\Delta_h u+\var\p_3^2 u+\nabla \du  ) \Big) \cdot Z^{\ah} \nabla \vro \ dx \\
          := &\;  \| Z^{\ah} \du\|_{L^2}^2 + \sum_{i=1}^{5} IV_{i}.  
            \end{align*}
            We can directly obtain that
            \beqq
            IV_{2} \lesssim \nu \|  Z^{\ah} \nabla \vro\|_{L^2}^2 +  C_{\nu} \| Z^{\ah} (\Delta_h u, \nabla \du)\|_{L^2}^2 +  C_{\nu} \var \| Z^{\ah} \p_3^2 u\|_{L^2}^2.
            \deqq
             Next we estimate the other terms by the sequence.
           {\bf We first deal with the case $|\ah|=0$.}
            \begin{align*}
          IV_1 \lesssim &\; \| \vro\|_{L^{\infty}} \| \du\|_{L^2}^2 + \| u \|_{L^{\infty}} \| \du\|_{L^2} \| \nabla \vro\|_{L^2},\\
          IV_3 + IV_4  \lesssim &\; \| \vro\|_{L^{\infty}} \| \nabla \vro\|_{L^2}^2 + \| \nabla \vro\|_{L^2}(\| u_h\|_{L^{\infty}} \| \nabla_h u\|_{L^2} + \| u_3\|_{H_{tan}^1}^{\f12}  \| \p_3 u_3\|_{H_{tan}^1}^{\f12}  \| \p_3 u\|_{L^2}^{\f12} \| \p_{13} u\|_{L^2}^{\f12}),\\
          IV_5  \lesssim &\; \| \vro\|_{L^{\infty}} \| \nabla \vro\|_{L^2} ( \| \Delta_h u\|_{L^2} + \var \| \p_3^2  u\|_{L^2} + \| \nabla \du\|_{L^2} ),
            \end{align*}
            thus we have
            \beq \label{3402}
            \f{d}{dt} \i u \cdot \nabla  \vro \ dx  + \|\nabla \vro\|_{L^2}^2 \lesssim \|(\du, \nabla \du, \Delta_h u)\|_{L^2}^2+
					\var \|\p_3^2 u \|_{L^2}^2
					+
				\sqrt{\me(t)}\md_{tan}(t).
            \deq
            {\bf Now we deal with the case $|\ah|=1$.}
            \begin{align*}
            IV_1 \lesssim &\; \| u\|_{L^{\infty}} \| \nabla_h  \nabla \vro\|_{L^2} \|\nabla_h \du\|_{L^2} +\|\nabla_h \du\|_{L^2}^{\f12 } \|\p_1 \nabla_h \du\|_{L^2}^{\f12 }\Big(\| \nabla_h u_h \|_{L^2}^{\f12} \| \p_2 \nabla_h u_h \|_{L^2}^{\f12} \|\nabla_h \vro\|_{L^2}^{\f12} \|\p_3 \nabla_h \vro\|_{L^2}^{\f12} \\
            &\; + \| \nabla_h u_3 \|_{L^2}^{\f12} \| \p_3 \nabla_h u_3 \|_{L^2}^{\f12} \|\p_3 \vro\|_{L^2}^{\f12} \|\p_{23} \vro\|_{L^2}^{\f12} + \|\nabla_h \vro\|_{L^2}^{\f12} \|\p_3 \nabla_h \vro\|_{L^2}^{\f12} \| \du \|_{L^2}^{\f12} \| \p_2 \du\|_{L^2}^{\f12} \Big)\\
            &\;+ \| \vro\|_{L^{\infty}} \| \nabla_h \du\|_{L^2}^2,\\
            \lesssim &\; \sqrt{\me(t)}\bmd_{tan}(t),\\
            IV_{3} \lesssim &\; \| \nabla_h  \nabla \vro\|_{L^2} \| \nabla_h \vro\|_{H_{tan}^1}^{\f12} \| \p_3 \nabla_h \vro\|_{H_{tan}^1}^{\f12} \| \nabla \vro \|_{L^2}^{\f12} \| \p_2 \nabla \vro \|_{L^2}^{\f12} + \| \vro\|_{L^{\infty}}  \| \nabla_h  \nabla \vro\|_{L^2}^2\\
            \lesssim &\; \sqrt{\me(t)}\bmd_{tan}(t),
            \end{align*}
            and using the interpolation inequality \eqref{inter-1}, we have
            \begin{align*}
            IV_{4} \lesssim &\; \Big(\| u_h \|_{H_{tan}^2} \| \nabla_h^2 u\|_{L^2}^{\f12} \| \p_3 \nabla_h^2 u\|_{L^2}^{\f12} +  \|\nabla_h u_h \|_{L^2}^{\f14} \|\p_3 \nabla_h u_h  \|_{L^2}^{\f14} \|\p_1 \nabla_h u_h  \|_{L^2}^{\f14} \|\p_{13} \nabla_h u_h \|_{L^2}^{\f14} \| \nabla_h u\|_{L^2}^{\f12} \| \p_{2} \nabla_h u\|_{L^2}^{\f12} \\
            &\; + \|u_3 \|_{L^2}^{\f14} \|\p_3 u_3 \|_{L^2}^{\f14} \|\p_1 u_3 \|_{L^2}^{\f14} \|\p_{13} u_3 \|_{L^2}^{\f14} \| \p_3 \nabla_h u\|_{L^2}^{\f12} \| \p_{23} \nabla_h u\|_{L^2}^{\f12} \\
            &\; +  \|\nabla_h u_3 \|_{L^2}^{\f14} \|\p_3 \nabla_h u_3  \|_{L^2}^{\f14} \|\p_1 \nabla_h u_3  \|_{L^2}^{\f14} \|\p_{13} \nabla_h u_3 \|_{L^2}^{\f14} \| \p_3 u\|_{L^2}^{\f12} \| \p_{23} u\|_{L^2}^{\f12}
            \Big)
            \| \nabla_h  \nabla \vro\|_{L^2} \\
            \lesssim &\; \sqrt{\me(t)}\bmd_{tan}(t)  + \Big( \|u_3 \|_{L^2}^{\f14} \|\p_3 u_3 \|_{L^2}^{\f14} \|\p_1 u_3 \|_{L^2}^{\f14} \|\p_{13} u_3 \|_{L^2}^{\f14} (\| \p_3  u\|_{L^2}^{\f12} \| \p_3 \nabla_h^2 u\|_{L^2}^{\f12} )^{\f12} \| \p_{23} \nabla_h u\|_{L^2}^{\f12} \\
            &\; + \|\nabla_h u_3 \|_{L^2}^{\f14} \|\p_3 \nabla_h u_3  \|_{L^2}^{\f14} \|\p_1 \nabla_h u_3  \|_{L^2}^{\f14} \|\p_{13} \nabla_h u_3 \|_{L^2}^{\f14} \| \p_3 u\|_{L^2}^{\f12} (\| \p_3  u\|_{L^2}^{\f12} \| \p_3 \p_2^2 u\|_{L^2}^{\f12} )^{\f12} \Big)
            \| \nabla_h  \nabla \vro\|_{L^2} \\
            \lesssim &\; \sqrt{\me(t)}\bmd_{tan}(t).
            \end{align*}
            Similarly, using the estimate \eqref{est-p32-u3-uh}, we can check that
            \beqq \bal
             IV_5 \lesssim &\; \sqrt{\me(t)}\bmd_{tan}(t).
            \dal \deqq
            Thus, we have
            \beq \label{3403}
            \f{d}{dt} \i \nabla_h u \cdot \nabla_h \nabla  \vro \ dx  + \|\nabla_h \nabla \vro\|_{L^2}^2 \lesssim \|\nabla_h (\du, \nabla \du, \Delta_h u)\|_{L^2}^2+
					\var \|\nabla_h \p_3^2 u \|_{L^2}^2
					+
				\sqrt{\me(t)}\bmd_{tan}(t).
            \deq
            {\bf  Next, we deal with the case $|\ah|= m-2$.} Using the anisotropic inequalities \eqref{ie:Sobolev}, we can check that
            \begin{align*} 
           IV_1 = &\; \sum_{0 \le \beta_h \le \alpha_h}C^{\beta_h}_{\alpha_h} \i \Big(Z^{\be_h}  u_h \cdot Z^{\ah- \be_h} \nabla_h \vro +  Z^{\be_h}  u_3 \, Z^{\ah- \be_h} \p_3 \vro + Z^{\be_h}  \vro \cdot Z^{\ah- \be_h} \du\Big)\, Z^{\ah} \du \, dx \\
           \lesssim  &\; \|u\|_{L^{\infty}} \|Z^{\ah} \du\|_{L^2} (\|Z^{\ah} \nabla \vro\|_{L^2}  + \|Z^{\ah} \du\|_{L^2}) +
           \|Z^{\ah} \du\|_{L^2}\\
           &\; \times \sum_{0 < \beta_h \le \alpha_h} \Big(
           \| Z^{\be_h}  u_h\|_{L^2}^{\f14}\|\p_3 Z^{\be_h}  u_h\|_{L^2}^{\f14} \|\p_1 Z^{\be_h}  u_h\|_{L^2}^{\f14}\|\p_{13} Z^{\be_h}  u_h\|_{L^2}^{\f14} \|Z^{\ah- \be_h} \nabla_h \vro \|_{L^2}^{\f12} \|\p_2 Z^{\ah- \be_h} \nabla_h \vro \|_{L^2}^{\f12}\\
           &\;  +\| Z^{\be_h}  u_3\|_{L^2}^{\f14}\|\p_3 Z^{\be_h}  u_3\|_{L^2}^{\f14} \|\p_1 Z^{\be_h}  u_3\|_{L^2}^{\f14}\|\p_{13} Z^{\be_h}  u_3\|_{L^2}^{\f14} \|Z^{\ah- \be_h} \p_3 \vro \|_{L^2}^{\f12} \|\p_{23} Z^{\ah- \be_h} \vro \|_{L^2}^{\f12}\\
           &\; + \| Z^{\be_h}  \vro\|_{L^2}^{\f12}\|\p_3 Z^{\be_h}  \vro\|_{L^2}^{\f12}   \|Z^{\ah- \be_h} \du \|_{L^2}^{\f14} \|\p_1 Z^{\ah- \be_h} \du \|_{L^2}^{\f14}  \|\p_2 Z^{\ah- \be_h} \du \|_{L^2}^{\f14}  \|\p_{12} Z^{\ah- \be_h} \du\|_{L^2}^{\f14}
           \Big)\\
           \lesssim &\; \sqrt{\me(t)}\bmd_{tan}(t).
           \end{align*}
Similarly, we can check that
            \beqq \bal
             IV_3 + IV_4+ IV_5  \lesssim &\; \sqrt{\me(t)}\bmd_{tan}(t).
            \dal \deqq
            Thus, we have
            \beq \label{3404}
            \f{d}{dt} \i Z^{\ah} u \cdot Z^{\ah} \nabla  \vro \ dx  + \|Z^{\ah} \nabla \vro\|_{L^2}^2 \lesssim \|Z^{\ah} (\du, \nabla \du, \Delta_h u)\|_{L^2}^2+
					\var \|Z^{\ah} \p_3^2 u \|_{L^2}^2
					+
				\sqrt{\me(t)}\bmd_{tan}(t).
            \deq
          {\bf{Finally, we deal with the case  $|\ah| = m-1$.}} Following essentially the same strategy as before estimates, we can check that if $|\ah| = m-1$,
            \beq \label{3405}
            \f{d}{dt} \i Z^{\ah} u \cdot Z^{\ah} \nabla  \vro \ dx  + \|Z^{\ah} \nabla \vro\|_{L^2}^2 \lesssim \|Z^{\ah} (\du, \nabla \du, \Delta_h u)\|_{L^2}^2+
					\var \|Z^{\ah} \p_3^2 u \|_{L^2}^2
					+
				\sqrt{\me(t)}\md_{tan}(t),
            \deq
            and thus omit the proof.
            Combining the estimates from \eqref{3402} to \eqref{3405}, we finish the proof of this lemma.
            \end{proof}			
        	
			Finally, we will establish the dissipative estimate under the higher conormal Sobolev space  as follows.
            \begin{lemm} \label{lemma36}
            For any smooth solution $(\vro,u)$ of equation \eqref{eqr}, it holds for $ |\al| = k\le m-1$,
			\beq  \label{3501}
				\begin{aligned}
				&\; \frac{d}{dt}
					 \i Z^{\al} u \cdot Z^{\al} \nabla  \vro \ dx
					+ \f12 \|Z^{\al} \nabla \vro\|_{L^2}^2
					\\
                    \lesssim &\;\|(\du, \nabla \du)\|_{H^{k}_{co}}^2+ \| (\nabla_h u, \Delta_h u)\|_{H_{co}^{k}}^2 +
					\var \|\p_3^2 u \|_{H^{k}_{co}}^2
					+\sqrt{\me(t)}\md(t).
					\end{aligned}
				\deq
			\end{lemm}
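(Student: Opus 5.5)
We follow the proof of Lemma~\ref{lemma35}, now with general conormal multi-indices $\alpha=(\alpha_h,\alpha_3)$, $|\alpha|=k\le m-1$. When $\alpha_3=0$ the estimate \eqref{3501} is already contained in \eqref{3401}, since $\mathcal{D}_{tan},\bar{\mathcal{D}}_{tan}\le\mathcal{D}$ and the tangential norms are bounded by the conormal ones. So we assume $\alpha_3\ge1$.

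\textbf{Energy identity.} Apply $Z^\alpha$ to $\eqref{eqr}_2$ and test against $Z^\alpha\nabla\vro$. This gives $\|Z^\alpha\nabla\vro\|_{L^2}^2$ on the left. We write
\[
-\int\p_t Z^\alpha u\cdot Z^\alpha\nabla\vro\,dx=-\frac{d}{dt}\int Z^\alpha u\cdot Z^\alpha\nabla\vro\,dx+\int Z^\alpha u\cdot Z^\alpha\nabla\p_t\vro\,dx .
\]
The key change from Lemma~\ref{lemma35} is that $Z^\alpha\nabla\ne\nabla Z^\alpha$. We therefore split
\[
Z^\alpha\nabla\p_t\vro=\nabla Z^\alpha\p_t\vro+[Z^\alpha,\nabla]\p_t\vro ,
\]
where only the vertical component of the commutator is nonzero and is given by \eqref{properity-p3}.

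For the main part we integrate by parts. The boundary term $\int_{x_3=0} Z^\alpha u_3\,Z^\alpha\p_t\vro$ vanishes, because $Z_3$ carries the factor $\vf(0)=0$ and $u_3|_{x_3=0}=0$. The resulting term is $-\int {\rm div}Z^\alpha u\; Z^\alpha\p_t\vro$. Replacing ${\rm div} Z^\alpha u$ by $Z^\alpha\du$ produces another commutator of the same type. Then we substitute $\p_t\vro=-\du-u\cdot\nabla\vro-\vro\du$. This yields $\|Z^\alpha\du\|_{L^2}^2$ plus cubic terms, exactly as the term $IV_1$ in Lemma~\ref{lemma35}.

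\textbf{Bounding the terms.} The viscous and pressure terms $IV_2$--$IV_5$ are bounded as before:
\begin{itemize}
\item the linear ones by $\nu\|Z^\alpha\nabla\vro\|_{L^2}^2+C_\nu\|Z^\alpha(\Delta_h u,\nabla\du)\|^2+C_\nu\var\|Z^\alpha\p_3^2u\|^2$;
\item the nonlinear ones by $\sqrt{\me(t)}\md(t)$, using Lemma~\ref{lemm:sobolev-ie} with $\|u\|_{L^\infty},\|\vro\|_{L^\infty}\lesssim\sqrt{\me}$.
\end{itemize}

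The commutator terms are the new part. By \eqref{properity-p3}, $[Z^\alpha,\p_3]f$ is a sum of terms $Z_3^{\beta_3}(1/\vf)\,Z^{\alpha_h}Z_3^{\alpha_3-\beta_3+1}f$ with $\beta_3\ge1$. Since $Z_3=\vf\p_3$, such a term is a combination with bounded coefficients of $Z^{\gamma}\p_3 f$, $|\gamma|\le k-1$. This gives the bound
\[
\|[Z^\alpha,\p_3]\p_t\vro\|_{L^2}\lesssim\|\p_3\p_t\vro\|_{H^{k-1}_{co}} .
\]
We then invoke \eqref{pro-p3tvro} and \eqref{pro-ptvro} from Lemma~\ref{lemma-help}, which control this by $\|\p_3\du\|_{H^{k-1}_{co}}$ plus $\me(t)$ times dissipative quantities. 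Pairing with $\|Z^\alpha u_3\|_{L^2}$ directly would not produce a dissipation norm. So we first integrate by parts once more, or use Young's inequality with the coefficient $\|Z^\alpha u\|$ replaced by $\|Z^\alpha\nabla u\|$ via the Hardy argument $u_3=\int_0^{x_3}\p_3u_3\,dz$, as in \eqref{est-communtator}.

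The commutator in ${\rm div}Z^\alpha u$ against $Z^\alpha\du$ is bounded directly by $\|\p_3 u_3\|_{H^{k-1}_{co}}\|Z^\alpha \du\|$. This is absorbed using $\p_3u_3=\du-\nabla_h\cdot u_h$, giving the admissible terms $\|\du\|_{H^k_{co}}^2+\|\nabla_h u\|^2_{H^k_{co}}$.

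\textbf{Main obstacle.} The hardest step is the term involving $Z^\alpha u$ paired with commutators of $\p_t\vro$ near the boundary. There, factors $Z_3^{\beta_3}(1/\vf)\lesssim 1/x_3$ must be compensated by the vanishing of $u_3$, and the time derivative must be traded through Lemma~\ref{lemma-help} without losing a vertical derivative. For this term I would first try the following. Move all $\p_3$-commutators onto the velocity side by integrating $\int Z^\alpha u\cdot Z^\alpha\nabla\p_t\vro$ against $\nabla$ before commuting. Then apply the Hardy-type bound $\|u_3/x_3\|_{L^2}\lesssim\|\p_3u_3\|_{L^2}$ on ${\rm supp}\,\chi$, and the equivalence of conormal and standard norms on ${\rm supp}(1-\chi)$.

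Finally, choose $\nu$ small, sum, and absorb $\nu\|Z^\alpha\nabla\vro\|^2$ into the left-hand side. This yields \eqref{3501} with the factor $\frac12$.
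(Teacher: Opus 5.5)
Your skeleton is the paper's: the same energy identity, the same splitting $Z^\al\nabla\p_t\vro=\nabla Z^\al\p_t\vro+[Z^\al,\nabla]\p_t\vro$, the same integration by parts of the main part (boundary term vanishing), and the same treatment of the viscous and pressure terms. The gap is at the one term that is new relative to Lemma~\ref{lemma35}, the commutator contribution $-\int Z^\al u_3\,(Z^\al\p_3-\p_3 Z^\al)\p_t\vro\,dx$ (the paper's $V_1$). There you offer a menu (``integrate by parts once more, or \dots\ Hardy'', ``I would first try'') but execute none of it. Moreover, the reason you give for needing an alternative is wrong. Since $\al_3\ge1$, we have $Z^\al u_3=Z^{\al_h}Z_3^{\al_3-1}(\vf\,\p_3 u_3)$, so $\|Z^\al u_3\|_{L^2}\lesssim\|\p_3 u_3\|_{H^{k-1}_{co}}\lesssim\|(\du,\nabla_h u)\|_{H^{k-1}_{co}}$ is already a dissipative quantity. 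There are two further problems. First, \eqref{pro-p3tvro} is only stated for $k\le 2$, so it cannot simply be cited for $\|\p_3\p_t\vro\|_{H^{k-1}_{co}}$ with $k-1$ up to $m-2$. Second, your Hardy variant only makes sense if you keep the weight $Z_3^{\be_3}(\frac{1}{\vf})\lesssim\frac{1}{x_3}$. Once you have absorbed it into $\vf\,\p_3$ (as in your bound by $\|\p_3\p_t\vro\|_{H^{k-1}_{co}}$), no singular weight is left for Hardy to trade.

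The paper closes $V_1$ in one step, with no time-derivative lemma and no cutoff $\chi$. It substitutes $\p_t\vro=-(\du+u\cdot\nabla\vro+\vro\,\du)$ before estimating and writes $Z_3^{\al_3-\be_3+1}=\vf\,\p_3 Z_3^{\al_3-\be_3}$. It then integrates by parts in $x_3$. The boundary term vanishes because $Z^\al u_3|_{x_3=0}=0$, and $\p_3$ lands on $Z^\al u_3\cdot \vf Z_3^{\be_3}(\frac{1}{\vf})$, where $\vf Z_3^{\be_3}(\frac{1}{\vf})$ is smooth and bounded. Using $\p_3 u_3=\du-\nabla_h\cdot u_h$, this gives
\[
V_1\lesssim\|\p_3 u_3\|_{H^k_{co}}\|\du\|_{H^{k-1}_{co}}+\sqrt{\me(t)}\md(t)\lesssim\|\du\|_{H^k_{co}}^2+\|\nabla_h u\|_{H^k_{co}}^2+\sqrt{\me(t)}\md(t).
\]
This is your first option. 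Committing to it completes the proof. Alternatively, use the direct pairing, after bounding $\|\p_3\p_t\vro\|_{H^{k-1}_{co}}$ from the continuity equation at all orders $k-1\le m-2$.
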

            \begin{proof}
                 For any $ |\al|=k \le m-1$, the equation $\eqref{eqr}_2$ yields that
            \beqq \bal
            \|Z^{\al} \nabla \vro\|_{L^2}^2  =  &\;  -\i \p_t Z^{\al} u \cdot Z^{\al} \nabla \vro \ dx + \i Z^{\al} (\Delta_h u + \var \p_3^2 u + \nabla \du) \cdot Z^{\al} \nabla \vro \ dx\\
            &\; + \i Z^{\al}\Big( u \cdot \nabla u+ \f{\vro}{1+\vro} (\Delta_h u +\var \p_3^2 u+\nabla \du) +\vro\, \nabla \vro \Big) \cdot Z^{\al} \nabla \vro \ dx.
            \dal \deqq
            On the other hand, using the density equation $\eqref{eqr}_1$ and integrating by part, we conclude
            \beqq \bal
        &\; -\i \p_t Z^{\al} u \cdot Z^{\al} \nabla \vro \ dx \\
        = &\; - \p_t \i Z^{\al} u \cdot Z^{\al} \nabla \vro \ dx  +\i  Z^{\al} u \cdot  Z^{\al} \nabla \p_t \vro \ dx \\
        = &\;  -  \p_t \i Z^{\al} u \cdot Z^{\al} \nabla \vro \ dx   +\i  Z^{\al} u \cdot (Z^{\al} \nabla  - \nabla Z^{\al}) \p_t \vro \ dx  + \i  Z^{\al} u \cdot  \nabla Z^{\al} \p_t \vro\ dx \\
        = &\;  -  \p_t \i Z^{\al} u \cdot Z^{\al} \nabla \vro \ dx  -\i  Z^{\al} u_3 \cdot (Z^{\al} \p_3-\p_3 Z^{\al})( \du +u \cdot \nabla \vro + \vro \, \du) \ dx \\
        &\; +\i  {\rm div} Z^{\al} u \cdot Z^{\al} (\du + u \cdot \nabla \vro + \vro \, \du)\ dx.
            \dal \deqq
            Thus, we have
              \begin{align*}
           &\; \f{d}{dt} \i Z^{\al} u \cdot  Z^{\al} \nabla \vro \ dx  + \|Z^{\al} \nabla \vro\|_{L^2}^2 \\
           = &\; -\i  Z^{\al} u_3 \cdot (Z^{\al} \p_3-\p_3 Z^{\al})( \du +u \cdot \nabla \vro + \vro \, \du) \ dx   + \i  {\rm div} Z^{\al} u \cdot Z^{\al} \du\ dx \\
           &\; +  \i {\rm div} Z^{\al} u  \cdot Z^{\al} ( u \cdot \nabla \vro + \vro \, \du)\ dx  + \i Z^{\al} (\Delta_h u + \var \p_3^2 u + \nabla \du) \cdot Z^{\al} \nabla \vro \ dx\\
           &\; - \i Z^{\al} (\vro\, \nabla \vro)\cdot Z^{\al} \nabla \vro \ dx- \i Z^{\al}( u \cdot \nabla u) \cdot Z^{\al} \nabla \vro \ dx \\
           &\; - \i Z^{\al} \Big(\f{\vro}{1+\vro} (\Delta_h u +\var \p_3^2 u+ \nabla \du)  \Big) \cdot Z^{\al} \nabla \vro \ dx \\
          := &\; \sum_{i=1}^{7} V_i.
            \end{align*}
            If $|\al|=0$, we can obtain the estimate \eqref{3501} directly from the estimate \eqref{3401}.
            Next, we deal with the case $1\le |\al| \le m-1$. If $\al_3 \ge 1$, using the property of the commutator \eqref{properity-p3}, integrating by parts and applying the anisotropic type inequality $\eqref{ie:Sobolev}_2$, we have
            \begin{align*}
            V_1
            = &\;\sum_{0< \be_3 \le \al_3} C_{\al_3}^{\be_3}  \i Z^{\al} u_3  \,   Z_3^{\be_3}(\f{1}{\vf})  \cdot  Z^{\al_{h}} Z_3^{\al_3-\be_3+1 }  ( \du +u \cdot \nabla \vro + \vro \, \du) \ dx \\
            = &\;\sum_{0< \be_3 \le \al_3} C_{\al_3}^{\be_3}  \i Z^{\al_{h}} Z_3^{\al_3-\be_3}  ( \du +u \cdot \nabla \vro + \vro \, \du)  \, \p_3 \Big( Z^{\al} u_3  \,  \vf Z_3^{\be_3}(\f{1}{\vf})\Big)\ dx \\
           \lesssim &\;\| \p_3 u_3\|_{H_{co}^{k}} \| \du\|_{H_{co}^{k-1}} +  \sqrt{\me(t)} \md(t)\\
            \lesssim &\;  \| \du \|_{H_{co}^{k}}^2+  \| \nabla_h u\|_{H_{co}^{k}}^2 + \sqrt{\me(t)} \md(t).
            \end{align*}
            If $\al_3=0$, we have $V_1=0$. 
            By Lemma \ref{lemm:sobolev-ie} and H\"older inequality, it is easy to check that
            \begin{align*}
          V_2 \lesssim &\; \| \du \|_{H_{co}^k}^2,\\
          V_3 = &\; \sum_{0\le \be \le \al} C_{\al}^{\be}  \i (Z^{\be} u \cdot  Z^{\al-\be}  \nabla \vro + Z^{\be} \vro \cdot  Z^{\al-\be}  \du )  \cdot  {\rm  div } Z^{\al} u \ dx \\
          \lesssim &\;  \|{\rm  div } Z^{\al} u \|_{L^2}^{\f12}  \|\p_1{\rm  div } Z^{\al} u \|_{L^2}^{\f12}  \sum_{0\le \be \le \al} \Big( \| Z^{\be} u\|_{H_{tan}^2} \|Z^{\al-\be}  \nabla \vro\|_{L^2}^{\f12}\|\p_3 Z^{\al-\be}  \nabla \vro\|_{L^2}^{\f12} \\
          &\; + \| Z^{\be} \vro\|_{L^2}^{\f12} \| \p_3 Z^{\be} \vro\|_{L^2}^{\f12}   \|Z^{\al-\be}  \du\|_{L^2}^{\f12}\|\p_2 Z^{\al-\be}  \du\|_{L^2}^{\f12} \Big)\\
          \lesssim &\; \sqrt{\me(t)} \md(t),\\
          V_4 \le &\; \nu \|Z^{\al}  \nabla \vro\|_{L^2}^2 +C_{\nu}\|Z^{\al} (\Delta_h u + \var \p_3^2 u + \nabla \du)\|_{L^2}^2,\\
            \end{align*}
and
\begin{align*}
V_5 = &\; \sum_{0\le \be \le \al} C_{\al}^{\be}  \i Z^{\be} \vro \cdot  Z^{\al-\be}  \nabla \vro \cdot   Z^{\al} \nabla \vro \ dx \\
\lesssim &\; \|\vro\|_{L^{\infty}} \| Z^{\al}  \nabla \vro \|_{L^2}^2 + \Big(\sum_{0 <  \be < \al} \| Z^{\be} \vro\|_{L^2}^{\f14} \| \p_3 Z^{\be} \vro\|_{L^2}^{\f14} \| \p_1 Z^{\be} \vro\|_{L^2}^{\f14}  \| \p_{13} Z^{\be} \vro\|_{L^2}^{\f14}   \|Z^{\al-\be}  \nabla \vro\|_{L^2}^{\f12}\|\p_2 Z^{\al-\be}  \nabla \vro\|_{L^2}^{\f12} \\
          &\; + \| Z^{\al} \vro\|_{L^2}^{\f12} \| \p_3 Z^{\al} \vro\|_{L^2}^{\f12}   \| \nabla \vro\|_{H_{tan}^2} \Big)\| Z^{\al}  \nabla \vro \|_{L^2} \\
        \lesssim &\; \sqrt{\me(t)} \md(t),\\
        V_6 = &\; \sum_{0\le \be \le \al} C_{\al}^{\be}  \i Z^{\be} u \cdot  Z^{\al-\be}  \nabla u \cdot   Z^{\al} \nabla \vro \ dx \\
        \lesssim &\; \sum_{0 <  \be < \al} \| Z^{\be} u\|_{L^2}^{\f14} \| \p_3 Z^{\be} u\|_{L^2}^{\f14} \| \p_1 Z^{\be} u\|_{L^2}^{\f14}  \| \p_{13} Z^{\be} u\|_{L^2}^{\f14}   \|Z^{\al-\be}  \nabla u\|_{L^2}^{\f12}\|\p_2 Z^{\al-\be}  \nabla u\|_{L^2}^{\f12}\| Z^{\al}  \nabla \vro \|_{L^2} \\
        \lesssim &\; \sqrt{\me(t)} \md(t).
\end{align*}
Similarly, we have
\beqq \bal
V_7 =  \sum_{0\le \be \le \al} C_{\al}^{\be}  \i Z^{\be} (\f{\vro}{1+\vro}) \cdot  Z^{\al-\be}  (\Delta_h u +\var \p_3^2 u+ \nabla \du)\cdot   Z^{\al} \nabla \vro \ dx
\lesssim  \sqrt{\me(t)} \md(t),
\dal \deqq
which, together with the estimates from $V_{1}$ to $V_{6}$, yields that
\beqq \bal
				&\; \frac{d}{dt}
					\i Z^{\al} u \cdot Z^{\al} \nabla  \vro \ dx
					+  \|Z^{\al} \nabla \vro\|_{L^2}^2 \\
					\le &\; \nu \|Z^{\al} \nabla \vro\|_{L^2}^2 +  \|(\du, \nabla \du)\|_{H^{k}_{co}}^2+ \| (\nabla_h u, \Delta_h u)\|_{H_{co}^{k}}^2 +
					\var \|\p_3^2 u \|_{H^{k}_{co}}^2
					+\sqrt{\me(t)}\md(t).
\dal \deqq
Using the smallness of $\nu$, we finish the proof of this lemma.
            \end{proof}
Finally, we will establish the estimate for $\var \| \p_3^2 \vro\|_{H_{co}^2}^2$.
            \begin{lemm}\label{lemma37}
				For any smooth solution $(\vro,u)$ of equation \eqref{eqr},
				it holds that
                \beq \label{3901}\bal
              &\;  \var(1+\var) \frac{d}{dt}  \Big(\i \f{1}{1+\vro} | \p_3^2 \vro|^2 \ dx + \sum_{  |\al|=2 } \i \f{1}{1+\vro} |Z^{\al} \p_3^2 \vro|^2 \ dx \Big)
    				+  \var\|\p_3^2 \vro \|_{H_{co}^{2}}^2 \\
				\lesssim &\; \|\p_t \du\|_{H_{tan}^2}^2 + \| Z_3 \p_t \p_3^2 u_3\|_{L^2}^2  + \| \p_t \p_3^2 u_3\|_{H_{tan}^1}^2
                +\|(\Delta_h u,\nabla_h \du, \nabla \vro)\|_{H_{co}^3}^2   + \sqrt{\me(t)} \md(t).
                \dal \deq
            \end{lemm}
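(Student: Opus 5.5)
The plan is to derive a damped transport equation for $\p_3^2\vro$ out of the vertical momentum equation, and then run a weighted $L^2$ energy estimate at the levels $|\al|=0$ and $|\al|=2$.

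\textbf{Deriving the damped equation.} The density equation can be rewritten as ${\rm div}\,u=-(\p_t+u\cdot\nabla)\ln(1+\vro)$. Substituting this into the identity \eqref{equ-p32u3} for $(1+\var)\p_3^2u_3$ (equivalently, into \eqref{23}) gives
\begin{equation*}
(1+\var)(\p_t+u\cdot\nabla)\p_3\ln(1+\vro)+(1+\vro)^2\p_3\vro=-\bigl[(1+\vro)\p_t u_3+(1+\vro)u\cdot\nabla u_3-\Delta_h u_3+\var\p_3\nabla_h\cdot u_h\bigr]-(1+\var)\p_3u\cdot\nabla\ln(1+\vro).
\end{equation*}
I apply one more $\p_3$ and use $\p_3^2\ln(1+\vro)=\frac{\p_3^2\vro}{1+\vro}-\frac{(\p_3\vro)^2}{(1+\vro)^2}$. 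This yields
\begin{equation*}
(1+\var)\Bigl(\p_t+u\cdot\nabla\Bigr)\frac{\p_3^2\vro}{1+\vro}+(1+\vro)^2\p_3^2\vro=F,
\end{equation*}
where $F$ has two parts. The linear part is $-\p_3[(1+\vro)\p_tu_3-\Delta_hu_3+\var\p_3\nabla_h\cdot u_h]$. The rest consists of quadratic and higher terms: $\p_3(u\cdot\nabla u_3)$, $\p_3u\cdot\nabla\p_3\vro$, $(\p_3\vro)^2$-type products, and their $\p_3$-derivatives.

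\textbf{Energy estimate.} I apply $Z^\al$ with $|\al|\in\{0,2\}$, multiply by $\var Z^\al\p_3^2\vro$, and integrate over $\mathbb{R}^3_+$. The time-derivative term produces $\frac{\var(1+\var)}2\frac{d}{dt}\i\frac{1}{1+\vro}|Z^\al\p_3^2\vro|^2dx$, up to two remainders. The first comes from $\p_t\frac1{1+\vro}$ and is controlled by $\|\p_t\vro\|_{L^\infty}$ through \eqref{pro-ptvro}. The second comes from commutators of $Z^\al$ with the weight. The damping term gives $\var\|Z^\al\p_3^2\vro\|_{L^2}^2$ up to an $O(\|\vro\|_{L^\infty})$ perturbation. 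The convection term $u\cdot\nabla$ is handled as in \eqref{est-communtator}: I integrate by parts, write $u_3=\int_0^{x_3}\p_3u_3\,dz$ near the boundary, and use the commutator formula \eqref{properity-p3} with $|Z_3^{\be_3}(1/\vf)|\lesssim 1/x_3$. This produces $\sqrt{\me(t)}\md(t)$.

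\textbf{Linear forcing terms.} Each is estimated by Young's inequality as $\var\int|Z^\al F\,Z^\al\p_3^2\vro|\le\frac\var4\|Z^\al\p_3^2\vro\|_{L^2}^2+C\var\|Z^\al F\|_{L^2}^2$, using $\var<1$.
\begin{itemize}
\item For $\p_3\p_tu_3$ I write $\p_t\p_3u_3=\p_t\du-\p_t\nabla_h\cdot u_h$, so its $H^2_{tan}$ part is bounded by $\|\p_t\du\|_{H^2_{tan}}$. Any remaining $Z_3$-derivatives become $\|Z_3\p_t\p_3^2u_3\|_{L^2}$ and $\|\p_t\p_3^2u_3\|_{H^1_{tan}}$, exactly as they appear in \eqref{3901}.
\item $\p_3\Delta_hu_3$ is part of $\Delta_h u$ measured in $H^3_{co}$ after expressing $\p_3 u_3$ through $\du$ and $\nabla_h u_h$.
\item For $\var^2\p_3^2\nabla_h\cdot u_h$ I use the second identity in \eqref{equ-p32u3} to write $\var\p_3^2\nabla_h\cdot u_h$ as $\nabla_h\cdot$ of $(1+\vro)\p_tu_h-\Delta_hu_h-\nabla_h\du+(1+\vro)^2\nabla_h\vro$ plus nonlinear terms. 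Its $\p_t$ part, $\p_t\nabla_h\cdot u_h=\p_t\du-\p_t\p_3u_3$, feeds back into the same time-derivative norms. The other parts give $\|(\Delta_hu,\nabla_h\du,\nabla\vro)\|_{H^3_{co}}$.
\end{itemize}
All nonlinear contributions go into $\sqrt{\me(t)}\md(t)$ using the anisotropic inequality \eqref{ie:Sobolev}, the bound \eqref{est-p32-u3-uh}, and the $L^\infty$ control of $\p_3\vro$ and $w_h$ contained in $\me$.

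\textbf{Main obstacle.} I expect the hardest step to be the highest-order terms when $\al$ contains $Z_3$. Here $Z^\al\p_3(\p_tu_3)$ has only $\p_t\p_3^2u_3$ available with a $Z_3$ weight, and the commutator $[Z^\al,\p_3^2]$ generates $x_3^{-1}$-singular coefficients. The plan is to use \eqref{properity-p3} together with $u_3|_{x_3=0}=0$ and the Hardy-type bound, so that every such term is expressed through $\|Z_3\p_t\p_3^2u_3\|_{L^2}$, $\|\p_t\p_3^2u_3\|_{H^1_{tan}}$ and $\var\|\p_3^2\vro\|_{H^2_{co}}$. Any small multiple of the last quantity is absorbed by the damping.
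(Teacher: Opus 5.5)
This is essentially the paper's proof. Taking $\p_3$ of the vertical momentum equation and then substituting $\var\p_3^2\nabla_h\cdot u_h$ through $\nabla_h\cdot$ of the horizontal equation reproduces exactly the paper's divergence identity (the linear forcing collapses to $-\p_t\du+2\Delta_h\du-\Delta_h\vro$), and the test against $\var Z^\al\p_3^2\vro$ with weight $\frac{1}{1+\vro}$, the damping $(1+\vro)^2\p_3^2\vro$, and the handling of $u_3\p_3$ via $u_3=\int_0^{x_3}\p_3u_3\,dz$ and \eqref{properity-p3} all match the paper.

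Two bookkeeping fixes are needed. First, your first and third bullets bound $\p_t\p_3u_3$ and $\p_t\nabla_h\cdot u_h$ through each other, which is circular: for tangential $\al$ add them into $-\p_t\du$, and for $\al$ containing $Z_3$ skip the substitution and bound the $\var^2\p_3^2\nabla_h\cdot u_h$ term directly by $\var\|\p_3^2u\|_{H^3_{co}}^2\lesssim\md$ (the paper's proof produces this term too). Second, the weight commutator $Z^\beta(\frac{1}{1+\vro})Z^{\al-\beta}\p_t\p_3^2\vro$, which you leave unestimated, needs $\var^{1/2}\|\p_t\p_3^2\vro\|_{H^1_{co}}\lesssim\md^{1/2}$; the paper obtains this from $\p_3^2$ of the density equation combined with \eqref{equ-p3du}.
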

	\begin{proof}
   From the equation \eqref{eqr}, we have
   \beq \label{3902}\bal
   &\; \p_t( 1+ \vro) + ( 1+ \vro) \, \du + u\cdot \nabla (1+\vro) = 0,\\
   &\; ( 1+ \vro) \p_t u + ( 1+ \vro) u \cdot \nabla u - \Delta_h u -\var \p_3^2 u - \nabla \du + ( 1+ \vro)^2 \nabla \vro = 0.
   \dal \deq
   Applying ${\rm div}$-operator to the equality $\eqref{3902}_2$, we have
   \beq \label{3903}
  {\rm div} (( 1+ \vro) \p_t u + ( 1+ \vro) u\cdot \nabla u ) - 2 \Delta_h \du - (1+\var) \p_3^2 \du + 2 (1+\vro) |\nabla \vro|^2 +  (1+\vro)^2 \Delta \vro = 0,
   \deq
   where we have used that $\Delta \du = \p_3^2 \du +\Delta_h \du $. Using the equation $\eqref{3902}_1$, we have
   \beqq
  - \p_3^2 \du =\p_3^2 ( \p_t \ln (1+\vro) + u \cdot \nabla \ln (1+\vro) ).
   \deqq
   Substituting above equality to the equation \eqref{3903}, we have
   \beqq \bal
   &\; (1+\var) \p_t  \p_3^2  \ln (1+\vro) + (1+\vro)^2 \p_3^2 \vro \\ = &\;
   -(1+\var)  \p_3^2 ( u \cdot \nabla \ln (1+\vro) ) - {\rm div} (( 1+ \vro) \p_t u + ( 1+ \vro) u \cdot \nabla u ) \\
   &+ 2 \Delta_h \du - 2 (1+\vro) |\nabla \vro|^2  -(1+\vro)^2 \Delta_h \vro \\
   := &\; F(x,t).
  \dal  \deqq
  Using the fact that
   \beqq
            \p_t \p_3^2 \ln(1+\vro) = \f{2}{(1+\vro)^3} \p_t \vro |\p_3 \vro|^2 - \f{2}{(1+\vro)^2} \p_3 \vro \,  \p_t \p_3 \vro -\f{1}{(1+\vro)^2} \p_t \vro \, \p_3^2 \vro + \f{1}{1+\vro} \p_t \p_3^2 \vro,
            \deqq
            and for $0 \le |\al| \le 2$, applying $Z^{\al}$-operator to the above equality, we have
  \beqq \bal
  &\; \f{1+\var}{1+\vro} \p_t Z^{\al} \p_3^2 \vro + (1+ \vro)^2 Z^{\al} \p_3^2 \vro\\
  = &\;   - (1+\var) Z^{\al} \Big(\f{2}{(1+\vro)^3} \p_t \vro |\p_3 \vro|^2 - \f{2}{(1+\vro)^2} \p_3 \vro \,  \p_t \p_3 \vro -\f{1}{(1+\vro)^2} \p_t \vro \, \p_3^2 \vro \Big)\\
  &\; - (1+\var) \sum_{ 0 < \be \le \al } C_{\al}^{\be} Z^{\be} (\f{1}{1+\vro}) Z^{\al- \be} \p_t \p_3^2 \vro -\sum_{ 0 < \be \le \al } C_{\al}^{\be} Z^{\be} ((1+ \vro)^2 ) Z^{\al- \be} \p_3^2 \vro + Z^{\al} F.
  \dal \deqq
  Multiplying above equality  by $\var Z^{\al} \p_3^2  \vro$ and integrating over $\mathbb{R}_+^3$, we can obtain that
             \beqq \bal
            &\; \frac{d}{dt} \frac{\var(1+\var)}{2} \i \f{1}{1+\vro} |Z^{\al} \p_3^2 \vro|^2 \ dx
						+\var \| (1+ \vro) Z^{\al} \p_3^2 \vro\|_{L^2}^2 \\
         = &\; - 2\var(1+\var) \i Z^{\al} \Big( \f{1}{(1+\vro)^3} \p_t \vro |\p_3 \vro|^2 \Big) \,  Z^{\al} \p_3^2  \vro \ dx   + 2\var(1+\var) \i Z^{\al} \Big( \f{1}{(1+\vro)^2} \p_3 \vro \,  \p_t \p_3 \vro \Big) \,  Z^{\al} \p_3^2  \vro \ dx \\
         &\;  + \var(1+\var) \i Z^{\al} \Big( \f{1}{(1+\vro)^2} \p_t \vro \, \p_3^2 \vro \Big) \,  Z^{\al} \p_3^2  \vro \ dx  +\f{\var(1+\var)}{2} \i  \p_t (\f{1}{1+\vro}) \,   |Z^{\al} \p_3^2  \vro|^2 \ dx\\
         &\; - \var(1+\var) \sum_{ 0 < \be \le \al } C_{\al}^{\be} \i Z^{\be} (\f{1}{1+\vro}) Z^{\al- \be} \p_t \p_3^2 \vro\,  Z^{\al} \p_3^2  \vro \ dx  \\
         &\;-\var \sum_{ 0 < \be \le \al } C_{\al}^{\be} \i  Z^{\be} ((1+ \vro)^2 ) Z^{\al- \be} \p_3^2 \vro \,Z^{\al} \p_3^2  \vro \ dx  + \var \i Z^{\al} F \,  Z^{\al} \p_3^2  \vro \ dx\\
         : = &\; \sum_{i=1}^{7} VI_{i},
            \dal \deqq
            where we have used the basis fact that
            \beqq \bal
            \i \f{1}{1+\vro}  \p_t Z^{\al} \p_3^2 \vro \,  Z^{\al} \p_3^2  \vro \ dx 
          = \f{d}{dt} \f12 \i \f{1}{1+\vro} |Z^{\al} \p_3^2 \vro|^2 \ dx - \f12 \i  \p_t (\f{1}{1+\vro}) \,   |Z^{\al} \p_3^2  \vro|^2 \ dx .
            \dal \deqq
            Using the equation $\eqref{eqr}_1$, it is easy to check that
            \beqq \bal
            VI_4 \lesssim &\; \var \| \p_t  \vro\|_{L^{\infty}} \| Z^{\al} \p_3^2  \vro\|_{L^2}^2 \lesssim   \var (\| (1+\vro) \du \|_{L^{\infty}} + \| u \cdot \nabla \vro \|_{L^{\infty}} ) \| Z^{\al} \p_3^2  \vro\|_{L^2}^2 \\
            \lesssim &\; \sqrt{\me(t)} \md(t).
            \dal \deqq
            {\bf We first deal with the case $| \al |=0$.} In this case, $VI_5 = 0$ and $VI_6=0$. Similar to the estimate of $VI_4$, we have
            \beqq \bal
            VI_1 + VI_3 \lesssim  &\; \var \| \p_t  \vro\|_{L^{\infty}} \| \p_3^2  \vro\|_{L^2}(\|  \p_3^2  \vro\|_{L^2} +\| \p_3  \vro\|_{L^2} \|\p_3 \vro\|_{L^{\infty}} )
            \lesssim \sqrt{\me(t)} \md(t),\\
            VI_2 \lesssim &\; \var  \| \p_3^2  \vro\|_{L^2} \| \p_3 \vro \|_{L^2}^{\f12} \| \p_3^2 \vro\|_{L^2}^{\f12} \| \p_t \p_3 \vro\|_{L^2}^{\f14}\| \nabla_h \p_t \p_3 \vro\|_{L^2}^{\f12} \| \nabla_h^2 \p_t \p_3 \vro\|_{L^2}^{\f14} \\
            \lesssim &\; \sqrt{\me(t)} \md(t),
            \dal \deqq
            where we have used the estimate \eqref{pro-p3tvro} of the term $\| \p_t \p_3 \vro\|_{H_{co}^2}$.
            Similarly, we can deal with nonlinear term $VI_7$ as follows:
            \beqq
            VI_7 \le \nu \var \| \p_3^2 \vro\|_{L^2}^2 + C_{\nu} (\| \p_t \du\|_{L^2}^2 + \| \Delta_h \du\|_{L^2}^2  + \| \Delta_h \vro\|_{L^2}^2  )   +\sqrt{\me(t)} \md(t),
            \deqq
            which, combining the estimates from $VI_1$ to $VI_6$ and using the smallness of $\nu$, yields that
            \beq \label{3904}
  \var(1+\var) \frac{d}{dt}  \i \f{1}{1+\vro} | \p_3^2 \vro|^2 \ dx
    				+\var \| (1+ \vro) \p_3^2 \vro\|_{L^2}^2
				\lesssim  \| (\p_t \du, \Delta_h \du, \Delta_h \vro)\|_{L^2}^2   + \sqrt{\me(t)} \md(t).
            \deq
            {\bf Finally, we deal with the case $| \al |=2$.}  Using the estimate \eqref{pro-ptvro} of $ \p_t \vro$, applying the anisotropic inequality \eqref{ie:Sobolev}, for $i=1,2,3$, we have
           \beqq \bal
            VI_1 = &\; - 2 \var(1+\var) \sum_{0 \le \be \le \al} C_{\al}^{\be} \i  Z^{\be}\Big(\f{|\p_3 \vro|^2 }{(1+\vro)^3}\Big) Z^{\al-
            \be} \p_t \vro \, Z^{\al} \p_3^2 \vro \ dx\\
            \lesssim &\; \var \| Z^{\al} \p_3^2 \vro\|_{L^2} \Big(\| \p_3 \vro\|_{L^{\infty}}^2 \| Z^{\al} \p_t \vro\|_{L^2}  + \| Z^{e_i} \Big(\f{|\p_3 \vro|^2 }{(1+\vro)^3} \Big)\|_{H_{tan}^1}^{\f12} \| \p_3 Z^{e_i}\Big(\f{|\p_3 \vro|^2}{(1+\vro)^3}\Big)\|_{H_{tan}^1}^{\f12} \| Z^{\al-e_i} \p_t \vro\|_{H_{tan}^1}    \\
            &\; +\| Z^{\al} \Big(\f{|\p_3 \vro|^2 }{(1+\vro)^3} \Big)\|_{L^2}^{\f12} \| \p_3 Z^{\al}\Big(\f{|\p_3 \vro|^2}{(1+\vro)^3} \Big)\|_{L^2}^{\f12}  \| \p_t \vro\|_{H_{tan}^2} \Big) \\
            \lesssim &\; \sqrt{\me(t)} \md(t),
            \dal \deqq
            and similarly, we can check
            \beqq \bal
         VI_2 +VI_3 + VI_6 \lesssim \sqrt{\me(t)} \md(t),
            \dal \deqq
            where we have used the estimate \eqref{pro-p3tvro} of $\p_3 \p_t \vro$.
            Applying $\p_3^2$-operator to the equation $\eqref{eqr}_1$, we have
            \beq \label{equ-pt33vro} \bal
           - \p_t \p_3^2 \vro = &\; \p_3^2 ((1+\vro) \du + u \cdot \nabla \vro)  = (1+\vro) \p_3^2 \du + 2 \p_3 \vro \, \p_3 \du + \p_3^2 \vro \, \du +\p_3^2 ( u \cdot \nabla \vro).
            \dal \deq
            Moreover, from the equation \eqref{equ-p3du} of $\p_3 \du$, we have
            \beqq
            \p_3^2 \du =\f{1}{1+\var} \p_3\Big( (1+ \vro) \p_t u_3 + (1+ \vro) u \cdot \nabla u_3  - \Delta_h u_3 + \var \p_3 \nabla_h \cdot u_h + (1+\vro)^2 \p_3 \vro \Big).
            \deqq
            Substituting above equality into \eqref{equ-pt33vro}, we have
            \beqq \bal
            - \p_t \p_3^2 \vro = &\;  \f{1+\vro}{1+\var} \p_3 \vro \, \p_t u_3 + \f{(1+\vro)^2}{1+\var} \p_t \p_3 u_3+ \f{1+\vro}{1+\var}   \p_3 \Big(  (1+ \vro) u \cdot \nabla u_3  - \Delta_h u_3 + \var \p_3 \nabla_h \cdot u_h + (1+\vro)^2 \p_3 \vro \Big) \\
            &\; + 2 \p_3 \vro \, \p_3 \du + \p_3^2 \vro \, \du +\p_3^2 ( u \cdot \nabla \vro).
            \dal \deqq
            Using the anisotropic type inequality \eqref{ie:Sobolev}, we can check
            \beqq \bal
            \var^{\f12} \| \p_t \p_3^2 \vro\|_{H_{co}^1} \lesssim \sqrt{\md(t)},
            \dal \deqq
            here we omit the proof for simplicity.
            Thus, we can deal with the term $VI_{5}$ as follows:
            \beqq
            VI_5 \lesssim  \var \sum_{ 0 < \be \le \al } \|Z^{\be} (\f{1}{1+\vro})\|_{L^{\infty}} \| Z^{\al-\be} \p_t \p_3^2 \vro\|_{L^2} \| Z^{\al} \p_3^2  \vro \|_{L^2} \lesssim \sqrt{\me(t)} \md(t).
            \deqq
            It remains to estimate the term $VI_7$. \begin{align*}
           VI_7 = &\; -\var (1+\var) \i Z^{\al} \p_3^2 ( u \cdot \nabla \ln (1+\vro) ) \,  Z^{\al} \p_3^2  \vro \ dx - \var \i Z^{\al}
            {\rm div} (( 1+ \vro) \p_t u) \,  Z^{\al} \p_3^2  \vro \ dx \\
           &\; +\var \i Z^{\al} \Big( -{\rm div} (( 1+ \vro) u \cdot \nabla u) + 2 \Delta_h \du - 2 (1+\vro) |\nabla \vro|^2  -(1+\vro)^2 \Delta_h \vro \Big) \,  Z^{\al} \p_3^2  \vro \ dx,\\
           : =&\; \sum_{i=1}^{3} VI_{7,i}.
            \end{align*}
            First, we decompose the term $VI_{7,1}$ into six terms as follows:
            \begin{align*}
            &\; VI_{7,1} \\ =
            &\;  -\var (1+\var) \i Z^{\al} \p_3^2 (  u_h \cdot \f{\nabla_h \vro}{1+\vro}) \,  Z^{\al} \p_3^2 \vro \ dx  -\var (1+\var) \i Z^{\al} \p_3^2 ( u_3 \, \f{\p_3 \vro}{1+\vro}) \,  Z^{\al} \p_3^2  \vro \ dx\\
            = &\;  -\var (1+\var) \sum_{ 0 \le \be \le  \al} C_{\al}^{\be} \i \Big(Z^{\be} \p_3^2 (\f{u_h}{1+\vro}) \cdot \nabla_h  Z^{\al-\be} \vro  + Z^{\be} \p_3  (\f{u_h}{1+\vro}) \cdot \p_3 \nabla_h Z^{\al-\be}  \vro \Big) \,  Z^{\al} \p_3^2  \vro \ dx \\
            &\;  -\var (1+\var) \sum_{ 0 \le \be <  \al} C_{\al}^{\be} \i Z^{\be}  (\f{u_h}{1+\vro}) \cdot \nabla_h Z^{\al-\be} \p_3^2 \vro \,  Z^{\al} \p_3^2  \vro \ dx  -\var (1+\var) \i  \f{u_h}{1+\vro} \cdot \nabla_h Z^{\al} \p_3^2   \vro \,  Z^{\al} \p_3^2  \vro \ dx \\
            &\; -\var (1+\var) \sum_{ 0 \le \be \le  \al} C_{\al}^{\be} \i \Big(Z^{\be} \p_3^2 (\f{u_3}{1+\vro}) \, Z^{\al-\be} \p_3 \vro  + Z^{\be} \p_3 (\f{u_3}{1+\vro}) \, Z^{\al-\be} \p_3^2 \vro \Big) \,  Z^{\al} \p_3^2  \vro \ dx \\
            &\; -\var (1+\var) \sum_{ 0 \le \be <  \al} C_{\al}^{\be} \i Z^{\be} (\f{u_3}{1+\vro}) \,  Z^{\al-\be} \p_3^3 \vro \,  Z^{\al} \p_3^2  \vro \ dx  -\var (1+\var) \i \f{u_3}{1+\vro} \, Z^{\al} \p_3^3 \vro \,  Z^{\al} \p_3^2  \vro \ dx \\
            = &\;\sum_{i=1}^{6} VI_{7,1,i}.
            \end{align*}
            We only give the proof of $VI_{7,1,3}$ , $VI_{7,1,5}$  and $VI_{7,1,6}$, and the other terms can be estimated similarly. 
            If $\al_3 \ge 1$, integrating by parts and using the property of the commutator \eqref{properity-p3}, we have
            \begin{align*}
            &\; VI_{7,1,3} +VI_{7,1,6} \\
            = &\; -\var (1+\var) \i  \f{1}{1+\vro} (\, u_h \cdot \nabla_h  Z^{\al} \p_3^2  \vro  + u_3 \, \p_3  Z^{\al} \p_3^2 \vro) \cdot Z^{\al} \p_3^2 \vro \ dx \\
            &\; -\var (1+\var) \i  \f{1}{1+\vro} \, u_3 \,(Z^{\al} \p_3 - \p_3 Z^{\al}) \p_3^2 \vro  \cdot Z^{\al} \p_3^2 \vro \ dx \\
            = &\; \var (1+\var) \i  \Big(\f{1}{1+\vro} \, \du + u \cdot \nabla (\f{1}{1+\vro}) \Big)|Z^{\al} \p_3^2 \vro|^2 \ dx  \\
            &\; -\var (1+\var) \i  \f{1}{1+\vro} \, u_3 \,(Z^{\al} \p_3 - \p_3 Z^{\al}) \p_3^2 \vro  \cdot Z^{\al} \p_3^2 \vro (\chi + 1- \chi)^2 dx  \\
             \lesssim &\; \var \| Z^{\al} \p_3^2 \vro\|_{L^2}^2 (  \| \du\|_{L^{\infty}} + \| u\|_{L^{\infty}} \| \nabla \vro \|_{L^{\infty}}) + \var \| Z^{\al} \p_3^2 \vro\|_{L^2} \| \p_3^2 \vro\|_{H_{co}^2} (\|u_3\|_{L^{\infty}} + \|\p_3 u_3\|_{L^{\infty}}) \\
             \lesssim &\; \sqrt{\me(t)} \md(t).
            \end{align*}
            If $\al_3 =0$, similarly we have
            \beqq  \bal
            VI_{7,1,3} +VI_{7,1,6}  =  \var (1+\var) \i  \Big(\f{1}{1+\vro} \, \du + u \cdot \nabla (\f{1}{1+\vro}) \Big)|Z^{\al} \p_3^2 \vro|^2 \ dx  \lesssim  \sqrt{\me(t)} \md(t).
            \dal \deqq
            Thus, combining the estimates for the two cases, we have
            \beqq  \bal
            VI_{7,1,3} +VI_{7,1,6}  \lesssim \sqrt{\me(t)} \md(t).
            \dal \deqq
            Now we estimate the term $VI_{7,1,5}$. If $\be_3=0$, using the boundary condition $u_3|_{x_3=0}=0 $ and similar to the estimate \eqref{ah1-II14} of $II_{1,4}$, we can check
            \beqq \bal
VI_{7,1,5} \lesssim &\;  \var \sum_{ 0 \le \be <  \al}  \i Z^{\be} (\f{u_3}{1+\vro}) \,  Z^{\al-\be} \p_3^3 \vro \,  Z^{\al} \p_3^2  \vro \, (1-\chi)^2 \ dx  \\
&\; +   \var \sum_{ 0 \le \be <  \al} \i \int_{0}^{x_3}  Z^{\be} \p_3 (\f{u_3}{1+\vro}) \ dz \,  Z^{\al-\be} \p_3^3 \vro \,  Z^{\al} \p_3^2  \vro \,  \chi^2 \ dx\\
\lesssim &\; \var \| Z^{\al} \p_3^2 \vro\|_{L^2} \sum_{ 0 \le \be <  \al} \| Z^{\al-\be}Z_3  \p_3^2 \vro\|_{L^2}  (\|Z^{\be} (\f{u_3}{1+\vro})\|_{L^{\infty}}   + \|Z^{\be} \p_3 (\f{u_3}{1+\vro})\|_{L^{\infty}})\\
  \lesssim &\; \sqrt{\me(t)} \md(t).
 \dal \deqq
 Thus, we can obtain that
 \beqq \bal
 VI_{7,1}  \lesssim \sqrt{\me(t)} \md(t).
 \dal \deqq
Similarly, under the assumption \eqref{assumption}, we can obtain that
\beqq \bal
 VI_{7,2} \lesssim   &\; \var \| Z^{\al} \p_3^2  \vro\|_{L^2} \|Z^{\al} \p_t \du\|_{L^2} +  \sqrt{\me(t)} \md(t) \\
 \le   &\; \nu \var \| Z^{\al} \p_3^2  \vro\|_{L^2}^2 + C_{\nu} (\|\p_t \du\|_{H_{tan}^2}^2 + \| \p_t (\nabla_h  \cdot u_h)\|_{H_{co}^2}^2 + \| (Z_3^2 \p_t \p_3 u_3, \nabla_h Z_3 \p_t \p_3 u_3)\|_{L^2}^2) + \sqrt{\me(t)} \md(t),  \\
\le   &\; \nu \var \| Z^{\al} \p_3^2  \vro\|_{L^2}^2 + C_{\nu} \Big(\|\p_t \du\|_{H_{tan}^2}^2 + \| Z_3 \p_t \p_3^2 u_3\|_{L^2}  + \| \p_t \p_3^2 u_3\|_{H_{tan}^1}^2 \\
&\; + \|(\Delta_h u,\nabla_h \du, \nabla_h \vro)\|_{H_{co}^3}^2  + \var\| \p_3^2 u\|_{H_{co}^3}^2 \Big)  +  \sqrt{\me(t)} \md(t),  \\
VI_{7,3} \le  &\;\nu \var \| Z^{\al} \p_3^2  \vro\|_{L^2}^2 + C_{\nu} (\|\Delta_h \du\|_{H_{co}^2}^2 +  \| \nabla \vro\|_{H_{co}^3}^2)+  \sqrt{\me(t)} \md(t),
\dal \deqq
where we have used the following estimate by using the equation $\eqref{eqr}_2$,
\beqq
 \| \p_t u_h\|_{H_{co}^3}^2 \lesssim \|(\Delta_h u ,\nabla_h \du, \nabla_h \vro)\|_{H_{co}^3}^2 + \var\| \p_3^2 u\|_{H_{co}^3}^2 + \me(t) \md(t).
 \deqq
 Combining the estimates from $VI_{7,1}$ to $VI_{7,3}$, we obtain
 \beqq \bal
 VI_{7} \le &\; \nu \var \| Z^{\al} \p_3^2  \vro\|_{L^2}^2 + C_{\nu} \Big(\|\p_t \du\|_{H_{tan}^2}^2 + \| Z_3 \p_t \p_3^2 u_3\|_{L^2}^2  + \| \p_t \p_3^2 u_3\|_{H_{tan}^1}^2 \\
&\; +\|(\Delta_h u,\nabla_h \du, \nabla \vro)\|_{H_{co}^3}^2 \Big)+  \sqrt{\me(t)} \md(t),
 \dal \deqq
 which, together with the estimates from $VI_1$ to $VI_6$, yields that
 \beqq  \bal
  &\; \var(1+\var) \frac{d}{dt}  \i \f{1}{1+\vro} | Z^{\al} \p_3^2 \vro|^2 \ dx
    				+\var \| (1+ \vro) Z^{\al} \p_3^2 \vro\|_{L^2}^2  \\
				\lesssim &\; \|\p_t \du\|_{H_{tan}^2}^2 + \| Z_3 \p_t \p_3^2 u_3\|_{L^2}^2  + \| \p_t \p_3^2 u_3\|_{H_{tan}^1}^2
                +\|( \Delta_h u,\nabla_h \du, \nabla \vro)\|_{H_{co}^3}^2   + \sqrt{\me(t)} \md(t).
           \dal \deqq
  Combining the above estimate and $\eqref{3904}$, we finish the proof of this lemma.
	\end{proof}

        \subsection{$L^{\infty}$ estimate} \label{lines}
               The aim of this subsection is to establish the $L^\infty$ estimates for
	the quantities $w_h$ and $\p_3 \vro$ in $\me(t)$.  First of all, let us establish the estimate for  $ \| w_h\|_{L^{\infty}}$.
               \begin{lemm}\label{lemma-wh}
				For any smooth solution $(\vro,u)$ of equation \eqref{eqr}, under the assumption \eqref{assumption},
				it holds that
                \beq\label{wh01}\bal
                    \|w_h(t) \|_{L^\infty}  \lesssim \|w_h(0)\|_{L^\infty} + \dl^{^{\f{5}{6}}}.
                \dal \deq
            \end{lemm}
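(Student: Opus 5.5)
We derive the equation for $w_h$ and apply the maximum principle, as outlined in Section~\ref{section-approach}. Divide the momentum equation $\eqref{eqr}_2$ by $1+\vro$ and take the curl. Using $\frac{1}{1+\vro}=1-\frac{\vro}{1+\vro}$, the horizontal components satisfy
\begin{equation*}
	\p_t w_h + u\cdot\nabla w_h - \Delta_h w_h - \var\p_3^2 w_h
	= \bigl[-w\,{\rm div}\,u + w\cdot\nabla u\bigr]_h
	+ \Bigl[\nabla\Bigl(\frac{\vro}{1+\vro}\Bigr)\times\bigl(\Delta_h u+\var\p_3^2 u+\nabla{\rm div}\,u\bigr)\Bigr]_h
	- \frac{\vro}{1+\vro}\bigl(\Delta_h w_h+\var\p_3^2 w_h\bigr).
\end{equation*}
The pressure term $\nabla\vro(1+\vro)$ is a gradient and drops out. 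The last term can be absorbed into the principal part as a perturbed coefficient $\frac{1}{1+\vro}(\Delta_h+\var\p_3^2)$, which is uniformly elliptic in the horizontal directions because $\|\vro\|_{L^\infty}\lesssim\sqrt\delta$. On the boundary, \eqref{bc1} gives $w_h|_{x_3=0}=(\p_2u_3-\p_3u_2,\p_3u_1-\p_1u_3)|_{x_3=0}=0$, so the homogeneous Dirichlet condition allows the maximum principle. This yields \eqref{2222}.

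The right side of \eqref{2222} is then bounded term by term by $\delta^{5/6}$, using \eqref{assumption} together with the anisotropic Sobolev inequality \eqref{ie:Sobolev}.

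\emph{Stretching term.} For $\int_0^t\|w_h{\rm div}\,u+w\cdot\nabla u_h\|_{L^\infty}$ we pair an $L^\infty$ factor from $\mathcal{E}$ with a factor bounded by $\|\nabla_h u\|_{L^\infty}+\|{\rm div}\,u\|_{L^\infty}$. The latter is estimated in terms of $\bar{\mathcal D}_{tan}^{1/2}$ and $\mathcal D_{tan}^{1/2}$, and Cauchy--Schwarz in time is applied with the weights $(1+\tau)^{\sigma}$ and $(1+\tau)^{1+\sigma}$. Since $\int_0^\infty(1+\tau)^{-\sigma-1}d\tau<\infty$, this gives a bound like $\delta^{1/3}\cdot\delta^{1/2}=\delta^{5/6}$, where the $\delta^{1/3}$ comes from $\|w_h\|_{L^\infty}\le\mathcal E^{1/3}$.

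\emph{The $\Delta_h u$ term.} Here $\|\nabla\vro\|_{L^\infty}\|\Delta_h u\|_{L^\infty}$ is treated the same way. The factor $\|\p_3\vro\|_{L^\infty}$ contributes $\delta^{1/3}$.

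\emph{The $\var\p_3^2u$ term.} We follow \eqref{21}. The third vertical derivative $\var\|\p_3^3u_h\|_{H^2_{tan}}$ is recovered from the $\p_3$-differentiated horizontal equation \eqref{equ-p32u3}, using Lemma~\ref{lemma-help} and the $\p_t$ pieces of $\mathcal E$ and $\mathcal D$. H\"older in time with exponents $1/16,3/16,1/4,1/4,1/4$ then gives a convergent factor $\int(1+\tau)^{-(7-8\zeta)/4}d\tau$.

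\emph{The $\nabla{\rm div}\,u$ term.} The horizontal part $\nabla_h{\rm div}\,u$ is handled like the $\Delta_h u$ term. For the critical piece $\|\nabla_h\vro\|_{L^\infty}\|\p_3{\rm div}\,u\|_{L^\infty}$ we substitute the identity \eqref{23} and follow \eqref{22}. The factors $\|\p_tu_3\|_{L^\infty}$ and $\|\p_3\vro\|_{L^\infty}$ are taken in $\sup_\tau$ through the $\p_t$ terms of $\mathcal E$. Against these, $\int\|\nabla_h\vro\|_{L^\infty}$ is controlled by interpolating $\|\nabla_h\vro\|_{L^2}$ with $\|\nabla\nabla_h\vro\|_{H^2_{tan}}$ under the two-tier weights, which leaves the integrable factor $(1+\tau)^{-(15-16\zeta)/8}$. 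The remaining contributions from $u\cdot\nabla u_3$, $\Delta_hu_3$ and $\var\p_3\nabla_h\cdot u_h$ in \eqref{23} are bounded by $(1+\|u\|_{L^\infty})\int\mathcal D_{tan}$. Each product should contain at least one factor of order $\delta^{1/3}$ times $\delta^{1/2}$.

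\textbf{Main obstacle.} The hardest step is this last term, together with the $\var\p_3^2u$ term. There one must make sure that every piece has total power $\delta^{5/6}$ or higher and that the time exponents are integrable, which is exactly where $\zeta\le\frac1{34}$ (i.e.\ $\sigma\ge\frac{16}{17}$) is used. A further technical point is rigor of the maximum principle with the variable coefficient $\frac1{1+\vro}$. The plan is to treat the equation as a linear parabolic problem (degenerate in $x_3$ when $\var$ is small) and to apply the comparison principle to $\pm w_h^i$ against $\|w_h(0)\|_{L^\infty}+\int_0^t\|{\rm RHS}\|_{L^\infty}$.
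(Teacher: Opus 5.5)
Your proposal is correct and follows the paper's proof essentially step for step. It uses the same equation \eqref{equ-ptwh} with $w_h|_{x_3=0}=0$, the maximum principle giving \eqref{wh-inf}, and the same term-by-term bounds, including the substitution \eqref{equ-p3du} for $\p_3{\rm div}\,u$ (as in \eqref{wh-help}) and the recovery of $\var\p_3^3u_h$ from \eqref{equ-p32u3}.

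Two small remarks. First, the cross product also produces $\var\nabla_h\vro\,\p_3^2u_3$; the paper handles it by writing $\p_3^2u_3=\p_3{\rm div}\,u-\p_3\nabla_h\cdot u_h$ and reusing \eqref{wh-help}. Second, the time exponents $(7-8\zeta)/4$ and $(15-16\zeta)/8$ exceed $1$ for all $\zeta<3/8$, so $\zeta\le\frac{1}{34}$ is not what this lemma needs; that restriction originates in \eqref{est-s-16/17}.
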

            \begin{proof}
                Recalling the definition of $w_h=(\nabla \times u)_h$, it solves the following equation:
                \beq \label{equ-ptwh} \bal
             &\p_t w_h + u \cdot \nabla w_h - \f{1}{1+\vro} (\Delta_h w_h + \var \p_3^2 w_h)\\
             &= w_h \du + w\cdot \nabla u_h  - [\nabla(\f{\vro}{1+\vro} ) \times (\Delta_h u + \var \p_3^2 u + \nabla \du)]_h,
                \dal \deq
                and the boundary condition is 
                $w_h|_{x_3=0}=0$.
                Applying the maximum principle for \eqref{equ-ptwh}, we can obtain that
		\beq \label{wh-inf}
		\bal
\|w_h(t) \|_{L^\infty}  \leq &\; \|w_h(0)\|_{L^\infty} + \int_{0}^t \|w_h \du + w\cdot \nabla u_h\|_{L^\infty}  \, d\tau  +\int_{0}^t \|[\nabla(\f{\vro}{1+\vro} ) \times \Delta_h u]_h\|_{L^\infty}  \, d\tau \\
&\;  +\int_{0}^t \|[\nabla(\f{\vro}{1+\vro} ) \times \nabla \du]_h\|_{L^\infty} \, d\tau + \var \int_{0}^t \|[\nabla(\f{\vro}{1+\vro} ) \times \p_3^2 u]_h\|_{L^\infty}  \, d\tau.
         \dal \deq
         We estimate the terms on the right-hand side of \eqref{wh-inf} in the sequence.
         \beq \label{wh02} \bal
&\int_{0}^t \|w_h \du + w\cdot \nabla u_h\|_{L^\infty}  \, d\tau\\
\lesssim &\; \sup_{ 0\le \tau \le t} \|w_h(\tau)\|_{L^\infty} \int_{0}^t \|\du\|_{L^\infty}  \, d\tau + \sup_{ 0\le \tau \le t} \|w_h(\tau)\|_{L^\infty} \int_{0}^t \|\nabla_h u_h\|_{L^\infty} \, d\tau \\
&\; + \sup_{ 0\le \tau \le t} \|\p_3 u_h(\tau)\|_{L^\infty} \int_{0}^t \|w_3\|_{L^\infty} \, d\tau \\
\lesssim &\; \sup_{ 0\le \tau \le t} \|(w_h, \nabla_h u)(\tau)\|_{L^\infty} \int_{0}^t \|(\du, \nabla_h u) (\tau)\|_{L^\infty} \, d\tau
\lesssim \; \dl^{\f{5}{6}},
         \dal \deq
         where, under the assumption \eqref{assumption} and using the inequality $\eqref{ie:Sobolev}_1$, we have used the following estimates
         \begin{align*} 
&\int_{0}^t \|\du (\tau)\|_{L^\infty}  \, d\tau  \\
\lesssim &\; \int_{0}^t \|(\du, \p_3 \du)\|_{L^2}^{\f14}  \|\nabla_h (\du, \p_3 \du)\|_{H_{tan}^1}^{\f34}  \, d\tau \\
\lesssim &\; \left\{\int_{0}^t \|(\du, \p_3 \du)\|_{L^2}^{2} (1+\tau)^{\sigma} \, d\tau\right\}^{\f18}  \left\{\int_{0}^t \|\nabla_h (\du, \p_3 \du)\|_{H_{tan}^1}^{2} (1+\tau)^{1+\sigma} \, d\tau\right\}^{\f38}\\
&\; \times \left\{\int_{0}^t (1+\tau)^{-\f{3+ 4\sigma}{4} } \, d\tau\right\}^{\f12} \\
\lesssim &\; \dl^{\f12},
   \end{align*}
   and similarly, we have 
   \begin{align*}
 \int_{0}^t \|\nabla_h u (\tau)\|_{L^\infty}  \, d\tau 
\lesssim  \dl^{\f12}.
         \end{align*}
         Notice that for any smooth functions $f(x)$ and $g(x)$,
         \beqq
         [f \times g]_h = \left[\begin{array}{*{4}{ll}}
          f_2  g_3 - f_3 g_2\\
          f_3 g_1 - f_1 g_3
         \end{array}\right].
         \deqq
         Thus, using the $L^{\infty}$ anisotropic inequality $\eqref{ie:Sobolev}_1$, we have
         \beq\label{wh04} \bal
&\; \int_{0}^t \|[\nabla(\f{\vro}{1+\vro} ) \times \nabla \du]_h\|_{L^\infty}  \, d\tau\\
\lesssim &\;   \int_{0}^t  \|\p_3 \vro  \|_{L^\infty} \|\Delta_h \du\|_{L^\infty}   \, d\tau + \int_{0}^t \|\nabla_h \vro\|_{L^\infty} \|\p_3 \du\|_{L^\infty}  \, d\tau\\
\lesssim &\; \sup_{ 0\le \tau \le t} \| \p_3 \vro (\tau) \|_{L^\infty}    \left\{ \int_{0}^t \|(\Delta_h u, \p_3 \Delta_h u)\|_{H_{tan}^2}^2   (1+\tau)^{1+\sigma} \, d\tau \right\}^{\f12}\left\{\int_{0}^t (1+\tau)^{-(1+ \sigma) } \, d\tau\right\}^{\f12} \\
&\; + \int_{0}^t  \|\nabla_h \vro\|_{L^2}^{\f18} \|\nabla \nabla_h \vro\|_{H_{tan}^1}^{\f78} \|\p_3 \du\|_{L^\infty}  \, d\tau\\
\lesssim &\; \dl^{\f{5}{6}} +\int_{0}^t \|\nabla_h \vro\|_{L^2}^{\f18} \|\nabla \nabla_h \vro\|_{H_{tan}^1}^{\f78} \|\p_3 \du\|_{L^\infty}  \, d\tau.
         \dal \deq
Now we deal with the term $\|\p_3 \du\|_{L^\infty}$. 
Using the equation of $u_3$ in $\eqref{equ-p32u3}_1$, we can check that
\beq  \label{equ-p3du}
(1+\var) \p_3 \du = (1+ \vro) \p_t u_3 + (1+ \vro) u \cdot \nabla u_3  - \Delta_h u_3 + \var \p_3 \nabla_h \cdot u_h + (1+\vro)^2 \p_3 \vro.
\deq
Furthermore, under the assumption \eqref{assumption}, we have
\beqq \bal
&\; \i \|\nabla_h \vro\|_{L^\infty} \, d\tau \lesssim \i \|\nabla_h \vro\|_{L^2}^{\f18} \|\nabla \nabla_h \vro\|_{H_{tan}^2}^{\f78} \, d\tau\\
\lesssim &\; \left\{ \int_{0}^t  \| \nabla_h \vro\|_{L^2}^2   (1+\tau)^{\sigma} \, d\tau \right\}^{\f{1}{16}} \left\{ \int_{0}^t  \| \nabla \nabla_h \vro\|_{H_{tan}^2}^2   (1+\tau)^{1+\sigma} \, d\tau \right\}^{\f{7}{16}} \left\{\int_{0}^t (1+\tau)^{-\f{7+8\sigma}{8} } \, d\tau\right\}^{\f12}\\
\lesssim &\; \dl^{\f12}.
\dal \deqq
Thus, we have
\begin{align} \label{wh-help} 
&\; \int_{0}^t \|\nabla_h \vro\|_{L^\infty} \|\p_3 \du\|_{L^\infty} \, d\tau \notag\\
\lesssim &\; (1+\sup_{ 0\le \tau \le t} \| u (\tau) \|_{L^{\infty}} ) \int_{0}^t \|(\nabla_h \vro, \nabla \nabla_h \vro)\|_{H_{tan}^2} (\| \nabla u_3\|_{L^{\infty}} + \|\Delta_h u_3\|_{L^{\infty}} +  \var \|\p_3 \nabla_h u\|_{L^{\infty}} )\, d\tau \notag\\
&\; +
\sup_{ 0\le \tau \le t} \| (\p_t u_3, \p_3 \vro) (\tau) \|_{L^{\infty}}  \int_{0}^t \|\nabla_h \vro\|_{L^\infty} \, d\tau \\
\lesssim &\;  (1+\sup_{ 0\le \tau \le t} \| u (\tau) \|_{L^{\infty}} ) \int_{0}^t \md_{tan} (\tau)\, d\tau  +
\sup_{ 0\le \tau \le t} (\| (\p_t u_3, \p_3 \p_t u_3)\|_{H_{tan}^2}  + \|\p_3 \vro (\tau) \|_{L^{\infty}})  \int_{0}^t \|\nabla_h \vro\|_{L^\infty} \, d\tau \notag\\
\lesssim &\; \dl^{\f{5}{6}}.\notag
\end{align}
Substituting the above estimates into \eqref{wh04}, we can  yield that
\beq\label{wh05}
\int_{0}^t \|[\nabla(\f{\vro}{1+\vro} ) \times \nabla \du]_h\|_{L^\infty}  \, d\tau \lesssim \dl^{\f{5}{6}}.
\deq
It remains to estimate the last term on the right-hand side of \eqref{wh-inf}. Under the assumption \eqref{assumption} and using the $L^{\infty}$ anisotropic inequality $\eqref{ie:Sobolev}_1$, we have
\beq \label{wh06} \bal
&\; \var \int_{0}^t \|[\nabla(\f{\vro}{1+\vro} ) \times \p_3^2 u]_h\|_{L^\infty}  \, d\tau \\
\lesssim &\; \var \int_{0}^t \|\p_3 \vro\|_{L^\infty} \| \p_3^2 u_h\|_{L^\infty}  \, d\tau + \var \int_{0}^t \|\nabla_h \vro\|_{L^\infty} \| \p_3^2 u_3\|_{L^\infty} \, d\tau\\
\lesssim &\; \var \int_{0}^t  \|\p_3 \vro\|_{L^2}^{\f18} \|\p_3 \nabla_h \vro\|_{H_{tan}^1}^{\f38} \|\p_3^2 \vro\|_{H_{tan}^2}^{\f12} \|\p_3^2 u_h\|_{H_{tan}^2}^{\f12} \|\p_3^3 u_h\|_{H_{tan}^2}^{\f12} d\tau \\
&\; + \var \int_{0}^t \|\nabla_h \vro\|_{L^\infty} \| \p_3 \du \|_{L^\infty} \, d\tau +  \var \int_{0}^t \|\nabla_h \vro\|_{L^\infty} \| \p_3 \nabla_h u \|_{L^\infty} \, d\tau \\
\lesssim &\;
 \sup_{ 0\le \tau \le t} \me(\tau)^{\f12}  \left\{ \int_{0}^t  \| \p_3 \vro\|_{L^2}^2   (1+\tau)^{\sigma} \, d\tau \right\}^{\f{1}{16}}
 \left\{ \int_{0}^t  \| \p_3 \nabla_h \vro\|_{H_{tan}^1}^2   (1+\tau)^{1+\sigma} \, d\tau \right\}^{\f{3}{16}} \\
&\; \times \left\{ \int_{0}^t  \var \| \p_3^2 \vro\|_{H_{tan}^2}^2  \, d\tau \right\}^{\f{1}{4}} \left\{ \int_{0}^t  \var \| \p_3^2 u_h \|_{H_{tan}^2}^2  \, d\tau \right\}^{\f{1}{4}}
\left\{\int_{0}^t (1+\tau)^{-\f{3+4\sigma}{4} } \, d\tau\right\}^{\f14} +  \int_{0}^{t} \md(\tau) d \tau+  \dl^{\f{5}{6}}\\
\lesssim &\; \dl^{\f{5}{6}},
\dal \deq
where we have used the following estimate by the equation \eqref{equ-p32u3}
\beqq \bal
 \var \| \p_3^3 u_h \|_{H_{tan}^2}= &\; \| \p_3 \Big( (1+\vro) \p_t u_h + (1+ \vro) u \cdot \nabla u_h  - \Delta_h u_h -  \nabla_h \du + (1+\vro)^2 \nabla_h \vro \Big) \|_{H_{tan}^2} \\
 \lesssim &\; \me(t)^{\f12} + \md(t)^{\f12},
\dal \deqq
and the estimate \eqref{wh-help}. 
Similarly, we can check
\beq\label{wh03} \bal
 \int_{0}^t \|[\nabla(\f{\vro}{1+\vro} ) \times \Delta_h u]_h\|_{L^\infty}  \, d\tau
\lesssim \dl^{\f{5}{6}}.
         \dal \deq
Substituting the estimates from \eqref{wh02}, \eqref{wh05}, \eqref{wh06} and \eqref{wh03} into \eqref{wh-inf}, we finish the proof of this lemma.
\end{proof}

Next, we will estimate $\| \p_3 \vro \|_{L^{\infty}}$.
\begin{lemm}\label{lemma39}
				For any smooth solution $(\vro,u)$ of equation \eqref{eqr}, under the assumption \eqref{assumption},
				it holds that
                \beq \label{vro01} \bal
              &\;  \| \p_3 \vro(t) \|_{L^{\infty}} \lesssim \| \p_3 \vro_0 \|_{L^{\infty}}  + \dl^{\f12}.
                \dal \deq
            \end{lemm}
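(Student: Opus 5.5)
We plan to apply a maximum principle to equation \eqref{25} for $\p_3\vro$, which is a pure transport equation with the damping term $\frac{1}{1+\var}\p_3\vro$. Because it is first order and $u_3|_{x_3=0}=0$, no boundary condition on $\p_3\vro$ is needed: the characteristics $\dot X = u(t,X)$ stay in $\overline{\mathbb{R}^3_+}$. Along a characteristic set $f(t)=\p_3\vro(t,X(t))$. Then $f' + \frac{1}{1+\var} f = G$, where $G$ is the right-hand side of \eqref{25}. Duhamel's formula gives
\begin{equation*}
\|\p_3\vro(t)\|_{L^\infty} \le e^{-\frac{t}{1+\var}}\|\p_3\vro_0\|_{L^\infty} + \int_0^t e^{-\frac{t-\tau}{1+\var}}\|G(\tau)\|_{L^\infty}\,d\tau .
\end{equation*}
Since $\var\in(0,1)$, the kernel is bounded by $e^{-(t-\tau)/2}$, which is integrable in $\tau$ uniformly in $t$. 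So it suffices to bound $\sup_\tau \|G(\tau)\|_{L^\infty}$ (or an $L^1$-in-time portion) by $\delta^{1/2}$.

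We split $G$ into linear and nonlinear parts. The linear forcing consists of
\begin{equation*}
\frac{1}{1+\var}\bigl(\p_t u_3 - \Delta_h u_3 + \var\p_3\nabla_h\cdot u_h\bigr)
\end{equation*}
plus terms carrying an extra factor $\vro$. We bound each by the anisotropic inequality $\eqref{ie:Sobolev}_1$ and the assumption \eqref{assumption}.

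\emph{Time derivative.} $\|\p_t u_3\|_{L^\infty}\lesssim \|(\p_t u_3,\p_3\p_t u_3)\|_{H^2_{tan}}\lesssim \me^{1/2}$. The second norm is available because $\p_t\p_3 u_3$ appears in $\me$.

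\emph{Horizontal Laplacian.} $\|\Delta_h u_3\|_{L^\infty}\lesssim \|(u_3,\p_3 u_3)\|_{H^4_{tan}}\lesssim\me^{1/2}$, using $m\ge5$.

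\emph{Viscous term.} $\var\|\p_3\nabla_h u_h\|_{L^\infty}\lesssim \var\|(\p_3 u,\p_3^2 u)\|_{H^3_{tan}}$. Lemma~\ref{lemma-help} controls $\var\|\p_3^2 u_h\|_{H^2_{tan}}$, and $\var\le 1$, so this is $\lesssim \me^{1/2}$. A more careful split using $\sqrt\var\le1$ with $\var\|\p_3^2u_h\|^2_{H^2_{tan}}\lesssim\me$ also works.

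Each linear piece is therefore $\lesssim\delta^{1/2}$ pointwise in time, and the convolution with $e^{-(t-\tau)/2}$ keeps it at that size.

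The nonlinear terms are estimated the same way: $\p_3\vro\,\du$, $\p_3u\cdot\nabla\vro$, $(1+\vro)u\cdot\nabla u_3$, $\vro\,\p_t u_3$, and $((1+\vro)^3-1)\p_3\vro$. Each is bounded by a product of $L^\infty$ norms. Here $\|\p_3 u_h\|_{L^\infty}\lesssim\|w_h\|_{L^\infty}+\|\nabla_h u_3\|_{L^\infty}\lesssim\me^{1/3}+\me^{1/2}$. Also $\|\p_3\vro\|_{L^\infty}\lesssim\me^{1/3}$, and $\|\vro\|_{L^\infty},\|\nabla_h\vro\|_{L^\infty}\lesssim\me^{1/2}$. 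Every product is at least quadratic, hence $\lesssim \delta^{2/3}\le\delta^{1/2}$ for small $\delta$.

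The term $\p_3 u_3\,\p_3\vro$ inside $\p_3 u\cdot\nabla\vro$ could instead be absorbed into the damping, since $\|\p_3 u_3\|_{L^\infty}\ll1$; either treatment is fine.

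The main point to verify is that the linear forcing terms are controlled by $\me^{1/2}$ and not by a quantity of size $O(1)$. This is exactly why $\p_t u_3$ and $\p_3\p_t u_3$ in $H^2_{tan}$ are included in $\me$, and why the cubic powers appear, per Remark~\ref{rema-Linfty}. The output is the linear bound $\delta^{1/2}$, which after cubing is $\delta^{3/2}\ll\delta$ and is absorbable. The other delicate point is justifying the maximum principle near the boundary via the characteristics argument. A secondary point is the uniform-in-$\var$ bound of the $\var\p_3\nabla_h u_h$ term: if the $\var\p_3^2 u$ route is insufficient, we would use $\var\p_3\nabla_h u_h$ in $H^2_{tan}$ via $\p_3 u\in H^{m-1}_{co}$ and anisotropic embedding.
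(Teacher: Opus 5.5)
Your argument is correct in structure and takes a genuinely different route from the paper. The paper also multiplies by $e^{t/(1+\var)}$ and applies the maximum principle. It then discards the damping and bounds the Duhamel term by $\int_0^t\|G(\tau)\|_{L^\infty}\,d\tau$. It must therefore show this $L^1$-in-time integral is $\lesssim\dl^{1/2}$. It does so by interpolating each piece of $G$ (e.g.\ $\p_t u_3$, $\var\p_3\nabla_h u$) against the weighted integrals $\int_0^t(1+\tau)^{\sigma}\md_{tan}\,d\tau$ and $\int_0^t(1+\tau)^{1+\sigma}\bmd_{tan}\,d\tau$ from \eqref{assumption}, so that the leftover powers of $(1+\tau)$ are integrable.

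You instead keep the kernel $e^{-(t-\tau)/(1+\var)}\le e^{-(t-\tau)/2}$. This gives $\|\p_3\vro(t)\|_{L^\infty}\le\|\p_3\vro_0\|_{L^\infty}+2\sup_{\tau\le t}\|G(\tau)\|_{L^\infty}$, so you only need pointwise-in-time bounds by $\sup_\tau\me(\tau)^{1/2}$. This decouples the lemma from the two-tier decay entirely. It is exactly what the damping term $\frac{1}{1+\var}\p_3\vro$ buys you, and it is what separates this estimate from the $w_h$ estimate (Lemma~\ref{lemma-wh}), where no damping exists and the $L^1$-in-time route is forced. Your bounds are correct for $\p_t u_3$ (both $\p_t u_3$ and $\p_t\p_3 u_3$ lie in $H^2_{tan}$ within $\me$), for $\Delta_h u_3$ (using $m\ge5$), and for all the nonlinear terms.

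One step fails as written. Applying $\eqref{ie:Sobolev}_1$ to $f=\var\p_3\nabla_h\cdot u_h$ produces the factor $\|\p_{123}f\|_{L^2}$, which is $\var\p_3^2 u_h$ with three horizontal derivatives. Lemma~\ref{lemma-help} only controls $\var\|\p_3^2 u_h\|_{H^2_{tan}}^2$, and $H^3_{tan}$ is not available from $\me$: through the momentum equation it would require $\p_t u_h\in H^3_{tan}$. Your fallback does not repair this either. Any $L^\infty$ bound needs a genuine $\p_3$ of $\var\p_3\nabla_h u_h$, and conormal regularity does not give $L^\infty$ near $x_3=0$.

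There are two easy fixes.
\begin{itemize}
\item Use the unbalanced anisotropic embedding $\|f\|_{L^\infty}^2\lesssim\|f\|_{H^{s_1}_{tan}}\|\p_3 f\|_{H^{s_2}_{tan}}$, valid for $s_1+s_2>2$ (Agmon in $x_3$ on each horizontal Fourier mode, then Cauchy--Schwarz in $\xi_h$). With $s_1=2$ and $s_2=1$ this gives $\var\|\p_3\nabla_h u_h\|_{L^\infty}\lesssim(\var\|\p_3 u\|_{H^3_{tan}})^{1/2}(\var\|\p_3^2 u_h\|_{H^2_{tan}})^{1/2}\lesssim\var^{3/4}\me^{1/2}$.
\item Treat just this piece in $L^1$ in time, using $\var\|\nabla_h\p_3^2 u\|_{H^{m-3}_{tan}}^2$ from $\bmd_{tan}$ as the paper does. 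This is compatible with your Duhamel bound, since $\int_0^t e^{-(t-\tau)/2}g(\tau)\,d\tau\le\int_0^t g(\tau)\,d\tau$.
\end{itemize}
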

            \begin{proof}
            It is easy to check that $\p_3 \vro$ solves the following equation:
            \beqq  \bal
            \p_t \p_3 \vro + u \cdot \nabla \p_3 \vro  + (1+\vro) \p_3 \du = - \p_3 \vro \,  \du - \p_3 u \cdot \nabla \vro.
            \dal \deqq
            Substituting the equation \eqref{equ-p3du} of $\p_3 \du$ into above equation, we have
            \beqq \bal
            \p_t \p_3 \vro + u \cdot \nabla \p_3 \vro  + \f{1}{1+\var} \p_3 \vro = &\; - \f{1+\vro}{1+\var} \Big( (1+ \vro) \p_t u_3 + (1+ \vro) u \cdot \nabla u_3  - \Delta_h u_3 + \var \p_3 \nabla_h \cdot u_h \Big) \\
            &\; - \p_3 \vro\, \du - \p_3 u \cdot \nabla \vro - \f{1}{1+\var} \Big((1+\vro)^3 -1 \Big) \p_3 \vro\\
            := &\; G(t).
            \dal \deqq
            Thus we have
            \beqq
        \p_t (e^{\f{1}{1+\var} t} \p_3 \vro  )+ u \cdot \nabla (e^{\f{1}{1+\var} t} \p_3 \vro) = e^{\f{1}{1+\var} t} G(t).
            \deqq
             Applying the maximum principle for above transport equation, we have
            \begin{equation*}
        e^{\f{1}{1+\var} t} \| \p_3 \vro(t) \|_{L^{\infty}} \lesssim \| \p_3 \vro_0 \|_{L^{\infty}}  + \int_{0}^t e^{\f{1}{1+\var} \tau} \|G(\tau)\|_{L^{\infty}} \, d\tau,
         \end{equation*}
         which yields that
         \begin{equation}
           \label{p3vro-inf}
         \| \p_3 \vro(t) \|_{L^{\infty}} \lesssim \| \p_3 \vro_0 \|_{L^{\infty}}  + \int_{0}^t \|G(\tau)\|_{L^{\infty}} \, d\tau.
         \end{equation}
         Now we estimate the term $\int_{0}^t \|G(\tau)\|_{L^{\infty}} \, d\tau$. From the equality $\eqref{equ-p3du}$, we can obtain
         \beqq
         \p_t u_3 = - u \cdot \nabla u_3 + \f{1}{1+\vro} \Delta_h u_3 -\f{\var}{1+\vro}  \p_3 \nabla_h \cdot u_h+ \f{1+\var}{1+\vro} \p_3 \du - (1+\vro) \p_3 \vro,
         \deqq
         thus we can check that
         \beq \label{pro-ptu3} \bal
         \| \p_t u_3\|_{L^2}^2
         \lesssim  \md_{tan}(t), ~~~\|
         \nabla_h \p_t u_3\|_{H_{tan}^1}^2 \lesssim \bmd_{tan}(t).
         \dal \deq
         Thus, using the interpolation inequality \eqref{inter-1} and under the assumption \eqref{assumption}, we have
          \begin{align*}
        &\; \int_{0}^t \|\p_t u_3\|_{L^{\infty}} \, d\tau\\
        \lesssim &\;  \int_{0}^t \|\p_t u_3\|_{L^2}^{\f18} \| \nabla_h \p_t u_3\|_{H_{tan}^1}^{\f38} \| \p_t \p_3 u_3 \|_{L^2}^{\f18} \| \nabla_h \p_t \p_3 u_3 \|_{L^2}^{\f14} \| \nabla_h^2 \p_t \p_3 u_3 \|_{L^2}^{\f18}  \, d\tau \\
        \lesssim &\;  \int_{0}^t \|\p_t u_3\|_{L^2}^{\f{3}{16}} \| \nabla_h \p_t u_3\|_{H_{tan}^1}^{\f{9}{16}} \| \p_t \p_3^2 u_3 \|_{H_{tan}^2}^{\f14}  \, d\tau \\
        \lesssim &\; \left\{ \int_{0}^t \|\p_t u_3\|_{L^2}^2 (1+\tau)^{\sigma} \, d\tau \right\}^{\f{3}{32}} \left\{ \int_{0}^t  \| \nabla_h \p_t u_3\|_{H_{tan}^1}^2 (1+\tau)^{1+\sigma} \, d\tau \right\}^{\f{9}{32}}  \left\{ \int_{0}^t \| \p_t \p_3^2 u_3 \|_{H_{tan}^2}^2  \, d\tau \right\}^{\f{1}{8}}
\\
        &\; \times \left\{\int_{0}^t (1+\tau)^{-\f{9+12\sigma}{16} }\, d\tau \right\}^{\f{1}{2}}
\lesssim \dl^{\f12},
\end{align*}
and
\begin{align*}
\var \int_{0}^t \|\p_3 \nabla_h u\|_{L^{\infty}} \, d\tau  \lesssim &\;\var^{\f34} \left\{ \int_{0}^t \|\p_3 \nabla_h u\|_{H_{tan}^2}^2 (1+\tau)^{\sigma} \, d\tau \right\}^{\f{1}{4}} \left\{ \var \int_{0}^t \|\p_3^2 \nabla_h u\|_{H_{tan}^2}^2 (1+\tau)^{1+\sigma} \, d\tau \right\}^{\f{1}{4}}\\
&\; \times
\left\{\int_{0}^t (1+\tau)^{-\f{1+2\sigma}{2} }\, d\tau \right\}^{\f{1}{2}}\lesssim \var^{\f34} \dl^{\f12}.
        \end{align*}
         Similarly, we can check that
         \beqq \bal
          &\;\int_{0}^t (\|u \cdot \nabla u_3\|_{L^{\infty}} +\|\Delta_h u\|_{L^{\infty}}) \, d\tau  \lesssim \dl^{\f12},\\
 &\; \int_{0}^t \Big( \|  \p_3 \vro\, \du\|_{L^{\infty}} + \|\p_3 u \cdot \nabla \vro\|_{L^{\infty}}  + \|\f{1}{1+\var} ((1+\vro)^3 -1 ) \p_3 \vro\|_{L^{\infty}} \Big) \, d\tau \lesssim \dl^{\f56}.
        \dal \deqq
        Combining above $L^{\infty}$ estimates of the term $G(t)$, we have
        \beqq
        \int_{0}^t \|G(\tau)\|_{L^{\infty}} \, d\tau \lesssim \dl^{\f12}.
        \deqq
        Substituting the above inequality into \eqref{p3vro-inf}, we finish the proof of this lemma.
            \end{proof}
            
			\subsection{Time derivative estimate}\label{timer}
In this subsection, we will establish the time derivative estimates in $\me(t)$.
 \begin{lemm}\label{lemma310}
				For any smooth solution $(\vro,u)$ of equation \eqref{eqr},
				it holds that 
                \begin{align} 
                &\; \frac{d}{dt} (\|  \p_t u\|_{L^2}^2 +\|  \p_t u\|_{\dot{H}_{tan}^2}^2) + \|\nabla_h  \p_t u \|_{H_{tan}^2}^2\notag \\
				 &\; +\var \|\p_3  \p_t u \|_{H_{tan}^2}^2
    				+  \| \p_t \du \|_{H_{tan}^2}^2 \lesssim \|\du\|_{H_{tan}^2}^2 +  \sqrt{\me(t)} \md(t),\label{3601}\\ 
                  &\;  \frac{d}{dt} (\| \p_t w_h\|_{L^2}^2+\| \p_t w_h\|_{\dot{H}_{tan}^2}^2 + \| Z_3 \p_t w_h\|_{L^2}^2 ) + \|\nabla_h  \p_t w_h \|_{H_{tan}^2}^2 +\|\nabla_h Z_3 \p_t w_h \|_{L^{2}}^2 \notag\\
    				&\; +  \var(\|\p_3 \p_t w_h \|_{H_{tan}^2}^2 + \|\p_3 Z_3 \p_t w_h \|_{L^{2}}^2)
				\lesssim  \sqrt{\me(t)} \md(t)\label{3701},
                 \end{align}
                 and 
                \beq\label{3801} \bal
                  &\; \frac{d}{dt} (\|\p_t \p_3 u_3  \|_{L^2}^2 + \|\p_t \p_3 u_3  \|_{\dot{H}_{tan}^2}^2+ \| Z_3 \p_t \p_3 u_3 \|_{L^2}^2) + \| \nabla \p_t \p_3 u_3\|_{H_{tan}^2}^2 + \| Z_3 \nabla \p_t \p_3 u_3\|_{L^{2}}^2 \\
				\lesssim &\; \|\nabla_h \p_t w_h\|_{H_{tan}^1}^2 + \| \nabla_h \p_t u \|_{H_{tan}^2}^2+  \| \p_3 \du \|_{H_{co}^2}^2 + \sqrt{\me(t)} \md(t).
                \dal \deq
            \end{lemm}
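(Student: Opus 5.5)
We will prove the three inequalities by differentiating the system \eqref{eqr} in time and running $L^2$ energy estimates for the resulting linear-plus-forcing equations, in the same spirit as Lemma~\ref{lemma32}. The time derivative commutes with all tangential fields $Z^{\alpha_h}$ and preserves the boundary conditions: $\p_t u_3|_{x_3=0}=0$, $\p_3\p_t u_h|_{x_3=0}=0$, and $\p_t w_h|_{x_3=0}=0$. Hence the boundary terms from integrating $\Delta_h$, $\var\p_3^2$ and $\nabla\,{\rm div}$ by parts vanish exactly as before.

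\emph{Estimate \eqref{3601}.} We apply $\p_t Z^{\alpha_h}$, $|\alpha_h|\le 2$, to $\eqref{eqr}_2$ and test with $\p_t Z^{\alpha_h}u$. This produces $\frac{d}{dt}\frac12\|\p_t Z^{\alpha_h}u\|_{L^2}^2+\|\nabla_h\p_tZ^{\alpha_h}u\|^2+\|\p_t Z^{\alpha_h}\du\|^2+\var\|\p_3\p_tZ^{\alpha_h}u\|^2$. The pressure term is $\int\p_t Z^{\alpha_h}\nabla\vro\cdot\p_tZ^{\alpha_h}u=-\int \p_tZ^{\alpha_h}\vro\,\p_tZ^{\alpha_h}\du$. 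We do not pair this with a $\p_t\vro$ energy. Instead we insert $\p_t\vro=-\du-\,$(quadratic terms) from $\eqref{eqr}_1$ and use Young's inequality, which bounds it by $\frac12\|\p_tZ^{\alpha_h}\du\|^2+C\|\du\|_{H^2_{tan}}^2$ plus a quadratic remainder controlled through \eqref{pro-ptvro}. This is exactly the source of the linear term $\|\du\|_{H^2_{tan}}^2$ on the right. The cubic terms $\p_t(u\cdot\nabla u)$, $\p_t(\frac{\vro}{1+\vro}(\Delta_h u+\var\p_3^2u+\nabla\du))$ and $\p_t(\vro\nabla\vro)$ are handled with the anisotropic inequality \eqref{ie:Sobolev}. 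We move one horizontal derivative onto the test function when $|\alpha_h|=2$, and bound $\p_t\vro$, $\p_3\p_t\vro$ by \eqref{pro-ptvro}--\eqref{pro-p3tvro}. The second-order vertical derivatives are bounded by \eqref{est-p32-u3-uh}. All of this gives $\sqrt{\me}\,\md$.

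\emph{Estimate \eqref{3701}.} We differentiate \eqref{equ-ptwh} in time. The principal part is $\p_t(\p_t w_h)+u\cdot\nabla\p_tw_h-\frac1{1+\vro}(\Delta_h+\var\p_3^2)\p_tw_h$ with the Dirichlet condition $\p_tw_h|_{x_3=0}=0$. We test with $(1+\vro)\p_tZ^{\alpha_h}w_h$, or equivalently treat the $\frac{\vro}{1+\vro}$ part as a perturbation, so the dissipation $\|\nabla_h\p_tZ^{\alpha_h}w_h\|^2+\var\|\p_3\p_tZ^{\alpha_h}w_h\|^2$ appears. Every right-hand side of \eqref{equ-ptwh} is at least quadratic, so no linear remainder survives. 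For the conormal piece $Z_3\p_tw_h$ we apply $Z_3$. The commutator $[Z_3,\var\p_3^2]=\var(\vf''\p_3+2\vf'\p_3^2)$ must be absorbed: we integrate by parts once, use $\vf'$ bounded, and absorb into $\var\|\p_3Z_3\p_tw_h\|^2$ plus $\var\|\p_3\p_tw_h\|^2$, the latter controlled by the tangential part of the same estimate. Since $Z_3$ vanishes on the boundary, no boundary terms appear.

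\emph{Estimate \eqref{3801} and the main obstacle.} We take $\p_t$ of the identity \eqref{equ-p3du} and regard it as a parabolic equation for $\p_3 u_3$ through ${\rm div}$. More precisely, we take ${\rm div}$ of the time-differentiated momentum equation; this gives an equation for $\p_t\du$ with Laplacian-type dissipation $2\Delta_h+(1+\var)\p_3^2$ (cf.\ \eqref{3903}). We then use $\p_3u_3=\du-\nabla_h\cdot u_h$. The term $\nabla_h\cdot u_h$ is traded via $\p_3\nabla_h\cdot u_h$, the horizontal vorticity and $\nabla_h u_3$. This explains the appearance of $\|\nabla_h\p_tw_h\|_{H^1_{tan}}^2$, $\|\nabla_h\p_tu\|_{H^2_{tan}}^2$ and the density coupling $\|\p_3\du\|_{H^2_{co}}^2$ on the right; the coupling comes from $\p_t\Delta\vro$ after using $\eqref{eqr}_1$. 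The main obstacle is the boundary. We have $\p_t\p_3u_3|_{x_3=0}=\p_t\du|_{x_3=0}$, which is not zero. Integrating by parts in $x_3$ therefore produces boundary integrals. Our first attempt is to use the normal component of the momentum equation at $x_3=0$: there $u_3=0$ and $\p_3u_h=0$, which gives $(1+\var)\p_3\du=(1+\vro)^2\p_3\vro$, a Neumann-type relation. With it, the boundary term becomes a density trace, and we control that trace by the trace inequality with $\|\p_3\vro\|_{H^{1}_{tan}}$ times the dissipation. For the $Z_3$ piece no boundary terms arise, and the commutators $[Z_3,\p_3^2]$ are treated as for \eqref{3701}, at the cost of $\|\nabla\p_t\p_3u_3\|_{H^2_{tan}}^2$, which is already on the left.
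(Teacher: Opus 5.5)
Your arguments for \eqref{3601} and \eqref{3701} match the paper's strategy, which it omits. In particular, substituting $\partial_t\varrho=-{\rm div}\,u+\dots$ into the pressure pairing is exactly where $\|{\rm div}\,u\|_{H^2_{tan}}^2$ comes from. The gap is in \eqref{3801}.

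\textbf{The boundary terms you create cannot be controlled uniformly in $\varepsilon$.} By passing to the divergence equation you produce boundary terms in $x_3$ that the paper never encounters. (Note also that $\partial_t\partial_3u_3=\partial_t{\rm div}\,u$ fails on $x_3=0$, since $\nabla_h\cdot u_h$ need not vanish there under the slip condition; neither trace vanishes, though, so the difficulty is real.) After you apply the Neumann relation $(1+\varepsilon)\partial_3{\rm div}\,u=(1+\varrho)^2\partial_3\varrho$ on $x_3=0$, the boundary integrand contains traces of $\partial_3\varrho$ and of its tangential and time derivatives. A trace inequality costs one normal derivative, $\|f|_{x_3=0}\|_{L^2(\mathbb{R}^2)}^2\lesssim\|f\|_{L^2}\|\partial_3 f\|_{L^2}$. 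You would therefore need $\partial_3^2\varrho$ and $\partial_t\partial_3^2\varrho$ in $L^2$, and these are controlled only with an $\varepsilon$-weight: $\varepsilon\|\partial_3^2\varrho\|_{H^2_{co}}^2$ in $\mathcal{E}(t)$ and $\varepsilon^{1/2}\|\partial_t\partial_3^2\varrho\|_{H^1_{co}}\lesssim\sqrt{\mathcal{D}(t)}$. Conormal norms give no trace information, because $Z_3$ vanishes on the boundary. Your bound via $\|\partial_3\varrho\|_{H^1_{tan}}$ is therefore not uniform in $\varepsilon$.

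Keeping $\Delta\partial_t\varrho$ unsubstituted does not rescue this. The principal parts of the viscous and pressure boundary terms do cancel by that same Neumann relation. However, the commutator remainders, where $\partial_t Z^{\alpha_h}$ falls on $(1+\varrho)^2$, still carry such traces. Finally, even if it closed, your route gives a differential inequality for $\|\partial_tZ^{\alpha_h}{\rm div}\,u\|^2$, not for the energy $\|\partial_tZ^{\alpha_h}\partial_3u_3\|^2$ stated in \eqref{3801}.

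\textbf{The missing idea: work with the $u_3$-equation, which produces no boundary terms.} Apply $Z^{\alpha_h}\partial_t$ to \eqref{equ-ptu3} and test against $-Z^{\alpha_h}\partial_t\partial_3^2u_3$.
\begin{itemize}
\item Since $u_3|_{x_3=0}=0$, both $\partial_t^2Z^{\alpha_h}u_3$ and $\nabla_hZ^{\alpha_h}\partial_tu_3$ vanish on the boundary. The integrations by parts in $x_3$ then give exactly $\frac12\frac{d}{dt}\|Z^{\alpha_h}\partial_t\partial_3u_3\|_{L^2}^2+\|Z^{\alpha_h}\nabla_h\partial_t\partial_3u_3\|_{L^2}^2$.
\item The viscous term $(1+\varepsilon)\partial_3^2u_3$ needs no integration by parts at all. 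It yields $(1+\varepsilon)\|Z^{\alpha_h}\partial_t\partial_3^2u_3\|_{L^2}^2$ directly.
\item The couplings $\partial_3\nabla_h\cdot u_h=\partial_1w_2-\partial_2w_1+\Delta_hu_3$ and $\partial_3\varrho$ enter only as volume forcings. Young's inequality absorbs them into $\nu\|Z^{\alpha_h}\partial_t\partial_3^2u_3\|_{L^2}^2$. What remains is $\|Z^{\alpha_h}(\nabla_h\partial_tw_h,\nabla_h^2\partial_tu_3)\|_{L^2}^2$ and $\|Z^{\alpha_h}\partial_t\partial_3\varrho\|_{L^2}^2$.
\item The last term is bounded through \eqref{pro-p3tvro} by $\|\partial_3{\rm div}\,u\|_{H^2_{co}}^2$ up to cubic terms. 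This, not $\partial_t\Delta\varrho$, is the source of that term on the right of \eqref{3801}.
\end{itemize}
The $Z_3$-piece then follows as you suggest. Apply $Z_3\partial_t\partial_3$ to \eqref{equ-ptu3}, test with $Z_3\partial_t\partial_3u_3$, and use $[Z_3,\partial_3]=-\varphi'\partial_3$.
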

\begin{proof}
    We shall present the proof of the estimate for \eqref{3801} only, as the proofs for \eqref{3601}-\eqref{3701} follow essentially the same strategy and are thus omitted. 
Recall that $u_3$ solves the following equality:
    \beq\label{equ-ptu3} \bal
  &\; \p_t u_3 - \Delta_h u_3 - (1+\var) \p_3^2 u_3 - \p_3 \nabla_h \cdot u_h + \p_3 \vro \\
   =&\;  - u \cdot \nabla u_3 - \f{\vro}{1+\vro} (\Delta_h u_3 + (1+\var) \p_3^2 u_3 + \p_3 \nabla_h \cdot u_h) - \vro \, \p_3 \vro,
    \dal \deq
    here we have used $\p_3 \du = \p_3^2 u_3 + \p_3 \nabla_h \cdot u_h$.
    For $0 \le |\ah| \le 2$, applying $ Z^{\ah} \p_t$-operator to the above equality and multiplying  by $-Z^{\ah} \p_t \p_3^2   u_3$ and integrating over $\mathbb{R}_+^3$, we can obtain that
            \beqq \bal
            &\; \frac{d}{dt} \frac{1}{2}\i | Z^{\ah} \p_t \p_3 u_3 |^2 dx
						+\| Z^{\ah}  \nabla_h  \p_t \p_3 u_3\|_{L^2}^2 + (1+\var)\| Z^{\ah} \p_t \p_3^2 u_3\|_{L^2}^2 \\
						=&\; - \i Z^{\ah} \p_t \p_3 \nabla_h \cdot u_h \cdot Z^{\ah} \p_t \p_3^2 u_3 \ dx+ \i Z^{\ah} \p_t (u \cdot \nabla u_3 ) \cdot Z^{\ah} \p_t \p_3^2 u_3 \ dx \\
                        &\; +  \i Z^{\ah}  \Big(\p_t (\f{\vro}{1+\vro}) (\Delta_h u_3 + (1+\var) \p_3^2 u_3 + \p_3 \nabla_h \cdot u_h) \Big) \cdot Z^{\ah} \p_t \p_3^2 u_3 \ dx \\
                        &\;+  \i Z^{\ah}  \Big( \f{\vro}{1+\vro} \p_t(\Delta_h u_3 + (1+\var) \p_3^2 u_3 + \p_3 \nabla_h \cdot u_h) \Big) \cdot Z^{\ah} \p_t \p_3^2 u_3 \ dx \\
                        &\; +\i Z^{\ah} (\p_t \vro \, \p_3 \vro +  (1+\vro) \, \p_t \p_3 \vro ) \cdot Z^{\ah} \p_t \p_3^2 u_3 \ dx\\
                        := &\; \sum_{i=1}^{5} VII_{i}.
            \dal  \deqq
            Using H\"older inequality, we have
            \beqq
            VII_{1} \le \nu \|Z^{\ah} \p_t \p_3^2 u_3  \|_{L^2}^2 + C_{\nu} \|(Z^{\ah} \p_t w_h, Z^{\ah} \nabla_h \p_t u) \|_{L^2}^2.
            \deqq
            {\bf Now, we deal with the case $|\ah|=0$.} In this case, by Lemma \ref{lemm:sobolev-ie}, we have
            \beqq \bal
            VII_2 \lesssim &\; \| \p_t \p_3^2 u_3 \|_{L^2} \Big( \| \p_t u_h\|_{L^2}^{\f12} \| \p_1 \p_t u_h\|_{L^2}^{\f12} \| \nabla_h u_3\|_{H_{tan}^1}^{\f12} \|\p_3 \nabla_h u_3\|_{H_{tan}^1}^{\f12} \\
           &\; + \| \p_t u_3\|_{L^2}^{\f12} \| \p_1 \p_t u_3\|_{L^2}^{\f12} \| \p_3 u_3\|_{H_{tan}^2} + \| u \|_{H_{tan}^2} \| \nabla \p_t u_3\|_{L^2}^{\f12} \| \p_3 \nabla \p_t u_3\|_{L^2}^{\f12}  \Big)\\
           \lesssim &\; \sqrt{\me(t)} \md(t),\\
           VII_{3} \lesssim &\; \| \p_t \p_3^2 u_3 \|_{L^2} \| \p_t \vro\|_{L^2}^{\f12} \| \p_t \vro\|_{L^2}^{\f12}  ( \| \Delta_h u_3\|_{H_{tan}^2} + \| \p_3^2 u_3\|_{H_{tan}^2}  + \| \p_3 \nabla_h u_3\|_{H_{tan}^2} )\\
             \lesssim &\; \sqrt{\me(t)} \md(t),
            \dal \deqq
         and similarly, using the estimate \eqref{pro-p3tvro}, we have
        \beqq
       VII_4  +VII_5 \le \nu \| \p_t \p_3^2 u_3 \|_{L^2}^2+ C_{\nu} \|\p_3 \du\|_{L^2}^2 +  \sqrt{\me(t)} \md(t) .
       \deqq
       Thus, using the smallness of $\nu$, we obtain that
       \beq \label{3802}
       \frac{d}{dt} \i |  \p_t \p_3 u_3 |^2 dx
						+\|  \nabla_h  \p_t \p_3 u_3\|_{L^2}^2 + (1+\var)\| \p_t \p_3^2 u_3\|_{L^2}^2  \lesssim \|(\p_t w_h,  \nabla_h \p_t u ,\p_3 \du) \|_{L^2}^2 + \sqrt{\me(t)} \md(t).
       \deq
        {\bf Next, we deal with the case $|\ah|=2$.}
        Using Lemma \ref{lemm:sobolev-ie}, we can easily check that
        \beqq \bal
        VII_2+ VII_3+ VII_4+ VII_5  \lesssim \nu \| \nabla_h^2 \p_t \p_3^2 u_3 \|_{L^2}^2+ C_{\nu} \|\nabla_h^2 \p_3 \du\|_{L^2}^2 + \sqrt{\me(t)} \md(t),
        \dal \deqq
        here we omit the proof for simplicity. Thus, in this case, we have
         \beq \label{3803} \bal
       &\; \frac{d}{dt} \i | \nabla_h^2 \p_t \p_3 u_3 |^2 dx
						+\|  \nabla_h^3  \p_t \p_3 u_3\|_{L^2}^2 + (1+\var)\|  \nabla_h^2 \p_t \p_3^2 u_3\|_{L^2}^2  \\
                       \lesssim  &\;  \| \nabla_h^2 ( \p_t w_h,  \nabla_h \p_t u ,\p_3 \du) \|_{L^2}^2 + \sqrt{\me(t)} \md(t).
       \dal \deq
      {\bf Finally, we will give the estimate for the term $Z_3 \p_t \p_3 u_3$.}
      Applying $ Z_3 \p_t \p_3  $-operator to the equality \eqref{equ-ptu3} and multiplying  by $Z_3 \p_t \p_3 u_3$ and integrating over $\mathbb{R}_+^3$, we can obtain that
      \beqq \bal
            &\; \frac{d}{dt} \frac{1}{2}\i | Z_3 \p_t \p_3 u_3 |^2 dx
						+\|  Z_3  \nabla_h  \p_t \p_3 u_3\|_{L^2}^2 + (1+\var)\|  Z_3 \p_t \p_3^2 u_3\|_{L^2}^2 \\
						=&\;  (1+\var) \i (Z_3 \p_3 -\p_3 Z_3) \p_t \p_3^2 u_3 \cdot Z_3 \p_t \p_3 u_3 \ dx  + (1+\var)\i Z_3 \p_t \p_3^2 u_3 \cdot (Z_3 \p_3 -\p_3 Z_3) \p_t \p_3 u_3 \ dx\\
                        &\; + \i Z_3 \p_t \p_3 \nabla_h \cdot u_h \cdot Z_3 \p_t \p_3 u_3 \ dx- \i Z_3 \p_t \p_3(u \cdot \nabla u_3 ) \cdot Z_3 \p_t \p_3 u_3\ dx \\
                        &\; -   \i Z_3 \p_3  \Big(\p_t (\f{\vro}{1+\vro}) (\Delta_h u_3+(1+\var) \p_3^2 u_3 + \p_3 \nabla_h \cdot u_h) \Big) \cdot Z_3 \p_t \p_3 u_3 \ dx \\
                        &\;-  \i Z_3 \p_3 \Big( \f{\vro}{1+\vro} \p_t(\Delta_h u_3 + (1+\var) \p_3^2 u_3 +\p_3 \nabla_h \cdot u_h) \Big) \cdot Z_3 \p_t \p_3 u_3 \ dx \\
                        &\; -\i Z_3 \p_3(\p_t \vro \, \p_3 \vro +  (1+\vro) \, \p_t \p_3 \vro ) \cdot Z_3 \p_t \p_3 u_3\ dx\\
                        := &\; \sum_{i=6}^{12} VII_{i},
            \dal  \deqq
            where we have used that
            \beqq \bal
         &\; -\i Z_3 \p_t \p_3^3 u_3 \cdot Z_3 \p_t \p_3 u_3 \ dx \\
         = &\; -\i (Z_3 \p_3 -\p_3 Z_3) \p_t \p_3^2 u_3 \cdot Z_3 \p_t \p_3 u_3 \ dx  - \i \p_3 Z_3 \p_t \p_3^2 u_3 \cdot  Z_3 \p_t \p_3 u_3 \ dx\\
        = &\; -\i (Z_3 \p_3 -\p_3 Z_3) \p_t \p_3^2 u_3 \cdot Z_3 \p_t \p_3 u_3 \ dx  + \|  Z_3 \p_t \p_3^2 u_3\|_{L^2}^2 - \i Z_3 \p_t \p_3^2 u_3 \cdot (Z_3 \p_3 -\p_3 Z_3) \p_t \p_3 u_3 \ dx.
        \dal \deqq
        It is easy to check that $Z_3 \p_3 -\p_3 Z_3=- \p_3 \vf \, \p_3$, and then integrating by parts, we have
        \beqq \bal
       VII_6 = &\; -(1+\var)\i \p_3 \vf \, \p_t \p_3^3 u_3  \cdot Z_3 \p_t \p_3 u_3 \ dx = (1+\var) \i \p_t \p_3^2 u_3 \p_3 (\p_3 \vf \,   \cdot Z_3 \p_t \p_3 u_3 )\ dx \\
        = &\; (1+\var) \i \p_t \p_3^2 u_3 (\p_3^2 \vf \,   \cdot Z_3 \p_t \p_3 u_3 + |\p_3 \vf|^2 \, \p_t \p_3^2 u_3 + \p_3 \vf  \, \cdot Z_3 \p_t \p_3^2 u_3 ) \ dx \\
        \le &\; \nu \|Z_3 \p_t \p_3^2 u_3\|_{L^2}^2 + C_{\nu} \|\p_t \p_3^2 u_3 \|_{L^2}^2,
        \dal \deqq
        and similarly, by H\"older inequality, we have
        \beqq \bal
      VII_7 \le &\; \nu \|Z_3 \p_t \p_3^2 u_3\|_{L^2}^2 + C_{\nu} \|\p_t \p_3^2 u_3 \|_{L^2}^2,\\
      VII_8 \le &\;  \nu \|Z_3 \p_t \p_3^2 u_3  \|_{L^2}^2 + C_{\nu} \|(\nabla_h \p_t w_h, \nabla_h^2 \p_t u, \p_t \p_3^2 u_3) \|_{L^2}^2.
        \dal \deqq
        Integrating by parts and using anisotropic inequalities \eqref{ie:Sobolev}, we have
        \begin{align*}
       VII_{9} = &\; \i  \p_3 \p_t ( u \cdot \nabla u_3) \, \p_3( \vf^2 \p_3 \p_t \p_3 u_3 )\ dx\\
       = &\; \i  Z_3 \p_t ( u \cdot \nabla u_3) \, Z_3 \p_t \p_3^2 u_3 \ dx + 2 \i  \p_3 \p_t ( u \cdot \nabla u_3) \,\vf \,  \p_3 \vf \,    \p_t \p_3^2 u_3 \ dx \\
       = &\; \i  Z_3 \p_t ( u \cdot \nabla u_3) \, Z_3 \p_t \p_3^2 u_3 \ dx - 2 \i  \p_t ( u \cdot \nabla u_3)  \p_3 (\,\vf \,  \p_3 \vf \,    \p_t \p_3^2 u_3) \ dx \\
       = &\; \i  Z_3 \p_t ( u \cdot \nabla u_3) \, Z_3 \p_t \p_3^2 u_3 \ dx - 2 \i  \p_t ( u \cdot \nabla u_3)  (|\p_3  \vf |^2 + \vf \, \p_3^2 \vf)  \p_t \p_3^2 u_3 \ dx \\
      &\;  - 2 \i  \p_t ( u \cdot \nabla u_3) \,\p_3  \vf  Z_3 \p_t \p_3^2 u_3 \ dx \\
       \lesssim &\; \sqrt{\me(t)} \md(t).
        \end{align*}
        Similarly, we can check
        \beqq \bal
        VII_{10} +VII_{11} +VII_{12}  \le \nu \|Z_3 \p_t \p_3^2 u_3  \|_{L^2}^2 + C_{\nu} (\|Z_3 \p_3 \du \|_{L^2}^2 + \| \p_t \p_3^2 u_3\|_{L^2}^2)+ \sqrt{\me(t)} \md(t),
        \dal \deqq
        which, together with the estimates from $VII_{6}$  to $VII_{12}$ and using the smallness of $\nu$, yields that
        \beqq \bal
        &\; \frac{d}{dt} \| Z_3 \p_t \p_3 u_3 \|_{L^2} + \| Z_3 \nabla \p_t \p_3 u_3\|_{L^{2}}^2 \\
				\lesssim &\; \|\nabla_h^2 \p_t u\|_{L^2}^2  +  \|\nabla_h \p_t w_h\|_{L^{2}}^2 + \|  \p_t \p_3^2 u_3 \|_{L^2}^2 + \|Z_3 \p_3 \du\|_{L^2}^2 +  \sqrt{\me(t)} \md(t).
        \dal \deqq
        Multiplying the above equality by small positive suitable constant $\kappa$, together with estimate \eqref{3802}, we have
        \beqq \bal
        &\; \frac{d}{dt} \| (\p_t \p_3 u_3, \kappa Z_3 \p_t \p_3 u_3 )\|_{L^2} + \|  \nabla \p_t \p_3 u_3 \|_{L^2}^2  + \kappa \| Z_3 \nabla \p_t \p_3 u_3\|_{L^{2}}^2 \\
				\lesssim &\; \|\nabla_h^2 \p_t u\|_{L^2}^2  +  \|\nabla_h \p_t w_h\|_{L^{2}}^2 + \|Z_3 \p_3 \du\|_{L^2}^2 +  \sqrt{\me(t)} \md(t).
        \dal \deqq
      This and  estimates \eqref{3803}, we finish the proof of \eqref{3801}. Thus, we finish the proof of this lemma.
\end{proof}

			\subsection{Negative derivative estimate}\label{negg}
			In this subsection, we will establish the estimates in negative
			Sobolev space that will play an important role in establishing
			the decay rate estimate, see Lemma \ref{lemma311} and Lemma \ref{lemma312}.
			\begin{lemm}\label{lemma311}
				Under the assumption \eqref{assumption},
				the smooth solution $(\vro,u)$ of equation \eqref{eqr} has the estimate
				\beq\label{J01}
				\begin{aligned}
					&\|\hs (\vro,u)(t)\|_{L^2}^2+
					\int_0^t (\|\hs(\nabla_h u, \du)\|_{L^2}^2 + \var \| \hs \p_3 u\|_{L^2}^2 )  d\tau
					\lesssim \|\hs (\vro_0, u_0)\|_{L^2}^2+\delta^2.
				\end{aligned}
				\deq
			\end{lemm}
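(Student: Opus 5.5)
We apply $\hs$ (with $s=1-\zeta$) to system \eqref{eqr} and run a plain $L^2$ energy estimate. Because $\hs$ is a horizontal Fourier multiplier, it commutes with $\p_t$, $\p_3$, $\nabla_h$ and with the boundary conditions (it does not touch $x_3$). So $\hs u_3|_{x_3=0}=0$ and $\p_3\hs u_h|_{x_3=0}=0$, and all boundary terms from integrating by parts the viscous terms and the pressure/divergence coupling vanish exactly as in \eqref{3102}. This gives
\begin{equation*}
\f12\f{d}{dt}\|\hs(\vro,u)\|_{L^2}^2+\|\hs\nabla_h u\|_{L^2}^2+\|\hs\du\|_{L^2}^2+\var\|\hs\p_3 u\|_{L^2}^2
=\i \hs N_1\,\hs\vro\,dx+\i \hs N_2\cdot\hs u\,dx .
\end{equation*}
Here $N_1=-(u\cdot\nabla\vro+\vro\,\du)$, and $N_2$ is the right side of $\eqref{eqr}_2$. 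The linear cancellation $\i\hs\nabla\vro\cdot\hs u+\i\hs\du\,\hs\vro=0$ is the same as in Lemma~\ref{lemma32}.

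For the nonlinear terms we bound
\[
\i \hs N\cdot\hs f\,dx\le\|\hs N\|_{L^2}\|\hs f\|_{L^2}.
\]
We then use the horizontal Hardy--Littlewood--Sobolev inequality (Lemma~\ref{H-L}) in $x_h$ for each fixed $x_3$, followed by $L^2$ in $x_3$:
\[
\|\hs N\|_{L^2}\lesssim\big\|\|N\|_{L^{p}_{x_h}}\big\|_{L^2_{x_3}},\qquad \tfrac1p=\tfrac12+\tfrac s2 .
\]
For a product $fg$, Hölder in $x_h$ gives $\|f\|_{L^{2/s}_{x_h}}\|g\|_{L^2_{x_h}}$. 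The factor carrying the higher derivative goes into $L^2_{x_h}L^2_{x_3}$. The other factor goes into $L^\infty_{x_3}L^{2/s}_{x_h}$. By Gagliardo--Nirenberg in 2D and the 1D Agmon inequality in $x_3$, that factor is bounded by
\[
\|f\|_{L^2}^{\frac{s}{2}}\,\|\nabla_h f\|_{L^2}^{1-\frac{s}{2}}
\]
times analogous $\p_3 f$ factors, raised to power $\f12$. For example, $u\cdot\nabla u$ gives roughly $\|u\|^{\theta}_{L^2}\|\nabla_h u\|^{1-\theta}\times\|\nabla u\|_{L^2}$. The terms $\f{\vro}{1+\vro}(\Delta_h u+\var\p_3^2u+\nabla\du)$ are handled by putting $\vro$ in the $L^{2/s}$ slot; $\vro\nabla\vro$ and $u\cdot\nabla\vro$ are treated the same way.

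Integrating in time yields
\[
\|\hs(\vro,u)(t)\|_{L^2}^2\le\|\hs(\vro_0,u_0)\|_{L^2}^2+C\int_0^t\|\hs(\vro,u)\|_{L^2}\,\mathcal N(\tau)\,d\tau .
\]
The factor $\|\hs(\vro,u)\|_{L^2}\le\sqrt{\me}\le\dl^{1/2}$ is bounded via \eqref{assumption}. It remains to show $\int_0^t\mathcal N\,d\tau\lesssim\dl^{3/2}$, or at least that the right side is $\lesssim\dl^2$.

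\textbf{Main obstacle.} The quantity $\mathcal N$ is a product of an $L^2$-type norm with a low power of the tangential energy and a dissipation norm, and it must be integrable in time. For example,
\[
\int_0^t\|u\|_{L^2}^{s/2}\|\nabla_h u\|_{L^2}^{1-s/2}\,\|\nabla u\|_{L^2}\,d\tau .
\]
Since $\nabla\vro$ and $\p_3u$ only appear in $\md$ and $\md_{tan}$, not with full vertical dissipation, we use the decay rates from \eqref{assumption}: $\me_{tan}\lesssim\dl(1+\tau)^{-s}$, $\bme_{tan}\lesssim\dl(1+\tau)^{-(1+\sigma)}$, together with the weighted integrals $\int(1+\tau)^\sigma\md_{tan}$ and $\int(1+\tau)^{1+\sigma}\bmd_{tan}$. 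We split by Hölder in time as in the $L^\infty$ lemmas, placing weights $(1+\tau)^{\sigma}$ and $(1+\tau)^{1+\sigma}$ on the dissipation factors. The leftover weight must be integrable, with a negative power exceeding $1$; this is where $\sigma\ge\f{16}{17}$ and $s<1$ enter. The product then gives $\dl^{1/2}\cdot\dl^{3/2}=\dl^2$. For the terms with a factor $\var\p_3^2u$ or $\nabla\du$, we put the dissipation in $L^2$ and the $\vro$ factor in the decaying $L^{2/s}$ norm, using $\|\nabla\vro\|_{H^{m-1}_{tan}}^2\le\md_{tan}$. The integrated dissipation on the left is kept, which gives \eqref{J01}.
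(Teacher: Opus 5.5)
Your route is the paper's route: apply $\hs$, use the horizontal Hardy--Littlewood--Sobolev inequality with $\frac1p=\frac12+\frac s2$, apply H\"older into $L^{2/s}_{x_h}L^\infty_{x_3}\times L^2$, then use weighted H\"older in time against \eqref{assumption}. The gap is in the final power counting. Every term of your $\mathcal N$ is quadratic in $(\vro,u)$; it is bounded by $\sqrt{\me_{tan}\md_{tan}}$ or $\me_{tan}^{3/4}\md_{tan}^{1/4}$. Under \eqref{assumption} the most you can get is
\[
\int_0^t\sqrt{\me_{tan}\md_{tan}}\,d\tau\le\Big(\int_0^t\me_{tan}(1+\tau)^{-\sigma}d\tau\Big)^{\f12}\Big(\int_0^t\md_{tan}(1+\tau)^{\sigma}d\tau\Big)^{\f12}\lesssim\dl .
\]
Similarly $\int_0^t\me_{tan}^{3/4}\md_{tan}^{1/4}d\tau\lesssim\dl$; this is \eqref{J03}, using $s+\sigma>1$ and $s+\f{\sigma}{3}>1$. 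By homogeneity, a quadratic quantity cannot be $O(\dl^{3/2})$ under a bootstrap threshold $\dl$. So bounding $\|\hs(\vro,u)\|_{L^2}\le\dl^{1/2}$ yields only $\|\hs(\vro_0,u_0)\|_{L^2}^2+\dl^{3/2}$, which is not \eqref{J01}.

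The missing step is an absorption. Keep the factor as $\sup_{0\le\tau\le t}\|\hs(\vro,u)(\tau)\|_{L^2}$ and bound the time integral by $C\dl$. Then use Young: $C\dl\sup_\tau\|\hs(\vro,u)\|_{L^2}\le\f14\sup_\tau\|\hs(\vro,u)\|_{L^2}^2+C\dl^2$. Take the supremum in $t$ on the left and absorb. The weaker $\dl^{3/2}$ would in fact still suffice to close Lemma~\ref{lemma313}, which only uses $\dl^{4/3}$, but it is not the stated estimate.

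A secondary caution concerns $u_3\p_3 u_h$. The $L^2$ factor $\p_3u_h$ has no $\var$-independent dissipation: $\md_{tan}$ contains $\p_3u$ only through $\var\|\p_3u\|^2_{H^m_{tan}}$, and $\p_3u_3$ only via ${\rm div}\,u-\nabla_h\cdot u_h$. So $\p_3u_h$ must be counted as energy, and treating $\|\nabla u\|_{L^2}$ as a dissipation factor, as in your example, fails for this term. The dissipative factor has to come from the $L^{2/s}_{x_h}L^\infty_{x_3}$ norm of $u_3$, through $\|\p_3u_3\|_{L^2}^{1/2}$ in $\eqref{ie:Sobolev}_6$. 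This gives $\me_{tan}^{3/4}\md_{tan}^{1/4}$, which is exactly where $s+\f\sigma3>1$ is needed. Your handling of $\vro\nabla\vro$ without integrating by parts is fine, since $\|\nabla\vro\|_{L^2}^2\le\md_{tan}$.
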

			\begin{proof}
				The equation \eqref{eqr} yields directly
				\beq\label{J02}
				\begin{aligned}
					&\frac{d}{dt}\frac{1}{2}\i(|\hs \vro|^2+|\hs u|^2)dx
					+\i |\nabla_h \hs u|^2 dx+ \i | \hs \du|^2 dx
					+\var \i |\hs \p_3 u|^2 dx\\
					=&-\i \hs(u \cdot \nabla \vro) \cdot \hs \vro \ dx
					-\i \hs( \vro \, \du) \cdot \hs \vro \ dx -\i \hs(u \cdot \nabla u) \cdot \hs u \ dx\\
                    &-\i \hs (\vro \, \nabla \vro) \cdot \hs u \ dx  -\i \hs\Big(\f{\vro}{1+\vro} (\Delta_h u +\var \p_3^2 u+ \nabla \du)  \Big) \cdot \hs u \ dx  \\
					:=&\sum_{i=1}^{5} J_{i}.
				\end{aligned}
				\deq
				Using the inequalities \eqref{ie:Sobolev}  and \eqref{a16},
				we may deduce that
				\begin{align*}
					J_1
					=&-\int_{\mathbb{R}_+}\int_{\mathbb{R}^2}
					\hs(u_h \cdot \nabla_h \vro+u_3 \p_3 \vro)\cdot \hs \vro \ dx_h dx_3\\
					\lesssim
					&\int_{\mathbb{R}_+}(\|\hs(u_h \cdot \nabla_h \vro)\|_{L^2(\mathbb{R}^2)}
					+\|\hs(u_3 \p_3 \vro)\|_{L^2(\mathbb{R}^2)})
					\|\hs \vro\|_{L^2(\mathbb{R}^2)}dx_3\\
					\lesssim
					&\int_{\mathbb{R}_+}(\|u_h \cdot \nabla_h \vro\|_{L^{\frac{1}{\frac{1}{2}+\frac{s}{2}}}(\mathbb{R}^2)}
					+\|u_3 \p_3 \vro\|_{L^{\frac{1}{\frac{1}{2}+\frac{s}{2}}}(\mathbb{R}^2)})
					\|\hs \vro\|_{L^2(\mathbb{R}^2)}dx_3\\
					\lesssim
					&\int_{\mathbb{R}_+}(\|u_h\|_{L^{\frac{2}{s}}(\mathbb{R}^2)}
					\|\nabla_h \vro\|_{L^2(\mathbb{R}^2)}
					+\|u_3\|_{L^{\frac{2}{s}}(\mathbb{R}^2)}
					\|\p_3 \vro\|_{L^2(\mathbb{R}^2)})
					\|\hs \vro\|_{L^2(\mathbb{R}^2)}dx_3\\
					\lesssim
					&\left.\|\|u_h\|_{L^\infty(\mathbb{R}_+)}\right\|_{L^{\frac{2}{s}}(\mathbb{R}^2)}
					\|\nabla_h \vro\|_{L^2(\mathbb{R}_+^3)}
					\|\hs \vro\|_{L^2(\mathbb{R}_+^3)}+\left.\|\|u_3\|_{L^\infty(\mathbb{R}_+)}\right\|_{L^{\frac{2}{s}}(\mathbb{R}^2)}
					\|\p_3 \vro\|_{L^2(\mathbb{R}_+^3)}
					\|\hs \vro\|_{L^2(\mathbb{R}_+^3)}\\
					\lesssim
					&(\|u_h\|_{L^2}\|\p_2 u_h\|_{L^2}
					+\|\p_1 u_h\|_{L^2}\|\p_{12} u_h\|_{L^2})^{\frac{1-s}{2}}
					\|u_h\|_{L^2}^{\frac{2s-1}{2}}\|\p_3 u_h\|_{L^2}^{\frac{1}{2}}
					\|\nabla_h \vro\|_{L^2}\|\hs \vro\|_{L^2}\\
					&+(\|u_3\|_{L^2}\|\p_2 u_3\|_{L^2}
					+\|\p_1 u_3\|_{L^2}\|\p_{12} u_3\|_{L^2})^{\frac{1-s}{2}}
					\|u_3\|_{L^2}^{\frac{2s-1}{2}}\|\p_3 u_3\|_{L^2}^{\frac{1}{2}}
					\|\p_3 \vro\|_{L^2}\|\hs \vro\|_{L^2}\\
					\lesssim
					&\sqrt{\mathcal{E}_{tan}(t)}\sqrt{\md_{tan}(t)}
					\|\hs \vro\|_{L^2}
					+\mathcal{E}_{tan}(t)^{\frac34}
					\md_{tan}(t)^{\frac14}
					\|\hs \vro\|_{L^2}.
				\end{align*}
				Similarly, we can obtain the estimate
				\beqq
				\begin{aligned}
					J_2
					\lesssim
					&\sqrt{\mathcal{E}_{tan}(t)}\sqrt{\md_{tan}(t)}
					\|\hs \vro\|_{L^2},\\
					J_3
					\lesssim
					&\sqrt{\mathcal{E}_{tan}(t)}\sqrt{\md_{tan}(t)}
					\|\hs u\|_{L^2}
					+\mathcal{E}_{tan}(t)^{\frac34}
					\md_{tan}(t)^{\frac14}
					\|\hs u\|_{L^2},\\
                    J_5
					\lesssim
					&\sqrt{\mathcal{E}_{tan}(t)}\sqrt{\md_{tan}(t)}
					\|\hs u\|_{L^2}.
				\end{aligned}
				\deqq
                It remains to estimate $J_4$. Integrating by parts, we have
				\beqq \bal
				J_{4} = &  \f12 \i \hs (\vro^2) \cdot \hs \du \ dx \\
				\lesssim & (\|\vro\|_{L^2}\|\p_2 \vro\|_{L^2}
					+\|\p_1 \vro\|_{L^2}\|\p_{12} \vro\|_{L^2})^{\frac{1-s}{2}}
					\|\vro\|_{L^2}^{\frac{2s-1}{2}}\|\p_3 \vro\|_{L^2}^{\frac{1}{2}}
					\| \vro\|_{L^2} \|\hs \du\|_{L^2} \\
					\lesssim & \var  \|\hs \du\|_{L^2}^2 + C_{\var} \mathcal{E}_{tan}(t)^2.
				\dal \deqq
				Substituting the estimates of terms for $J_1$  through $J_4$
				into \eqref{J02}, using the smallness of $\var$ and integrating over $[0, t]$, we obtain
				\beqq
				\begin{aligned}
					&\frac{1}{2}\|\hs (\vro,u)(t)\|_{L^2}^2+
					\int_0^t (\|\hs(\nabla_h u, \du)\|_{L^2}^2 + \var \| \hs \p_3 u\|_{L^2}^2 )  d\tau\\
					\lesssim
					&\frac{1}{2}\|(\hs \vro_0, \hs u_0)\|_{L^2}^2
					+\underset{0\le \tau \le t}{\sup}\|(\hs \vro, \hs u)(\tau)\|_{L^2}
					\int_0^t \sqrt{\mathcal{E}_{tan}(\tau)}\sqrt{\md_{tan}(\tau)}d\tau\\
					&+\underset{0\le \tau \le t}{\sup}\|(\hs \vro, \hs u)(\tau)\|_{L^2}
					\int_0^t \mathcal{E}_{tan}(\tau)^{\frac34} \md_{tan}(\tau)^{\frac14} d\tau.
				\end{aligned}
				\deqq
				Using the assumption \eqref{assumption} and H\"{o}lder's inequality, we have
				\beq\label{J03}
				\begin{aligned}
					\int_0^t  \sqrt{\mathcal{E}_{tan}(\tau)}\sqrt{\md_{tan}(\tau)} d\tau
					&\le
					\left\{\int_0^t   \mathcal{E}_{tan}(\tau)
					(1+\tau)^{-\sigma} d\tau\right\}^{\frac{1}{2}}
					\left\{\int_0^t  \md_{tan}(\tau) (1+\tau)^{\sigma}
					d\tau\right\}^{\frac{1}{2}}\\
					&\lesssim
					\delta \left\{\int_0^t (1+\tau)^{-(s+\sigma)} d\tau\right\}^{\frac{1}{2}}
					\lesssim \delta, \\
					\int_0^t  \mathcal{E}_{tan}(\tau)^{\frac34} \md_{tan}(\tau)^{\frac14} d\tau
					\lesssim
					&\left\{\int_0^t  \mathcal{E}_{tan}(\tau)
					(1+\tau)^{-\frac13\sigma} d\tau\right\}^{\frac{3}{4}}
					\left\{\int_0^t  \md_{tan}(\tau) (1+\tau)^{\sigma}
					d\tau\right\}^{\frac{1}{4}}\\
					\lesssim&
					\delta \left\{\int_0^t (1+\tau)^{-(s+\frac13\sigma)} d\tau\right\}^{\frac{3}{4}}
					\lesssim \delta,
				\end{aligned}
				\deq
				where we use $\frac{16}{17}\le \sigma<s<1$ in the above estimate.
				Thus, we can obtain the following estimate
				\beqq
				\begin{aligned}
					&\|(\hs \vro, \hs u)(t)\|_{L^2}^2
					+
					\int_0^t (\|\hs(\nabla_h u, \du)\|_{L^2}^2 + \var \| \hs \p_3 u\|_{L^2}^2 )  d\tau
					\lesssim
					\|(\hs \vro_0, \hs u_0)\|_{L^2}^2
					+\delta^2.
				\end{aligned}
				\deqq
				Therefore, we complete the proof of this lemma.
			\end{proof}
			
			Next, we establish the 
            estimate of vertical derivative
			of density and velocity field in negative Sobolev space.
			\begin{lemm}\label{lemma312}
				Under the assumption \eqref{assumption},
				the smooth solution $(\vro,u)$ of equation \eqref{eqr} has the estimate
				\beq\label{K01}
				\begin{aligned}
					&\|\hs \p_3(\vro, u)(t)\|_{L^2}^2
					+\int_0^t (\|\hs \p_3(\nabla_h u, \du)\|_{L^2}^2 + \var \| \hs \p_3^2 u\|_{L^2}^2 ) d\tau
					\lesssim 
					\|\hs \p_3(\vro, u)(0)\|_{L^2}^2+\delta^2.
				\end{aligned}
				\deq
			\end{lemm}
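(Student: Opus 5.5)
I would mirror the proof of Lemma~\ref{lemma311} at the level of one vertical derivative, using the same multiplier structure as in Lemma~\ref{lemma33} with $|\alpha_h|=0$. Apply $\hs\p_3$ to $\eqref{eqr}_1$ and test with $\hs\p_3\vro$. Apply $\hs$ to $\eqref{eqr}_2$ and test with $-\hs\p_3^2 u$. Since $\hs$ acts only in $x_h$, it commutes with $\p_3$ and with the boundary conditions. Integrating by parts in $x_3$, the boundary terms vanish: $\p_3 u_h=0$ and $u_3=0$ on $x_3=0$, and $\p_t u_3=0$ there. The linear coupling cancels exactly as in the identity used for \eqref{32-equ}, namely
\[
\i \hs\p_3(\nabla_h\cdot u_h)\,\hs\p_3\vro\,dx-\i\hs\nabla_h\vro\cdot\hs\p_3^2u_h\,dx=0 .
\]
This yields the energy identity
\[
\f12\f{d}{dt}\|\hs\p_3(\vro,u)\|_{L^2}^2+\|\hs\p_3(\nabla_h u,\du)\|_{L^2}^2+\var\|\hs\p_3^2u\|_{L^2}^2=\sum_{i=1}^5 K_i,
\]
where the $K_i$ are the nonlinear terms
\[
-\i\hs\p_3(u\cdot\nabla\vro+\vro\,\du)\,\hs\p_3\vro,\qquad \i\hs\big(u\cdot\nabla u+\vro\nabla\vro+\tfrac{\vro}{1+\vro}(\cdots)\big)\cdot\hs\p_3^2u .
\]

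Next I would move one $\p_3$ off the $\hs\p_3^2 u$ multiplier in the momentum terms. For the $u_h$ components this is legitimate because $\p_3 u_h=0$ on the boundary. For the $u_3$ component I would avoid $\p_3^2 u_3$ by writing it as $\p_3\du-\p_3\nabla_h\cdot u_h$, pairing $\p_3\du$ with dissipation and integrating the remaining part by parts horizontally. Every nonlinear term then takes the form $\i\hs(fg)\,\hs F\,dx$, where $F$ is either $\p_3(\vro,u)$ or a dissipative quantity $\p_3(\nabla_h u,\du)$.

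Each such term is bounded exactly as $J_1$ was, by the Hardy--Littlewood--Sobolev inequality \eqref{a16} in $x_h$:
\[
\|\hs(fg)\|_{L^2(\mathbb{R}^2)}\lesssim\|f\|_{L^{2/s}(\mathbb{R}^2)}\|g\|_{L^2(\mathbb{R}^2)}.
\]
Here $f$ is always taken to be a factor with a tangential-energy bound in $L^\infty_{x_3}L^{2/s}_{x_h}$, estimated via \eqref{ie:Sobolev}. This gives the bound
\[
\sum K_i\lesssim\big(\sqrt{\me_{tan}\md_{tan}}+\me_{tan}^{3/4}\md_{tan}^{1/4}\big)\|\hs\p_3(\vro,u)\|_{L^2}+\nu\|\hs\p_3\du\|_{L^2}^2+C_\nu\me_{tan}^2 .
\]
The last two terms come from the pressure-type term $\vro\nabla\vro$, handled as $J_4$ by writing it as $\f12\nabla(\vro^2)$. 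The term $\f{\vro}{1+\vro}\nabla\du$ is absorbed into the dissipation through the factor $\|\vro\|_{L^\infty}\lesssim\sqrt\delta$. Integrating in time and using \eqref{J03}, which yields $\int_0^t\sqrt{\me_{tan}\md_{tan}}\,d\tau\lesssim\delta$ (and similarly for the $3/4$ power), together with $\int_0^t\me_{tan}^2\,d\tau\lesssim\delta^2\int_0^t(1+\tau)^{-2s}\,d\tau$, then produces \eqref{K01}.

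\textbf{Main obstacle.} The hardest terms are those containing two vertical derivatives on $\vro$ or $u$. Examples are $u_3\,\p_3^2\vro$ in $\hs\p_3(u\cdot\nabla\vro)$ and $u_3\,\p_3^2u$ inside the convection term. Here $\p_3^2\vro$ and $\p_3^2 u_h$ lie outside $\me_{tan}$, and $\hs$ destroys the pointwise structure needed for a direct integration by parts. My first attempt would be to split $u_3\p_3^2\vro=\p_3(u_3\p_3\vro)-\p_3u_3\,\p_3\vro$. The term $\p_3u_3\,\p_3\vro$ is handled by Hardy--Littlewood--Sobolev, with $\p_3u_3=\du-\nabla_h\cdot u_h$ taken from the dissipation and its $L^\infty_{x_3}$ bound controlled by \eqref{est-p32-u3-uh}. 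For $\p_3(u_3\p_3\vro)$ paired with $\hs\p_3\vro$, I would integrate by parts in $x_3$ (the boundary term vanishes since $u_3=0$ there), leaving $\i\hs(u_3\p_3\vro)\,\hs\p_3^2\vro\,dx$. This is still bad, so instead I would exploit that the transport part $u\cdot\nabla\hs\p_3\vro$ tested against $\hs\p_3\vro$ is a commutator $[\hs,u]$. If that fails, I would fall back on the cutoff $\chi$ decomposition used for $II_{1,4}$, writing $u_3=\int_0^{x_3}\p_3u_3$ near the boundary and using equivalence with $H^m$ norms away from it, at the price of some $\me^{1/3}$ powers as in Lemma~\ref{lemma33}.
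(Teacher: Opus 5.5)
\textbf{There is a genuine gap: the terms you flag as the main obstacle are not actually resolved.}

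Your setup essentially agrees with the paper. That covers the $\hs\p_3$ / $-\hs\p_3^2 u$ energy identity, Hardy--Littlewood--Sobolev in $x_h$ with one factor in $L^\infty_{x_3}L^{2/s}_{x_h}$, the $\vro\nabla\vro$ term, and time integration via \eqref{J03}. But the whole difficulty of the lemma lies in $u_3\p_3^2\vro$ in $K_1$ and $u_3\p_3^2 u_h$ in the convection term after integrating by parts in $x_3$, and for these you have no working argument.

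The commutator route cannot help. $\hs$ acts only in $x_h$, so $[\hs,u_3]\p_3^2\vro$ gains nothing in $x_3$. It still contains $\p_3^2\vro$, which is controlled only with the weight $\var$.

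Your fallback is the paper's first step. The cutoff $\chi$, with $u_3=\int_0^{x_3}\p_3 u_3\,dz$ near the boundary and $\vf^{-1}$ bounded away from it, gives $|u_3\p_3^2\vro|\lesssim(\|\p_3u_3\|_{L^\infty(\mathbb{R}_+)}+|u_3|)\,|Z_3\p_3\vro|$. Hence the contribution is $\me_{tan}^{1/4}\md_{tan}^{1/4}\|Z_3\p_3\vro\|_{L^2}\|\hs\p_3\vro\|_{L^2}$, and likewise for $u$.

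The problem is that $\|Z_3\p_3(\vro,u)\|_{L^2}$ is not a tangential quantity. In Lemma~\ref{lemma33} a non-decaying factor such as $\me^{1/3}$ multiplies a dissipation and is absorbed. Here, by contrast, the coefficient of the undissipated quantity $\|\hs\p_3(\vro,u)\|_{L^2}$ must be integrable in time. If you pay for $\|Z_3\p_3 u\|_{L^2}$ with $\sqrt{\me}$, you are left with $\int_0^t\me_{tan}^{1/4}\md_{tan}^{1/4}\,d\tau$. For this, \eqref{assumption} only gives the bound $\delta^{1/2}\big(\int_0^t(1+\tau)^{-(s+\sigma)/3}\,d\tau\big)^{3/4}$, which diverges. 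The same principle rules out using \eqref{est-p32-u3-uh} for $\p_3^2u_3$ in the $\p_3u_3\,\p_3\vro$ term; use $\|\p_3^2u_3\|_{L^2}\le\|\p_3(\du,\nabla_h u)\|_{L^2}\lesssim\md_{tan}^{1/2}$ instead.

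\textbf{The missing idea} is to transfer decay into this factor through the interpolation $\eqref{ie:Sobolev}_4$:
$\|Z_3 f\|_{L^2}\lesssim\|f\|_{L^2}+\|f\|_{L^2}^{3/4}\|Z_3^3 f\|_{L^2}^{1/4}+\|f\|_{L^2}^{2/3}\|Z_3^3 f\|_{L^2}^{1/3}$, with $f=\p_3\vro$ or $f=\p_3 u$.
Here $\|f\|_{L^2}\le\me_{tan}^{1/2}$ decays, while $\|Z_3^3 f\|_{L^2}\lesssim\sqrt{\me}$ because $m-1\ge 3$.

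This yields the bound $\|(\p_3\vro,Z_3^3\p_3\vro)\|_{L^2}^{1/3}\me_{tan}^{7/12}\md_{tan}^{1/4}\|\hs\p_3\vro\|_{L^2}$, and the analogue for $u$. Then $\int_0^t\me_{tan}^{7/12}\md_{tan}^{1/4}\,d\tau\lesssim\delta^{5/6}$, because $\frac{7}{9}s+\frac{1}{3}\sigma>1$; this is where $\frac{16}{17}\le\sigma<s<1$ is used. Young's inequality then closes the estimate with $\delta^2$.

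For the density term alone you could instead use $\|Z_3\p_3\vro\|_{L^2}^2\le\md$ and $\int_0^t\md\,d\tau\le\delta$, with Cauchy--Schwarz in time and \eqref{J03}. But $\md$ controls $Z_3\p_3 u_h$ only with the weight $\var$. So some decay transfer of this kind is genuinely needed for the convection term.

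\textbf{A smaller error.} You cannot absorb $\frac{\vro}{1+\vro}\nabla\du$ by extracting $\|\vro\|_{L^\infty}$. The bound $\|\hs(aG)\|_{L^2}\lesssim\|a\|_{L^\infty}\|\hs G\|_{L^2}$ fails for the negative-order operator $\hs$. Instead, put $\frac{\vro}{1+\vro}$ in $L^\infty_{x_3}L^{2/s}_{x_h}$, as in your other terms. This produces terms of the form $\nu\|\hs\p_3(\nabla_h u,\du)\|_{L^2}^2+C_\nu\me_{tan}\md_{tan}$ or $\sqrt{\me_{tan}\md_{tan}}\,\|\hs\p_3 u\|_{L^2}$. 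Both are harmless after time integration.
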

			\begin{proof}
            Applying $\hs \p_3$-operator to $\eqref{eqr}_1$ and multiplying by $\hs \p_3 \vro$ and integrating over $\mathbb{R}_+^3$, we have
                \begin{equation*} \label{hsp3-vro}
					\begin{aligned}
						\frac{d}{dt}\frac{1}{2}\i| \hs \p_3 \vro|^2 \ dx+ \i  \hs \p_3 \du \, \hs \p_3 \vro \ dx
						=\i \hs \p_3  (-u\cdot \nabla \vro -\vro \, \du )\, \hs \p_3 \vro \ dx.
					\end{aligned}
				\end{equation*}
                Next, applying $\hs$-operator to $\eqref{eqr}_2$ and multiplying by $-\p_3^2 \hs u$ and integrating over $\mathbb{R}_+^3$, together with above inequality, we have
				\begin{equation}\label{K02}
					\begin{aligned}
						&\frac{d}{dt}\frac{1}{2}\i(|\hs \wr|^2 + |\hs\wu|^2)dx+\i | \hs \p_3 \nabla_h u|^2 dx +
						 \i | \hs \p_3 \du|^2 dx +\var\i  | \hs \p_3^2 u|^2 dx\\
						=&-\i \hs \p_3 (u \cdot \nabla \vro) \cdot \hs \p_3  \vro \ dx
					-\i \hs \p_3 ( \vro \, \du) \cdot \hs \p_3  \vro \ dx  + \i \hs ( u \cdot \nabla u) \cdot \hs \p_3^2 u \ dx \\
                &
				+\i \hs (\vro \, \nabla \vro) \cdot \hs \p_3^2 u \ dx  +\i \hs\Big(\f{\vro}{1+\vro} (\Delta_h u +\var \p_3^2 u+ \nabla \du)  \Big) \cdot \hs \p_3^2  u \ dx  \\
						:= &\sum_{i=1}^{5}K_i,
					\end{aligned}
				\end{equation}
                where we have used the fact that
                \beqq \bal
              &\;  \i \hs \nabla \du \cdot \hs \p_3^2 u \ dx  = \i \hs \p_3 \du \cdot \hs \p_3^2 u_3 \ dx +   \i \hs \nabla_h \du \cdot \hs \p_3^2 u_h \ dx  \\
               = &\; \i \hs \p_3 \du \cdot \hs \p_3^2 u_3 \ dx +   \i \hs \p_3 \du \cdot \hs \p_3 \nabla_h \cdot u_h \ dx  \\
               = &\; \i | \hs \p_3 \du |^2 dx,
                \dal \deqq
                and
                \beqq \bal
              &\;  \i \hs \nabla \vro \cdot \hs \p_3^2 u \ dx  = \i \hs \p_3 \vro \cdot \hs \p_3^2 u_3 \ dx +   \i \hs \nabla_h \vro \cdot \hs \p_3^2 u_h \ dx  \\
               = &\; \i \hs \p_3 \vro \cdot \hs \p_3^2 u_3 \ dx +   \i \hs \p_3 \vro \cdot \hs \p_3 \nabla_h \cdot u_h \ dx  \\
               = &\; \i \hs \p_3 \vro \, \hs \p_3 \du \ dx.
                \dal \deqq
				The H\"{o}lder's inequality and  inequality \eqref{a16} in Lemma \ref{H-L} yield directly
				\begin{equation}\label{K03}
					\begin{aligned}
						K_1 = &\int_{\mathbb{R}^3} \hs( \p_3 u \cdot \nabla  \vro+ u\cdot \nabla \p_3 \vro ) \cdot \hs \p_3 \vro \ dx \\
						\lesssim & \int_{\mathbb{R}_+}
						(\|\p_3 u_h \cdot \nabla_h \vro \|_{L^{\frac{1}{\frac12+\frac{s}{2}}}(\mathbb{R}^2)} +\|\p_3 u_3 \partial_3 \vro \|_{L^{\frac{1}{\frac12+\frac{s}{2}}}(\mathbb{R}^2)}
						+ \|u_h \cdot \nabla_h \p_3 \vro \|_{L^{\frac{1}{\frac12+\frac{s}{2}}}(\mathbb{R}^2)}
						+\|u_3 \partial_3^2 \vro \|_{L^{\frac{1}{\frac12+\frac{s}{2}}}(\mathbb{R}^2)})\\
						&\; \times
						\|\hs \p_3 \vro \|_{L^2(\mathbb{R}^2)}dx_3   .
					\end{aligned}
				\end{equation}
				Similar to the estimate of term $J_1$, it is easy to check that
				\begin{equation}\label{K04}
					\begin{aligned}
						&~~~~\int_{\mathbb{R}_+}
						(\|\p_3 u_h \cdot \nabla_h \vro \|_{L^{\frac{1}{\frac12+\frac{s}{2}}}(\mathbb{R}^2)} +\|\p_3 u_3 \partial_3 \vro \|_{L^{\frac{1}{\frac12+\frac{s}{2}}}(\mathbb{R}^2)}
						+ \|u_h \cdot \nabla_h \p_3 \vro \|_{L^{\frac{1}{\frac12+\frac{s}{2}}}(\mathbb{R}^2)})
						\|\hs \p_3 \vro\|_{L^2(\mathbb{R}^2)}dx_3\\
						&\lesssim
						\left(
						\left\|\|\nabla_h \vro\|_{L^\infty(\mathbb{R}_+)}\right\|_{L^{\frac{2}{s}}}
						\|\partial_3 u_h\|_{L^2}+\left\|\|\p_3 u_3\|_{L^\infty(\mathbb{R}_+)}\right\|_{L^{\frac{2}{s}}}
						\|\partial_3 \vro \|_{L^2}+\left\|\|u_h\|_{L^\infty(\mathbb{R}_+)}\right\|_{L^{\frac{2}{s}}}
						\|\nabla_h \p_3 \vro \|_{L^2}
						\right)
						\|\hs \p_3 \vro\|_{L^2}\\
						&\lesssim \sqrt{\mathcal{E}_{tan}(t)} \sqrt{\md_{tan}(t)}
						\|\hs \p_3 \vro\|_{L^2}.
					\end{aligned}
				\end{equation}
				Now, let us deal with the difficult term $\int_{\mathbb{R}_+}
				\|u_3 \partial_3^2 \vro \|_{L^{\frac{1}{\frac12+\frac{s}{2}}}(\mathbb{R}^2)}
				\|\hs \p_3 \vro \|_{L^2(\mathbb{R}^2)}dx_3$.
				Indeed, we may check that
				\begin{equation*}
					\begin{aligned}
						u_3 \p_3^2 \vro
						&=u_3 \p_3^2 \vro \, \chi(x_3)+u_3 \p_3^2 \vro \, (1-\chi(x_3))\\
						&=\int_0^{x_3} \partial_3 u_3 \, dz \, \p_3^2 \vro \, \chi(x_3)
						+\varphi^{-1} u_3 \,\varphi \p_3^2 \vro \,  (1-\chi(x_3)),
					\end{aligned}
				\end{equation*}
				which yields directly
				\begin{equation*}
					|u_3 \p_3^2 \vro|
					\lesssim \|\partial_3 u_3\|_{L^\infty(\mathbb{R}_+)}|Z_3\p_3 \vro|
					+|u_3||Z_3 \p_3 \vro|.
				\end{equation*}
                Then, this decomposition yields directly
				\begin{equation}\label{K05}
					\begin{aligned}
						&~~~~\int_{\mathbb{R}_+}
						\|u_3 \partial_3^2 \vro\|_{L^{\frac{1}{\frac12+\frac{s}{2}}}(\mathbb{R}^2)}
						\|\hs \p_3 \vro \|_{L^2(\mathbb{R}^2)}dx_3\\
						&\lesssim\int_0^{+\infty}
						\left(\left\|\|\partial_3 u_3\|_{L^\infty(\mathbb{R}_+)}\right\|_{L^{\frac2s}(\mathbb{R}^2)}
						+\left\|\|u_3\|_{L^\infty(\mathbb{R}_+)}\right\|_{L^{\frac2s}(\mathbb{R}^2)}\right)
						\|Z_3 \p_3 \vro\|_{L^2(\mathbb{R}^2)}
						\|\Lambda_h^{-s} \p_3 \vro\|_{L^2(\mathbb{R}^2)}dx_3\\
						&\lesssim\left(\left\|\|\partial_3 u_3\|_{L^\infty(\mathbb{R}_+)}\right\|_{L^{\frac2s}(\mathbb{R}^2)}
						+\left\|\|u_3\|_{L^\infty(\mathbb{R}_+)}\right\|_{L^{\frac2s}(\mathbb{R}^2)}\right)
						\|Z_3 \p_3 \vro\|_{L^2}\|\Lambda_h^{-s} \p_3 \vro\|_{L^2}\\
						&\lesssim\left(\left\|\|\partial_3 u_3\|_{L^\infty(\mathbb{R}_+)}\right\|_{L^{\frac2s}(\mathbb{R}^2)}
						+\left\|\|u_3\|_{L^\infty(\mathbb{R}_+)}\right\|_{L^{\frac2s}(\mathbb{R}^2)}\right)
						\|\Lambda_h^{-s} \p_3 \vro\|_{L^2}\\
						&\quad \quad \times \left(\|\p_3 \vro\|_{L^2}
						+\|\p_3 \vro\|_{L^2}^{\frac34}\|Z_3^3 \p_3 \vro\|_{L^2}^{\frac14}
						+\|\p_3 \vro\|_{L^2}^{\frac23}\|Z_3^3 \p_3 \vro\|_{L^2}^{\frac13} \right)\\
						&\lesssim \|(\p_3 \vro, Z_3^3 \p_3 \vro )\|_{L^2}^{\frac13}
						\mathcal{E}_{tan}(t)^{\frac{7}{12}}
						\md_{tan}(t)^{\frac14}
						\|\hs \p_3 \vro\|_{L^2}.
					\end{aligned}
				\end{equation}
				Substituting estimates \eqref{K04} and \eqref{K05}  into \eqref{K03}, we have
				\beqq
				\begin{aligned}
					K_1
					\lesssim&\;
					\sqrt{\mathcal{E}_{tan}(t)} \sqrt{\md_{tan}(t)}
					\|\hs \p_3 \vro\|_{L^2} +\|(\p_3 \vro, Z_3^3 \p_3 \vro )\|_{L^2}^{\frac13}
					\mathcal{E}_{tan}(t)^{\frac{7}{12}}
					\md_{tan}(t)^{\frac14}
					\|\hs  \p_3 \vro\|_{L^2}.
				\end{aligned}
				\deqq
				Similarly, it is easy to check that
				\beqq
				\begin{aligned}
					K_2
					\lesssim
					&\sqrt{\mathcal{E}_{tan}(t)} \sqrt{\md_{tan}(t)}
					\|\hs \p_3 \vro\|_{L^2},\\
                    \end{aligned}
                    \deqq
                    and integrating by parts, we have
				\begin{align*}
					K_3 = &\; \i \hs( u \cdot \nabla u_3  ) \cdot \hs \p_3^2 u_3 \ dx +  \i \hs( u \cdot \nabla u_h  ) \cdot \hs \p_3^2 u_h \ dx \\
                    = &\;- \i \hs \p_3 ( u \cdot \nabla u_3  ) \cdot \hs \p_3 u_3 \ dx -  \i \hs \p_3 ( u \cdot \nabla u_h  ) \cdot \hs \p_3 u_h \ dx \\
					\lesssim &\;
					\sqrt{\mathcal{E}_{tan}(t)} \sqrt{\md_{tan}(t)}
					\|\hs \p_3 u\|_{L^2} +\|(\p_3 u, Z_3^3 \p_3 u)\|_{L^2}^{\frac13}
					\mathcal{E}_{tan}(t)^{\frac{7}{12}}
					\md_{tan}(t)^{\frac14}
					\|\hs  \p_3 u\|_{L^2},\\
					K_4 = &\; \f12 \i \hs \p_3 (|\vro|^2 ) \cdot \hs \p_3^2 u_3 \ dx  + \f12 \i \hs \nabla_h (|\vro|^2 ) \cdot \hs \p_3^2 u_h \ dx \\
                    = &\; \f12 \i \hs \p_3 (|\vro|^2 ) \cdot \hs \p_3^2 u_3 \ dx  + \f12 \i \hs \p_3 (|\vro|^2 ) \cdot \hs \p_3 \nabla_h \cdot u_h \ dx \\
                     = &\; \i \hs(\vro \, \p_3 \vro) \cdot \hs \p_3 \du \ dx \\
					\le
					&\; \nu \|\hs \p_3 \du\|_{L^2}^2 +  C_{\nu} \mathcal{E}_{tan}(t)^2.
				\end{align*}
                Integrating by part, we have
                \begin{align*}
					K_5  = &\; \i \hs \Big(\f{\vro}{1+\vro} (\Delta_h u_h + \nabla_h \du)  \Big) \cdot \hs \p_3^2 u_h \ dx  +\var \i \hs \Big(\f{\vro}{1+\vro} \p_3^2 u_h) \cdot \hs \p_3^2 u_h \ dx\\
                    &\; + \i \hs \Big(\f{\vro}{1+\vro} (\Delta_h u_3 +\var \p_3^2 u_3 + \p_3 \du)  \Big) \cdot \hs \p_3^2 u_3 \ dx\\
                    = &\; -\i \hs \p_3\Big(\f{\vro}{1+\vro} (\Delta_h u_h + \nabla_h \du)  \Big) \cdot \hs \p_3 u_h \ dx +\var \i \hs \Big(\f{\vro}{1+\vro} \p_3^2 u_h) \cdot \hs \p_3^2 u_h \ dx \\
                    &\; + \i \hs \Big(\f{\vro}{1+\vro} (\Delta_h u_3 +\var \p_3^2 u_3 + \p_3 \du)  \Big) \cdot \hs \p_3^2 u_3 \ dx.
                    \end{align*}
				Using the H\"{o}lder's inequality and inequality \eqref{a16} in Lemma \ref{H-L} yield directly
				\begin{align*}
					K_5  
					\lesssim &\;
                    \| \hs \p_3 u_h\|_{L^2} \Big( \left(\| \vro\|_{L^2}\|\partial_{1}  \vro\|_{L^2}
						+\|\p_{2} \vro\|_{L^2}\| \p_{12} \vro\|_{L^2}\right)^{\frac{1-s}{2}}
						\|\vro\|_{L^2}^{\frac{2s-1}{2}}
						\| \partial_3   \vro\|_{L^2}^{\frac12}
						\| \p_3 (\Delta_h u_h,\nabla_h \du)\|_{L^2} \\
                       &\;  +\| \p_3 \vro\|_{L^2} \big( \left(\| \Delta_h u_h\|_{L^2}\|\partial_{1}  \Delta_h u_h\|_{L^2}
						+\|\p_{2} \Delta_h u_h\|_{L^2}\| \p_{12} \Delta_h u_h\|_{L^2}\right)^{\frac{1-s}{2}}
						\|\Delta_h u_h\|_{L^2}^{\frac{2s-1}{2}}
						\| \partial_3 \Delta_h u_h\|_{L^2}^{\frac12}\\
                        &\;  +\left(\| \nabla_h \du\|_{L^2}\|\partial_{1}  \nabla_h \du\|_{L^2}
						+\|\p_{2} \nabla_h \du\|_{L^2}\| \p_{12} \nabla_h \du\|_{L^2}\right)^{\frac{1-s}{2}}
						\|\nabla_h \du\|_{L^2}^{\frac{2s-1}{2}}
						\| \partial_3 \nabla_h \du\|_{L^2}^{\frac12}
						\big) \Big)\\
						 &\; + \left(\| \vro\|_{L^2}\|\partial_{1}  \vro\|_{L^2}
						+\|\p_{2} \vro\|_{L^2}\| \p_{12} \vro\|_{L^2}\right)^{\frac{1-s}{2}}
						\|\vro\|_{L^2}^{\frac{2s-1}{2}}
						\| \partial_3   \vro\|_{L^2}^{\frac12}\\
                        &\;\times
						\Big (
                        \| (\Delta_h u_3, \var \p_3^2 u_3, \p_3 \du)\|_{L^2}  \|(\hs \p_3 \du, \hs \p_3 \nabla_h u)\|_{L^2} + \var \|\p_3^2 u_h\|_{L^2} \| \hs \p_3^2 u_h\|_{L^2} \Big)\\
                    \le &\; \nu (\|(\hs \p_3 \du, \hs \p_3 \nabla_h u)\|_{L^2}^2 + \var \|\hs \p_3^2 u\|_{L^2}^2 ) + C_{\nu} \me_{tan}(t)  \md_{tan}(t) + \sqrt{\mathcal{E}_{tan}(t)} \sqrt{\md_{tan}(t)}
					\|\hs \p_3 u\|_{L^2}.
				\end{align*}
				Substituting the estimates of terms for  $K_1$
				through $K_5$ into \eqref{K02} and using the smallness of $\nu$, we have
				\begin{align}\label{K06}
					&\frac{1}{2} \|\hs \p_3(\vro, u)(t)\|_{L^2}^2
					+\int_0^t (\|\hs \p_3(\nabla_h u, \du)\|_{L^2}^2 + \var \| \hs \p_3^2 u\|_{L^2}^2 ) d\tau \notag\\
					\lesssim
					&\frac{1}{2}\|\hs \p_3 (\vro, u)(0)\|_{L^2}^2
					+\underset{0\le \tau \le t}{\sup}\|\hs \p_3 (\vro, u)(\tau)\|_{L^2}
					\int_0^t \mathcal{E}_{tan}(\tau)^{\f12}
					\md_{tan} (\tau)^{\f12}
					d\tau \notag\\
					&+\underset{0\le \tau \le t}{\sup}
					\|\hs \p_3 (\vro, u)(\tau)\|_{L^2}
					\underset{0\le \tau \le t}{\sup}
					\|(\p_3 \vro, \p_3 u, Z_3^3 \p_3 \vro,  Z_3^3 \p_3 u)(\tau)\|_{L^2}^{\frac13}
					\int_0^t  \mathcal{E}_{tan}(\tau)^{\frac{7}{12}}
					\md_{tan}(\tau)^{\frac14} d \tau \notag\\
			         &\; + \int_0^t (\mathcal{E}_{tan}(\tau)^{2} +\mathcal{E}_{tan}(\tau) \md_{tan}(\tau) ) d\tau.
				\end{align}
				Using the assumption \eqref{assumption}, it is easy to check that
				\begin{equation}\label{K07}
					\begin{aligned}
						\int_0^t \mathcal{E}_{tan}(\tau)^{\frac{7}{12}}
						\md_{tan}(\tau)^{\frac14} d\tau
						&\lesssim
						\bigg\{\int_0^t \md_{tan}(\tau)
						(1+\tau)^{\sigma}d\tau\bigg\}^{\frac{1}{4}}
						\bigg\{\int_0^t \mathcal{E}_{tan}(\tau)^{\frac{7}{9}}(1+\tau)^{-\frac13 \sigma}
						d\tau\bigg\}^{\frac{3}{4}}\\
						&\lesssim
						\delta^{\frac56}\bigg\{\int_0^t (1+\tau)^{-(\frac{7}{9}s+\frac13\sigma)}d\tau
						\bigg\}^{\frac{1}{2}}
						\lesssim \delta^{\frac56},
					\end{aligned}
					\end{equation}
                    and
                    \beqq \bal
                    \int_0^t (\mathcal{E}_{tan}(\tau)^{2} +\mathcal{E}_{tan}(\tau) \md_{tan}(\tau) ) d\tau  \lesssim \delta^2,
                    \dal \deqq
				where we have used the condition $\frac{16}{17}\le\sigma<s<1$.
				Substituting the above estimate and \eqref{J03} into \eqref{K06}, then we have
				\beqq
				\begin{aligned}
					&\|(\hs\wr, \hs \wu)(t)\|_{L^2}^2
					+\int_0^t \|(\nabla_h \hs \wu, \hs \p_3 \du)\|_{L^2}^2 d\tau + \var \int_0^t \| \hs \p_3^2 u \|_{L^2}^2 d\tau\\
					\lesssim
					&\|(\hs\wr, \hs \wu)(0)\|_{L^2}^2+\delta^2.
				\end{aligned}
				\deqq
				Therefore, we complete the proof of this lemma.
			\end{proof}

			\subsection{Decay in time estimate}\label{dect}
			In this subsection, we will establish the decay rate estimate
			for density and velocity field. This decay in time estimate
			will help us to close the energy estimate.
			First of all, we establish the uniform estimate for $\me(t)$.
            \begin{lemm}\label{lemma313}
				Under the assumption \eqref{assumption},
				the smooth solution $(\vro,u)$ of equation \eqref{eqr} has the estimate
				\beq\label{31001}
				\begin{aligned}
					\me(t) + \int_{0}^{t} \md(\tau) d \tau
				\lesssim C_0,
				\end{aligned}
				\deq
				where the constant $C_0$ is defined in \eqref{co}.
			\end{lemm}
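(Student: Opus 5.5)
The plan is to assemble a single Lyapunov functional from the differential inequalities already proved, weight each block by a constant so that its right-hand side is absorbed by the dissipation of the others, integrate in time, and then add the integrated $L^\infty$ and negative-norm bounds.

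\emph{First block: conormal energy with one vertical derivative.} Sum \eqref{3101-1} (using \eqref{3102} and \eqref{3105}) over all tangential orders. Then add \eqref{3201-1}. Then add the full conormal estimate \eqref{3301} of Lemma~\ref{lemma34}, inductively in $k=1,\dots,m-1$, with weights $\eta^k$ for a small $\eta>0$. The right-hand side of \eqref{3301} at level $k$ contains only quantities of order $k-1$: $\|\p_3(\nabla_h u,\du)\|_{H^{k-1}_{co}}^2$, $\var\|\p_3^2u\|_{H^{k-1}_{co}}^2$, $\|\du\|_{H^{k-1}_{co}}^2$ and $\|(\nabla_h u,\Delta_h u)\|^2_{H^{k-1}_{co}}$. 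These are all absorbed by the dissipation already produced at level $k-1$ or by the tangential dissipation. Using the equivalence \eqref{est-equivalence}, this controls $\|(\vro,u)\|_{H^m_{co}}^2+\|\p_3(\vro,u)\|_{H^{m-1}_{co}}^2$. The cross terms $\int Z^\al u\cdot Z^\al\nabla\vro$ are bounded by $\kappa_1$ times the energy, so the resulting functional stays equivalent to the energy. Next add $\kappa_1$ times Lemmas~\ref{lemma35}--\ref{lemma36} to gain $\|\nabla\vro\|_{H^{m-1}_{co}}^2$. Their right-hand sides are all velocity dissipation, including $\|\nabla\du\|^2_{H^{m-1}_{co}}$, which is part of $\mathcal D$ by the identity $\p_3\du=\p_3^2u_3+\p_3\nabla_h\cdot u_h$ and \eqref{est-p32-u3-uh}. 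With $\kappa_1$ small, they are absorbed.

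\emph{Second block: time-derivative and $\var\p_3^2\vro$ terms.} Add $\kappa_2$ times \eqref{3601}--\eqref{3701}, then $\kappa_3\ll\kappa_2$ times \eqref{3801}, then $\kappa_4\ll\kappa_3$ times \eqref{3901}.
\begin{itemize}
\item The right-hand side of \eqref{3801} contains $\|\nabla_h\p_t w_h\|^2_{H^1_{tan}}$ and $\|\nabla_h\p_t u\|^2_{H^2_{tan}}$, which \eqref{3601}--\eqref{3701} dissipate, together with $\|\p_3\du\|^2_{H^2_{co}}$ from the first block.
\item The right-hand side of \eqref{3901} contains $\|\p_t\du\|^2_{H^2_{tan}}$ and $\p_t\p_3^2u_3$-terms, controlled by \eqref{3601} and \eqref{3801}. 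It also contains $\|(\Delta_hu,\nabla_h\du,\nabla\vro)\|^2_{H^3_{co}}$, controlled since $m\ge5$.
\end{itemize}
This yields
\begin{equation*}
\frac{d}{dt}\widetilde{\mathcal E}(t)+c\,\mathcal D(t)\lesssim(\me(t)^{1/3}+\me(t)^{1/2})\md(t),
\end{equation*}
with $\widetilde{\mathcal E}$ equivalent to the $L^2$-type part of $\me$. By \eqref{assumption}, $\me^{1/3}\le\delta^{1/3}$ is small, so the right-hand side is absorbed. Integrating on $[0,t]$ gives $\widetilde{\mathcal E}(t)+\int_0^t\md\lesssim\me(0)$.

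\emph{Remaining components of $\me$.} Cube \eqref{wh01} and \eqref{vro01}: this gives $\|w_h\|^3_{L^\infty}+\|\p_3\vro\|^3_{L^\infty}\lesssim\me(0)+\delta^{3/2}$, using $\|w_h(0)\|_{L^\infty}^3\le\me(0)$. The negative-norm pieces come from \eqref{J01} and \eqref{K01}, which are bounded by $\me(0)+\delta^2$. Summing everything gives $\me(t)+\int_0^t\md\lesssim\me(0)+\delta^{4/3}=:C_0$, since $\delta<1$ makes $\delta^{3/2},\delta^2\le\delta^{4/3}$.

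The main obstacle is the bookkeeping in the hierarchical weighting. One must check that every lower-order term appearing on a right-hand side (especially $\|\p_3\vro\|^2_{H^{k-1}_{co}}$ in \eqref{Z3-all} and the $\p_3^2u_3$ terms) is genuinely in the dissipation produced at an earlier level and with a strictly smaller weight. The only way to make this work is to choose the order $\eta\gg\kappa_1\gg\kappa_2\gg\kappa_3\gg\kappa_4$ carefully, and the weights must be fixed independently of $\var$ and $t$.
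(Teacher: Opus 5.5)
Your proposal is correct and follows the same route as the paper. You combine \eqref{3101-1} with \eqref{3301} inductively, add a small multiple of \eqref{3501}, then add \eqref{3601}--\eqref{3801} and \eqref{3901} with successively smaller weights, and finally add the cubed $L^\infty$ bounds and the negative-norm bounds \eqref{J01}--\eqref{K01}.

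The differences are cosmetic. You absorb the remainder $(\me^{1/3}+\me^{1/2})\md$ into the dissipation using $\delta\ll 1$, whereas the paper integrates it in time to produce the $\delta^{4/3}$ in $C_0$. Also, $\|\p_3\du\|_{H_{co}^{m-1}}^2$ is already a component of $\md$, so your appeal to \eqref{est-p32-u3-uh}, which is an energy bound rather than a dissipation bound, is unnecessary there.
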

            \begin{proof}
                Combining the estimates $\eqref{3101-1}$ and \eqref{3301},  we can apply the induction and the equivalence relation \eqref{est-equivalence} 
                to obtain that
				\beqq \bal
				&\; \| (\vro, u)(t)\|_{H_{co}^m}^2 + \| \p_3 (\vro, u)(t)\|_{H_{co}^{m-1}}^2 +\int_0^t \|(\nabla_h u, \du)(\tau)\|_{H_{co}^m}^2  d\tau + \var \int_0^t \|\p_3 u(\tau)\|_{H_{co}^m}^2 d \tau\\
                &\; +\int_0^t \|\p_3 (\nabla_h u, \du)(\tau)\|_{H_{co}^{m-1}}^2  d\tau + \var \int_0^t \|\p_3^2 u(\tau)\|_{H_{co}^{m-1}}^2  d \tau 
				\\
                \lesssim  &\;  \| (\vro, u)(0)\|_{H_{co}^m}^2 +\|\p_3 (\vro, u)(0)\|_{H_{co}^{m-1}}^2 +\delta^{\frac43}.
				\dal \deqq
                Multiplying the estimate \eqref{3501} by the small constant $\kappa_1$ and integrating over $[0,t]$, together with the above inequality, we can obtain
                \begin{align*}
                &\; \| (\vro, u)(t)\|_{H_{co}^m}^2 + \| \p_3 (\vro, u)(t)\|_{H_{co}^{m-1}}^2 + \kappa_1 \sum_{|\al| \le m-1}  \i Z^{\al} u \cdot Z^{\al} \nabla  \vro \ dx
                +\int_0^t \|(\nabla_h u, \du)(\tau)\|_{H_{co}^m}^2  d\tau \\  
                &\;+ \var \int_0^t \|\p_3 u(\tau)\|_{H_{co}^m}^2 d \tau 
				 +\int_0^t \|\p_3 (\nabla_h u, \du)(\tau)\|_{H_{co}^{m-1}}^2  d\tau + \var \int_0^t \|\p_3^2 u(\tau)\|_{H_{co}^{m-1}}^2  d \tau + \kappa_1 \int_0^t \|\nabla \vro(\tau)\|_{H_{co}^{m-1}}^2  d \tau\\
                \lesssim  &\; \kappa_1 \int_{0}^{t} (\|(\du, \nabla \du)\|_{H^{m-1}_{co}}^2+ \| (\nabla_h u, \Delta_h u)\|_{H_{co}^{m-1}}^2 +
					\var \|\p_3^2 u \|_{H^{m-1}_{co}}^2 ) d \tau + \kappa_1 \sum_{|\al| \le m-1}  \i Z^{\al} u_0 \cdot Z^{\al} \nabla  \vro_0 \ dx \\
                    &\; + \| (\vro, u)(0)\|_{H_{co}^m}^2 +\|\p_3 (\vro, u)(0)\|_{H_{co}^{m-1}}^2 +\delta^{\frac43},
				\end{align*}
                which, using the smallness of $\kappa_1$, yields that
                \begin{align*}
                &\; \| (\vro, u)(t)\|_{H_{co}^m}^2 + \| \p_3 (\vro, u)(t)\|_{H_{co}^{m-1}}^2 +\int_0^t \|(\nabla_h u, \du)(\tau)\|_{H_{co}^m}^2  d\tau + \var \int_0^t \|\p_3 u(\tau)\|_{H_{co}^m}^2 d \tau 
				\\  
                &\; +\int_0^t \|\p_3 (\nabla_h u, \du)(\tau)\|_{H_{co}^{m-1}}^2  d\tau + \var \int_0^t \|\p_3^2 u(\tau)\|_{H_{co}^{m-1}}^2  d \tau + \int_0^t \|\nabla \vro(\tau)\|_{H_{co}^{m-1}}^2  d \tau\\
                \le &\; C (\| (\vro, u)(0)\|_{H_{co}^m}^2 +\|\p_3 (\vro, u)(0)\|_{H_{co}^{m-1}}^2 )+C\delta^{\frac43}.
				\end{align*}
                 In a similar way, combining the above estimate and \eqref{3901}, \eqref{3601}, \eqref{3701} and \eqref{3801}, we can deduce that
                 \beq\label{31002} \bal
&\; \| (\vro, u)(t)\|_{H_{co}^m}^2 + \| \p_3 (\vro, u)(t)\|_{H_{co}^{m-1}}^2 + \| \p_t  (u, w_h,  \p_3 u_3)(t)\|_{H_{tan}^2}^2 \\
                     &\;
					+  \| Z_3 \p_t ( w_h, \p_3 u_3 ) (t)\|_{L^2}^2 + \var \| \p_3^2 \vro(t) \|_{H_{co}^2}^2 +\int_0^t \md(\tau) d\tau \\
				\le &\; C(\| (\vro, u) (0)\|_{H_{co}^m}^2 + \| \p_3 (\vro, u)(0)\|_{H_{co}^{m-1}}^2 + \| \p_t  (u, w_h,  \p_3 u_3)(0)\|_{H_{tan}^2}^2
					 \\
                     &\;  \quad +  \| Z_3 \p_t ( w_h, \p_3 u_3 ) (0)\|_{L^2}^2+ \var \| \p_3^2 \vro(0) \|_{H_{co}^2}^2)+C\delta^{\frac43}.
                 \dal \deq
                Moreover, 
                the combination of estimates \eqref{wh01} and \eqref{vro01} yields
                \beqq
				\begin{aligned}
					\| w_h\|_{L^{\infty}}^{3} + \| \p_3 \vro\|_{L^{\infty}}^{3}
					\le
					C( \| w_h(0)\|_{L^{\infty}}^{3}+ \| \p_3 \vro(0) \|_{L^{\infty}}^{3}+\delta^{\f{3}{2}}),
				\end{aligned}
				\deqq
                and the combination of estimates \eqref{J01} and \eqref{K01} yields directly
				\beqq
				\begin{aligned}
					\|(\hs(\vro,u), \hs \p_3 (\vro,u))(t)\|_{L^2}^2
					\le
					C(\|(\hs(\vro,u), \hs \p_3 (\vro,u))(0)\|_{L^2}^2+\delta^2),
				\end{aligned}
				\deqq
				which, together with estimate \eqref{31002}, yields directly
				\beq\label{31003}
				\me(t) + \int_{0}^{t} \md(\tau) d \tau
				\le CC_0,
				\deq
				where the constant $C_0$ is defined by
				\beq\label{co}
				C_0:=\me(0) +\delta^{\f43}.
				\deq
                Thus, we finish the proof of this lemma.
            \end{proof}
            Next, we  establish the decay rate estimate for $\me_{tan}(t)$ and $\md_{tan}(t)$.

			\begin{lemm}\label{lemma314}
				Under the assumption \eqref{assumption},
				the smooth solution $(\vro,u)$ of equation \eqref{eqr} has the estimate
				\beq\label{31101}
				(1+t)^s \mathcal{E}_{tan}(t) + \int_0^t (1+\tau)^{\sigma} \md_{tan}(\tau)d\tau \lesssim C_0,
				\deq
				where the constant $C_0$ is defined in \eqref{co}.
			\end{lemm}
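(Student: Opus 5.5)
We will build a Lyapunov functional equivalent to $\me_{tan}(t)$ and close the argument with the standard negative-Sobolev interpolation, as described in Step~(4c) of Section~\ref{section-approach}.

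\textbf{Step 1: the combined dissipative inequality.} We add three ingredients: \eqref{3101-1} of Lemma~\ref{lemma32}; \eqref{3201-1} of Lemma~\ref{lemma33}; and $\kappa$ times the density dissipation \eqref{3401} of Lemma~\ref{lemma35}, summed over $0\le|\ah|\le m-1$ (the cases $k=0$ and $k=m-1$, with the intermediate cases bounded trivially by $\md_{tan}$). Set
\[
\widetilde{\me}_{tan}(t):=\tfrac12\big(\|(\vro,u)\|_{H^m_{tan}}^2+\|\p_3(\vro,u)\|_{H^{m-1}_{tan}}^2\big)+\kappa\sum_{|\ah|\le m-1}\i Z^{\ah}u\cdot Z^{\ah}\nabla\vro\,dx .
\]
For $\kappa$ small, $\widetilde{\me}_{tan}\sim\me_{tan}$. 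The terms $\|(\du,\nabla\du,\Delta_h u)\|_{H^{m-1}_{tan}}^2+\var\|\p_3^2u\|_{H^{m-1}_{tan}}^2$ on the right of \eqref{3401} are absorbed by the velocity dissipation in \eqref{3101-1}--\eqref{3201-1}, because $\nabla\du$ involves only $\nabla_h\du$ and $\p_3\du$. This yields
\[
\frac{d}{dt}\widetilde{\me}_{tan}+2\kappa_2\md_{tan}\lesssim(\me^{1/3}+\me^{1/2})\md_{tan}.
\]
By \eqref{assumption}, $\me\le\delta$ is small, so the right side is absorbed and
\[
\frac{d}{dt}\widetilde{\me}_{tan}+\kappa_2\md_{tan}\le0 .
\]

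\textbf{Step 2: algebraic decay.} We use the second inequality in \eqref{inter-2}, applied in the horizontal variables and integrated in $x_3$, to every quantity in $\me_{tan}$. Derivative orders are lowered via \eqref{inter-1} and the first line of \eqref{inter-2}, with $\Lambda_h^{-s}$ of the lowest-order fields. For the highest tangential derivatives we use the horizontal gradient directly, which is part of $\md_{tan}$. The $\vro$ part is handled with $\|\nabla\vro\|_{H^{m-1}_{tan}}$ in $\md_{tan}$. The result is
\[
\widetilde{\me}_{tan}\lesssim\|\hs(\vro,u,\p_3\vro,\p_3u)\|_{L^2}^{\frac{2}{1+s}}\md_{tan}^{\frac{s}{1+s}}.
\]
By Lemmas~\ref{lemma311}--\ref{lemma312}, the negative norms are $\lesssim C_0$ uniformly in time. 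Hence $\md_{tan}\gtrsim C_0^{-1/s}\widetilde{\me}_{tan}^{1+1/s}$, which gives the nonlinear ODE
\[
\frac{d}{dt}\widetilde{\me}_{tan}+\kappa_2 C_0^{-1/s}\widetilde{\me}_{tan}^{1+1/s}\le0 .
\]
Integrating, and using $\widetilde{\me}_{tan}(0)\lesssim\me(0)\le C_0$, we get $\me_{tan}(t)\lesssim C_0(1+t)^{-s}$.

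\textbf{Step 3: the weighted dissipation integral.} Multiply the linear inequality by $(1+t)^\sigma$ and integrate:
\[
(1+t)^\sigma\widetilde{\me}_{tan}(t)+\kappa_2\int_0^t(1+\tau)^\sigma\md_{tan}\,d\tau\le\widetilde{\me}_{tan}(0)+\sigma\int_0^t(1+\tau)^{\sigma-1}\widetilde{\me}_{tan}\,d\tau .
\]
By Step 2 the last integral is bounded by $C_0\int_0^\infty(1+\tau)^{\sigma-1-s}\,d\tau$. This is finite precisely because $\sigma<s$, i.e.\ $\zeta>0$. Combined with the decay bound, this gives \eqref{31101}.

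\textbf{Main obstacle.} The main obstacle is the interpolation in Step 2. Two points need checking. First, every piece of $\me_{tan}$, including the top-order $\|\vro\|_{\dot H^m_{tan}}$ and $\|\p_3\vro\|_{\dot H^{m-1}_{tan}}$, must be controlled by $\md_{tan}$ (which has no $L^2$ of $\vro$ itself) together with the negative norms, with the exponent exactly $s/(1+s)$. Second, the $\p_3$ components need the negative norm from Lemma~\ref{lemma312}. For a top-order piece we would bound it directly by $\md_{tan}$ and use $\md_{tan}\lesssim\md_{tan}^{s/(1+s)}\delta^{1/(1+s)}$. The concern is that this yields an extra power of $\delta$ rather than $C_0$; if so, we would instead write the constant as $C_0+\delta$, which is harmless since $C_0\ge\delta^{4/3}$ is not of that size. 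The cleaner alternative is to interpolate every tangential order between $\hs f$ and $\nabla_h f$-type dissipation using the first line of \eqref{inter-2} and \eqref{inter-1}.
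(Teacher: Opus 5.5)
\textbf{Gap in Step~2.} Your architecture is the paper's: the Lyapunov functional, the nonlinear ODE through the negative norms, and the $(1+t)^\sigma$-weighted integration using $\sigma<s$. But Step~2 does not establish the key bound $\widetilde{\me}_{tan}\lesssim C_0^{1/(1+s)}\md_{tan}^{s/(1+s)}$.

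Your displayed inequality $\widetilde{\me}_{tan}\lesssim\|\hs(\vro,u,\p_3\vro,\p_3u)\|_{L^2}^{2/(1+s)}\md_{tan}^{s/(1+s)}$ is false as a functional inequality. Take $u=0$ and $\vro$ supported at horizontal frequencies $|\xi_h|\sim N\gg1$. The left side scales like $N^{2m}$ and the right side like $N^{2s(m-1)/(1+s)}$, because $\md_{tan}$ contains no derivative of $\vro$ of order above $m$.

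Your ``cleaner alternative'' fails for the same reason. For $\vro$ there is no higher-order dissipation to interpolate with. For $u$ the top-order exponent of $\md_{tan}$ comes out as $(m+s)/(m+1+s)$, which is larger than $s/(1+s)$, and lowering it needs a pointwise-in-time bound.

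Your fallback is closer but has two problems. First, as written, $\md_{tan}\lesssim\md_{tan}^{s/(1+s)}\delta^{1/(1+s)}$ needs a pointwise bound on all of $\md_{tan}$. That quantity contains $\|\nabla_h u\|_{H^m_{tan}}^2$ and is only integrable in time. The correct form is $P=P^{1/(1+s)}P^{s/(1+s)}\le\me(t)^{1/(1+s)}\md_{tan}^{s/(1+s)}$ for each top-order piece $P$. Second, and more seriously, bounding $\me(t)$ by $\delta$ is not harmless. The decay constant becomes of size $\me(0)+\delta$. The bootstrap of Proposition~\ref{main_pro}, with $\delta=12C\me(0)$, closes only because everything is bounded by $3C(\me(0)+\delta^{4/3})\le\delta/2$. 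A term $C\delta$ with $C$ not small cannot be absorbed, so the lemma would no longer improve \eqref{assumption}.

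\textbf{The fix.} What is missing is an $L^\infty_t$ bound of size $C_0$ rather than $\delta$. Write
$\me_{tan}\lesssim\|(\vro,u,\p_3\vro,\p_3u)\|_{L^2}^2+\|\nabla_h(\vro,u)\|_{H^{m-1}_{tan}}^2+\|\nabla_h\p_3(\vro,u)\|_{H^{m-2}_{tan}}^2$.
Handle the $L^2$ pieces by $\|f\|_{L^2}\lesssim\|\hs f\|_{L^2}^{1/(1+s)}\|\nabla_h f\|_{L^2}^{s/(1+s)}$ together with Lemmas~\ref{lemma311}--\ref{lemma312}. Each remaining piece is bounded both by $\md_{tan}$ and by $\me(t)$. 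Then use $\me(t)\lesssim C_0$ from Lemma~\ref{lemma313}, i.e.\ \eqref{31003}. A cheaper alternative is monotonicity from your linear inequality: $\me_{tan}(t)\lesssim\widetilde{\me}_{tan}(t)\le\widetilde{\me}_{tan}(0)\lesssim\me(0)$. Either way you get $\widetilde{\me}_{tan}\lesssim C_0^{1/(1+s)}\md_{tan}^{s/(1+s)}$, which is \eqref{31107}, and the rest of Steps~2--3 then goes through with constant $C_0$.

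\textbf{Smaller point on Step~1.} The cited lemmas only supply the $L^2$ and top-order pieces, and \eqref{3401} is stated only for $k\in\{0,1,m-2,m-1\}$. So build $\widetilde{\me}_{tan}$ as the paper does, from
$\|(\vro,u)\|_{L^2}^2+\|(\vro,u)\|_{\dot{H}^m_{tan}}^2$, $\|\p_3(\vro,u)\|_{L^2}^2+\|\p_3(\vro,u)\|_{\dot{H}^{m-1}_{tan}}^2$, and the cross terms with $|\ah|\in\{0,m-1\}$. Recover the intermediate orders by horizontal interpolation.
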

			\begin{proof}
				From estimates $\eqref{3101-1}$ and $\eqref{3201-1}$, we have
				\beq\label{31102} \bal
				&\; \frac{d}{dt} (\|(\vro,u)(t)\|_{L^2}^2+\|(\vro,u)(t)\|_{\dot{H}_{tan}^{m}}^2)
				+\|(\nabla u,\du)\|_{H^{m}_{tan}}^2 + \var \| \p_3 u \|_{H^{m}_{tan}}^2  
				\lesssim  \sqrt{\me(t)}\md_{tan}(t),
				\dal \deq
				and
				\beq\label{31103} \bal
				&\; \frac{d}{dt}(\|\p_3(\vro,u)(t)\|_{L^2}^2+\|\p_3 (\vro,u)(t)\|_{\dot{H}_{tan}^{m-1}}^2) 
				+\|\p_3(\nabla u,\du)\|_{H^{m-1}_{tan}}^2 + \var \| \p_3^2 u \|_{H^{m-1}_{tan}}^2 
				\lesssim (\me(t)^{\f13}+\me(t)^{\f12})\md_{tan}(t).
				\dal \deq
				From the estimates \eqref{3501} and \eqref{3701}, we have
				\beq\label{31104}
				\begin{aligned}
					&\frac{d}{dt}
					\i ( u \cdot \nabla \vro +  \sum_{|\ah|=m-1} Z^{\ah} u \cdot \nabla Z^{\ah} \vro ) \ dx
					+\|\nabla \vro\|_{H^{m-1}_{tan}}^2  \\
					\lesssim
					&\|(\du, \nabla \du, \Delta_h u)\|_{H^{m-1}_{tan}}^2+
					\var \|\p_3^2 u \|_{H^{m-1}_{tan}}^2 
					+\sqrt{\me(t)}\md_{tan}(t).
				\end{aligned}
				\deq
				Then, for some small positive suitable constant $\kappa_2>0$,
				we can deduce from estimates \eqref{31102}-\eqref{31104} that
				\beq\label{31105}
				\begin{aligned}
					&\frac{d}{dt}\widetilde{\mathcal{E}}_{tan}(t)
					+2\kappa_2 \md_{tan}(t)
					\lesssim (\me(t)^{\f13}+\me(t)^{\f12})\md_{tan}(t),
				\end{aligned}
				\deq
				where the quantity $\widetilde{\mathcal{E}}_{tan}(t)$ is defined as
				\beqq
				\begin{aligned}
					\widetilde{\mathcal{E}}_{tan}(t)
					:=&\|(\vro,u)(t)\|_{L^2}^2 + \|(\vro,u)(t)\|_{\dot{H}_{tan}^{m}}^2 +\|\p_3(\vro,u)(t)\|_{L^2}^2+\|\p_3 (\vro,u)(t)\|_{\dot{H}_{tan}^{m-1}}^2 \\
                    &+ 2\kappa_2 (\i u \cdot \nabla \vro \ dx +  \sum_{|\ah|=m-1} \i Z^{\ah} u \cdot \nabla Z^{\ah} \vro \ dx).
				\end{aligned}
				\deqq
				Due to the smallness of $\kappa_2$, it is easy to check that
				$\widetilde{\mathcal{E}}_{tan}(t)$ is equivalent to $\mathcal{E}_{tan}(t)$.
				Thus, due to the a priori assumption \eqref{assumption}, we have
				\beq\label{31106}
				\frac{d}{dt}\widetilde{\mathcal{E}}_{tan}(t)
				+\kappa_2 \md_{tan}(t)
				\le 0.
				\deq
				On the other hand, due to the inequality
				\beqq
				\|(\vro, u, \wr, \wu)\|_{L^2}
				\lesssim \|\hs(\vro, u, \wr, \wu)\|_{L^2}^{\frac{1}{1+s}}
				\|\nabla_h(\vro, u, \wr, \wu)\|_{L^2}^{\frac{s}{1+s}},
				\deqq
				it is easy to check that
				\beq\label{31107}
				\begin{aligned}
					\widetilde{\mathcal{E}}_{tan}(t)
					\lesssim
					\me_{tan}(t)
					\lesssim
					&(\|\hs(\vro, u, \wr, \wu)\|_{L^2}^2
					+\|\nabla_h(\vro,u)\|_{H^{m-2}_{tan}}^2
					+\|\nabla_h(\wr, \wu)\|_{H^{m-3}_{tan}}^2)^{\frac{1}{1+s}}\\
					& \times(\|\nabla_h(\vro, u)\|_{H^{m-2}_{tan}}^2
					+\|\nabla_h(\wr, \wu)\|_{H^{m-3}_{tan}}^2)^{\frac{s}{1+s}}\\
					\lesssim
					&C_0^{\frac{1}{1+s}} \md_{tan}(t)^{\frac{s}{1+s}},
				\end{aligned}
				\deq
				where we have used the estimate \eqref{31003} in the last inequality.
				The combination of \eqref{31106} and \eqref{31107} yields
				\beqq
				\frac{d}{dt}\widetilde{\mathcal{E}}_{tan}(t)
				+\kappa_2 C_0^{-\frac{1}{s}}
				\widetilde{\mathcal{E}}_{tan}(t)^{1+\frac{1}{s}}\le 0,
				\deqq
				which yields directly the decay estimate
				\beq\label{31108}
				\mathcal{E}_{tan}(t)
				\lesssim
				\widetilde{\mathcal{E}}_{tan}(t)
				\lesssim C_0(1+t)^{-s}.
				\deq
			Finally, we will establish the time integration
			of  $\md_{tan}(t)$ with the suitable weight $(1+\tau)^{\sigma}$.
				For $0<\sigma<s$, multiplying \eqref{31106} by $(1+t)^{\sigma}$, we have
				\beqq
				\begin{aligned}
					&\frac{d}{dt}[(1+t)^{\sigma}\widetilde{\mathcal{E}}_{tan}(t)]
					+\kappa_2 (1+t)^{\sigma} \md_{tan}(t)
					\le \sigma(1+t)^{\sigma-1}\widetilde{\mathcal{E}}_{tan}(t).
				\end{aligned}
				\deqq
				Integrating the above inequality over $[0, t]$
				and using the uniform estimate \eqref{31105}, we have
				\beq\label{est-sigma-s}
				\begin{aligned}
					&\;(1+t)^{\sigma}\widetilde{\mathcal{E}}_{tan}(t)
					+\kappa_2 \int_0^t  (1+\tau)^{\sigma} \md_{tan}(\tau)d\tau\\
					\le
					&\;\widetilde{\mathcal{E}}_{tan}(0)
					+\sigma\int_0^t (1+\tau)^{\sigma-1}
					\widetilde{\mathcal{E}}_{tan}(\tau)d\tau\\
					\le
					&\;\widetilde{\mathcal{E}}_{tan}(0)
					+\sigma\underset{0\le \tau \le t}{\sup}
					\left[\widetilde{\mathcal{E}}_{tan}(\tau)(1+\tau)^s\right]
					\int_0^t (1+\tau)^{\sigma-1-s}d\tau\\
					\le
					&\;\widetilde{\mathcal{E}}_{tan}(0)
					+\frac{\sigma}{s-\sigma}
					\left[1-(1+t)^{\sigma-s}\right]
					\underset{0\le \tau \le t}{\sup}
					\left[\widetilde{\mathcal{E}}_{tan}(\tau)(1+\tau)^s\right]\\
					\lesssim
					&\;C_0.
				\end{aligned}
				\deq
				Combining the above estimate and \eqref{31108}, we complete the proof of this lemma.
			\end{proof}
            Finally, we establish the decay rate estimate for $\bme_{tan}(t)$ and $\bmd_{tan}(t)$.

			\begin{lemm}\label{lemma315}
				Under the assumption \eqref{assumption},
				the smooth solution $(\vro,u)$ of equation \eqref{eqr} has the estimate
				\beq\label{31201}
				(1+t)^{1+\sigma} \bme_{tan}(t) + \int_0^t (1+\tau)^{1+\sigma} \bmd_{tan}(\tau)d\tau \lesssim C_0,
				\deq
				where the constant $C_0$ is defined in \eqref{co}.
			\end{lemm}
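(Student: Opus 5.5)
The plan mirrors the proof of Lemma~\ref{lemma314}, now at the level of one extra horizontal derivative. First, collect the higher-order differential inequalities. Estimate \eqref{3101-2} from Lemma~\ref{lemma32} controls $\|\nabla_h(\vro,u)\|_{L^2}^2+\|\nabla_h(\vro,u)\|_{\dot H^{m-2}_{tan}}^2$. Estimate \eqref{3201-2} from Lemma~\ref{lemma33} controls $\|\p_3\nabla_h(\vro,u)\|_{L^2}^2+\|\p_3\nabla_h(\vro,u)\|_{\dot H^{m-3}_{tan}}^2$. The cases $k=1$ and $k=m-2$ of Lemma~\ref{lemma35} supply the density dissipation $\|\nabla_h\nabla\vro\|_{H^{m-3}_{tan}}^2$. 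To absorb the velocity terms on the right of \eqref{3401}, I add a small multiple $2\kappa_3$ of the cross terms
\begin{equation*}
\i \nabla_h u\cdot\nabla_h\nabla\vro\,dx+\sum_{|\ah|=m-2}\i Z^{\ah}u\cdot Z^{\ah}\nabla\vro\,dx .
\end{equation*}
These three estimates will be labelled \eqref{31202}--\eqref{31204}. With them I define $\widetilde{\bme}_{tan}$, which is equivalent to $\bme_{tan}$ for $\kappa_3$ small. Using $\me\lesssim\delta$ from the bootstrap assumption \eqref{assumption}, the right-hand sides $(\me^{1/3}+\me^{1/2})\bmd_{tan}$ are absorbed. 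This should give
\begin{equation*}
\frac{d}{dt}\widetilde{\bme}_{tan}(t)+\kappa_3\bmd_{tan}(t)\le 0 .
\end{equation*}

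Second, introduce the time weight. Multiplying by $(1+t)^{1+\sigma}$ and integrating over $[0,t]$ gives
\begin{equation*}
(1+t)^{1+\sigma}\widetilde{\bme}_{tan}(t)+\kappa_3\int_0^t(1+\tau)^{1+\sigma}\bmd_{tan}\,d\tau\le \widetilde{\bme}_{tan}(0)+(1+\sigma)\int_0^t(1+\tau)^{\sigma}\widetilde{\bme}_{tan}(\tau)\,d\tau .
\end{equation*}
The key observation is $\widetilde{\bme}_{tan}\lesssim\bme_{tan}\lesssim\md_{tan}$. This holds because every component of $\bme_{tan}$ is a horizontal derivative of $(\vro,u)$ or of $\p_3(\vro,u)$ and so appears in $\md_{tan}$. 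Here $\nabla_h u$ and $\nabla_h\p_3u$ are handled directly, while $\nabla_h\vro$ and $\nabla_h\p_3\vro$ come from $\|\nabla\vro\|_{H^{m-1}_{tan}}^2$. Hence the last integral is bounded by $\int_0^t(1+\tau)^\sigma\md_{tan}\,d\tau\lesssim C_0$, by Lemma~\ref{lemma314}. This closes \eqref{31201}, since $\widetilde{\bme}_{tan}(0)\lesssim\me(0)\le C_0$.

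The main obstacle is the first step. I must verify that the lower-order parts of the inequalities for $\bme_{tan}$ contain no term without a horizontal derivative, since such a term would only be controlled by $\md_{tan}$ with insufficient weight. In \eqref{3401} the right side involves $\|\nabla_h(\du,\nabla\du,\Delta_hu)\|_{H^{k-1}_{tan}}^2$ and $\var\|\nabla_h\p_3^2u\|^2$, which lie in $\bmd_{tan}$. The exception is $\nabla_h\p_3\du$ at top order $k=m-2$, for which I would check that it is dominated by $\|\nabla_h\p_3\du\|_{H^{m-3}_{tan}}^2\subset\bmd_{tan}$. Should a remainder of the form $\md_{tan}$ appear anyway, say from the cross terms, I would instead bound it by $(1+\tau)^{1+\sigma}\md_{tan}$. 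That version fails to be integrable, however, so the correct bookkeeping of exactly which dissipation each term carries is essential. As a fallback I would use the interpolation \eqref{inter-2} together with the decay \eqref{31108}.
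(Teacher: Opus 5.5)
Your proposal is correct and follows the paper's proof essentially step for step. You combine \eqref{3101-2}, \eqref{3201-2} and the $k=1,\,m-2$ cases of \eqref{3401} with a small weight $\kappa_3$, absorb the nonlinear terms via \eqref{assumption} to reach $\frac{d}{dt}\widetilde{\bme}_{tan}+\kappa_3\bmd_{tan}\le 0$, then multiply by $(1+t)^{1+\sigma}$ and control the error term through $\widetilde{\bme}_{tan}\lesssim\md_{tan}$ and \eqref{31101}. The obstacle you raise in your last paragraph does not arise: every right-hand side in those inequalities, including $\nabla_h\p_3\du$ in $H^{m-3}_{tan}$, already lies in $\bmd_{tan}$, so no fallback is needed.
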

            \begin{proof}
                From estimates $\eqref{3101-2}$ and $\eqref{3201-2}$, we have
				\beq\label{31202} \bal
				&\; \frac{d}{dt}(\|\nabla_h (\vro,u)(t)\|_{L^2}^2  + \|\nabla_h (\vro,u)(t)\|_{\dot{H}_{tan}^{m-2}}^2 )
				+\|\nabla_h(\nabla u,\du)\|_{H^{m-2}_{tan}}^2 \\
                &\; + \var \| \nabla_h \p_3 u \|_{H^{m-2}_{tan}}^2  
				\lesssim  \sqrt{\me(t)}\bmd_{tan}(t),
				\dal \deq
				and
				\beq\label{31203} \bal
				&\; \frac{d}{dt}(\|\nabla_h \p_3 (\vro,u)(t)\|_{L^2}^2 + \|\nabla_h \p_3 (\vro,u)(t)\|_{\dot{H}_{tan}^{m-3}}^2)
				+\|\nabla_h \p_3(\nabla u,\du)\|_{H^{m-3}_{tan}}^2 \\
                &\; +\var \| \nabla_h \p_3^2 u \|_{H^{m-3}_{tan}}^2   
				\lesssim (\me(t)^{\f13}+\me(t)^{\f12})\bmd_{tan}(t).
				\dal \deq
				From the estimates \eqref{3401}, we have
				\beq\label{31204}
				\begin{aligned}
					&\frac{d}{dt}
					\i ( \nabla_h u \cdot \nabla \nabla_h \vro +  \sum_{|\ah|=m-2} Z^{\ah} u \cdot \nabla Z^{\ah} \vro ) \ dx
					+\|\nabla_h \nabla \vro\|_{H^{m-3}_{tan}}^2  \\
					\lesssim
					&\|\nabla_h (\du, \nabla \du, \Delta_h u)\|_{H^{m-3}_{tan}}^2+
					\var \|\nabla_h \p_3^2 u \|_{H^{m-3}_{tan}}^2 
					+\sqrt{\me(t)}\bmd_{tan}(t).
				\end{aligned}
				\deq
				Then, for some small positive suitable constant $\kappa_3>0$,
				we can deduce from estimates \eqref{31202}-\eqref{31204} that
				\beqq
				\begin{aligned}
					&\frac{d}{dt}\widetilde{\bme}_{tan}(t)
					+2\kappa_3 \bmd_{tan}(t)
					\lesssim (\me(t)^{\f13}+\me(t)^{\f12})\bmd_{tan}(t),
				\end{aligned}
				\deqq
				where the quantity $\widetilde{\bme}_{tan}(t)$ is defined as
				\beqq
				\begin{aligned}
					\widetilde{\bme}_{tan}(t)
					:=&\|\nabla_h (\vro,u)(t)\|_{L^2}^2 + \|\nabla_h (\vro,u)(t)\|_{\dot{H}_{tan}^{m-2}}^2 +\|\nabla_h \p_3 (\vro,u)(t)\|_{L^2}^2 +\|\nabla_h \p_3 (\vro,u)(t)\|_{\dot{H}_{tan}^{m-3}}^2 \\
					&+ 2\kappa_3 (\i \nabla_h u \cdot \nabla \nabla_h \vro \ dx +  \sum_{|\ah|=m-2} \i Z^{\ah} u \cdot \nabla Z^{\ah} \vro \ dx).
				\end{aligned}
				\deqq
				Due to the smallness of $\kappa_3$, it is easy to check that
				$\widetilde{\bme}_{tan}(t)$ is equivalent to $\bme_{tan}(t)$.
				Thus, due to the a priori assumption \eqref{assumption}, we have
				\beqq
				\frac{d}{dt}\widetilde{\bme}_{tan}(t)
				+\kappa_3 \bmd_{tan}(t)
				\le 0.
				\deqq
				Multiplying the above inequality by $(1+t)^{1+\sigma}$, we have
				\beqq \bal
				\frac{d}{dt}[(1+t)^{1+\sigma}\widetilde{\bme}_{tan}(t)]
				+\kappa_3 (1+t)^{1+\sigma}\bmd_{tan}(t)\le (1+\sigma)(1+t)^\sigma \widetilde{\bme}_{tan}(t).
				\dal \deqq
				Integrating the above inequality over $[0,t]$ and using the decay estimate \eqref{31101}, we have
				\beqq
				\begin{aligned}
					&\;(1+t)^{1+\sigma}\widetilde{\bme}_{tan}(t)
					+\kappa_3 \int_0^t  (1+\tau)^{1+\sigma} \bmd_{tan}(\tau)d\tau\\
					\le
					&\;\widetilde{\bme}_{tan}(0)
					+(1+\sigma)\int_0^t 
					\widetilde{\bme}_{tan}(\tau)(1+\tau)^{\sigma}d\tau\\
					\le
					&\;\widetilde{\bme}_{tan}(0)
					+(1+\sigma)
					\int_0^t \md_{tan}(\tau)(1+\tau)^{\sigma}d\tau\\
					\lesssim
					&\;C_0,
				\end{aligned}
				\deqq
                here we can check that $\widetilde{\bme}_{tan}(t) \lesssim \md_{tan}(t)$. Thus, we finish the proof of this lemma.
                            \end{proof}

			\subsection{Global in time uniform regularity} \label{lastt}
			
			In this subsection, we will give the proof of Proposition \ref{main_pro}.
			Indeed,  the combination of estimates \eqref{31001}, \eqref{31101}  and \eqref{31201} 
			yields directly
            \beqq
			\underset{0\le \tau \le t}{\sup}\me(\tau) + \int_{0}^{t} \md(\tau) d \tau 
			\le C\me(0)+C\delta^{\frac43},
			\deqq
            and
            \beqq \bal
			&\; \underset{0\le \tau \le t}{\sup}[(1+\tau)^{s}{\mathcal{E}}_{tan}(\tau)]
			+\int_0^t (1+\tau)^{\sigma} \md_{tan}(\tau)d\tau
			\le C(\me(0)+\delta^{\f43}),\\
            &\; \underset{0\le \tau \le t}{\sup}[(1+\tau)^{1+\sigma}{\bme}_{tan}(\tau)]
			+\int_0^t (1+\tau)^{1+\sigma} \bmd_{tan}(\tau)d\tau
			\le C(\me(0)+\delta^{\f43}),
			\dal \deqq
			where the constants $(\sigma, s)$ satisfy $\frac{16}{17}\le\sigma<s<1$.
			Then, we can obtain the estimate
			\beqq
			\begin{aligned}
				 &\; \underset{0\le \tau \le t}{\sup}\me(\tau)
					+\underset{0\le \tau \le t}{\sup}[(1+\tau)^s \mathcal{E}_{tan}(\tau)] +\underset{0\le \tau \le t}{\sup}[(1+\tau)^{1+\sigma} \bme_{tan}(\tau) ]
					 \\
                    &\;+\int_0^t \md(\tau)d\tau  +\int_0^t (1+\tau)^{\sigma} \md_{tan}(\tau)d\tau
					+ \int_0^t (1+\tau)^{1+\sigma} \bmd_{tan}(\tau)d\tau
				\le
				 3C(\me(0)
				+\delta^{\frac43}).
			\end{aligned}
			\deqq
			Now choose the small constant
			\beqq\label{choose_delta}
			\delta=12 C \me(0) \le \min\{1,\frac{1}{(12 C)^3}\},
			\deqq
			then we have
			\beqq
			\begin{aligned}
				&\; \underset{0\le \tau \le t}{\sup}\me(\tau)
					+\underset{0\le \tau \le t}{\sup}[(1+\tau)^s \mathcal{E}_{tan}(\tau)] +\underset{0\le \tau \le t}{\sup}[(1+\tau)^{1+\sigma} \bme_{tan}(\tau) ]
					 \\
                    &\;+\int_0^t \md(\tau)d\tau  +\int_0^t (1+\tau)^{\sigma} \md_{tan}(\tau)d\tau
					+ \int_0^t (1+\tau)^{1+\sigma} \bmd_{tan}(\tau)d\tau\\
				\le &\;
				 3C \me(0)
				+3C\delta^{\frac43} 
                \le  
				 \frac{\delta}{4}+\frac{\delta}{4}=\frac{\delta}{2},
			\end{aligned}
			\deqq
			which implies the estimate \eqref{close_assumption}.
			Therefore, we complete the proof of Proposition \ref{main_pro}.

			\begin{proof}[\textbf{Proof of Theorem \ref{main_result_one}}]
				Suppose the assumptions in Theorem \ref{main_result_one} hold,
				due to the horizontal dissipative structure, one can establish the uniform local-in-time well-posedness for the equation \eqref{eqr}.
				Next, we use the standard continuity argument to show the global well-posedness.
				From the local existence result
				and smallness assumption of initial condition,
				it holds
				\beqq \bal
				&\; \underset{0\le \tau \le t}{\sup}\me(\tau)
					+\underset{0\le \tau \le t}{\sup}[(1+\tau)^s \mathcal{E}_{tan}(\tau)] +\underset{0\le \tau \le t}{\sup}[(1+\tau)^{1+\sigma} \bme_{tan}(\tau) ]
					 \\
                    &\;+\int_0^t \md(\tau)d\tau  +\int_0^t (1+\tau)^{\sigma} \md_{tan}(\tau)d\tau
					+ \int_0^t (1+\tau)^{1+\sigma} \bmd_{tan}(\tau)d\tau
				 \le \delta,
				\dal \deqq
				for all $t\in [0, T_0)$ and $\delta=12 C \me(0) $
				and $C$ is a positive constant independent of time $t$ and
				parameter $\var$.
				Set
				\beq\label{criterion} \bal
				T^*:= &\; \underset{T_0}{\sup}
				\Big\{T_0~|
				\underset{0\le \tau \le t}{\sup}\me(\tau)
					+\underset{0\le \tau \le t}{\sup}[(1+\tau)^s \mathcal{E}_{tan}(\tau)] +\underset{0\le \tau \le t}{\sup}[(1+\tau)^{1+\sigma} \bme_{tan}(\tau) ]
					 \\
                    &\;+\int_0^t \md(\tau)d\tau  +\int_0^t (1+\tau)^{\sigma} \md_{tan}(\tau)d\tau
					+ \int_0^t (1+\tau)^{1+\sigma} \bmd_{tan}(\tau)d\tau
				 \le \delta,
				\quad \forall ~ t\in [0, T_0)\Big\},
				\dal \deq
				we claim that $T^*=+\infty$. Otherwise, applying the estimate
				\eqref{close_assumption}
				and the local-in-time existence result,
				there exists a positive constant $T^{**}$ such that $T^{**}>T^{*}$,
				it holds that for any $T\in [T^{*}, T^{**})$,
				\beqq \bal
				&\; \underset{0\le \tau \le t}{\sup}\me(\tau)
					+\underset{0\le \tau \le t}{\sup}[(1+\tau)^s \mathcal{E}_{tan}(\tau)] +\underset{0\le \tau \le t}{\sup}[(1+\tau)^{1+\sigma} \bme_{tan}(\tau) ]
					 \\
                    &\;+\int_0^t \md(\tau)d\tau  +\int_0^t (1+\tau)^{\sigma} \md_{tan}(\tau)d\tau
					+ \int_0^t (1+\tau)^{1+\sigma} \bmd_{tan}(\tau)d\tau\le \delta,
				\quad \forall ~ t\in [0, T).
				\dal \deqq
				This contradicts the definition of $T^*$ in \eqref{criterion}.
				Therefore, we can deduce that $T^*=+\infty$.
				Therefore, we complete the proof of Theorem \ref{main_result_one}.
			\end{proof}

            \section{Convergence of solution}\label{asymptotic-behavior}
					In this section, we will obtain the  vanishing vertical viscosity limit to the equation \eqref{eqr0} based on the uniform estimate established in previous section.
		Thanks to Theorem \ref{main_result_one}, the global uniform estimate \eqref{uniform_estimate} holds for any $t>0$,
		which yields directly
		\beq \label{est-u-vro-compact}
		\sup_{0\le \tau \le t} (\|(\vro^{\var},u^{\var})\|_{H_{co}^{m}}^2 +\|(\nabla \vro^{\var},\nabla u^{\var})\|_{H_{co}^{m-1}}^2) \le C \delta_0.
		\deq	
		On the other hand, due to the equation \eqref{eqr} and the global uniform estimate \eqref{uniform_estimate}, it holds
		\beq\label{est-ptu-vro}  \bal
        \int_{0}^{t} \| \p_t \vro^{\var}\|_{H_{co}^{m-1}}^2 d \tau \lesssim  &\; \int_{0}^{t} ( \|  \du^{\var} \|_{H_{co}^{m-1}}^2 +  \|  u^{\var} \cdot \nabla \vro^{\var} \|_{H_{co}^{m-1}}^2 + \|  \vro^{\var} \, \du^{\var}\|_{H_{co}^{m-1}}^2 )d \tau \le C,\\
		\int_{0}^{t} \| \p_t u^{\var}\|_{H_{co}^{m-1}}^2 d \tau
		\lesssim  &\;  \int_{0}^{t} (  \|  (\Delta_h u^{\var}, \nabla \du^{\var},  \nabla \vro^{\var})\|_{H_{co}^{m-1}}^2 + \var^2 \|  \p_3^2 u^{\var}\|_{H_{co}^{m-1}}^2+ \|  u^{\var} \cdot \nabla u^{\var}\|_{H_{co}^{m-1}}^2 )d \tau\\
        &\; +  \int_{0}^{t} \| \vro^{\var} \, \nabla \vro^{\var}\|_{H_{co}^{m-1}}^2 d \tau +  \int_{0}^{t} \| \f{\vro^{\var}}{1+\vro^{\var}}( \Delta_h u^{\var} +  \nabla \du^{\var} + \var \p_3^2 u^{\var}) \|_{H_{co}^{m-1}}^2  d \tau  \\
		\le  &\; C.
		\dal \deq
		Consequently, from the estimates \eqref{est-u-vro-compact}, \eqref{est-ptu-vro} and the  strong compactness argument(see Lemma 4 in \cite{Simon1990})
		that $\vro^{\var}$ and $u^\var$ are compact in $C([0,t]; H^{m-1}_{co,loc})$ for any $t>0$.
		In particular, there exist a sequence $\var_n \rightarrow 0^+$
		and $\vro^0, u^0 \in C([0,t]; H^{m-1}_{co,loc})$ such that
		\beqq \bal
		&\; \vro^{\var_n}
		\rightarrow   \vro^0 \quad \text{in} \, C([0,t]; H^{m-1}_{co,loc}) \,
		\text{as} \ {\var_n} \rightarrow 0^+,\\
        &\; u^{\var_n}
		\rightarrow   u^0 \quad \text{in} \, C([0,t]; H^{m-1}_{co,loc}) \,
		\text{as} \ {\var_n} \rightarrow 0^+.
		\dal \deqq
         With above information, passing the limit $\var_n \to 0^+$ in \eqref{eqr}, we can check that $\vro^0$ and $u^0$ solve the equation $\eqref{eqr0}_1$ and $\eqref{eqr0}_2$ respectively as well as the
initial data of $(\vro^0, u^0)$ is the limit of $(\vro^{\var},u^{\var})(0)$, which yields the condition \eqref{eqr-i2}.
         Furthermore, from the uniform estimate \eqref{uniform_estimate}, $u^{\var}$ is bounded in $L^2([0, t]; H^{1})$. Without loss of generality, the subsequence $u^{\var}$ also converges weakly in $L^2([0, t]; H^{1})$ to $u^0$, which yields the boundary condition  $\eqref{eqr0}_3$.
        Thus we end the proof of Theorem \ref{main_result_two}.
        
		\section*{Acknowledgments}
		Jincheng Gao was partially supported by the National Key Research and Development Program of China(2021YFA1002100), Guangdong Special Support Project (2023TQ07A961) and Guangzhou Science and Technology Program (2024A04J6410).
		Lianyun Peng was partially supported by a fellowship award (PolyU RFS2122-1S05).
		Jiahong Wu was partially supported by the
		National Science Foundation of the United States (DMS 2104682, DMS 2309748).
			Zheng-an Yao was partially supported by
			National Key Research and Development Program of China (2020YFA0712500).	

			\begin{appendices}
				\section{Some useful inequalities}\label{usefull-inequality}	
		Now let us state some  anisotropic Sobolev inequalities used frequently in our paper.
				\begin{lemm}\label{lemm:sobolev-ie}
					For any suitable functions $(f(x), g(x), h(x))$ defined on $\mathbb{R}^3_+$ and different numbers $i,j,k \in \{1, 2, 3\}$,
					the following estimates hold
					\beq \label{ie:Sobolev}
					\bal
					\|f\|_{L^\infty} &\; \lesssim \|f\|_{L^2}^{\f18}\|\p_1 f\|_{L^2}^{\f18}\|\p_2 f\|_{L^2}^{\f18}\|\p_{12} f\|_{L^2}^{\f18}
					\|\p_3 f\|_{L^2}^{\f18}\|\p_{13} f\|_{L^2}^{\f18}\|\p_{23} f\|_{L^2}^{\f18}\|\p_{123} f\|_{L^2}^{\f18},\\
					\int_{\mathbb{R}^3_+} |f g h |\, dx
					&\; \lesssim \|f\|_{L^2}
					\|g\|_{L^2}^{\frac12}
					\|\p_i g\|_{L^2}^{\frac12}
					\|h\|_{L^2}^{\frac14}
					\|\p_j h\|_{L^2}^{\frac14}
					\|\p_{k} h\|_{L^2}^{\frac14}
					\|\p_{jk} h\|_{L^2}^{\frac14},\\
					\int_{\mathbb{R}^3_+} |f g h |\, dx
					&\; \lesssim \|f\|_{L^2}^{\frac12}
					\|\p_1 f\|_{L^2}^{\frac12}
					\|g\|_{L^2}^{\frac12}
					\|\p_2 g\|_{L^2}^{\frac12}
					\|h\|_{L^2}^{\frac12}
					\|\p_3 h\|_{L^2}^{\frac12},\\
					\|Z_3 f\|_{L^2}
					&\lesssim \|f\|_{L^2}^{\frac23}\|(f,Z_3^3 f)\|_{L^2}^{\frac13},\\
					\|Z_3 f\|_{L^2}
					&\lesssim \|f\|_{L^2}^{\frac34}\|(f,Z_3^4 f)\|_{L^2}^{\frac14},\\
                    \left\|\|f\|_{L^\infty(\mathbb{R}_+)}\right\|_{L^{\frac2s}(\mathbb{R}^2)}
					&\lesssim \left(\|f\|_{L^2}\|\partial_2 f\|_{L^2}
					+\|\partial_1 f\|_{L^2}\|\partial_{12}f\|_{L^2}\right)^{\frac{1-s}{2}}
					\|f\|_{L^2}^{\frac{2s-1}{2}}
					\|\partial_3 f\|_{L^2}^{\frac12}.
					\dal
					\deq
				\end{lemm}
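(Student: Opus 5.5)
The statement to prove is Lemma \ref{lemm:sobolev-ie}: the six anisotropic inequalities \eqref{ie:Sobolev}. I would prove them one at a time. In each case I first extend $f,g,h$ to $\mathbb{R}^3$ by even reflection in $x_3$ (or, for the tangential statements, simply work on $\mathbb{R}^3_+$ directly). I then use one-dimensional Gagliardo--Nirenberg / Agmon inequalities, applied successively in each variable. The basic one-dimensional tool, valid on $\mathbb{R}$ and on $\mathbb{R}_+$, is
\[
\|F\|_{L^\infty_{x_i}}^2 \lesssim \|F\|_{L^2_{x_i}}\|\p_i F\|_{L^2_{x_i}} + \|F\|_{L^2_{x_i}}^2 .
\]
On $\mathbb{R}_+$ I would use the version without the lower-order term, obtained from $F(x)^2=-\int_x^\infty \p(F^2)$.

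\textbf{First inequality.} Apply the one-dimensional bound successively in $x_1$, $x_2$, $x_3$, and use Hölder in the remaining variables at each stage. This gives
\[
\|f\|_{L^\infty}\lesssim \prod \|\p^\gamma f\|_{L^2}^{1/8},
\]
where the product runs over the eight multi-indices $\gamma\in\{0,1\}^3$.

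\textbf{Second and third inequalities (trilinear).} Write $\int |fgh| \le \|f\|_{L^2}\|gh\|_{L^2}$. Then bound $\|g\|_{L^2_{x_j,x_k}L^\infty_{x_i}}$ by the one-dimensional inequality in $x_i$, and bound $\|h\|_{L^\infty_{x_j,x_k}L^2_{x_i}}$ by the two-dimensional product estimate in $(x_j,x_k)$. The third inequality is the fully symmetric version: take $L^\infty$ in a different variable for each factor, via the Hölder split
\[
\int |fgh| \le \big\| \|f\|_{L^\infty_{x_1}}\|g\|_{L^2_{x_1}}\|h\|_{L^2_{x_1}}\big\|_{L^1_{x_2x_3}},
\]
and iterate this twice.

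\textbf{Fourth and fifth inequalities (conormal interpolation).} Integrate by parts in $x_3$. For example,
\[
\|Z_3 f\|_{L^2}^2=-\int f\, Z_3^{*}Z_3 f ,\qquad Z_3^{*}=-Z_3-\vf' ,
\]
and the boundary terms vanish because $\vf(0)=0$. This gives
\[
\|Z_3 f\|^2\lesssim \|f\|\,\big(\|Z_3^2 f\|+\|Z_3 f\|\big).
\]
Iterating yields the standard chain $\|Z_3^k f\|\lesssim \|f\|^{1-k/n}\|(f,Z_3^n f)\|^{k/n}$ (convexity of $k\mapsto \log\|Z_3^k f\|$ up to lower-order terms). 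Taking $(k,n)=(1,3)$ and $(k,n)=(1,4)$ gives the two stated forms. One could alternatively change variables $y=\log$-type so that $Z_3$ becomes $\p_y$ near the boundary, but the integration-by-parts route is cleaner.

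\textbf{Sixth inequality.} Set $F(x_h)=\|f(x_h,\cdot)\|_{L^\infty(\mathbb{R}_+)}$. The one-dimensional bound gives
\[
F^2\lesssim \|f\|_{L^2_{x_3}}\|\p_3 f\|_{L^2_{x_3}} .
\]
Next I interpolate $L^{2/s}(\mathbb{R}^2)$ between $L^2$ and $L^\infty(\mathbb{R}^2)$ (or use Ladyzhenskaya / two-dimensional Gagliardo--Nirenberg), with the $L^\infty(\mathbb{R}^2)$ norm controlled by the first inequality restricted to $(x_1,x_2)$. Distributing the $x_3$ factors via Hölder and Minkowski then produces the stated exponents $\frac{1-s}{2}$, $\frac{2s-1}{2}$, $\frac12$.

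\textbf{Main obstacle.} I expect this last bookkeeping step to be the hardest. The mixed norm requires Minkowski in the correct order: $L^\infty_{x_3}$ inside and $L^{2/s}_{x_h}$ outside. Matching the exponents requires $s\ge\frac12$, which holds because $s\ge\frac{16}{17}$.
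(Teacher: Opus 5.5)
Your arguments for the first five inequalities are fine and match the paper. The paper cites Wu--Zhu for the first three. For the fourth and fifth it does exactly your integration by parts, $\|Z_3 f\|_{L^2}\lesssim\|f\|_{L^2}+\|f\|_{L^2}^{1/2}\|Z_3^2 f\|_{L^2}^{1/2}$, and then iterates it.

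The gap is in the sixth inequality, at the step you flagged. As written, you interpolate $F(x_h)=\|f(x_h,\cdot)\|_{L^\infty(\mathbb{R}_+)}$ in $L^{2/s}(\mathbb{R}^2)$ between $L^2$ and $L^\infty$. That requires controlling $\|F\|_{L^\infty(\mathbb{R}^2)}=\|f\|_{L^\infty(\mathbb{R}^3_+)}$. This quantity is not controlled by $f,\partial_1 f,\partial_2 f,\partial_{12}f,\partial_3 f$ in $L^2$: on the Fourier side $\int_{\mathbb{R}^3}[(1+\xi_1^2)(1+\xi_2^2)+\xi_3^2]^{-1}\,d\xi=\infty$. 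Any bound for it brings in mixed derivatives such as $\partial_{13}f,\partial_{23}f,\partial_{123}f$, as in the first inequality, and these do not appear on the right-hand side. Likewise, $\|F\|_{L^2}^s$ supplies only $\|\partial_3 f\|_{L^2}^{s/2}$, not the full power $\frac12$. So this route cannot close with the stated right-hand side.

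The ordering that works is the paper's. Write $F^2\le 2AB$ with $A=\|f\|_{L^2_{x_3}}$ and $B=\|\partial_3 f\|_{L^2_{x_3}}$.

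First, use H\"older in $x_h$ to put all of $B$ into $L^2(\mathbb{R}^2)$:
\[
\|F\|_{L^{2/s}(\mathbb{R}^2)}^{2}=\|F^2\|_{L^{1/s}(\mathbb{R}^2)}\lesssim \|\partial_3 f\|_{L^2}\,\|A\|_{L^{\frac{2}{2s-1}}(\mathbb{R}^2)} .
\]
This is where $s\ge\frac12$ genuinely enters.

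Second, interpolate only $A$: $\|A\|_{L^{2/(2s-1)}}\le\|A\|_{L^2}^{2s-1}\|A\|_{L^\infty}^{2-2s}$, with $\|A\|_{L^2(\mathbb{R}^2)}=\|f\|_{L^2}$.

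Third, bound $\|A\|_{L^\infty(\mathbb{R}^2)}\le\bigl\|\,\|f\|_{L^\infty(\mathbb{R}^2)}\bigr\|_{L^2_{x_3}}$ by Minkowski. Then apply the two-dimensional Agmon estimate for each fixed $x_3$ and use H\"older in $x_3$.

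This involves only $f,\partial_1 f,\partial_2 f,\partial_{12}f$ and gives exactly the exponents $\frac{1-s}{2},\frac{2s-1}{2},\frac12$. Your product form $(\|f\|\|\partial_1 f\|\|\partial_2 f\|\|\partial_{12}f\|)^{1/4}$ is even slightly stronger than the paper's sum, by AM--GM. The key point is that the horizontal supremum must be taken of the $L^2_{x_3}$-norm of $f$, never of its $L^\infty_{x_3}$-norm.
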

				
				\begin{proof}
					First of all, one can follow the idea in \cite[Lemma 1.2]{Wu2021Advance} to establish the inequalities $\eqref{ie:Sobolev}_1$-$\eqref{ie:Sobolev}_3$.
					Let us give the proof of inequality $\eqref{ie:Sobolev}_4$ and $\eqref{ie:Sobolev}_6$.
					Indeed, integrating by part, it holds
					\begin{equation*}
						\|Z_3 f\|_{L^2}^2=-\int_{\mathbb{R}_+^3}(\varphi' Z_3 f+Z_3^2 f)f dx
						\lesssim \|Z_3 f\|_{L^2}\|f\|_{L^2}
						+\|Z_3^2 f\|_{L^2}\|f\|_{L^2},
					\end{equation*}
					which yields directly
					\begin{equation}\label{a10}
						\|Z_3 f\|_{L^2}
						\lesssim \|f\|_{L^2}
						+\|f\|_{L^2}^{\frac12}\|Z_3^2 f\|_{L^2}^{\frac12}.
					\end{equation}
					Applying estimate \eqref{a10} twice, it is easy to check that
					\begin{equation*}
						\begin{aligned}
							\|Z_3^2 f\|_{L^2}
							&\lesssim \|Z_3 f\|_{L^2}
							+\|Z_3 f\|_{L^2}^{\frac12}\|Z_3^3 f\|_{L^2}^{\frac12}\\
							&\lesssim \|f\|_{L^2}
							+\|f\|_{L^2}^{\frac12}\|Z_3^2 f\|_{L^2}^{\frac12}
							+\left(\|f\|_{L^2}
							+\|f\|_{L^2}^{\frac12}\|Z_3^2 f\|_{L^2}^{\frac12}\right)^{\frac12}
							\|Z_3^3 f\|_{L^2}^{\frac12}\\
							&\lesssim \frac12 \|Z_3^2 f\|_{L^2}+\|f\|_{L^2}
							+\|f\|_{L^2}^{\frac12}\|Z_3^3 f\|_{L^2}^{\frac12}
							+\|f\|_{L^2}^{\frac13}\|Z_3^3 f\|_{L^2}^{\frac23},
						\end{aligned}
					\end{equation*}
					which implies directly
					\begin{equation}\label{a11}
						\|Z_3^2 f\|_{L^2}
						\lesssim \|f\|_{L^2}
						+\|f\|_{L^2}^{\frac12}\|Z_3^3 f\|_{L^2}^{\frac12}
						+\|f\|_{L^2}^{\frac13}\|Z_3^3 f\|_{L^2}^{\frac23}.
					\end{equation}
					Then, the combination of estimates \eqref{a10} and \eqref{a11} yields directly
					\begin{equation*}
						\begin{aligned}
							\|Z_3 f\|_{L^2}
							&\lesssim \|f\|_{L^2}
							+\|f\|_{L^2}^{\frac12}
							\left\{\|f\|_{L^2}
							+\|f\|_{L^2}^{\frac12}\|Z_3^3 f\|_{L^2}^{\frac12}
							+\|f\|_{L^2}^{\frac13}\|Z_3^3 f\|_{L^2}^{\frac23}\right\}^{\frac12}\\
							&\lesssim \|f\|_{L^2}
							+\|f\|_{L^2}^{\frac34}\|Z_3^3 f\|_{L^2}^{\frac14}
							+\|f\|_{L^2}^{\frac23}\|Z_3^3 f\|_{L^2}^{\frac13},
						\end{aligned}
					\end{equation*}
					which yields the estimate $\eqref{ie:Sobolev}_4$. Similarly, we can obtain the estimate $\eqref{ie:Sobolev}_5$, thus we omit the proof.
					Finally, let us establish the estimate $\eqref{ie:Sobolev}_6$.
					Indeed, it holds
					\begin{equation}\label{a6}
						\begin{aligned}
							\left\|\|f\|_{L^\infty(\mathbb{R}_+)}\right\|_{L^{\frac2s}(\mathbb{R}^2)}^{\frac2s}
							\lesssim \left\|\|f\|_{L^2(\mathbb{R}_+)}^{\frac12}
							\|\partial_3 f\|_{L^2(\mathbb{R}_+)}^{\frac12}\right\|_{L^{\frac2s}(\mathbb{R}^2)}^{\frac2s}
							\lesssim \|\partial_3 f\|_{L^2}^{\frac1s}
							\left\|\|f\|_{L^2(\mathbb{R}_+)}\right\|_{L^{\frac{2}{2s-1}}
								(\mathbb{R}^2)}^{\frac1s},
						\end{aligned}
					\end{equation}
					and
					\begin{equation}\label{a7}
						\begin{aligned}
							\left\|\|f\|_{L^2(\mathbb{R}_+)}\right\|_{L^{\frac{2}{2s-1}}(\mathbb{R}^2)}
							^{\frac{2}{2s-1}}
							&\lesssim \left\|\|f\|_{L^\infty(\mathbb{R}^2)}\right\|_{L^2(\mathbb{R}_+)}
							^{\frac{4-4s}{2s-1}}\|f\|_{L^2}^2\\
							&\lesssim \left(\|f\|_{L^2}^{\frac12}\|\partial_2 f\|_{L^2}^{\frac12}
							+\|\partial_1 f\|_{L^2}^{\frac12}\|\partial_{12}f\|_{L^2}^{\frac12}\right)
							^{\frac{4-4s}{2s-1}}\|f\|_{L^2}^2.
						\end{aligned}
					\end{equation}
					The combination of estimates \eqref{a6} and \eqref{a7} gives
					\begin{equation*}
						\begin{aligned}
							\left\|\|f\|_{L^\infty(\mathbb{R}_+)}\right\|_{L^{\frac2s}(\mathbb{R}^2)}
							&\lesssim \|\partial_3 f\|_{L^2}^{\frac12}
							\left\|\|f\|_{L^2(\mathbb{R}_+)}\right\|_{L^{\frac{2}{2s-1}}
								(\mathbb{R}^2)}^{\frac12}\\
							&\lesssim \left(\|f\|_{L^2}\|\partial_2 f\|_{L^2}
							+\|\partial_1 f\|_{L^2}\|\partial_{12}f\|_{L^2}\right)^{\frac{1-s}{2}}
							\|f\|_{L^2}^{\frac{2s-1}{2}}
							\|\partial_3 f\|_{L^2}^{\frac12}.
						\end{aligned}
					\end{equation*}
					Therefore, we complete the proof of this lemma.
				\end{proof}
				
				Next, we introduce the Hardy-Littlewood-Sobolev inequality
				in \cite[pp. 119, Theorem 1]{Stein1970} as follow.
				\begin{lemm}\label{H-L}
					Let $0<\alpha<2 , 1<p<q<\infty, \frac{1}{q}+\frac{\alpha}{2}=\frac{1}{p}$, then
					\begin{equation}\label{a16}
						\|\Lambda_h^{-\alpha}f\|_{L^q(\mathbb{R}^2)}\lesssim \|f\|_{L^p(\mathbb{R}^2)}.
					\end{equation}
				\end{lemm}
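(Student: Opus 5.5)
The statement to be proved is the Hardy--Littlewood--Sobolev inequality \eqref{a16} on $\mathbb{R}^2$. My plan is to realise $\Lambda_h^{-\alpha}$ as a Riesz potential and then combine a local Hölder bound with a maximal-function bound, following Hedberg's argument.

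\textbf{Kernel representation.} For $0<\alpha<2$, the Fourier multiplier $|\xi_h|^{-\alpha}$ is the transform of $c_\alpha|x_h|^{\alpha-2}$, a locally integrable homogeneous distribution. Hence, for Schwartz $f$,
\begin{equation*}
\Lambda_h^{-\alpha}f(x_h)=c_\alpha\int_{\mathbb{R}^2}\frac{f(y_h)}{|x_h-y_h|^{2-\alpha}}\,dy_h .
\end{equation*}
It therefore suffices to bound the positive operator $I_\alpha |f|$ from $L^p$ to $L^q$, and to extend from Schwartz functions to $L^p$ by density.

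\textbf{Splitting at a radius $R>0$.} On the near region $|x_h-y_h|<R$, decompose dyadically into annuli. This gives
\begin{equation*}
\int_{|x_h-y_h|<R}\frac{|f(y_h)|}{|x_h-y_h|^{2-\alpha}}\,dy_h\lesssim R^{\alpha}\,Mf(x_h),
\end{equation*}
where $M$ is the Hardy--Littlewood maximal function; the geometric series converges because $\alpha>0$. On the far region $|x_h-y_h|\ge R$, apply Hölder's inequality:
\begin{equation*}
\int_{|x_h-y_h|\ge R}\frac{|f(y_h)|}{|x_h-y_h|^{2-\alpha}}\,dy_h\lesssim \|f\|_{L^p}\Big(\int_{|z|\ge R}|z|^{-(2-\alpha)p'}dz\Big)^{1/p'}\lesssim \|f\|_{L^p}R^{\alpha-2/p}.
\end{equation*}
The tail integral is finite exactly when $(2-\alpha)p'>2$, which is equivalent to $\alpha<2/p$. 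This follows from $1/q=1/p-\alpha/2>0$, i.e.\ from $q<\infty$.

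\textbf{Optimising $R$.} Choose $R$ pointwise so that the two bounds balance: $R^{2/p}=\|f\|_{L^p}/Mf(x_h)$. This yields
\begin{equation*}
|\Lambda_h^{-\alpha}f(x_h)|\lesssim Mf(x_h)^{1-\alpha p/2}\,\|f\|_{L^p}^{\alpha p/2}=Mf(x_h)^{p/q}\|f\|_{L^p}^{1-p/q},
\end{equation*}
using the exponent relation $1/q+\alpha/2=1/p$.

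\textbf{Conclusion.} Take $L^q$ norms. The maximal theorem, valid since $p>1$, gives $\|(Mf)^{p/q}\|_{L^q}=\|Mf\|_{L^p}^{p/q}\lesssim\|f\|_{L^p}^{p/q}$, and multiplying by $\|f\|_{L^p}^{1-p/q}$ gives $\|\Lambda_h^{-\alpha}f\|_{L^q}\lesssim\|f\|_{L^p}$.

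\textbf{Main obstacle.} The delicate step is the near-region estimate by $R^\alpha Mf$ together with justifying the kernel identity. Both are standard, so I would cite Stein \cite{Stein1970} for the maximal theorem and the Riesz kernel formula. The exponent bookkeeping $\alpha<2/p$ is exactly what the hypothesis $q<\infty$ provides.
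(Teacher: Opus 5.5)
Your proof is correct. The paper gives no argument of its own for this lemma; it quotes it from Stein \cite{Stein1970}. Stein's proof uses the same splitting of the Riesz kernel $|x_h|^{\alpha-2}$ at a radius $\mu$, but ties $\mu$ to the level $\lambda$ rather than to the point $x_h$:
\begin{itemize}
\item The truncated near kernel has $L^1$ norm $\sim\mu^{\alpha}$, so Young's inequality bounds the near part in $L^p$ by $\mu^{\alpha}\|f\|_{L^p}$.
\item H\"older bounds the far part in $L^\infty$ by $\mu^{-2/q}\|f\|_{L^p}$.
\item Choosing $\mu^{-2/q}\|f\|_{L^p}\sim\lambda$ gives the weak-type bound $|\{|I_\alpha f|>\lambda\}|\lesssim(\|f\|_{L^p}/\lambda)^{q}$.
\item The strong-type estimate for $p>1$ then follows from Marcinkiewicz interpolation between two admissible pairs.
\end{itemize}

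Your Hedberg-type pointwise choice of radius gives the pointwise inequality $|\Lambda_h^{-\alpha}f|\lesssim (Mf)^{p/q}\|f\|_{L^p}^{1-p/q}$. The strong-type bound then follows in one line from the maximal theorem, with no separate interpolation step; the interpolation is absorbed into the $L^p$ boundedness of $M$. Stein's route avoids the maximal function and gives the $p=1$ weak-type endpoint directly. Your inequality also recovers that endpoint via the weak $(1,1)$ bound for $M$. The paper only uses the case $q=2$, $p=\frac{2}{1+s}$ with $s\in(0,1)$, and either argument fully covers it.
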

				In this paper, taking $q=2$ in \eqref{a16}, then
				$
				p=\frac{1}{\frac12+\frac{\alpha}{2}}=\frac{2}{1+\alpha}
				$
				should satisfy the condition
				$$
				1<p=\frac{2}{1+\alpha} <2.
				$$
				This implies the index $\alpha \in (0, 1)$.
			\end{appendices}

		\section*{Data Availability}
		Data sharing is not applicable to this article as no new data were created or analysed in this study.

		\section*{Conflict of interest}
		The authors declared that they have no Conflict of interest to this work.

			\phantomsection

		\end{sloppypar}

\addcontentsline{toc}{section}{\refname}
		\begin{thebibliography}{99}
			
		
			
			\bibitem{BeiraodaVeiga2010}
			H.~Beir\~{a}o~da Veiga and F.~Crispo.
			\newblock Sharp inviscid limit results under {N}avier type boundary conditions.
			{A}n {$L^p$} theory.
			\newblock {\em J. Math. Fluid Mech.}, 12(3):397--411, 2010.
			
			\bibitem{BeiraodaVeiga2011}
			H.~Beir\~{a}o~da Veiga and F.~Crispo.
			\newblock Concerning the {$W^{k,p}$}-inviscid limit for 3-{D} flows under a
			slip boundary condition.
			\newblock {\em J. Math. Fluid Mech.}, 13(1):117--135, 2011.
			
			
            \bibitem{Bresch-Jabin-2018}
			D.~Bresch and P.E.~Jabin.
             Global existence of weak solutions for compressible Navier-Stokes equations:
            thermodynamically unstable pressure and anisotropic viscous stress tensor.
           \newblock {\em Ann. of Math.}, 188(2):577--684, 2018.
           
            \bibitem{new-Cao-Wu2025}
            C.S.~Cao and J.H.~Wu.
            \newblock Stability of the 3D Navier-Stokes equations
            with anisotropic dissipation.
            \newblock {\em Nonlinearity}, 38(9): No.095012, 2025.

			

			
			\bibitem{ani-CDGG2000}
			J.-Y.~Chemin, B.~Desjardins, I.~Gallagher and E.~Grenier. \newblock Fluids with anisotropic viscosity. \newblock{\em Math. Model. Numer. Anal.}, 34(2):315--335, 2000.
			
			
			
			
			\bibitem{Chemin2007}
			J.-Y.~Chemin and P.~Zhang.
			\newblock On the global wellposedness to the 3-{D} incompressible anisotropic
			{N}avier-{S}tokes equations.
			\newblock{\em Comm. Math. Phys.}, 272(2):529--566, 2007.
			
			
			\bibitem{Constantin1986}
			P.~Constantin.
			\newblock Note on loss of regularity for solutions of the {$3$}-{D}
			incompressible {E}uler and related equations.
			\newblock {\em Comm. Math. Phys.}, 104(2):311--326, 1986.
			
			\bibitem{Constantin1988}
			P.~Constantin and C.~Foias.
			\newblock {\em Navier-{S}tokes equations}.
			\newblock Chicago Lectures in Mathematics. University of Chicago Press,
			Chicago, IL, 1988.
			
			\bibitem{ConstantinWu1995}
			P.~Constantin and J.H.~Wu.
			\newblock Inviscid limit for vortex patches.
			\newblock {\em Nonlinearity}, 8(5):735--742, 1995.

            \bibitem{Fan-Li-Li2022}
            X.Y.~Fan, J.X.~Li and J.~Li.
           \newblock Global existence of strong and weak solutions to 2D compressible Navier-Stokes system in bounded domains with large data and vacuum. 
           \newblock {\em Arch. Ration. Mech. Anal.}, 245(1):239--278, 2022. 
    
			\bibitem{FHWW2025}
			Z.F.~Feng, G.Y.~Hong, Y.H.~Wang and J.H.~Wu.
			\newblock
			Stability and decay rates for the 3D compressible Navier-Stokes 
             equations with eddy diffusion.
			\newblock {\em  preprint}.
			

		
			\bibitem{ConstantinWu1996}
			P.~Constantin and J.H.~Wu.
			\newblock The inviscid limit for non-smooth vorticity.
			\newblock {\em Indiana Univ. Math. J.}, 45(1):67--81, 1996.
			
		
			\bibitem{Gao2024}
			J.C.~Gao, J.H.~Wu, Z.A.~Yao and X.~Yin.
			\newblock Global uniform regularity for the 3D incompressible MHD equations
			with slip boundary condition near an equilibrium.
			\newblock {\em  arXiv:2408.14123}.

            \bibitem{Guo-Wang2012}
            Y.~Guo and Y.J.~Wang.
            \newblock Decay of dissipative equations and negative Sobolev spaces. 
           \newblock {\em  Comm. Partial Differential Equations}, 37(12):2165--2208, 2012.
			


         \bibitem{Huang-Li-Xin-Siam}
          X.D.~Huang, J.~Li  and Z.P.~Xin.
          Serrin type criterion for the three-dimensional viscous compressible flows.
          \newblock {\em SIAM J. Math. Anal.}, 43:1872--1886, 2011. 


          \bibitem{Huang-Li-Xin-CMP}
          X.D.~Huang, J.~Li  and Z.P.~Xin.
          Blowup criterion for the compressible flows with vacuum states.
         \newblock {\em  Comm. Math. Phys.}, 301:23--35, 2011. 

            

            \bibitem{Huang-Li-Xin2012}
            X.D.~Huang, J.~Li and Z.P.~Xin.
            \newblock Global well-posedness of classical solutions with large oscillations and vacuum to the three-dimensional isentropic compressible Navier-Stokes equations.
            \newblock {\em Commun. Pure Appl. Math.}, 65:549--585, 2012.

            \bibitem{Huang-Li2018}
            X.D.~Huang and J.~Li. 
            \newblock Global classical and weak solutions to the three-dimensional full compressible Navier-Stokes system with vacuum 
            and large oscillations. 
            \newblock {\em Arch. Ration. Mech. Anal.}, 227(3):995--1059, 2018.

            \bibitem{Hoff-Zumbrun1995}
             D.~Hoff and K.~Zumbrun.
            \newblock  Multi-dimensional diffusion waves for the Navier-Stokes equations of compressible flow. 
           \newblock {\em Indiana Univ. Math. J.}, 44(2):603--676, 1995.

             \bibitem{Hong-Hou-Peng-Zhu2024}
             G.Y.~Hong, X.H.~Hou, H.Y.~Peng and C.J.~Zhu.
             \newblock Global existence for a class of large solution to compressible Navier-Stokes equations with vacuum. 
             \newblock {\em Math. Ann.}, 388(2):2163--2194, 2024.

            
			\bibitem{Iftimie2002}
			D.~Iftimie.
			\newblock A uniqueness result for the {N}avier-{S}tokes equations with
			vanishing vertical viscosity.
			\newblock {\em SIAM J. Math. Anal.}, 33(6):1483--1493, 2002.
			
			\bibitem{ani-Iftimie-Planas2016}
			D.~Iftimie and G.~Planas.
			\newblock Inviscid limits for the Navier-Stokes equations with Navier friction boundary conditions. \newblock{\em Nonlinearity}, 19(4):899--918, 2006.
			
			\bibitem{Iftimie2011ARMA}
			D.~Iftimie and F.~Sueur.
			\newblock Viscous boundary layers for the {N}avier-{S}tokes equations with the
			{N}avier slip conditions.
			\newblock {\em Arch. Ration. Mech. Anal.}, 199(1):145--175, 2011.
			
			
            \bibitem{JTW2023}
            R.H.~Ji, L.~Tian and J.H.~Wu.
            \newblock 3D anisotropic Navier-Stokes equations in
            $\mathbb{T}^2 \times \mathbb{R}$: stability and large-time behaviour.
            \newblock {\em Nonlinearity}, 36(6):3219--3237, 2023.

            \bibitem{jWy2021}
            R.H.~Ji, J.H.~Wu and W.R.~Yang.
            \newblock Stability and optimal decay for the 3D
            Navier-Stokes equations with horizontal dissipation.
            \newblock {\em J. Differential Equations}, 290:57--77, 2021.
			
			 \bibitem{Kagei-Kobayashi2002}
            Y.~Kagei and T.~Kobayashi.
           \newblock  On large time behavior of solutions to the compressible Navier–Stokes equations in the half space in $R^3$. 
\newblock {\em Arch. Ration. Mech. Anal.}, 165:89--159, 2002.

			\bibitem{Kato1972}
			T.~Kato.
			\newblock Nonstationary flows of viscous and ideal fluids in {${\bf R}\sp{3}$}.
			\newblock {\em J. Funct. Anal.}, 9:296--305, 1972.
			
           \bibitem{Liu-Wang1998}
            T.P.~Liu and W.K.~Wang.
            \newblock The pointwise estimates of diffusion wave for the 
            Navier-Stokes systems in odd multi-dimensions.
            \newblock {\em Comm. Math. Phys.}, 196(1):145--173, 1998.
          
          

            

            
			
			\bibitem{Liu-Paica-Zhang-2020}
             Y.L.~Liu, M.~Paicu and P.~Zhang
             \newblock  Global well-posedness of 3-D anisotropic Navier-Stokes
             system with small unidirectional derivative.
             \newblock {\em  Arch. Ration. Mech. Anal.}, 238(2):805--843, 2020.
	
			
			\bibitem{Masmoudi2007CMP}
			N.~Masmoudi.
			\newblock Remarks about the inviscid limit of the {N}avier-{S}tokes system.
			\newblock {\em Comm. Math. Phys.}, 270(3):777--788, 2007.

            \bibitem{Matsumura-Nishida1980}
            A.~Matsumura and T.~Nishida.
            The initial value problem for the equations of motion of viscous and
            heat-conductive gases.
            \newblock {\em J. Math. Kyoto Univ.}, 20:67--104, 1980.

            \bibitem{Matsumura-Nishida1983}
             A.~Matsumura and T.~Nishida.
             The initial boundary value problems for the equations of motion of
             compressible and heat-conductive fluids.
             \newblock {\em Commun. Math. Phys.}, 89:445--464, 1983.	
             		
			\bibitem{Masmoudi2012ARMA}
			N.~Masmoudi and F.~Rousset.
			\newblock Uniform regularity for the {N}avier-{S}tokes equation with {N}avier
			boundary condition.
			\newblock {\em Arch. Ration. Mech. Anal.}, 203(2):529--575, 2012.

            \bibitem{Masmoudi-Sun-Wang-Zhang}
            N.~Masmoudi, C.Z.~Sun, C.~Wang and Z.F.~Zhang
            Incompressible and vanishing vertical viscosity limit for the compressible Navier-Stokes system with Dirichlet boundary conditions.
            \newblock {\em arXiv:2501.05063}.

            
			\bibitem{1987Geophysical}
			J.~Pedlosky.
			\newblock {\em Geophysical Fluid Dynamics, Second Edition}.
			\newblock Geophysical Fluid Dynamics, Second Edition, 1987.			

			\bibitem{Paicu2005-2}
			M.~Paicu.
			\newblock Anisotropic Navier-Stokes equation in critical spaces.
			\newblock {\em Rev. Mat. Iberoamericana}, 21(1):179--235, 2005.
			
			\bibitem{Paicu2005}
			M.~Paicu.
			\newblock \'{E}quation periodique de {N}avier-{S}tokes sans viscosit\'{e} dans
			une direction.
			\newblock {\em Comm. Partial Differential Equations}, 30(7-9):1107--1140, 2005.

            \bibitem{Sun-Wang-Zhang-JMPA}
			Y.Z.~Sun, C.~Wang and Z.F.~Zhang.
            A Beale-Kato-Majda blow-up criterion for the 3-D compressible Navier-Stokes equations.
             \newblock {\em J. Math. Pures Appl.}, 
             95:36--47, 2011. 
			
            
			\bibitem{Simon1990}
			J.~Simon. Nonhomogeneous viscous incompressible fluids: existence of velocity, density, and pressure, \newblock{SIAM J. Math. Anal.}, 21(5):1093--1117, 1990.
						
			\bibitem{Stein1970}
			E.M.~Stein.
			\newblock {\em  Singular integrals and differentiability properties of
				functions}, volume No. 30 of { Princeton Mathematical Series}.
			\newblock {Princeton University Press, Princeton, NJ}, 1970. 

            
			
			\bibitem{Temam-Ziane-2004}
             R.~Temam and M.~Ziane.
             Some mathematical problems in geophysical fluid dynamics.
             \newblock{\em Handbook of mathematical fluid dynamics},
             Vol. III, North-Holland, Amsterdam, 535--657, 2004.

			
            \bibitem{Wen-2025}
            H.Y.~Wen.
            \newblock Global wellposedness of compressible Navier-Stokes equations with
            vacuum and smallness on scaling invariant quantity in $\mathbb{R}^3$.
            \newblock {\em Adv. Math.},  482: No.110628,  2025.

         \bibitem{Wen-Zhu-Adv}
          H.Y.~Wen and C.J.~Zhu.
          Blow-up criterions of strong solutions to 3D compressible Navier-Stokes equations with vacuum.
        \newblock {\em Adv. Math.}, 248:534--572, 2013.



           \bibitem{Wen-2017}
           H.Y.~Wen and C.J.~Zhu.
          \newblock  Global solutions to the three-dimensional full compressible Navier-Stokes equations with vacuum at infinity in some classes of large data. 
          \newblock {\em SIAM J. Math. Anal.}, 49(1):162--221, 2017. 


			\bibitem{Wang-2016}
			Y.~Wang.
			Uniform regularity and vanishing dissipation limit for the
			full compressible Navier-Stokes system in three dimensional
			bounded domain.
			\newblock {\em Arch. Ration. Mech. Anal.}, 221(3):1345--1415, 2016.

			\bibitem{Wang-Xin-Yan2015}
             Y.~Wang, Z.P.~Xin and Y.~Yong.
            Uniform regularity and vanishing viscosity limit for the compressible
            Navier-Stokes with general Navier-slip boundary conditions
            in three-dimensional domains.
           \newblock {\em SIAM J. Math. Anal.}, 47(6):4123--4191, 2015.

			

			
			
			\bibitem{Wu2021Advance}
			J.H.~Wu and Y.~Zhu.
			\newblock Global solutions of 3{D} incompressible {MHD} system with mixed
			partial dissipation and magnetic diffusion near an equilibrium.
			\newblock {\em Adv. Math.}, 377:107466, 2021.
            



			\bibitem{Xiao2007}
			Y.L.~Xiao and Z.P.~Xin.
			\newblock On the vanishing viscosity limit for the 3{D} {N}avier-{S}tokes
			equations with a slip boundary condition.
			\newblock {\em Comm. Pure Appl. Math.}, 60(7):1027--1055, 2007.
			
			\bibitem{Xiao2013}
			Y.L.~Xiao and Z.P.~Xin.
			\newblock On the inviscid limit of the 3{D} {N}avier-{S}tokes equations with
			generalized {N}avier-slip boundary conditions.
			\newblock {\em Commun. Math. Stat.}, 1(3):259--279, 2013.
			
			
			\bibitem{Zhang-2009}
			T.~Zhang.
			Global wellposed problem for the 3-D incompressible
			anisotropic Navier-Stokes equations in an anisotropic space.
			\newblock {\em Comm. Math. Phys.}, 287(1):211--224, 2009.
			
			
		\end{thebibliography}
	\end{document}